\documentclass[11pt]{amsart}

\usepackage[utf8]{inputenc} 
\usepackage{graphicx}
\usepackage{amsmath}
\usepackage{amsthm}
\usepackage{amsfonts}
\usepackage{amssymb}
\usepackage{booktabs}
\usepackage{verbatim}
\usepackage{subcaption}
\usepackage{latexsym}
\usepackage{mdframed}
\usepackage{enumerate}
\usepackage{array}
\usepackage{commath}
\usepackage{xcolor}
\newcommand{\dist}{3.0}
\usepackage[utf8]{inputenc} 
\usepackage{graphicx}
\usepackage{mathtools}
\usepackage{hyperref}
\usepackage{multirow}
\usepackage{centernot} 
\usepackage{parcolumns}
\usepackage{multicol}
\usepackage{mathrsfs}
\usepackage{xfrac}
\usepackage{tabularx}
\usepackage[all]{xy}
\usepackage{tabu}
\usepackage{enumitem}
\usepackage{bbm} 
\usepackage{thmtools}
\usepackage{thm-restate}
\usepackage{cleveref}
\usepackage{graphicx}
\usepackage{bm}
\usepackage[many]{tcolorbox}
\usepackage[left=3cm,right=3cm,top=3cm,bottom=3cm]{geometry}
\usepackage{centernot}
\usepackage{algorithm}
\usepackage{algpseudocode}
\usepackage{pgf,tikz}
\usetikzlibrary{graphs, positioning, quotes, arrows, calc}
\usepackage{capt-of}
\usepackage{thmtools}  
\usepackage[normalem]{ulem}
\usepackage{scalerel}

\theoremstyle{plain}
\newtheorem{thm}{Theorem}[section]
\newtheorem{lemma}[thm]{Lemma}
\newtheorem{cor}[thm]{Corollary}
\newtheorem{prop}[thm]{Proposition}
\newtheorem{fact}[thm]{Fact}

\newtheorem*{thm*}{Theorem}

\theoremstyle{remark}
\newtheorem{rmk}[thm]{Remark}

\theoremstyle{definition}
\newtheorem{defn}[thm]{Definition}

\numberwithin{equation}{section}

\def\E{\mathbb{E}}

\def\1{\mathbf{1}}

\newcommand{\0}{\hspace{.01in}}

\DeclareMathOperator{\supp}{supp}

\DeclareMathOperator{\spa}{span}

\DeclareMathOperator{\diag}{diag}

\DeclareMathOperator{\rank}{rank}
\DeclareMathOperator{\row}{row}

 \usepackage{xcolor}

\colorlet{lightviolet}{violet!70}

\newcommand{\sqcell}[1]{\parbox[c][2cm][c]{2cm}{\centering #1}}

\usepackage{natbib}   

\title{Graph Disjointness with Applications to Reversible Markov Chains}

\author{Yang Xiang \and Kevin McGoff \and Andrew B. Nobel}

\begin{document}

\maketitle

\begin{abstract}
The correspondence between weighted undirected graphs and reversible Markov chains via 
vertex random walks is simple and well known.  Leveraging this correspondence and ideas from the theory of dynamical systems, we study the structural 
discordance of graphs and Markov chains by means of graph joinings.
Informally, a joining of graphs $G$ and $H$ is a  
graph on the product of their vertex sets giving rise to a coupling of their random walks.
Graphs $G$ and $H$ are strongly disjoint if their only joining is the tensor product,
and they are weakly disjoint if the degree function of every joining is equal to the degree 
function of the tensor product. 
We establish close connections between graph joinings, disjointness, and graph factors.
Our first principal result characterizes weak disjointness of graphs in terms of the 
spectral overlap of their Markov transition matrices.  The second 
establishes that two graphs without self loops are strongly disjoint if and only if
they are weakly disjoint and exactly one of the graphs is a tree.  The third shows
that the strong or weak disjointness of graphs is essentially determined by their
vertex and edge sets, without regard to edge weights.  Translating these results
into the language of Markov chains yields new insights into the rigidity 
and structure of reversible couplings of reversible Markov chains.



\end{abstract}

\section{Introduction}

Graphs arise in a variety of statistical problems, where they are used to 
represent assumed or inferred relationships 
between pairs of objects of interest, 
or to describe probabilistic structures such as Markov chains,
hidden Markov models, or Markov random fields. 
While the study and statistical analysis of individual graphs in these settings  
is well-established, a number of problems require the analysis and interpretation 
of a \textit{collection} of two or more graphs, potentially of different sizes and topologies. 
In these problems, it is often of interest to align or compare two given graphs.
The existing literature on graph alignment and comparison is focused primarily 
on the study of graph \textit{similarity}, most often in the form of graph isomorphism,
though less rigid notions of similarity have also been studied. 
In contrast, the investigation of 
graph discordance has received little attention.

In this paper we define and study three related notions of 
discordance between undirected graphs with non-negative edge weights. 
As used here, discordance is distinct from notions of dissimilarity that arise   
when two graphs are not isomorphic or are far apart relative to some distance. 
Dissimilarity of this sort is often evident upon inspection, and may be
of limited practical use: for example, knowing that two graphs have a large edit distance
is usually less useful than knowing that their edit distance is small.   
Informally, discordance captures a lack of common structure.
An overview of our results is given below; precise definitions and formal statements
of results can be found in Sections~\ref{Basic_Definitions}--\ref{Section:MC}. 

Our investigations are based on the elementary correspondence between 
reversible Markov chains and undirected graphs with normalized, 
non-negative edge weights.  This Markov embedding approach to graph analysis has
also been considered in the recent papers
\cite{yi2025alignment} and \cite{hoang2025optimal}.  
Let $G$ and $H$ be undirected graphs with non-negative edge weights
and vertex sets $U$ and $V$, respectively. 
{Define the degree of a vertex $u$ to be the sum of the weights 
of all edges incident to $u$.}
Let $X = X_0, X_1, \ldots \in U$ be the standard random walk on $G$:
from a vertex $u \in U$, the walk moves to an adjacent vertex $u'$ with probability 
equal to the weight of the edge $(u,u')$ divided by the degree of $u$. 
As $G$ is undirected, the walk $X$ is a reversible Markov chain.
Here and throughout the paper all random walks are assumed to be stationary.
Let $Y = Y_0, Y_1, \ldots \in V$ be the standard random walk on $H$, which is
defined in the same manner.

Following \cite{hoang2025optimal} we define a graph joining of $G$ and $H$ to be 
a graph $K$ on $U \times V$ whose associated random walk is a
Markovian coupling of the random walks on $G$ and $H$
(see Section \ref{Basic_Definitions} for details).
The definition ensures, in particular, that 
$G$ and $H$ can be recovered from $K$ through appropriate marginalization.
Let $\mathcal{J}(G,H)$ be the set of all graph joinings of $G$ and $H$.
As illustrated below, graph joinings capture structural 
interactions between $G$ and $H$. 
The set $\mathcal{J}(G,H)$ is non-empty, as the tensor product graph 
$K = G \otimes H$ is always a graph joining.

We define and study three types of graph discordance in terms of graph joinings.  
For each notion we use the general term disjointness, which was introduced by \cite{furstenberg1967disjointness} in
the context of ergodic theory and dynamical systems.
We say that graphs $G$ and $H$ are strongly disjoint if the family
$\mathcal{J}(G,H)$ contains only the tensor product $G \otimes H$, and we say that $G$ and $H$ are
weakly disjoint if for each $K \in \mathcal{J}(G,H)$ 
the degree of $(u,v)$ in $K$ is equal to the degree of $u$ in $G$ times the degree
of $v$ in $H$. A third, weaker, form of disjointness arises from the problem of
finding a joining $K \in \mathcal{J}(G,H)$ minimizing the degree-weighted sum of 
a cost function $c: U \times V \to [0,\infty)$ (see Section~\ref{subsec:OGJ}).  
We say that graphs $G$ and $H$ are $c$-disjoint if $G \otimes H$ is a minimizer (not necessarily unique) of the degree weighted cost. 
It is easy to see that strong disjointness implies weak disjointness, and weak disjointness implies $c$-disjointness for any cost function $c$.  Moreover,
two graphs are weakly disjoint if and only if they are $c$-disjoint 
for every cost function $c$.  Each type of disjointness can
be determined in polynomial time with respect to the sizes of $U$ and $V$.
We present a number of examples, including paths and cycles, that illustrate the 
different types of disjointness and show that they are distinct.  

\subsection{Theoretical Results}
Our theoretical analysis begins by examining the connection between graph 
joinings, disjointness, and graph factors.  
Informally, $H$ is a factor of $G$ if the random walk $Y$ on $H$ can be obtained 
(in the sense of distribution) by applying a fixed
function $f: U \to V$ to the random walk $X$ on $G$.  
Factors are closely related to graph joinings: a graph
$K$ with vertex set $U \times V$ is a graph joining of $G$ and $H$ if and only 
if $G$ and $H$ are factors of $K$ under the usual coordinate projections.
Further analysis reveals several analogies between disjoint graphs 
and coprime numbers, paralleling related results in ergodic theory 
(see \cite{furstenberg1967disjointness,glasner1983minimal,de2005introduction, de2023joinings}).
In particular, graphs $G$ and $H$ are strongly disjoint if, whenever both are factors
of a graph $K$, their tensor product $G \otimes H$ is also a factor of $K$.
Also, if $G$ and $H$ are weakly disjoint, then they have no non-trivial common factors,  
but we note that the converse fails to hold in this setting.

We next turn our attention to characterizations of weak and strong disjointness,
which form the primary results of the paper.
Connected graphs $G$ and $H$ are weakly disjoint if and only if the
transition matrices of their random walks share no eigenvalue other than one (see Theorem~\ref{weak_no_share_eigen}).
Moreover, connected graphs with no self loops 
are strongly disjoint if and only if they are weakly disjoint and exactly one of 
the graphs is a tree (see Theorem~\ref{Characterization_connected_no_self}).  
When $G$ and $H$ are different, strong disjointness is the exception 
and weak disjointness is the rule. Moreover, the weak or strong disjointness of
two graphs is preserved for virtually all graphs (in both topological and measure-theoretic senses) having the same vertex and edge sets but potentially different weight 
functions (see Proposition~\ref{prevalence_strong_weak}). 
Finally, we show that the classes of bipartite and connected 
graphs can be characterized in terms of strong disjointness.

\subsection{Scope and Contributions}

As shown in Section \ref{markovchain},
if $G$ and $H$ are graphs with associated random walks $X$ and $Y$, 
the family $\mathcal{J}(G,H)$ of graph joinings is in one-to-one correspondence with 
the family $\mathcal{J}(X,Y)$ of reversible Markovian couplings of $X$ and $Y$.
As a consequence, the results of this paper may be expressed equivalently in 
terms of graphs or Markov chains.  
We have stressed the former, as the
development and results of the paper are expressed most succinctly in terms 
of graphs.  Nevertheless, our results show that the graphical approach 
taken here provides a useful mathematical framework in which to study 
reversible couplings of reversible Markov chains.

While graph joinings and graph factors have, under different names,
received attention in the Markov chain literature, 
the concepts of strong, weak, and $c$-disjointness are new and have not
previously been studied in the context of Markov chains or graphs.
Moreover, factors of Markov chains have primarily been studied 
via lumpability for fixed chains, rather than the more 
algebraic, multi-chain perspective adopted here.
As such, the results of the paper,
expressed in terms of graphs or Markov chains, are new and of potential
interest to researchers interested in either topic, or in the connections 
between them.

The notions of disjointness studied here may have 
implications for graph inference problems that are 
sensitive to structural incompatibility. 
For example, in transfer learning on graph-structured data, where representations learned on a source graph are applied to a target graph that may differ in topology or feature distribution, a mismatch between the source and the target could undermine generalization and give rise to negative transfer \cite{weiss2016survey}. Negative transfer has been linked to structural discrepancies between the source and the target, even when they are semantically similar. Empirical studies show that even small perturbations to the target structure (e.g., minor edge permutations or injected structural noise) can notably degrade transfer performance \cite{wang2024subgraph}. 

\vskip.2in

\subsection{Organization of the Paper}

The definitions and basic properties of weight joinings and graph joinings 
are given in the next section.
Section~\ref{Disjointness} presents the three types of disjointness between graphs 
and provides examples demonstrating their differences.  
Section~\ref{factors} introduces the concept of graph factors, characterizes graph joinings in terms 
of graph factors, and examines connections between graph factors and disjointness. Section~\ref{Characterizing} is devoted to the 
characterizations of weak 
and strong disjointness.
In Section~\ref{Prevalence_of_Disjointness} we investigate a dichotomy in the
persistence of strong and weak disjointness: if the vertices and edges of two 
graphs are fixed, almost all weight assignments yield the same type of 
disjointness.
Section~\ref{characterize_graph_family} is devoted to the characterization of bipartite and 
connected graphs in terms of strong disjointness. 
In Section~\ref{Section:MC} we formulate and discuss the key results of 
the paper in terms of reversible Markovian couplings of reversible Markov chains. 
Sections~\ref{proof_of_disjointness_def}–\ref{proof_for_markovchain} contain 
proofs of the results in Sections~\ref{Disjointness}–\ref{Section:MC}.
The Appendix~\ref{appendix:proofs} contains proofs of several additional results.

\subsection{Related Work}

The graph comparison problem aims to identify and quantify similarities 
between two graphs based on structural or attribute-level information.
Recently, a number of methods based on optimal transport 
(OT) ideas have been developed and applied to graph alignment tasks.
Graph Optimal Transport (GOT) \cite{petric2019got,maretic2022fgot}
derives covariance matrices from the Laplacians of each graph and then uses optimal transport of the resulting Gaussian distributions to align the graphs. 
The Fused Gromov–Wasserstein (FGW) distance combines a Wasserstein component, 
depending on node features, with a Gromov component, based on pairwise 
structure, in an attempt to match node attributes and relational geometry \cite{vayer2020fused}.
Recent work of \cite{o2022optimal} and \cite{yi2025alignment} 
uses Markov optimal transport to compare and align (possibly directed) 
graphs via their random walks.  The Markov embedding approach adopted here
is inspired by this work, though our focus is on graph discordance rather
than graph alignment. 
The notion of graph joining studied here
is introduced in the recent work of \cite{hoang2025optimal}.

Our results have several points of contact with the substantial  
literature on the spectral analysis of reversible Markov chains. 
The spectral gap of Markov chains has been studied in depth, largely because of its central role in deriving quantitative bounds on mixing times and convergence to stationarity 
(see, e.g., \cite{diaconis1996cutoff,boyd2004fastest,boyd2005symmetry}, as well as the monographs \cite{aldous1989lower,montenegro2006mathematical,levin2017markov}).
The great majority of the existing literature focuses on the spectra 
and convergence behavior of a single Markov chain (e.g., \cite{lawler1988bounds,diaconis1991geometric}), in particular 
the comparison of chains with the same transition matrix but 
different initial distributions. 
Couplings of such chains (e.g., \cite{hunter2009coupling}) are most often
used to analyze convergence to stationarity.
Coupling arguments have also been used to bound the distance between 
the distributions of two Markov chains with distinct transition matrices (see, e.g., \cite{diaconis2023markov}).

The Markov chain Monte Carlo (MCMC) literature is concerned with constructing
a Markov chain whose stationary (invariant) distribution is equal to a
prescribed target distribution, typically a posterior distribution arising from Bayesian inference (see, e.g., \cite{tierney1994markov,robert1999monte}). The chains constructed for MCMC are often reversible, as reversibility provides a convenient sufficient condition for invariance of the target distribution \cite{tierney1998note}. Nevertheless, non-reversible chains have also been studied, largely because they can yield improved sampling efficiency \cite{diaconis2000analysis,ottobre2016markov}. 
Coupling techniques are also used in the MCMC context, but with different objectives. For example couplings of multiple copies of the same chain 
are employed to generate exact samples from the stationary distribution,
as in coupling from the past \cite{propp1996exact},
or to bound the distance between marginal distributions in order 
to assess or compare the performance of MCMC algorithms (\cite{biswas2019estimating}). 

The setting and results of this 
paper are different than those in the literature discussed above.  
Expressed in terms of Markov chains, we study
the family of reversible Markovian couplings of reversible Markov
chains.  All chains are assumed to be stationary, and the chains
being coupled will, in general, not be readily comparable in
terms of their state spaces or transition matrices.
Couplings are used not as a means of obtaining convergence
bounds or simulating samples from a desired distribution, but
rather as a means of uncovering structural relationships between the
chains of interest.

The notion of graph factor studied here is closely related to the concept of lumpability 
in the Markov chain literature, which concerns the aggregation of states of a 
single Markov chain in a manner that preserves the Markov property of the induced process \cite{burke1958markovian}. 
Our definition of factor is formally similar to that of lumpability, 
but its purpose and scope differ substantially: 
rather than focusing on the structural reduction of an individual chain, graph factors are used here 
to study the interactions and structural relationships between two or more Markov chains. 
A more detailed discussion of lumpability and its relation to our graph factor notion is provided in Remark~\ref{rmk_lumpability}.
Our notion of graph disjointness is connected to ideas in ergodic
theory, specifically the definition and study of disjointness of dynamical 
systems introduced in \cite{furstenberg1967disjointness}. A detailed discussion of this connection
is provided in Remark~\ref{rmk:disjoint}.
 

\section{Definition and Properties of Graph Joinings}
\label{Basic_Definitions}

In this section we review the notion of weight function and 
and graph joining from \cite{hoang2025optimal}, and then we 
present some examples and some basic properties of graph joinings.

\subsection{Basic Definitions}
\label{Basic}

\begin{defn}[Weight function and degree function]
A \emph{weight function} on a finite set $U$ is a map $\alpha: U \times U \to \mathbb{R}$ satisfying the following conditions: 
\begin{itemize}
\vskip.06in
\item (Nonnegativity) $\alpha(u,u') \geq 0$ for all $u,u' \in U$;
\vskip.06in
\item (Symmetry) $\alpha(u,u') = \alpha(u',u)$ for all $u,u' \in U$;
\vskip.06in
\item (Normalization) $\sum_{u,u' \in U} \alpha(u,u') = 1$.
\end{itemize} 
\vskip.06in
The \emph{degree function} $p : U \to [0,1]$ of a weight function
$\alpha$ is defined for $u \in U$ by
\[
p(u) = \sum_{u'\in U}\alpha(u,u').
\]
\end{defn}

\vskip.05in

The definition above ensures that $p$ 
satisfies $p(u') = \sum_{u \in U} \alpha(u,u')$,
and also $\sum_{u \in U} p(u) = 1$.  In particular, $p$ is necessarily 
a probability mass function on $U$.

\vskip.05in

\begin{defn}[Weighted undirected graphs]
A \emph{weighted, undirected graph} is described by a pair $G = (U,\alpha)$, 
where $U$ is a finite vertex set and $\alpha$ is a weight function on $U$. 
The edge set of $G$ is $E(G) = \{ (u,u')\in U\times U : \alpha(u,u') > 0 \}$.
The \emph{neighborhood} of a vertex $u$ in $G$ is
$N(u) \coloneqq \{u'\in U: \alpha(u,u') > 0\}$.  
\end{defn}

\vskip.06in

Although we refer to $G = (U, \alpha)$ above as being undirected, our analysis is 
simplified by considering edges as ordered pairs, so that $(u,u')$ is a 
directed edge from $u$ to $u'$ with weight $\alpha(u,u')$.  
Self-loops $(u,u)$ are allowed under the definition, but multi-edges are excluded.
Symmetry of $\alpha$ ensures that the weight assigned to a directed edge
$(u,u')$ is the same as the weight assigned to $(u',u)$. 

There is a natural correspondence between an undirected graph 
$G = (U, \alpha)$ specified as
above and the more standard notion of a weighted graph $\tilde{G}$ 
with undirected edges $\{u,u'\}$.  
Under this correspondence the weight of a self loop $\{u,u\}$ is 
equal to $\alpha(u,u)$ and the weight of an edge $\{u,u'\}$ with
$u' \neq u$ is equal to $\alpha(u,u') + \alpha(u',u)$.  
By definition, the graph $G$ is connected if there is a path between 
any pair of distinct vertices in $U$.  
Symmetry of $\alpha$ ensures that connectivity in $G$ is equivalent to 
connectivity in $\tilde{G}$.
We will regard a given undirected, unweighted graph $\tilde{G}$ as a
weighted graph $G = (U, \alpha)$ such that each nonzero value of
$\alpha(u,u')$ is the same, and we will  
refer to such graphs as \emph{uniformly weighted}.

A vertex $u$ in $G = (U, \alpha)$ has zero degree, i.e., $p(u) = 0$, if and only if 
$u$ has no incident edges.
A graph will be called {\em fully supported} if every vertex has strictly positive degree. In particular, every connected graph is fully supported.

\vskip.05in

\begin{defn}[Weight Joining]
\label{Def:weight_joining}
Let $\alpha$ and $\beta$ be weight functions on finite sets $U$ and $V$ 
with degree functions $p$ and $q$, respectively. 
A weight function $\gamma$ on $U \times V$ with degree function 
\[
r (u,v) = \sum_{(u',v') \in U \times V} \gamma((u,v),(u',v'))
\]
is a \emph{weight joining} of $\alpha$ and $\beta$ if it satisfies the following two conditions.
\vskip.1in
\begin{itemize}
\item[(a)] 
Degree coupling: $r$ is a coupling of $p$ and $q$, i.e., 
for all $u \in U$ and $v \in V$, 
\begin{align*}
            \sum\limits_{v \in V} r(u,v) = p(u) \ \ \mbox{ and } \ \ 
            \sum\limits_{u \in U} r(u,v) = q(v).
\end{align*}
\item[(b)] 
Transition coupling: for all $u,u'\in U$ and $v,v'\in V$
\begin{align*}
    &p(u)\sum \limits_{\tilde{v} \in V} \gamma((u,v), (u', \tilde{v})) 
    = {\alpha(u, u')} \, r(u, v), \; \text{ and }\\[.07in]
    &q(v)\sum \limits_{\tilde{u} \in U}\gamma((u, v), (\tilde{u}, v')) 
    = {\beta(v, v')} \, r(u, v).
\end{align*}
\end{itemize}
Let $\mathcal{J}(\alpha,\beta)$ denote the set of all weight joinings of $\alpha$ and $\beta$.
\end{defn}

\vskip.05in

\begin{defn}[Graph Joining]
A graph $K$ is a \emph{graph joining} of $G=(U,\alpha)$ and $H=(V,\beta)$ if it is of the form
$K = (U \times V, \gamma)$ where $\gamma$ is a weight joining
of $\alpha$ and $\beta$.
Let $\mathcal{J}(G,H)$ be the family of all graph joinings of $G$ and $H$.
\end{defn}

\vskip.05in

For any given graphs $G = (U,\alpha)$ and $H = (V,\beta)$ 
it is straightforward to verify that the product weight function, 
defined for all vertices $u,u'\in U$ and $v,v'\in V$ by
\begin{equation}
\label{eqn:prodjoin}
(\alpha \otimes \beta) ((u,v),(u',v')) = \alpha(u,u')\beta(v,v'),
\end{equation}
is a weight joining of $\alpha$ and $\beta$.  Thus
the \emph{tensor product graph}  
$G \otimes H := (U \times V, \alpha \otimes \beta)$ 
is an element of $\mathcal{J}(G,H)$, which is therefore non-empty.
The tensor product plays a critical role in the study of disjointness
in the next section.

Let $\gamma$ be a weight joining of $\alpha$ and $\beta$.  The definition ensures that 
$\alpha$ and $\beta$ can be recovered from $\gamma$ through marginalization, i.e.,
\[
\sum_{v,v' \in V} \gamma((u,v), (u',v')) = \alpha(u,u')
\ \ \mbox{and} \ \ 
\sum_{u,u' \in U} \gamma((u,v), (u',v')) = \beta(v,v').
\]
While they are necessary, it is important to note that these conditions
are not sufficient to guarantee that $\gamma$ is a weight joining. 
In the case where $r(u,v) > 0$, the transition coupling conditions can be 
rewritten in the form
\begin{align*}
    &\frac{1}{r(u,v)} \sum \limits_{\tilde{v} \in V} \gamma((u,v), (u', \tilde{v})) \, = \, \frac{1}{p(u)}\alpha(u,u'),\\
    &\frac{1}{r(u,v)}\sum \limits_{\tilde{u} \in U} \gamma((u,v), (\tilde{u}, v')) \, = \, \frac{1}{q(v)}\beta(v,v').
\end{align*}
The first equation indicates that the fraction of outgoing weight from 
$(u,v)$ to any pair $(u',\tilde{v})$ with $\tilde{v} \in V$ is equal to the fraction 
of outgoing weight from $u$ to $u'$.  The second equation may be interpreted
similarly.
As discussed in \cite{hoang2025optimal} and Section \ref{Section:MC} below,
the definition of graph joining is equivalent to the requirement that 
the random walk on $K \in \mathcal{J}(G,H)$ is a Markovian coupling of the
random walks on $G$ and $H$. 

While it is needed for general undirected graphs, 
the degree coupling condition 
is redundant when both $G$ and $H$ are connected (see \cite{hoang2025optimal}).  A proof of
the following proposition is given in Appendix~\ref{appendix:proofs} for completeness.

\begin{restatable}{prop}{connecteddropcoupling}
    Let $G=(U,\alpha)$ and $H=(V,\beta)$ be graphs with degree functions 
    $p$ and $q$, respectively. 
    If $G$ and $H$ are connected, then any weight function $\gamma$ satisfying
    condition (b) of Definition~\ref{Def:weight_joining} also satisfies
    condition (a).
    \label{connected_drop_coupling}
\end{restatable}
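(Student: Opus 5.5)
Summing the first transition coupling identity over $u' \in U$ and $v \in V$ gives, for each $u$,
\[
p(u) \sum_{v} r(u,v) = p(u) \sum_{v,\tilde v,u'} \gamma((u,v),(u',\tilde v)) = \sum_{u'} \alpha(u,u') \sum_v r(u,v),
\]
which is only the tautology $p(u)\,\bar r(u) = p(u)\,\bar r(u)$, where $\bar r(u) := \sum_v r(u,v)$. So the marginal identity must be extracted differently. The plan is to show that $\bar r$ is a stationary measure for the random walk on $G$, and then invoke uniqueness of stationary distributions for connected graphs.

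\textbf{Step 1: $\bar r$ is stationary.} Fix $u'$. By symmetry of $\gamma$,
\[
\bar r(u') = \sum_{v'} \sum_{(u,v)} \gamma((u,v),(u',v')) = \sum_{u} \sum_{v} \sum_{\tilde v} \gamma((u,v),(u',\tilde v)).
\]
If $p(u)>0$ (true for every $u$, since $G$ is connected and hence fully supported), condition (b) gives
\[
\sum_{\tilde v} \gamma((u,v),(u',\tilde v)) = \frac{\alpha(u,u')}{p(u)}\, r(u,v).
\]
Summing over $v$ then yields
\[
\bar r(u') = \sum_u \bar r(u)\, \frac{\alpha(u,u')}{p(u)} = \sum_u \bar r(u)\, P(u,u'),
\]
where $P$ is the transition matrix of the random walk on $G$. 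Hence $\bar r$ is a nonnegative invariant vector for $P$.

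\textbf{Step 2: identify $\bar r$ with $p$.} Normalization of $\gamma$ gives $\sum_u \bar r(u) = \sum_{u,v} r(u,v) = 1$, so $\bar r$ is a probability vector. Since $G$ is connected, $P$ is irreducible, and its stationary distribution is unique by Perron--Frobenius. That distribution is $p$ itself, because
\[
\sum_u p(u)\, \frac{\alpha(u,u')}{p(u)} = p(u').
\]
Therefore $\bar r = p$. The same argument with the second identity of (b), together with connectivity of $H$, gives $\sum_u r(u,v) = q(v)$, which is condition (a).

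The main point needing care is Step 1: the conditional form of (b) requires $p(u)>0$, which is where connectedness (full support) enters, and the uniqueness in Step 2 is where irreducibility enters. Both steps are standard once organized this way, so I expect no genuine obstacle. If the graph had isolated vertices, uniqueness would fail and (a) could indeed break, which is consistent with the hypothesis.
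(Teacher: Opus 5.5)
Your proof is correct and follows essentially the same route as the paper. In both arguments, symmetry of $\gamma$ and the first transition coupling condition show that $u \mapsto \sum_{v} r(u,v)$ is a stationary probability vector for the random walk on $G$, and uniqueness of the stationary distribution of the irreducible chain then forces it to equal $p$ (and likewise $q$ for $H$).
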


\subsection{Examples of Graph Joinings}

We adopt the following conventions for figures containing graphs.
An undirected edge between 
two nodes $u$ and $u'$ represents the directed edges $(u,u')$ and $(u',u)$;
the number adjacent to the edge indicates the common value of $\alpha(u,u')$ 
and $\alpha(u',u)$. Component graphs are displayed marginally, 
with joinings displayed to the upper right of the component graphs.

\vskip.1in

Figure~\ref{graph_joining_example} (A) and (B) show two different joinings between identical graphs $G$ and $H$. 
Figure~\ref{graph_joining_example} (C) illustrates an example where there is 
only one possible joining of $G$ and $H$, with constant edge weight $1/12$.  
This example is discussed in Section \ref{example_of_disjointness} below.
In examples (A) - (C) the marginal graphs and their joinings are uniformly weighted;
in general, a joining of uniformly weighted graphs need not be uniformly weighted. 
Figure~\ref{graph_joining_example} (D) illustrates a more complicated example, 
which is discussed further in Proposition~\ref{no_factor_not_weak}.
Note that in this example the self-loops of the graph $G$ are reflected in 
the joining as a self-loop and a cycle.

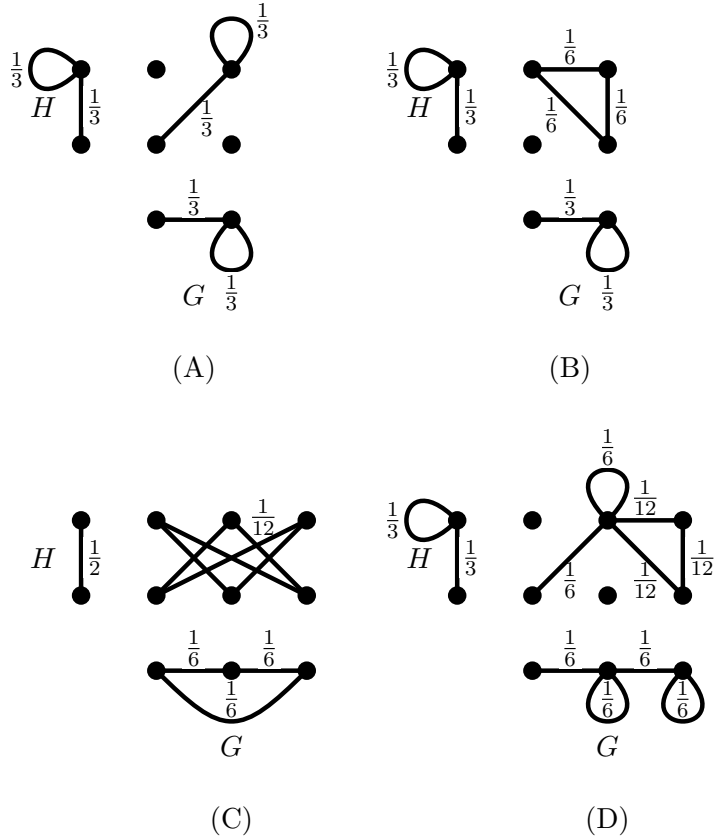
\begin{figure}[h!]
    \centering
\begin{tikzpicture}
    \draw[line width=0.6mm] (0,-1) -- (1,-1) node[midway, above, fill=white, inner sep=1pt] {$\tfrac{1}{3}$}; 
    \draw[line width=0.6mm] (-1,0) -- (-1,1) node[midway, right, fill=white, inner sep=1pt] {$\tfrac{1}{3}$}; 
    \draw[line width=0.6mm] (1,-1) .. controls ++ (0.3*\dist,-0.3*\dist) and ++ (-0.3*\dist,-0.3*\dist) .. (1,-1) node[midway, below, fill=white, inner sep=1pt] {$\tfrac{1}{3}$};
    \draw[line width=0.6mm] (-1,1) .. controls ++ (-0.3*\dist,-0.3*\dist) and ++ (-0.3*\dist,0.3*\dist) .. (-1,1) node[midway, left, fill=white, inner sep=1pt] {$\tfrac{1}{3}$}; 

    \draw[line width=0.6mm] (0,0) -- (1,1) node[pos=0.35, right=4pt, fill=none, inner sep=1pt] {$\tfrac{1}{3}$}; 
    \draw[line width=0.6mm] (1,1) .. controls ++ (-0.3*\dist,0.3*\dist) and ++ (0.3*\dist,0.3*\dist) .. (1,1) node[pos=0.6, right=3pt, fill=none, inner sep=1pt] {$\tfrac{1}{3}$};

    \foreach \x in {0,1} {
        \foreach \y in {-1} {
            \node at (\x,\y) [circle,fill=black,inner sep=0pt,minimum size=7pt] {};
        }
    }
    \foreach \x in {0, 1} {
        \foreach \y in {0, 1} {
            \node at (\x,\y) [circle,fill=black,inner sep=0pt,minimum size=7pt] {};
        }
    }
    \foreach \y in {0, 1} {
        \foreach \x in {-1} {
            \node at (\x,\y) [circle,fill=black,inner sep=0pt,minimum size=7pt] {};
        }
    }
 
    \node at (0.5,-2) {$G$};
    \node at (-1.5,0.5) {$H$};

    \draw[line width=0.6mm] (5,-1) -- (6,-1) node[midway, above, fill=white, inner sep=1pt] {$\tfrac{1}{3}$}; 
    \draw[line width=0.6mm] (4,0) -- (4,1) node[midway, right, fill=white, inner sep=1pt] {$\tfrac{1}{3}$}; 
    \draw[line width=0.6mm] (6,-1) .. controls ++ (0.3*\dist,-0.3*\dist) and ++ (-0.3*\dist,-0.3*\dist) .. (6,-1) node[midway, below, fill=white, inner sep=1pt] {$\tfrac{1}{3}$};
    \draw[line width=0.6mm] (4,1) .. controls ++ (-0.3*\dist,-0.3*\dist) and ++ (-0.3*\dist,0.3*\dist) .. (4,1) node[midway, left, fill=white, inner sep=1pt] {$\tfrac{1}{3}$};

    \draw[line width=0.6mm] (5,1) -- (6,0) node[pos = 0.6, left = 4.7pt, fill=none, inner sep=1pt] {$\tfrac{1}{6}$}; 
    \draw[line width=0.6mm] (6,0) -- (6,1) node[midway, right, fill=white, inner sep=1pt] {$\tfrac{1}{6}$}; 
    \draw[line width=0.6mm] (5,1) -- (6,1) node[midway, above, fill=white, inner sep=1pt] {$\tfrac{1}{6}$}; 

    \foreach \x in {5,6} {
        \foreach \y in {-1} {
            \node at (\x,\y) [circle,fill=black,inner sep=0pt,minimum size=7pt] {};
        }
    }
    \foreach \x in {5, 6} {
        \foreach \y in {0, 1} {
            \node at (\x,\y) [circle,fill=black,inner sep=0pt,minimum size=7pt] {};
        }
    }
    \foreach \y in {0, 1} {
        \foreach \x in {4} {
            \node at (\x,\y) [circle,fill=black,inner sep=0pt,minimum size=7pt] {};
        }
    }
 
    \node at (5.5,-2) {$G$};
    \node at (3.5,0.5) {$H$};    

    \draw[line width=0.6mm] (0,-7) -- (1,-7) node[midway, above, fill=white, inner sep=1pt] {$\tfrac{1}{6}$}; 
    \draw[line width=0.6mm] (2,-7) -- (1,-7) node[midway, above, fill=white, inner sep=1pt] {$\tfrac{1}{6}$}; 
    \draw[line width=0.6mm] (-1,-6) -- (-1,-5) node[midway, right, fill=white, inner sep=1pt] {$\tfrac{1}{2}$}; 
    \draw[line width=0.6mm] (0,-7) .. controls ++ (0.3*\dist,-0.3*\dist) and ++ (-0.3*\dist,-0.3*\dist) .. (2,-7) node[midway, above, fill=none, inner sep=1pt] {$\tfrac{1}{6}$};

    \draw[line width=0.6mm] (0,-5) -- (1,-6); 
    \draw[line width=0.6mm] (1,-6) -- (2,-5); 
    \draw[line width=0.6mm] (2,-5) -- (0,-6) node[midway, anchor = south, xshift = 12pt, yshift = 7pt, fill=none, inner sep=1pt] {$\tfrac{1}{12}$}; 
    \draw[line width=0.6mm] (0,-6) -- (1,-5);
    \draw[line width=0.6mm] (1,-5) -- (2,-6);
    \draw[line width=0.6mm] (2,-6) -- (0,-5);

    \foreach \x in {0,1,2} {
        \foreach \y in {-7} {
            \node at (\x,\y) [circle,fill=black,inner sep=0pt,minimum size=7pt] {};
        }
    }
    \foreach \x in {0, 1, 2} {
        \foreach \y in {-5, -6} {
            \node at (\x,\y) [circle,fill=black,inner sep=0pt,minimum size=7pt] {};
        }
    }
    \foreach \y in {-5, -6} {
        \foreach \x in {-1} {
            \node at (\x,\y) [circle,fill=black,inner sep=0pt,minimum size=7pt] {};
        }
    }

    \draw[line width=0.6mm] (5,-7) -- (6,-7)node[midway, above, fill=white, inner sep=1pt] {$\tfrac{1}{6}$}; 
    \draw[line width=0.6mm] (7,-7) -- (6,-7)node[midway, above, fill=white, inner sep=1pt] {$\tfrac{1}{6}$}; 
    \draw[line width=0.6mm] (4,-6) -- (4,-5)node[midway, right, fill=white, inner sep=1pt] {$\tfrac{1}{3}$}; 
    \draw[line width=0.6mm] (7,-7) .. controls ++ (0.3*\dist,-0.3*\dist) and ++ (-0.3*\dist,-0.3*\dist) .. (7,-7)node[midway, above, fill=none, inner sep=1pt] {$\tfrac{1}{6}$};
    \draw[line width=0.6mm] (6,-7) .. controls ++ (0.3*\dist,-0.3*\dist) and ++ (-0.3*\dist,-0.3*\dist) .. (6,-7)node[midway, above, fill=none, inner sep=1pt] {$\tfrac{1}{6}$};
    \draw[line width=0.6mm] (4,-5) .. controls ++ (-0.3*\dist,-0.3*\dist) and ++ (-0.3*\dist,0.3*\dist) .. (4,-5)node[midway, left, fill=white, inner sep=1pt] {$\tfrac{1}{3}$};

    \draw[line width=0.6mm] (5,-6) -- (6,-5) node[midway, below, fill=none, inner sep=1pt] {$\tfrac{1}{6}$};
    \draw[line width=0.6mm] (6,-5) -- (7,-6) node[midway, below, fill=none, inner sep=1pt] {$\tfrac{1}{12}$};
    \draw[line width=0.6mm] (7,-6) -- (7,-5) node[midway, right, fill=white, inner sep=1pt] {$\tfrac{1}{12}$};
    \draw[line width=0.6mm] (7,-5) -- (6,-5) node[midway, above, fill=white, inner sep=1pt] {$\tfrac{1}{12}$};
    \draw[line width=0.6mm] (6,-5) .. controls ++ (-0.3*\dist,0.3*\dist) and ++ (0.3*\dist,0.3*\dist) .. (6,-5) node[midway, above, fill=white, inner sep=1pt] {$\tfrac{1}{6}$};

    \foreach \x in {5,6,7} {
        \foreach \y in {-7} {
            \node at (\x,\y) [circle,fill=black,inner sep=0pt,minimum size=7pt] {};
        }
    }
    \foreach \x in {5, 6, 7} {
        \foreach \y in {-6, -5} {
            \node at (\x,\y) [circle,fill=black,inner sep=0pt,minimum size=7pt] {};
        }
    }
    \foreach \y in {-6, -5} {
        \foreach \x in {4} {
            \node at (\x,\y) [circle,fill=black,inner sep=0pt,minimum size=7pt] {};
        }
    }
 

    \node at (1,-8) {$G$};
    \node at (-1.5,-5.5) {$H$};    
    \node at (6,-8) {$G$};
    \node at (3.5,-5.5) {$H$};    
    \node at (0.5,-3) {\text{(A)}};
    \node at (5.5,-3) {\text{(B)}};
    \node at (1,-9) {\text{(C)}};
    \node at (6,-9) {\text{(D)}};
\end{tikzpicture}

\caption{ 
(A) and (B) Two distinct graph joinings of the same pair of uniformly weighted graphs (with self-loops); (C) The only graph joining between this pair of graphs is the product joining; (D) An example of a weighted joining between two graphs with self-loops.}
    \label{graph_joining_example}
\end{figure}

\vskip.1in

\subsection{Isomorphic Graphs} 
\label{isomorphic_graphs}

Recall that two graphs $G = (U,\alpha)$ and $H = (V,\beta)$ are isomorphic,
written $G \cong H$, if there is a bijection $f: U \to V$ that preserves 
edge weights in the sense that $\alpha(u,u') = \beta(f(u),f(u'))$
for all $u, u' \in U$.  In particular, $p(u) = q(f(u))$ for all $u \in U$.
In this case there is a \emph{bijective} graph joining 
$K \in \mathcal{J}(G, H)$ with weight function $\gamma$ satisfying
    \begin{align*}
    \gamma((u,v),(u',v')) \, = \, 
    \begin{cases}
        \, \alpha(u,u') &\text{ if } v = f(u) \text{ and } v' = f(u'),\\
        \, 0 &\text{otherwise},
    \end{cases}
\end{align*}
and corresponding degree function $r(u,v) = p(u) \mathbbm{1}(f(u)=v)$.
Indeed, to check that $\gamma$ is a graph joining, we note that the 
first transition coupling condition follows from
\begin{align*}
\sum\limits_{v'\in V} \gamma((u, v), (u', v')) 
\, = \, 
\mathbbm{1}(f(u) = v) \, \alpha(u,u') 
\, = \, \frac{\alpha(u,u')}{p(u)} \, r(u,v),
\end{align*}
and the second follows in a similar manner, making use of the identities 
$\alpha(u,u') = \beta(f(u),f(u'))$ and $p(u) = q(f(u))$.  Verification of
the degree condition is similarly straightforward.

\vskip.2in

\subsection{Vertex and Edge Preservation}

When $\mathcal{X}$ is a finite set, we denote the support of a function $f: \mathcal{X} \to [0,\infty)$ by $\supp(f) = \{x \in \mathcal{X}\mid f(x)>0\}$.

\begin{restatable}{lemma}
{vertexedgepreservation}
\label{vertex_edge_preservation}
Let $G=(U,\alpha)$ and $H=(V,\beta)$ be graphs with degree functions $p$ and $q$. 
Then any graph joining $K = (U \times V, \gamma) \in \mathcal{J}(G,H)$ 
with degree function $r$ satisfies the following properties:
\begin{align*}
    & \supp(r) \, \subseteq \, \supp(p \otimes q) 
      \, = \, \{ (u,v) \in U \times V \mid p(u)>0, q(v)>0 \}, \; \text{ and } \\
    & \supp(\gamma) \, \subseteq \, \supp(\alpha\otimes \beta)
    \, = \, \{((u,v),(u',v'))\in (U\times V)^2\mid (u,u')\in E(G),(v,v')\in E(H)\}.
\end{align*}
We refer to the first property as \emph{vertex preservation}, and the second as \emph{edge preservation}.
\end{restatable}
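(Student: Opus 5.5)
Both inclusions follow directly from Definition~\ref{Def:weight_joining}; the plan is to prove vertex preservation first and then derive edge preservation from it together with the transition coupling conditions. The two equalities describing $\supp(p\otimes q)$ and $\supp(\alpha\otimes\beta)$ are immediate. A product of nonnegative numbers is positive exactly when both factors are, and $(u,u')\in E(G)$ means precisely $\alpha(u,u')>0$.

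\emph{Vertex preservation.} Suppose $r(u,v)>0$. The degree coupling condition (a) gives $p(u)=\sum_{\tilde v} r(u,\tilde v)\ge r(u,v)>0$, because all terms are nonnegative. In the same way $q(v)=\sum_{\tilde u} r(\tilde u,v)\ge r(u,v)>0$. Hence $(u,v)\in\supp(p\otimes q)$.

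\emph{Edge preservation.} Suppose $\gamma((u,v),(u',v'))>0$. Then $r(u,v)\ge \gamma((u,v),(u',v'))>0$, since $r(u,v)$ is a sum of nonnegative weights containing this term. By the first step, $p(u)>0$ and $q(v)>0$. Now use the first transition coupling identity in (b):
\[
\alpha(u,u')\,r(u,v)=p(u)\sum_{\tilde v\in V}\gamma((u,v),(u',\tilde v))\;\ge\; p(u)\,\gamma((u,v),(u',v'))>0 .
\]
Since $r(u,v)>0$, this forces $\alpha(u,u')>0$. The second transition coupling identity gives $\beta(v,v')>0$ by the same argument with the roles of the coordinates exchanged. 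Therefore $((u,v),(u',v'))\in\supp(\alpha\otimes\beta)$.

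There is no real obstacle here. The one point needing care is the order of the argument. The transition identities only yield information once the multiplying degree $p(u)$ (or $q(v)$) is known to be positive. That is why vertex preservation must be established first, via condition (a), before it is used in the edge step.
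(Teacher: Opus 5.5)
Your proof is correct and matches the paper's argument: vertex preservation comes from the degree coupling condition (a), and edge preservation comes from the transition coupling identities in (b), with vertex preservation supplying $p(u)>0$ and $q(v)>0$. The paper writes the edge step contrapositively, treating the cases $p(u)=0$ and $p(u)>0,\ \alpha(u,u')=0$ separately, whereas you argue directly from $\gamma((u,v),(u',v'))>0$; the content is the same.
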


Vertex preservation says that if vertex $(u,v)$ has positive degree 
in $K$, then $u$ and $v$ must have positive degree in $G$ and $H$, respectively. 
Edge preservation says that if $((u,v),(u',v'))$ is an edge in $K$, 
then $(u,u')$ and $(v,v')$ must be edges in $G$ and $H$, respectively.
In other words, a joining of two graphs cannot create a joint vertex
or a joint edge unless the constituent vertices or edges are present
in the original graphs. Note, however, that $(u,v)$ may have zero degree in
$K$ even when $u$ and $v$ have positive degree in $G$ and $H$,
and similarly for edges (see Figure \ref{graph_joining_example}). The proof 
of the lemma is provided in Appendix~\ref{proof_vertex_edge_preservation}.

\vskip.2in

\subsection{Optimal Graph Joinings}
\label{subsec:OGJ}

In previous work \cite{hoang2025optimal}, we consider a constrained optimal transport problem based 
on graph joinings and study its connection with the graph isomorphism problem.
Let $G=(U,\alpha)$ and $H=(V,\beta)$ be graphs, and let 
$c: U \times V \to [0,\infty)$ be
a cost function relating their vertex sets.
The Optimal Graph Joining (OGJ) problem finds the minimum 
degree-weighted average cost,
\begin{align}
\label{OGJ_distance}
\rho(G,H) 
\, = \, 
\min_{K \in \mathcal{J}(G,H)} 
\sum\limits_{(u,v) \in U \times V} 
c(u,v) \, r_{\scaleto{K}{4pt}}(u,v) ,
\end{align}
where $r_{\scaleto{K}{4pt}}$ is the weight function of $K$. 
As $\mathcal{J}(G,H)$ is non-empty, 
a standard compactness argument shows that the OGJ problem has at least one solution.
In \cite{hoang2025optimal} we consider families $\mathcal{G}$ of 
graphs $G = (U,\alpha)$ with vertex labels, 
and a binary cost function $c(u,v)$ that is zero 
if the labels of $u, v$ agree and one otherwise.  
We identify conditions on the family $\mathcal{G}$ and the 
labeling scheme under which $\rho(G,H) = 0$ implies that (i) $G$
and $H$ are isomorphic, and (ii) the extreme points of 
the set of optimal graph joinings coincide with the bijective graph joinings 
arising from the isomorphisms of $G$ and $H$. 

Loosely speaking, disjointness can be viewed as the \textit{opposite} of 
isomorphism, and as such the results of this paper are complementary 
to those in \cite{hoang2025optimal}.  Indeed, it follows from
Theorem~\ref{weak_no_share_eigen_general} below that for many pairs $G$, $H$ of
{\em non-isomorphic} graphs, the minimum in equation \eqref{OGJ_distance} is
achieved by the tensor product $G \otimes H$.  In this case, 
$\rho(G,H)$ does not capture detailed information about interactions between
$G$ and $H$.

\vskip.25in

\section{Disjointness}
\label{Disjointness}

The family $\mathcal{J}(G,H)$ of joinings between graphs $G=(U,\alpha)$ 
and $H=(V,\beta)$ provides a means to quantify and study similarities and 
compatibilities between the graphs.  A graph joining can be viewed as a
constrained form of graph coupling.  In this sense
the family $\mathcal{J}(G,H)$ is analogous to the family of couplings
of two measures that forms the feasible set of the optimal transport problem.
Unlike \cite{hoang2025optimal} and the usual study of optimal transport, our
interest here is in the structure of the family $\mathcal{J}(G,H)$, without
regard to any particular cost function.

As noted above, there is always one joining of two graphs $G$ and $H$, 
namely the tensor product $G \otimes H = (U \times V, \alpha \otimes \beta)$, 
under which $G$ and $H$ do not interact.   
In process terms, the random walk on $G \otimes H$ is the independent
coupling of the individual random walks on $G$ and $H$.
Other joinings, if they exist, capture non-trivial interactions between $G$ and $H$.
For example, if $G$ and $H$ are isomorphic
then there is a bijective joining in $\mathcal{J}(G,H)$ whose degree function
is concentrated on the graph of an isomorphism between $G$ and $H$.
Of interest here is the definition and analysis of structural
discordance between graphs.  Beginning with the case in which the tensor 
product $G \otimes H$ is the \textit{only} joining of $G$ and $H$, we
explore several forms of discordance under the heading of disjointness.

\subsection{Definitions of graph disjointness}

\newcommand\shortperp{\scalebox{1}[.6]{$\perp$}}

\begin{defn}[Graph disjointness]
Let $G=(U,\alpha)$ and $H=(V,\beta)$ be weighted undirected graphs with degree
functions $p$ and $q$ respectively.  
Let $r_{\scaleto{K}{4pt}}$ denote the degree function of a graph 
joining $K \in \mathcal{J}(G,H)$, and
let $(p \otimes q)(u,v) = p(u) q(v)$ denote
the usual tensor product of $p$ and $q$.
\begin{enumerate}

\vskip.1in

\item
$G$ and $H$ are {\em strongly disjoint} if 
$\mathcal{J}(G,H) = \{G \otimes H \}$.

\vskip.1in

\item 
$G$ and $H$ are {\em weakly disjoint} if every $K \in \mathcal{J}(G,H)$ 
has degree function $r_{\scaleto{K}{4pt}} = p \otimes q$. 

\vskip.1in

\item 
$G$ and $H$ are {\em $c$-disjoint} with respect to a cost function
$c: U \times V \rightarrow [0,\infty)$ if 
\[
\min_{K \in \mathcal{J}(G,H)} 
\sum\limits_{(u,v) \in U \times V} 
c(u,v) \, r_{\scaleto{K}{4pt}}(u,v) 
\, = \, 
\sum\limits_{(u,v)\in U  \times V} c(u,v) \, p(u) \, q(v).
\]

\end{enumerate}
\end{defn}

Note that $G$ and $H$ are $c$-disjoint if 
the product joining $G \otimes H$ is optimal for the
OGJ problem with cost $c$.
It is easy to see that strong disjointness implies weak disjointness, 
and that weak disjointness implies $c$-disjointness for every cost function $c$. 
In general these implications are strict: see Proposition~\ref{circle_property} and Example~\ref{example_weak_c} below.
The following proposition is proved in Section \ref{proof_of_weak_and_c}.

\begin{restatable}{prop}{ctoweak}
\label{c_to_weak}
Graphs $G$ and $H$ are weakly disjoint if and only if they are $c$-disjoint for every cost function $c$.
\end{restatable}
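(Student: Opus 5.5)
The forward direction is immediate from the definitions. If $G$ and $H$ are weakly disjoint, then every $K \in \mathcal{J}(G,H)$ has $r_K = p \otimes q$. Hence, for any cost function $c$, the objective $\sum_{(u,v)} c(u,v)\, r_K(u,v)$ takes the same value $\sum_{(u,v)} c(u,v)\, p(u) q(v)$ on every joining, and so the minimum equals this value. This is exactly $c$-disjointness.

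For the converse we argue by contraposition. Suppose $G$ and $H$ are not weakly disjoint. Then there is a joining $K \in \mathcal{J}(G,H)$ whose degree function $r_K$ differs from $p \otimes q$. Both $r_K$ and $p \otimes q$ are probability mass functions on $U \times V$: the former by the degree coupling condition, or simply because $\gamma$ is normalized. Two distinct probability vectors with the same total mass must disagree in both directions, so there is a pair $(u_0,v_0)$ with
\[
r_K(u_0,v_0) < p(u_0)\, q(v_0).
\]
Take the cost $c = \mathbbm{1}_{\{(u_0,v_0)\}}$, which is nonnegative as required. Then the cost of $K$ is $r_K(u_0,v_0)$, which is strictly less than $p(u_0) q(v_0)$, the cost of $G \otimes H$. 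Since the minimum over $\mathcal{J}(G,H)$ is at most the cost of $K$, the product joining is not optimal, and $G$ and $H$ are not $c$-disjoint for this $c$.

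The only delicate point is that costs must be nonnegative. This is why we choose a coordinate where $r_K$ falls below $p \otimes q$ rather than one where it exceeds it. Existence of such a coordinate follows from the equal total masses, $\sum r_K = 1 = \sum p \otimes q$. The minimum in the definition is attained by the compactness remark in Section~\ref{subsec:OGJ}, but the argument only needs the inequality $\min \le \text{cost}(K)$.
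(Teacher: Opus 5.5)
Your proof is correct, but it reaches the converse by a different route than the paper. You argue by contraposition: a joining $K$ with $r_K \neq p\otimes q$ must fall below $p\otimes q$ at some point, because both are probability vectors on $U\times V$. The indicator cost at that point then makes $K$ strictly cheaper than $G\otimes H$.

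The paper instead fixes an arbitrary $c$ and an arbitrary joining $\tilde\gamma$. It uses edge preservation, $\supp(\tilde\gamma)\subseteq\supp(\alpha\otimes\beta)$, to see that $(1+t)(\alpha\otimes\beta)-t\tilde\gamma$ is still a weight joining for small $t>0$. Applying $c$-optimality of the product to this perturbed joining gives the reverse inequality. Hence every joining has exactly the same $c$-cost as $G\otimes H$, and indicator costs then give $\tilde r = p\otimes q$.

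Your argument is shorter. It uses nothing about $\mathcal{J}(G,H)$ beyond the fact that degree functions are probability vectors, and the sign issue for nonnegative costs is handled by the equal-mass observation.

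The paper's argument, in exchange, proves a stronger intermediate fact valid for each fixed $c$. Because the product joining lies in the relative interior of the joining polytope, $c$-disjointness forces the $c$-cost to be constant over all of $\mathcal{J}(G,H)$. The paper appeals to this fact later in the polynomial-time test for $c$-disjointness (Lemma~\ref{polynomial_time}), and your route does not supply it.
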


\begin{rmk}
\label{rmk:disjoint}
The term graph joining and the definitions of disjointness above are inspired 
by ideas first introduced in the context of ergodic theory and dynamical systems
by \cite{furstenberg1967disjointness}.  We discuss this briefly
in the language of probability.  Let $X$ and $Y$ be two general 
stationary processes (not necessarily Markov).
Furstenberg defined a joining of $X$ and $Y$ to be any {\em stationary} 
coupling $Z$ of $X$ and $Y$, and he defined $X$ and $Y$ to be disjoint if 
their only joining is the independent coupling (which is always stationary).
Optimal joinings with respect to a predefined cost function 
are the basis of the $\bar{d}$-distance and $\bar{\rho}$-distance 
between processes (see \cite{ornstein1970bernoulli,gray1975generalization}). 
In so far as stochastic processes are concerned, the present work is
restricted to finite state reversible Markov chains.  
As a graph joining corresponds to a stationary Markovian coupling of random
walks, it is a joining in Furstenberg's sense.  
However, the converse does not hold, as a stationary coupling of 
reversible Markov chains need not be reversible or Markovian.
Our notion of strong disjointness is most closely aligned
with Furstenberg's notion of disjointness, as both assert that the 
only possible joining of two given objects is the trivial one.  
Weak- and c-disjointness are natural notions to consider in the context of graphs (or Markov chains), but we are not aware of analogous
definitions in the ergodic theory literature.
\end{rmk}

Each type of disjointness 
introduced above can be characterized by a rank equation involving a suitably 
defined constraint matrix.  As such, disjointness can be verified in polynomial time, which we state formally in the following lemma. The proof appears in Section \ref{proof_of_polynomial_time}.

\begin{restatable}{lemma}{polynomialtime}
\label{polynomial_time}
Given two graphs $G = (U,\alpha)$ and $H = (V,\beta)$ and a cost function $c : U \times V \to [0,\infty)$, one can determine whether 
they are strongly disjoint, weakly disjoint, or $c$-disjoint in time polynomial 
in $|U|$ and $|V|$.
\end{restatable}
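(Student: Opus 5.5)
The plan is to exploit the fact that $\mathcal{J}(G,H)$ is a polytope cut out by a system of linear equalities and inequalities in the $N = (|U||V|)^2$ unknowns $\gamma((u,v),(u',v'))$. The constraints are nonnegativity, symmetry, normalization, the degree coupling conditions (a), and the transition coupling conditions (b). Each of these is linear in $\gamma$, since $r$ is itself linear in $\gamma$, and $p,q,\alpha,\beta$ are fixed data. The number of constraints and unknowns is polynomial in $|U|,|V|$. We write the equality constraints as $A\gamma = b$, $\gamma \ge 0$; the entries of $A$ and $b$ are computed directly from $\alpha,\beta$ in polynomial time. Every disjointness question then reduces to a polynomial number of linear programs (or rank computations) over this polytope, and linear programming is solvable in polynomial time, for instance by the ellipsoid or interior point methods.

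\emph{$c$-disjointness.} The objective $\sum c(u,v) r_K(u,v)$ is linear in $\gamma$, so we solve a single LP to obtain the minimum. We then compare it with $\sum c(u,v)p(u)q(v)$. Since $G\otimes H$ is feasible, the minimum is always at most this value, and $G,H$ are $c$-disjoint exactly when the two values are equal.

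\emph{Weak disjointness.} $G$ and $H$ are weakly disjoint iff for every $(u,v)$ the linear functional $r(u,v)$ is constant, equal to $p(u)q(v)$, on the polytope. For each $(u,v)$ we solve two LPs, maximizing and minimizing $r(u,v)$, and check that both optima equal $p(u)q(v)$. This is $2|U||V|$ LPs in total.

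\emph{Strong disjointness.} $G$ and $H$ are strongly disjoint iff the polytope is the single point $\alpha\otimes\beta$. For each coordinate we maximize and minimize $\gamma((u,v),(u',v'))$ and check that both optima equal $\alpha(u,u')\beta(v,v')$, which requires $2N$ LPs. The rank formulation mentioned before the lemma gives an alternative route. By Lemma~\ref{vertex_edge_preservation}, coordinates outside $\supp(\alpha\otimes\beta)$ vanish. The point $\alpha\otimes\beta$ has all remaining coordinates strictly positive, so it lies in the relative interior of the face $\{\gamma\ge 0,\ \gamma=0 \text{ off } \supp(\alpha\otimes\beta)\}$. The polytope is therefore a singleton iff the restricted equality matrix has full column rank on $\supp(\alpha\otimes\beta)$, and this is decided by Gaussian elimination in polynomial time. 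A similar relative-interior argument turns weak disjointness into the rank condition that each functional $r(u,v)$ lie in the row space of the restricted $A$.

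The main obstacle is making sure nothing is hidden in the complexity model. Exact polynomial time for LP or rank computation requires the input numbers to be rational, with bit-length counted as part of the input. I would state this assumption explicitly and note that all constraint coefficients are products of input weights, so their bit sizes stay polynomial. If real weights are allowed, the rank-based route, which is polynomial in arithmetic operations, is the cleaner one to present.
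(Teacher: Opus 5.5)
Your proposal is correct, but your primary route differs from the paper's. The paper decides all three properties by rank tests on the joining constraint matrix $J$, restricted to $\supp(\alpha\otimes\beta)$:
\begin{itemize}
\item strong disjointness holds iff $\dim\mathrm{Null}(J)=1$ (Proposition~\ref{rank_equiv_strong});
\item weak disjointness holds iff appending the rows $J_w$, which encode $r(u,v)-p(u)q(v)\,\mathbbm{1}^T\vec{\gamma}$, does not increase the rank;
\item $c$-disjointness holds iff appending the single analogous cost row $J_c$ does not increase the rank.
\end{itemize}
All three tests rest on the fact you isolate in your alternative route. Namely, $\alpha\otimes\beta$ is strictly positive on $\supp(\alpha\otimes\beta)$, so it lies in the relative interior of the joining polytope.

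Your LP route instead treats $\mathcal{J}(\alpha,\beta)$ as a black-box polytope and optimizes linear functionals over it. This needs no structural insight. However, it uses many LPs per question and is polynomial only in the bit model with rational data, since no strongly polynomial LP algorithm is known. The rank route needs a single elimination per question and is polynomial in arithmetic operations, which matches the paper's implicit complexity model. Your rank sketches for strong and weak disjointness coincide with the paper's tests. Your remark on the complexity model is a fair refinement that the paper glosses over.

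For $c$-disjointness you give only the LP version. To get the arithmetic-model version there as well, add the step you left out. Because $\alpha\otimes\beta$ is a relative-interior point, it minimizes $\sum c\,r_K$ iff this functional is constant on the polytope; this follows from the perturbation $(1+t)(\alpha\otimes\beta)-t\gamma$ used in the proof of Proposition~\ref{c_to_weak}. Equivalently, the cost row lies in the row space of the restricted constraint matrix, which is exactly the paper's $J_c$ test.
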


\vskip.1in

\subsection{Examples of Disjointness}
\label{example_of_disjointness}


Here and in what follows we make use of standard graph theory notation for 
uniformly weighted cycles and paths (see Section 1.3 in \cite{diestel2025graph}).

\begin{defn}[Cycles, Paths, Bipartite Graphs]
\label{def:circlepath}
\leavevmode 
\begin{enumerate}
\vskip.05in

\item 
For $k \geq 3$, $C_k$ denotes the cycle with $k$ vertices and uniform edge weights $1 /(2k)$.

\vskip.05in

\item
For $k \geq 2$, $P_k$ denotes the path with $k$ vertices and uniform edge weights $1/(2(k-1))$.

\vskip.05in

\item
For $k, l \geq 1$ let $K_{k,l}$ be the (uniformly weighted) complete bipartite graph having vertex groups of
size $k$ and $l$, with all possible edges between groups.

\end{enumerate}
\end{defn}

\vskip.1in

The following propositions demonstrate that the three notions of disjointness 
are non-trivial and distinct from each other.
The proofs are given in Section \ref{proof_of_disjointness_def}.

\begin{restatable}{prop}{linecircle}
\label{line_circle}
Fix integers $m\geq 3$ and $n\geq 2$. The cycle $C_m$ and the path $P_n$ are strongly disjoint if and only if 
$m$ is odd and $\gcd(m,n-1)=1$. In particular, there exists a pair of strongly disjoint graphs.
\end{restatable}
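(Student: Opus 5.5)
The plan is to prove the two directions separately: construct explicit non-product joinings when the arithmetic condition fails, and when it holds show first that every joining has product degree function and then that the edge weights are forced. Label the vertices of $C_m$ by $\mathbb{Z}/m$ and those of $P_n$ by $\{0,\dots,n-1\}$. Write $\alpha,\beta$ for their weights and $p,q$ for their degree functions. Since both graphs are connected, Proposition~\ref{connected_drop_coupling} lets us check only the transition coupling condition (b) for any candidate $\gamma$.

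\textbf{Necessity.} Suppose the condition fails. If $m$ is even, both graphs are bipartite. Set $\gamma=2\,(\alpha\otimes\beta)\,\mathbbm{1}\{i\equiv j \bmod 2\}$ on edges $((i,j),(i',j'))$; matching parity is preserved along edges, so this restriction is consistent. Every neighbour of $j$ has the opposite parity, so the sum in (b) over the partner coordinate recovers the full $\alpha$ or $\beta$ row. Since $r=2p\otimes q\cdot\mathbbm{1}\{\text{parity match}\}\neq p\otimes q$, this joining is not $G\otimes H$.

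If $m$ is odd and $d=\gcd(m,n-1)>1$, then $d\ge 3$ is odd. Let $S=\{(i,j): i\equiv j \text{ or } i\equiv -j \bmod d\}$, and let $\gamma$ be proportional to $\alpha\otimes\beta$ restricted to edges that keep the sign relation. In the class $i\equiv j$, cycle step $+1$ pairs with path step $+1$, and $-1$ with $-1$; in the class $i\equiv -j$ the pairing is reversed. Because $d$ is odd, the two classes meet only where $j\equiv 0 \bmod d$, and this includes both endpoints $0$ and $n-1$, since $d\mid n-1$. At an endpoint the forced path step pairs with cycle step $+1$ in one class and $-1$ in the other, each with weight $1/2$. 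I would then verify symmetry and condition (b) vertex by vertex. The support is a proper subset of $U\times V$, so the joining is not the product.

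\textbf{Sufficiency, step 1 (weak disjointness).} Let $K\in\mathcal J(C_m,P_n)$ and let $(X_t,Y_t)$ be its stationary walk. Take eigenfunctions $\varphi$ of the $C_m$ transition matrix with eigenvalue $\lambda$ and $\psi$ of the $P_n$ transition matrix with eigenvalue $\mu$, and set $c=\E[\varphi(X_0)\psi(Y_0)]$. Condition (b) gives $\E[\varphi(X_1)\mid X_0,Y_0]=\lambda\varphi(X_0)$, so $\E[\varphi(X_1)\psi(Y_0)]=\lambda c$. Symmetry of $\gamma$ (reversibility) gives $\E[\varphi(X_1)\psi(Y_0)]=\E[\varphi(X_0)\psi(Y_1)]=\mu c$. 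Hence $\lambda\ne\mu$ forces $c=0$.

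The spectra are $\{\cos(2\pi j/m)\}$ for $C_m$ and $\{\cos(\pi k/(n-1))\}$ for $P_n$. A common eigenvalue other than $1$ requires $j/m=\pm k/(2(n-1))$ modulo $1$ with $0<j<m$. When $m$ is odd and $\gcd(m,n-1)=1$, this is impossible, because $\gcd(m,2(n-1))=1$. So $c=0$ for all non-constant eigenfunction pairs. Since the products $\varphi\otimes\psi$ span all functions on $U\times V$, this gives $r=p\otimes q$.

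\textbf{Sufficiency, step 2 (edges), the main obstacle.} Set $\delta=\gamma-\alpha\otimes\beta$, supported on edges $((i,j),(i\pm1,j\pm1))$ by Lemma~\ref{vertex_edge_preservation}. With $r=p\otimes q$, condition (b) says that at each interior $(i,j)$ the $2\times2$ array of $\delta$-values indexed by (cycle step, path step) has zero row and column sums. Hence $\delta(+,+)=\delta(-,-)=a(i,j)$ and $\delta(+,-)=\delta(-,+)=-a(i,j)$. Symmetry of $\gamma$ then gives $a(i,j)=a(i+1,j+1)$ and $a(i,j)=a(i+1,j-1)$.

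At the endpoints $j=0$ and $j=n-1$ the path step is forced, so the cycle-row condition gives $\delta=0$ there. Thus $a$ vanishes on boundary vertices, and it is constant along the moves $(i,j)\to(i+1,j\pm1)$. Because $m$ is odd, these moves connect all of $\mathbb{Z}/m\times\{0,\dots,n-1\}$ to the boundary, so $a\equiv 0$ and $\gamma=\alpha\otimes\beta$.

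The delicate points are the bookkeeping of $\delta$ at the endpoints and the connectivity argument. Boundary values must propagate consistently through the symmetry relations; I would check this carefully, including the case $n=2$, where every vertex is a boundary vertex.
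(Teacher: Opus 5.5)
Your proof is correct. It follows the same logical skeleton as the paper's proof but is self-contained. The paper reads off the spectra $\{\cos(2\pi k/m)\}$ and $\{\cos(\pi k/(n-1))\}$, notes that they overlap only at $1$ exactly when $\gcd(m,2(n-1))=1$, and then cites Theorem~\ref{weak_no_share_eigen} and Theorem~\ref{Characterization_connected_no_self} ($P_n$ is a tree and $C_m$ is not). You instead prove the needed pieces directly for this pair of graphs:
\begin{itemize}
\item Your Step~1 is the ``no common eigenvalue $\Rightarrow$ weakly disjoint'' half of Theorem~\ref{weak_no_share_eigen}. Your identity $\lambda c=\mu c$, with $c=\varphi^T\Gamma\psi$, is the bilinear form of the paper's equation $P^T\Gamma=\Gamma Q$. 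You derive it from reversibility rather than by summing the transition coupling conditions.
\item Your Step~2 is the leaf-peeling argument of Proposition~\ref{one_tree_one_not_tree}, specialized to the path.
\item For necessity, the paper goes through Theorem~\ref{weak_no_share_eigen}: the perturbation $\gamma_t$ when $\lambda\neq\pm1$, and the common factor $P_2$ when $\lambda=-1$. You instead write down explicit joinings. These are the joinings obtained by coupling over the common factors $P_2$ and the folded path with a terminal self-loop from Proposition~\ref{remark_on_circle}. I checked that symmetry and condition (b) hold at single-class, double-class, and endpoint vertices.
\end{itemize}
The paper's route is much shorter given the general theorems. Yours needs no external results, produces explicit witnesses, and shows directly that here strong disjointness can only fail through failure of weak disjointness.

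Two small remarks. In Step~1, to conclude $r=p\otimes q$ you should state that eigenfunctions with eigenvalue $\neq 1$ have mean zero under $p$ (resp.\ $q$). Then $p\otimes q$ vanishes on exactly the same products $\varphi\otimes\psi$ as $r$ does. In Step~2, oddness of $m$ is not needed. From any $(i,j)$ the moves $(i,j)\to(i+1,j-1)\to\cdots\to(i+j,0)$ reach the boundary. The endpoint bookkeeping you were worried about is consistent: symmetry across the edge $((i,1),(i+1,0))$ gives $-a(i,1)=\delta((i+1,0),(i,1))=0$. This matches Proposition~\ref{one_tree_one_not_tree}: once one graph is a tree, a product degree function alone forces the product joining.
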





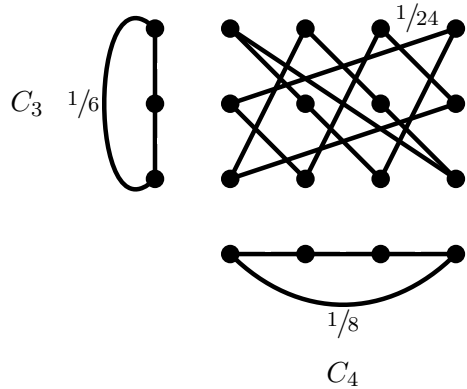
\begin{figure}[ht]
    \centering
    \begin{tikzpicture}
    \draw[line width=0.6mm] (0,0) -- (1,0) node[midway, above, fill=white, inner sep=1pt]{}; 
    \draw[line width=0.6mm] (1,0) -- (2,0) node[midway, above, fill=white, inner sep=1pt]{}; 
    \draw[line width=0.6mm] (2,0) -- (3,0) node[midway, above, fill=white, inner sep=1pt]{}; 
    \draw[line width=0.6mm] (-1,1) -- (-1,2) node[midway, right, fill=white, inner sep=1pt]{};
    \draw[line width=0.6mm] (-1,3) -- (-1,2) node[midway, right, fill=white, inner sep=1pt]{};
    \draw[line width=0.6mm] (0,0) .. controls ++ (0.3*\dist,-0.3*\dist) and ++ (-0.3*\dist,-0.3*\dist) .. (3,0) node[midway, below, fill=white, inner sep=1pt] {$\sfrac{1}{8}$};
    \draw[line width=0.6mm] (-1,1) .. controls ++ (-0.3*\dist,-0.3*\dist) and ++ (-0.3*\dist,0.3*\dist) .. (-1,3) node[midway, left, fill=white, inner sep=1pt]{$\sfrac{1}{6}$};



    \draw[line width=0.6mm] (0,2) -- (3,3) node[midway, anchor = south, xshift = 28pt, yshift = 11pt, fill=none, inner sep=1pt] {$\sfrac{1}{24}$}; 
    \draw[line width=0.6mm] (3,3) -- (2,1);
    \draw[line width=0.6mm] (2,1) -- (1,2);
    \draw[line width=0.6mm] (1,2) -- (0,3);
    \draw[line width=0.6mm] (0,3) -- (3,1);
    \draw[line width=0.6mm] (3,1) -- (2,2);
    \draw[line width=0.6mm] (2,2) -- (1,3);
    \draw[line width=0.6mm] (1,3) -- (0,1);
    \draw[line width=0.6mm] (0,1) -- (3,2);
    \draw[line width=0.6mm] (3,2) -- (2,3);
    \draw[line width=0.6mm] (2,3) -- (1,1);
    \draw[line width=0.6mm] (1,1) -- (0,2);

    \foreach \x in {0,1,2,3} {
        \foreach \y in {0} {
            \node at (\x,\y) [circle,fill=black,inner sep=0pt,minimum size=7pt] {};
        }
    }
    \foreach \x in {0, 1, 2,3} {
        \foreach \y in {1, 2,3} {
            \node at (\x,\y) [circle,fill=black,inner sep=0pt,minimum size=7pt] {};
        }
    }
    \foreach \y in {1, 2,3} {
        \foreach \x in {-1} {
            \node at (\x,\y) [circle,fill=black,inner sep=0pt,minimum size=7pt] {};
        }
    }
 
    \node at (1.5,-1.6) {$C_4$};
    \node at (-2.7,2) {$C_3$};    
\end{tikzpicture}
\caption{
A graph joining of $C_3$ and $C_4$ that is not the tensor product $C_3\otimes C_4$.}
    \label{example_not_strong}
\end{figure}

\begin{restatable}{prop}{circleweakcoprime}
\label{circle_property}
Let $m, n \geq 3$. The cycles $C_m$ and $C_n$ are never strongly disjoint.  Moreover, 
$C_m$ and $C_n$ are weakly disjoint if and only if $\gcd(m,n)=1$. In particular, there exists a pair of graphs that is weakly disjoint but not strongly disjoint.
\end{restatable}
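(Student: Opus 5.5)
Three things need proof: $C_m$ and $C_n$ always admit a joining other than $C_m\otimes C_n$; if $\gcd(m,n)=1$ every joining has degree function $p\otimes q$; and if $\gcd(m,n)=d>1$ some joining has a different degree function. Since both cycles are regular, $p$ and $q$ are uniform, so $p\otimes q$ is uniform on $\mathbb{Z}_m\times\mathbb{Z}_n$. I label the vertices of $C_m$ by $\mathbb{Z}_m$ and those of $C_n$ by $\mathbb{Z}_n$.

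\textbf{Never strongly disjoint.} I will generalize Figure~\ref{example_not_strong}. Put weight $\frac{1}{2mn}$ on the edges $((i,j),(i+1,j+1))$ and $((i,j),(i-1,j-1))$, with all indices mod $m$ and mod $n$, and weight zero on the anti-diagonal moves $(\pm1,\mp1)$. This $\gamma$ is symmetric and sums to $1$. Its degree function is uniform, $r=\frac{1}{mn}=p\otimes q$. For the transition coupling, from $(i,j)$ the total weight to $u'=i+1$ is $\frac{1}{2mn}$. This equals $\alpha(i,i+1)\,r/p=\frac{1}{2m}\cdot\frac{1}{mn}\cdot m$, and the same check works in the other coordinate. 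So $\gamma$ is a joining, and it differs from $\alpha\otimes\beta$ because the latter puts weight $\frac{1}{4mn}$ on all four moves.

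\textbf{Weak disjointness when $\gcd(m,n)=1$.} I would argue directly. Let $K$ be a joining with degree function $r$. The random walk on $K$ is a Markov coupling whose first coordinate is a stationary walk on $C_m$, and its law is $r$. For a character $\chi_k(i)=e^{2\pi i k/m}$, the transition coupling condition gives $\E[\chi_k(X_1)\mid Z_0]=\lambda_k\chi_k(X_0)$ with $\lambda_k=\cos(2\pi k/m)$. The same holds for $\psi_l$ on $C_n$ with $\mu_l=\cos(2\pi l/n)$.

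Now look at $c_{k,l}=\sum r(i,j)\chi_k(i)\overline{\psi_l(j)}$. Reversibility of the walk on $K$ gives two expressions for the lag-one cross moment,
\[
\E[\chi_k(X_1)\overline{\psi_l(Y_0)}]=\lambda_k c_{k,l}
\qquad\text{and}\qquad
\E[\chi_k(X_0)\overline{\psi_l(Y_1)}]=\mu_l c_{k,l},
\]
and these two moments are equal by time reversal. Hence $(\lambda_k-\mu_l)c_{k,l}=0$. The step to check carefully is when $\cos(2\pi k/m)=\cos(2\pi l/n)$ with $k\ne0$: this forces $k/m\equiv\pm l/n \pmod 1$, which is impossible when $\gcd(m,n)=1$ unless $k=l=0$. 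So all nontrivial Fourier coefficients of $r$ vanish, and $r$ is uniform.

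\textbf{Failure of weak disjointness when $d>1$.} Map both cycles onto $C_d$ via $i\mapsto i \bmod d$ for $d\ge3$. For $d=2$ (both cycles even) use the bipartition instead. Then join $C_m$ and $C_n$ relative to the common image: restrict the diagonal joining above to the set $\{(i,j): i\equiv j \pmod d\}$. Concretely, take $\gamma$ supported on moves $(+1,+1)$ and $(-1,-1)$, together with $(+1,-1)$ and $(-1,+1)$ when $d=2$, from vertices with $i\equiv j$. Normalize so that $r=\frac{d}{mn}$ on that set and $0$ elsewhere. This set is invariant under the diagonal moves. The marginal checks are the same as in the first construction, since each $i$ pairs with exactly $n/d$ values of $j$, which makes $r$ couple $p$ and $q$. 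Because $r$ is non-uniform, weak disjointness fails.

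The step I expect to be the main obstacle is the cosine-coincidence argument in the coprime case. The existence constructions are routine verifications.
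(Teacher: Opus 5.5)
Your proof is correct. The first part, the diagonal joining with weight $\frac{1}{2mn}$ on the moves $(\pm1,\pm1)$ and zero on the anti-diagonal moves, is exactly the paper's construction. For the weak-disjointness characterization, the two routes differ.

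\textbf{The paper's route.} It invokes Theorem~\ref{weak_no_share_eigen} together with the circulant eigenvalues $\cos(2\pi k/m)$. This reduces the whole question to the arithmetic of when $\cos(2\pi k/m)=\cos(2\pi l/n)$.

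\textbf{Your coprime direction.} You argue directly. The relation $(\lambda_k-\mu_l)c_{k,l}=0$ is the identity $P^T\Gamma=\Gamma Q$ from the proof of Theorem~\ref{weak_no_share_eigen}, written in the explicit character basis of $\mathbb{Z}_m\times\mathbb{Z}_n$. That basis diagonalizes both walks explicitly. So uniformity of $r$ follows at once from normalization, without the rank-one argument or the appeal to diagonalizability of reversible chains.

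\textbf{Your non-coprime direction.} Through the general theorem, the paper would build the perturbation $\gamma_t$ from eigenvectors, or use the common factor $P_2$ when the shared eigenvalue is $-1$. Your joining supported on $\{(i,j): i\equiv j \pmod d\}$ is instead the relative product of $C_m$ and $C_n$ over the common factor $C_d$ (or $P_2$). This is precisely the joining that Propositions~\ref{furstenberg} and~\ref{factor_to_weak_disjoint} produce from the mod-$d$ factor maps used in Proposition~\ref{remark_on_circle}.

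\textbf{What each buys.} The paper's route is shorter given the general theorem, and it makes clear that only the spectra matter. Yours is self-contained and gives an explicit witness with a visibly non-uniform degree function ($r=\frac{d}{mn}$ on a sublattice). It also makes the link between $\gcd(m,n)>1$ and a common factor transparent.

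\textbf{Two small remarks.} The separate treatment of $d=2$ is unnecessary, since $\{i\equiv j \pmod 2\}$ is already invariant under the diagonal moves alone. You should also say explicitly that any coprime pair, such as $C_3$ and $C_4$, gives the final claim of the statement.
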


\vskip.1in

Proposition \ref{circle_property} demonstrates that strong and weak disjointness    
are distinct, and that unequal graphs need not be weakly disjoint.
Figure~\ref{graph_joining_example} (D) provides another example of the latter
phenomenon: the graphs $G$ and $H$ in the figure are fully supported, so their degree functions $p$ and $q$ are positive, but the illustrated joining has vertices $(u,v)$ for which 
$0 = r(u,v) \neq (p \otimes q) (u,v) = p(u) q(v) > 0$.  The next example illustrates
that weak and c-disjointness are distinct.

\begin{restatable}{example}{weakc}
\label{example_weak_c} 
The path $P_2$ with vertex set $\{u_0,u_1\}$ and the complete bipartite graph $K_{2,2}$ 
with vertex sets $\{v_0,v_1\}$ and $\{v_2,v_3\}$
are not weakly disjoint, but they are $c$-disjoint with respect to the cost function 
$c(u_i,v_j) = i + j \pmod{2}$.
\end{restatable}


\vskip.2in

\section{Graph Factors}
\label{factors}
In this section we define and explore the basic properties of 
graph factors.  Graph factors provide an alternative
characterization of graph joinings and 
yield insights into weak and strong disjointness.

\begin{defn}
\label{graph_factor}
Let $G=(U,\alpha)$ and $H=(V,\beta)$ be graphs with degree functions
$p$ and $q$, respectively. We say that $H$ is a factor of $G$,
written $H \0 | \0 G$, if there is a {surjective} 
map $\phi: U \rightarrow V$ such that 
\begin{enumerate}

\vskip.12in

\item[(i)]
$q(v) \, = \, \sum\limits_{u \in \phi^{-1}(v)}p(u)$ for all $v \in V$, and 

\vskip.12in

\item[(ii)]
$q(v) \sum\limits_{ u^{\prime} \in \phi^{-1}\left(v^{\prime}\right)} 
\alpha\left(u, u^{\prime}\right) \, = \, p(u) \0 \beta\left(v, v^{\prime}\right)$
for all $v, v^{\prime} \in V$ and 
all $u \in \phi^{-1}(v)$.

\vskip.08in

\end{enumerate}
A function $\phi$ satisfying (i) and (ii) will be called a factor map.
When we wish to emphasize the role of $\phi$, we will 
write $H \0 | \0 G$ under $\phi$.
\end{defn} 

\vskip.1in

Every map $\phi: U \to V$ induces a partition 
$\{ U(v) = \phi^{-1}(v) : v \in V \}$ of the vertices of $G$.
Condition (i) says that $q$ is the push-forward of $p$ under $\phi$.  
Condition (ii) says that for each vertex $u \in U(v)$, the relative 
weight from $u$ to the cell $U(v')$ is equal to the relative weight 
from $v$ to $v'$.  
For connected graphs, condition (i) follows readily from condition (ii) (see Proposition \ref{factor_degree}).
A similar definition of factor for directed, connected graphs was considered in \cite{yi2025alignment}. The definition guarantees that if $\phi$ is applied
to each state in the random walk $X$ on $G$, then the resulting process is equal
in distribution to the random walk $Y$ on $H$.
Connections between graph factors and lumpability of Markov chains are
discussed below, see Remark~\ref{rmk_lumpability}.

The definition of a graph factor considered here is different from 
that most commonly used in the graph theory literature, in which
factor of a graph $G$ is a spanning subgraph, that is, 
a subgraph containing all the vertices of $G$ but 
only a subset of its edges, see, e.g., \cite{tutte1952factors,plummer2007graph}. 
Different types of factors arise from different edge constraints, 
typically involving vertex degrees or the shape of the spanning subgraph. 


\vskip.05in

As studied here, graph factors provide an alternative characterization of
graph joinings. 
Let $\pi_1(u,v) = u$ and $\pi_2(u,v) = v$ be the usual coordinate projections
on $U \times V$.  

\begin{restatable}{prop}{characterizegraphjoining}
A graph $K = (U \times V, \gamma)$ is a graph joining of $G = (U,\alpha)$ and 
$H = (V,\beta)$ if and only if $G \0 | \0 K$ under $\pi_1$ and 
$H \0 | \0 K$ under $\pi_2$.
\label{characterize_graph_joining}
\end{restatable}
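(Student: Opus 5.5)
The plan is to unpack both definitions and check that they impose exactly the same equations on $\gamma$. Write $r$ for the degree function of $K$, and note that $\pi_1^{-1}(u) = \{u\} \times V$ and $\pi_2^{-1}(v) = U \times \{v\}$. Condition (i) of Definition~\ref{graph_factor} for $G \0|\0 K$ under $\pi_1$ reads
\[
p(u) = \sum_{v \in V} r(u,v) \quad \text{for all } u \in U,
\]
which is the first half of the degree coupling condition (a). Likewise, condition (i) for $H \0|\0 K$ under $\pi_2$ is the second half of (a).

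Next, condition (ii) for $G \0|\0 K$ under $\pi_1$ states that for all $u,u' \in U$ and every $(u,v) \in \pi_1^{-1}(u)$, that is, for every $v \in V$,
\[
p(u) \sum_{(\tilde u,\tilde v) \in \{u'\}\times V} \gamma((u,v),(\tilde u,\tilde v)) = r(u,v)\,\alpha(u,u').
\]
This is verbatim the first transition coupling condition in (b). Symmetrically, condition (ii) for $H \0|\0 K$ under $\pi_2$ is the second transition coupling condition. So, as systems of equations, (a) together with (b) coincides with conditions (i) and (ii) for both projections.

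The only remaining point is surjectivity of $\pi_1$ and $\pi_2$, which the factor definition requires. The coordinate projections from $U\times V$ are always surjective, since $U$ and $V$ are nonempty vertex sets, so this imposes nothing. We must also confirm that $K$ is a graph in the sense used, i.e.\ $\gamma$ is a weight function on $U\times V$. This is assumed in both directions, since the statement begins with "a graph $K = (U\times V,\gamma)$".

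The proof therefore runs as a chain of equivalences with no real obstacle. The one thing to watch is that the roles of $p,q,\alpha,\beta$ and the variable being summed match exactly under the index relabeling $\pi_1^{-1}(u') = \{u'\}\times V$.
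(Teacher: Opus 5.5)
Your proposal is correct and takes the same approach as the paper. In both, condition (i) of the factor definition for $\pi_1$ and $\pi_2$ is identified with the two halves of the degree coupling condition, and condition (ii) with the two transition coupling conditions, via $\pi_1^{-1}(u') = \{u'\}\times V$ and $\pi_2^{-1}(v') = U\times\{v'\}$. Your version is slightly cleaner: you match the equations in their multiplied-out form, whereas the paper divides by $r(u,v)$ and tacitly restricts to $r(u,v)>0$, and you check surjectivity of the projections explicitly.
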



Recall that $G \otimes H$ 
denotes the tensor product of $G$ and $H$, and that
$G \cong H$ denotes the fact that $G$ and $H$ are isomorphic.
The following elementary proposition is proved in Section~\ref{proof_of_factors}.

\begin{restatable}{prop}{propfactprop}
\label{prop:factprop}
Let $G$, $H$, and $K$ be graphs.
\begin{enumerate}
\vskip.04in
\item 
$G \0 | \0 (G \otimes H)$ and $H \0 | \0 (G \otimes H)$.
\vskip.04in
\item 
If $G \0 | \0 H$ and $H \0 | \0 K$,  then $G \0 | \0 K$. 
\vskip.04in
\item 
If $G \0 | \0 H$ and $H \0 | \0 G$ then $G \cong H$. 

\end{enumerate}
\end{restatable}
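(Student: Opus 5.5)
Parts (1) and (2) are direct verifications from Definition~\ref{graph_factor}; part (3) is where the work lies.

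For part (1), I would combine Proposition~\ref{characterize_graph_joining} with the fact that $G \otimes H \in \mathcal{J}(G,H)$. Since the tensor product is a graph joining, that proposition gives $G \,|\, (G\otimes H)$ under $\pi_1$ and $H \,|\, (G\otimes H)$ under $\pi_2$. If $G$ or $H$ has isolated vertices, then $\pi_1,\pi_2$ are still surjective because $V$ and $U$ are nonempty. One could also check (i) and (ii) directly: $\sum_v p(u)q(v)=p(u)$, and $p(u)\sum_{v,v'}\alpha(u,u')\beta(v,v')\cdot$ (restricted appropriately) matches $p(u)q(v)\alpha(u,u')$.

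For part (2), let $\phi: U_H\to U_G$ and $\psi: U_K\to U_H$ be the factor maps, and set $\theta=\phi\circ\psi$, which is surjective. Condition (i) follows by summing pushforwards: $\sum_{k\in\theta^{-1}(g)} r(k)=\sum_{h\in\phi^{-1}(g)}\sum_{k\in\psi^{-1}(h)} r(k)=\sum_h q(h)=p(g)$. For (ii), fix $k$ with $\psi(k)=h$ and $\phi(h)=g$. Write $\theta^{-1}(g')$ as the disjoint union of $\psi^{-1}(h')$ over $h'\in\phi^{-1}(g')$, and apply (ii) for $\psi$ to get $q(h)\sum_{k'\in\theta^{-1}(g')}\gamma(k,k')=r(k)\sum_{h'\in \phi^{-1}(g')}\beta(h,h')$. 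Multiplying by $p(g)$ and using (ii) for $\phi$ gives $p(g)q(h)\sum\gamma = r(k)q(h)\alpha(g,g')$. When $q(h)>0$ we can cancel. When $q(h)=0$, vertex $h$ is isolated, and then by (ii) for $\psi$ every $k\in\psi^{-1}(h)$ has $r(k)\cdot$(edge weights) zero. Indeed, (i) gives $r(k)=0$, hence $\gamma(k,\cdot)=0$, so both sides vanish. This isolated-vertex bookkeeping is the only care needed.

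For part (3), surjections $U\to V$ and $V\to U$ force $|U|=|V|$, so both factor maps $\phi: V_H \to U_G$ and $\psi:U_G\to V_H$ are bijections. A bijective factor map is an isomorphism onto its image in the following sense. By (i), $p(\phi(h))=q(h)$. Then (ii) reads $p(g)\alpha'\!=\!\ldots$: with singleton fibres, $p(g)\,\beta(h,h')=q(h)\,\alpha(\phi h,\phi h')$, and since $p(\phi h)=q(h)$ this gives $\alpha(\phi h,\phi h')=\beta(h,h')$ whenever $q(h)>0$.

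The main obstacle is vertices of zero degree, where (ii) is vacuous and bijectivity alone does not determine the weights. But if $q(h)=0$ then $h$ is isolated, so $\beta(h,\cdot)=0$, and $p(\phi h)=0$, so $\alpha(\phi h,\cdot)=0$. The weights therefore agree there too, and by symmetry agree for pairs in which $h'$ is isolated. Hence $\phi$ is a weight-preserving bijection, so $G\cong H$. Only one of the two maps is actually needed beyond the cardinality count.
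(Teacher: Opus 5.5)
Your proposal is correct and follows the paper's proof essentially step for step. Part (1) goes through Proposition~\ref{characterize_graph_joining} applied to $G\otimes H$, part (2) composes the factor maps and combines the two instances of condition (ii), and part (3) uses the cardinality count to get a bijective factor map whose singleton fibres give $\alpha(\phi h,\phi h')=\beta(h,h')$, treating zero-degree vertices separately. The only cosmetic difference is in (2): you multiply through by $p(g)$ and cancel $q(h)$, splitting on $q(h)=0$, whereas the paper divides by the degree of the $K$-vertex and splits on whether that degree is zero; in both versions the degenerate case is handled by condition (i).
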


\vskip.1in

\subsection{Factors and Disjointness}

We next explore the relationship between factors and 
disjointness.  
Proofs of the following propositions can be found in Section \ref{proof_of_factors}. 
Markov chain analogs of these results are discussed in Section \ref{markovchain_factor}.

\begin{restatable}{prop}{furstenbergresult}
\label{furstenberg}
Suppose that $G \0 | \0 K$ and $H \0 | \0 L$. If $K$ and $L$ are strongly disjoint, 
then $G$ and $H$ are strongly disjoint.
If $K$ and $L$ are weakly disjoint, then $G$ and $H$ are weakly disjoint.
\end{restatable}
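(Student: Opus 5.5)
Let $G=(U,\alpha)$, $H=(V,\beta)$, $K=(\tilde U,\tilde\alpha)$, $L=(\tilde V,\tilde\beta)$ have degree functions $p,q,\tilde p,\tilde q$, and let $\phi:\tilde U\to U$ and $\psi:\tilde V\to V$ be factor maps witnessing $G\0|\0K$ and $H\0|\0L$. The plan is to \emph{lift} an arbitrary joining $\gamma\in\mathcal{J}(\alpha,\beta)$ to a joining $\tilde\gamma\in\mathcal{J}(\tilde\alpha,\tilde\beta)$, apply the hypothesis on $K,L$ to $\tilde\gamma$, and then push back down by $\phi\times\psi$. This is the relatively-independent (conditional product) construction from ergodic theory. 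Concretely, for $\tilde u,\tilde u'\in\tilde U$ and $\tilde v,\tilde v'\in\tilde V$, with $u=\phi(\tilde u)$, $u'=\phi(\tilde u')$, $v=\psi(\tilde v)$, $v'=\psi(\tilde v')$, define
\[
\tilde\gamma((\tilde u,\tilde v),(\tilde u',\tilde v')) \,=\, \gamma((u,v),(u',v'))\,\frac{\tilde\alpha(\tilde u,\tilde u')\,\tilde\beta(\tilde v,\tilde v')}{\alpha(u,u')\,\beta(v,v')},
\]
with the convention $0/0=0$. By edge preservation (Lemma~\ref{vertex_edge_preservation}), $\gamma$ vanishes whenever $\alpha(u,u')\beta(v,v')=0$, so this is well defined. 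Symmetry and nonnegativity are immediate.

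The first key step is to sum over the fibres. Summing factor condition (ii) over $u\in\phi^{-1}(v)$ and using (i) shows that $\sum_{\tilde u\in\phi^{-1}(u),\tilde u'\in\phi^{-1}(u')}\tilde\alpha(\tilde u,\tilde u')=\alpha(u,u')$, and similarly for $\tilde\beta$. Hence $\tilde\gamma$ pushes forward under $\phi\times\psi$ (applied to both endpoints) to $\gamma$; in particular it is normalized. Its degree function is
\[
\tilde r(\tilde u,\tilde v)=\sum_{u',v'}\gamma((u,v),(u',v'))\frac{\sum_{\tilde u'\in\phi^{-1}(u')}\tilde\alpha(\tilde u,\tilde u')}{\alpha(u,u')}\frac{\sum_{\tilde v'\in\psi^{-1}(v')}\tilde\beta(\tilde v,\tilde v')}{\beta(v,v')}.
\]
By condition (ii), the two fractions equal $\tilde p(\tilde u)/p(u)$ and $\tilde q(\tilde v)/q(v)$ (when these are nonzero), giving
\[
\tilde r(\tilde u,\tilde v)=r(u,v)\,\tilde p(\tilde u)\tilde q(\tilde v)/(p(u)q(v)).
\]

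The second step, which I expect to be the main obstacle, is verifying the transition coupling condition (b) for $\tilde\gamma$, along with degree coupling (a); the bookkeeping with zero-degree vertices also needs care. For the first half of (b), sum $\tilde\gamma$ over $\tilde v'\in\tilde V$. Grouping by $v'=\psi(\tilde v')$ turns the $\tilde\beta$ factor into $\tilde q(\tilde v)/q(v)$. What remains is
\[
\frac{\tilde\alpha(\tilde u,\tilde u')}{\alpha(u,u')}\,\frac{\tilde q(\tilde v)}{q(v)}\sum_{v'}\gamma((u,v),(u',v')),
\]
and by (b) for $\gamma$ this sum equals $\alpha(u,u')r(u,v)/p(u)$. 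The total is therefore $\tilde\alpha(\tilde u,\tilde u')\,r(u,v)\tilde q(\tilde v)/(p(u)q(v))=\tilde\alpha(\tilde u,\tilde u')\tilde r(\tilde u,\tilde v)/\tilde p(\tilde u)$, as required. The second half of (b) is symmetric. Condition (a) follows by summing $\tilde r$ over $\tilde v\in\tilde V$: grouping by fibres gives $\sum_v r(u,v)\tilde p(\tilde u)/p(u)=\tilde p(\tilde u)$, using $\sum_{\tilde v\in\psi^{-1}(v)}\tilde q(\tilde v)=q(v)$. Vertices with $p(u)=0$ force $\tilde p(\tilde u)=0$ by (i), so those terms are harmless.

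Finally, apply the hypotheses. If $K,L$ are weakly disjoint, then $\tilde r=\tilde p\otimes\tilde q$. Pushing forward through the fibres gives $r(u,v)=\sum_{\tilde u\in\phi^{-1}(u),\tilde v\in\psi^{-1}(v)}\tilde r(\tilde u,\tilde v)=p(u)q(v)$, so $G,H$ are weakly disjoint. If $K,L$ are strongly disjoint, then $\tilde\gamma=\tilde\alpha\otimes\tilde\beta$, and pushing forward (first step) gives $\gamma=\alpha\otimes\beta$ by the fibre-sum identities. Hence $\mathcal{J}(G,H)=\{G\otimes H\}$.
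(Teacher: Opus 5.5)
Your proof is correct and follows the paper's argument essentially verbatim. The paper lifts $\gamma$ to the same relatively independent weight function $\zeta$ on $(S\times T)^2$, derives the same degree formula $r_\zeta(s,t)=k(s)l(t)\,r_\gamma(f(s),g(t))/(p(f(s))q(g(t)))$, and checks transition and degree coupling the same way. Your fibre-sum push-forward step also spells out what the paper only asserts ``by construction'', namely that $\gamma\neq\alpha\otimes\beta$ forces $\zeta\neq\eta\otimes\delta$.
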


As a corollary of Proposition~\ref{furstenberg}, we obtain the following 
useful lemma.

\begin{restatable}{lemma}{disconnectednotweak}
\label{disconnected_not_weak}
    Two fully supported disconnected graphs are never weakly disjoint. 
\end{restatable}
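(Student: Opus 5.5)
The plan is to use Proposition~\ref{furstenberg} to reduce the claim to a pair of very small graphs. Write the disconnected, fully supported graph $G=(U,\alpha)$ as $U = U_1 \sqcup U_2$, where $U_1$ is a nonempty union of connected components and $U_2$ is its nonempty complement. Since there are no edges between $U_1$ and $U_2$ and every vertex has positive degree, we have $a := \sum_{u\in U_1} p(u) \in (0,1)$. Let $D_a$ be the two-vertex graph on $\{1,2\}$ with self-loops of weights $a$ and $1-a$ and no other edges. The map $\phi: U \to \{1,2\}$ sending $U_i$ to $i$ is a factor map:
\begin{itemize}
\item condition (i) holds by the choice of $a$;
\item condition (ii) holds because, for $u \in U_1$, all of the weight leaving $u$ stays in $U_1$, so $\sum_{u'\in U_1}\alpha(u,u') = p(u)$ and $\sum_{u' \in U_2}\alpha(u,u') = 0$. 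This matches $q(1)^{-1}\beta(1,1) = 1$ and $\beta(1,2) = 0$.
\end{itemize}
Hence $D_a \0 | \0 G$. In the same way, $D_b \0 | \0 H$ for some $b \in (0,1)$.

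By the contrapositive of the weak part of Proposition~\ref{furstenberg}, it suffices to show that $D_a$ and $D_b$ are not weakly disjoint. To do this, exhibit a joining of $D_a$ and $D_b$ whose degree function differs from the product $p\otimes q$. Any coupling $r$ of $(a,1-a)$ and $(b,1-b)$ can be turned into a joining: put a self-loop of weight $r(i,j)$ at each vertex $(i,j)$ and no other edges. This gives $\gamma((i,j),(i,j)) = r(i,j)$.

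Check the transition coupling condition. For vertex $(1,j)$,
\[
p(1)\sum_{\tilde v}\gamma((1,j),(1,\tilde v)) = a\, r(1,j) = \alpha(1,1)\, r(1,j),
\]
and the cross terms vanish on both sides. The second condition is symmetric. The degree coupling condition holds because $r$ is a coupling.

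It then remains to choose a coupling $r \neq p\otimes q$. Set $r(1,1) = \min(a,b)$ and fill in the other entries to respect the marginals. Since $0 < a,b < 1$, we have $\min(a,b) > ab$, so $r(1,1) \neq ab$.

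The main care needed is in verifying the factor-map condition (ii) for the two-block collapse, since that is where full support is used (to get $a \in (0,1)$). Everything else is a direct check.
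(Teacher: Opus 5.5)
Your proof is correct and follows essentially the same route as the paper: collapse each graph onto a two-vertex graph carrying two self-loops via a factor map, invoke Proposition~\ref{furstenberg}, and then exhibit a self-loop-only joining of the two small graphs whose degree at $(1,1)$ differs from $ab$. The differences are cosmetic. You verify the factor-map conditions explicitly, which the paper leaves implicit. You also take the specific value $r(1,1)=\min(a,b)$, which is an endpoint of the open interval of admissible values used in the paper and is still valid because zero weights are allowed.
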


\begin{restatable}{prop}{factortimesfactorstrong}
\label{factor_times_factor_strong}
Let $G$ and $H$ be graphs. If for every graph $K$, the conditions $G \0 | \0 K$ and $H \0 | \0 K$ together imply $G \otimes H \0 | \0 K$, then $G$ and $H$ are strongly disjoint.
\end{restatable}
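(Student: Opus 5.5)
To prove Proposition~\ref{factor_times_factor_strong}, I take an arbitrary joining $K = (U\times V,\gamma) \in \mathcal{J}(G,H)$ and show that $\gamma = \alpha\otimes\beta$. By Proposition~\ref{characterize_graph_joining}, $G \0|\0 K$ under $\pi_1$ and $H \0|\0 K$ under $\pi_2$. The hypothesis then gives $G\otimes H \0|\0 K$ under some factor map $\psi: U\times V \to U\times V$.

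The key observation is that a factor map between graphs on vertex sets of equal finite size is a bijection, because it is surjective. Condition (i) of Definition~\ref{graph_factor} then reads $r(\psi^{-1}(u,v)) = p(u)q(v)$. Since preimages are singletons, condition (ii) reads
\[
(p\otimes q)(\psi(x))\,\gamma(x,x') = r(x)\,(\alpha\otimes\beta)(\psi(x),\psi(x')).
\]
Using (i), this becomes $r(x)\gamma(x,x') = r(x)(\alpha\otimes\beta)(\psi x,\psi x')$. Where $r(x)>0$ we get $\gamma(x,x') = (\alpha\otimes\beta)(\psi x,\psi x')$. Where $r(x)=0$, both sides vanish, since $\gamma(x,\cdot)$ sums to $r(x)$ and $(\alpha\otimes\beta)(\psi x,\cdot)$ sums to $(p\otimes q)(\psi x) = r(x) = 0$. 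Hence $K \cong G\otimes H$ via $\psi$.

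It remains to upgrade this isomorphism to the equality $K = G\otimes H$. This is the main obstacle, since an isomorphism need not be compatible with the coordinate projections. My approach is to apply the hypothesis not to $K$ itself but to a graph that forces the projections to be respected.

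The first idea is to count degree mass on fibers. From $K$ being a joining we have $\sum_v r(u,v) = p(u)$. Using the bijection, $r = (p\otimes q)\circ\psi$, so the multiset of values of $r$ equals that of $p\otimes q$. This alone is not enough.

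The more robust route is to use the hypothesis on the graph $L = K\otimes G\otimes H$, or on a suitable joining built from $K$, and then compare dimensions or entropy. The cleanest version uses entropy of the stationary Markov random walk. Let $h(\cdot)$ denote the entropy rate. A coupling satisfies $h(K) \le h(G)+h(H)$, with equality if and only if the two coordinate processes are independent, i.e., $K = G\otimes H$. Isomorphic graphs have equal entropy rates, so $K\cong G\otimes H$ gives $h(K) = h(G\otimes H) = h(G)+h(H)$. Therefore the coupling $K$ has independent marginal walks. Its two-step law is then the product law, which means $\gamma = \alpha\otimes\beta$.

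I would verify two points. First, the equality case of subadditivity: $H(X_0,X_1,Y_0,Y_1) - H(X_0,Y_0)$ versus the sum of the marginal terms. It is cleaner to use the joint two-dimensional entropy $H(\gamma)$ together with $H(\gamma)\le H(\alpha)+H(\beta)$, where equality holds exactly when $\gamma = \alpha\otimes\beta$ as a coupling of the pair distributions $\alpha$ and $\beta$. Second, that $H(\gamma) = H(\alpha\otimes\beta)$ under the isomorphism $\psi$, which is immediate because $\psi\times\psi$ is a bijection carrying $\gamma$ to $\alpha\otimes\beta$.

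Since $\gamma$ marginalizes to $\alpha$ and $\beta$, as noted after the definition of graph joining, this settles the proof. The entropy equality argument is exactly the step I expect to need care.
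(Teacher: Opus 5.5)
Your proof is correct in its final form, but it takes a different route from the paper, and one intermediate claim should be removed. The assertion that $h(K)\le h(G)+h(H)$, with equality only when $K=G\otimes H$, is false for entropy rates. For $G=H=P_2$ every random walk involved is deterministic, so all entropy rates vanish, yet the bijective joining of Section~\ref{isomorphic_graphs} differs from $P_2\otimes P_2$. The idea of applying the hypothesis to $K\otimes G\otimes H$ is also never used.

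What actually carries the proof is the argument you settle on:
\begin{itemize}
\item The factor conditions give $\gamma=(\alpha\otimes\beta)\circ(\psi\times\psi)$, including at vertices with $r(x)=0$.
\item Hence $H(\gamma)=H(\alpha\otimes\beta)=H(\alpha)+H(\beta)$.
\item Regarded as a law on $(U\times U)\times(V\times V)$, $\gamma$ has marginals $\alpha$ and $\beta$.
\item The equality case of subadditivity of Shannon entropy then forces $\gamma=\alpha\otimes\beta$.
\end{itemize}

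The paper extracts much less from the isomorphism. It only records $|\supp(\gamma)|=|\supp(\alpha\otimes\beta)|$ and combines this with edge preservation to conclude that every joining has support exactly $\supp(\alpha\otimes\beta)$. It then argues by contradiction: if some $\gamma\neq\alpha\otimes\beta$ existed, the extrapolation $(1+t)\gamma-t\,\alpha\otimes\beta$ with the largest admissible $t$ would be a joining with strictly smaller support. That argument is purely combinatorial and convex-geometric, but it is global: it needs the hypothesis for this auxiliary joining, not just for the given one.

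Your argument uses the full value distribution through a strictly concave invariant and works one joining at a time. It therefore proves the stronger statement that any single joining $K$ with $G\otimes H\0|\0K$ already equals $G\otimes H$. The price is importing the equality case of entropy subadditivity.
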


\vskip.1in

A graph is said to be \emph{non-trivial} if it contains two or more vertices with positive weights. The following proposition can be proved directly using Proposition~\ref{furstenberg}.

\begin{restatable}{prop}{factortoweak}
\label{factor_to_weak_disjoint}
If there is a non-trivial graph $K$ such that $K \0 | \0 G$ and $K \0 | \0 H$,
then $G$ and $H$ are not weakly disjoint. In other words, weakly disjoint
graphs have no non-trivial common factors.
\end{restatable}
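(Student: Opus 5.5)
The plan is to argue by contraposition, using Proposition~\ref{furstenberg}. Suppose $K$ is non-trivial with $K \0|\0 G$ and $K \0|\0 H$. If $G$ and $H$ were weakly disjoint, then, taking $G$ in the role of ``$K$'' and $H$ in the role of ``$L$'' in Proposition~\ref{furstenberg}, the factors $K \0|\0 G$ and $K \0|\0 H$ would force $K$ and $K$ to be weakly disjoint. It therefore suffices to show that a non-trivial graph $K = (W,\kappa)$ with degree function $s$ is never weakly disjoint from itself.

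For this I will use the bijective joining of Section~\ref{isomorphic_graphs}, with $f$ the identity map on $W$. This gives a graph joining $\Delta \in \mathcal{J}(K,K)$ whose degree function is
\[
r_\Delta(w,w') = s(w)\,\mathbbm{1}(w=w').
\]
Weak disjointness of $K$ with itself would require $r_\Delta = s\otimes s$. Non-triviality supplies two distinct vertices $w \neq w'$ with $s(w),s(w')>0$. Then $r_\Delta(w,w') = 0$, while $(s\otimes s)(w,w') = s(w)s(w') > 0$. So $r_\Delta \neq s \otimes s$, and $K$ is not weakly disjoint from itself. This contradiction proves the proposition.

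The key step is checking that Proposition~\ref{furstenberg} is applied in the right direction. Its hypotheses are factor relations $G\0|\0K$ and $H\0|\0L$ together with disjointness of the larger graphs $K,L$, and its conclusion is disjointness of the factors. Here the larger graphs are $G$ and $H$ and the common factor $K$ plays both smaller roles, so the application is legitimate. The remaining verification, that the identity bijective joining is a weight joining, was already carried out in Section~\ref{isomorphic_graphs}. I do not expect a real obstacle beyond this bookkeeping.
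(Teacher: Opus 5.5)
Your proposal is correct and matches the paper's proof. A non-trivial $K$ is not weakly disjoint from itself because of the identity bijective joining from Section~\ref{isomorphic_graphs}, and Proposition~\ref{furstenberg}, applied with $G$ and $H$ as the larger graphs and $K$ in both factor roles, transfers this to $G$ and $H$. Your explicit check that $r_\Delta(w,w') = 0 < s(w)s(w')$ for distinct positive-degree vertices fills in a step the paper leaves implicit.
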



Proposition \ref{factor_to_weak_disjoint} can be used to determine 
when two given graphs are not weakly disjoint. 
For example, two bipartite graphs are never weakly disjoint, 
as they share a common factor $P_2$. 
The converse of Proposition \ref{factor_to_weak_disjoint}
does not hold. 
Figure~\ref{graph_joining_example} (D) provides a counterexample. The detailed 
analysis is provided in the proof section.

\begin{restatable}{prop}{nofactornotweak}
\label{no_factor_not_weak}
There exist 
{connected} graphs $G$ and $H$ such that $G$ and $H$ have no non-trivial common factor and yet $G$ and $H$ are not weakly disjoint.
\end{restatable}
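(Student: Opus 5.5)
We will take $G$ and $H$ to be the two marginal graphs in Figure~\ref{graph_joining_example}~(D), exhibit a non-product joining of them, and then show by a short case analysis that they have no non-trivial common factor. Reading the weights off the figure, $G$ has vertices $u_0,u_1,u_2$, edges $u_0u_1$ and $u_1u_2$ of weight $1/6$, and self-loops at $u_1$ and $u_2$ of weight $1/6$. Hence $p=(1/6,1/2,1/3)$. The graph $H$ has vertices $v_0,v_1$, an edge $v_0v_1$ of weight $1/3$, and a self-loop at $v_1$ of weight $1/3$. Hence $q=(1/3,2/3)$. Both graphs are connected.

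\emph{Step 1 (not weakly disjoint).} Let $\gamma$ be the weight function shown in panel (D): edge $(u_0,v_0)(u_1,v_1)$ of weight $1/6$, edges $(u_1,v_1)(u_2,v_0)$, $(u_2,v_0)(u_2,v_1)$ and $(u_2,v_1)(u_1,v_1)$ of weight $1/12$, and a self-loop at $(u_1,v_1)$ of weight $1/6$. We check directly that $\gamma$ is symmetric and sums to $1$. We then verify the transition coupling condition (b) of Definition~\ref{Def:weight_joining} vertex by vertex; there are only four vertices of positive degree, so this is routine. By Proposition~\ref{connected_drop_coupling}, condition (a) follows automatically since $G$ and $H$ are connected. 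Equivalently, one can verify the two factor conditions of Proposition~\ref{characterize_graph_joining}. Finally, $r(u_0,v_0)=1/6\neq 1/18=p(u_0)q(v_0)$, so $G$ and $H$ are not weakly disjoint.

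\emph{Step 2 (factors of $H$).} Let $K$ be a non-trivial factor of $H$. Since a factor map is surjective from a two-element set, and $K$ has at least two vertices, the factor map $\phi$ is a bijection. Condition (i) then gives equal degrees, and condition (ii) reads $q(v)\,\beta(v,v')=q(v)\,\kappa(\phi v,\phi v')$. As $q>0$, this forces $K\cong H$. So any non-trivial common factor must be isomorphic to $H$.

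\emph{Step 3 (main step: $H$ is not a factor of $G$).} A factor map $\phi:U\to\{v_0,v_1\}$ must push $p$ forward to $q=(1/3,2/3)$. Among the two-cell partitions of $\{u_0,u_1,u_2\}$, the cell masses are $\{1/6,5/6\}$, $\{1/2,1/2\}$, or $\{1/3,2/3\}$. Only the partition $\{u_2\}$, $\{u_0,u_1\}$ is compatible with $q$, so $\phi(u_2)=v_0$ and $\phi(u_0)=\phi(u_1)=v_1$.

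Condition (ii) requires every vertex of a cell to send the same fraction of its weight into each cell. Vertex $u_0$ sends fraction $1$ of its weight into $\{u_0,u_1\}$. Vertex $u_1$ sends only $(1/6+1/6)/(1/2)=2/3$ of its weight there. This contradiction shows $H\nmid G$. Hence $G$ and $H$ have no non-trivial common factor, completing the proof.

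The only delicate points are reading the weights correctly from the figure and checking the joining conditions in Step 1. The case analysis in Step 3 is short.
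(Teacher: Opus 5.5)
Your proof is correct and takes essentially the same route as the paper: you use the graphs and joining of Figure~\ref{graph_joining_example}~(D), reduce any non-trivial common factor to a two-vertex graph isomorphic to $H$, and then rule out $H$ being a factor of $G$ by examining the surjections from the three vertices of $G$ onto the two vertices of $H$. Your version is slightly more explicit than the paper's: condition (i) pins down the single admissible partition $\{u_2\},\{u_0,u_1\}$, and condition (ii) then fails because $u_0$ and $u_1$ send fractions $1$ and $2/3$ of their weight into $\{u_0,u_1\}$. Your weights, the degrees $p=(1/6,1/2,1/3)$ and $q=(1/3,2/3)$, and the witness $r(u_0,v_0)=1/6\neq 1/18$ all check out.
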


Within restricted graph families 
the absence of a common graph factor may be equivalent to weak 
or strong disjointness.

\begin{restatable}{prop}{remarkoncircle}
\label{remark_on_circle}
Consider cycle graphs $C_m$ and $C_n$ with $m,n \geq 3$, along with a path graph 
$P_n$ with $n \geq 2$. Then
\begin{itemize}

\item $C_m$ and $C_n$ are weakly disjoint if and only if
they share no common graph factor; 

\item $C_m$ and $P_n$ are strongly disjoint if and only if 
they share no common graph factor. 

\end{itemize}
\end{restatable}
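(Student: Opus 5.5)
The proof combines Proposition~\ref{circle_property}, Proposition~\ref{line_circle}, and Proposition~\ref{factor_to_weak_disjoint}. In each bullet one direction is automatic. Weak disjointness rules out non-trivial common factors by Proposition~\ref{factor_to_weak_disjoint}, and strong disjointness implies weak disjointness. So the work is to show that when the relevant disjointness fails, we can exhibit an explicit non-trivial common factor.

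For the cycles, suppose $C_m$ and $C_n$ are not weakly disjoint. By Proposition~\ref{circle_property}, $d=\gcd(m,n)>1$. I will show that $C_m \,|\, C_k$ whenever $k \mid m$, using the map $i \mapsto i \bmod k$ on $\mathbb{Z}_m \to \mathbb{Z}_k$ (for $k \ge 3$). Condition (i) holds because each fiber has $m/k$ vertices of degree $1/m$, giving total degree $1/k$. Condition (ii) holds because each vertex $i$ sends weight $1/(2m)$ to each of $i\pm1$, and these lie in the distinct cells $i\pm1 \bmod k$. Then $q(v)\cdot\frac{1}{2m} = \frac1k\cdot\frac1{2m} = \frac1m\cdot\frac1{2k} = p(u)\beta(v,v')$. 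For the case $k=2$ the target is $P_2$, with edge weight $1/2$ and degrees $1/2$. Each vertex sends its full weight $1/m$ into the opposite cell, and $\frac12\cdot\frac1m=\frac1m\cdot\frac12$. If $d\ge 3$ take $k=d$; otherwise $d=2$ and take $P_2$. Either way this gives a non-trivial common factor of $C_m$ and $C_n$, proving the first bullet.

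For the cycle and path, suppose $C_m$ and $P_n$ are not strongly disjoint. By Proposition~\ref{line_circle}, either $m$ is even or $\gcd(m,n-1)>1$. If $m$ is even, $C_m$ is bipartite and so is $P_n$, so $P_2$ is a common factor by the parity map (the paper already notes that bipartite graphs share the factor $P_2$). Otherwise $m$ is odd and $d=\gcd(m,n-1)>1$ is odd, hence $d\ge3$. I then want a common factor, and the natural candidate is $P_{(d+1)/2}$ or a similar folded path.

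The cycle $C_d$ is a factor of $C_m$ by the map above. I plan to show $P_{(d+1)/2}$ is a factor of $C_d$ via the folding $i \mapsto \min(i,d-i)$. Because the path weights are not uniform at the endpoints after folding, the candidate common factor $F$ should be taken as the path on $(d+1)/2$ vertices with a self-loop at the folded end, with weights chosen to match. I also need $F$ to be a factor of $P_n$, via the zigzag map $P_n \to P_{d}$, $j\mapsto$ reflection of $j \bmod 2(d-1)$, followed by a further fold. Reconciling these two folds so that the same target graph $F$ with consistent weights arises is the main obstacle. I would first try the target obtained from the universal-cover picture: both $C_m$ and $P_n$ map to the quotient of $\mathbb{Z}$ by the group generated by $i\mapsto -i$ and $i\mapsto i+d$. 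Then I would check conditions (i) and (ii) of Definition~\ref{graph_factor} locally at the fixed points of the reflections, where self-loops appear. If the path–cycle compatibility fails for some parity, I would fall back on the weighted path $P_2$-type quotient with loops, checking (ii) directly.
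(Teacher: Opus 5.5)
\textbf{Verdict: there is a genuine gap in the second bullet.} Your first bullet is fine; your separate use of $P_2$ when $\gcd(m,n)=2$ is cleaner than writing $C_2$, and you mean $C_k \,|\, C_m$ rather than $C_m \,|\, C_k$. The automatic directions and the case $m$ even are also fine. The gap is the one non-routine step: $m$ odd and $d=\gcd(m,n-1)\ge 3$. Here you must exhibit a non-trivial common factor of $C_m$ and $P_n$, and the proposal never does. The concrete routes you list all fail:
\begin{itemize}
\item \emph{Zigzag $P_n\to P_d$.} This needs both endpoints of $P_n$ to land on endpoints of $P_d$, i.e.\ $(d-1)\mid(n-1)$. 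For $m=3$, $n=4$ the degrees $\tfrac16,\tfrac13,\tfrac13,\tfrac16$ of $P_4$ cannot even be grouped into $\tfrac14,\tfrac12,\tfrac14$. Even when the map exists, $P_d$ does not factor onto the folded graph: its loop-free end vertex has degree $1/d$, which is not a multiple of $\tfrac{1}{2(d-1)}$ for $d\ge 3$.
\item \emph{Passing through $C_d$.} An endpoint of $P_n$ has a single neighbor, so condition (ii) forces its image in $C_d$ to send all of its weight to one vertex. No vertex of $C_d$ does this, so there is no factor map $P_n\to C_d$, and transitivity only helps on the $C_m$ side.
\item \emph{Two-vertex fallback.} No partition of $\mathbb{Z}_5$ into two cells is equitable, so $C_5$ has no non-trivial two-vertex factor. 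Yet $C_5$ and $P_6$ are not strongly disjoint, so this fallback cannot work in general.
\end{itemize}

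\textbf{What closes the gap.} Your universal-cover candidate is exactly the graph the paper uses, but it has to be written down and checked.
\begin{itemize}
\item \emph{The graph.} Its vertices are the classes $\{0\},\{\pm1\},\dots,\{\pm\tfrac{d-1}{2}\}$ of residues mod $d$. Consecutive classes are joined with weight $1/d$. There is a self-loop of weight $1/d$ at $\{\pm\tfrac{d-1}{2}\}$, which contains the reflection-fixed edge $\{\tfrac{d-1}{2},\tfrac{d+1}{2}\}$. The degrees are $1/d$ at $\{0\}$ and $2/d$ elsewhere.
\item \emph{The maps.} Send both $C_m$ and $P_n$ (indexed $0,\dots,n-1$) to this graph by reducing mod $d$ and taking the class.
\item \emph{Why the path side works.} Since $d\mid n-1$, both endpoints of $P_n$ land in the reflection-fixed class $\{0\}$. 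That class's only neighbor in the quotient is $\{\pm1\}$, matching the single neighbor of an endpoint.
\item \emph{The checks.} The mass of $\{0\}$ is $2\cdot\frac{1}{2(n-1)}+\bigl(\frac{n-1}{d}-1\bigr)\frac{1}{n-1}=\frac1d$. Condition (ii) is then a direct check at endpoints, interior vertices, and the loop class. The paper does the same with the reflection centered at residue $1$.
\end{itemize}

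\textbf{Alternative via your group picture.} This also becomes a proof once you make two observations. First, $\langle j\mapsto -j,\ j\mapsto j+2(n-1)\rangle$, whose orbit chain is the walk on $P_n$, and $\langle j\mapsto j+m\rangle$ both lie in $\langle j\mapsto -j,\ j\mapsto j+d\rangle$, because $d\mid 2(n-1)$ and $d\mid m$. Second, lumping the simple random walk on $\mathbb{Z}$ by nested groups of its automorphisms is compatible. As written, however, this step is only announced, and it is hedged by alternatives that do not work.
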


\vskip.05in

\begin{rmk}
Propositions \ref{furstenberg} - \ref{no_factor_not_weak} parallel
existing results in ergodic theory 
(see \cite{furstenberg1967disjointness,glasner1983minimal}), 
in which graphs are replaced by measure-preserving systems, and weak and strong 
disjointness are replaced by disjointness of stationary processes, as
defined in Remark \ref{rmk:disjoint}.  Together with Proposition 
\ref{prop:factprop}, these results establish a rough analogy between
coprime numbers and disjoint graphs. Proposition \ref{no_factor_not_weak}
demonstrates that the analogy is not complete; its analog in the ergodic
theory setting is more involved than the result here,
see \cite{glasner1983minimal}. 
Coprime graphs, which connect integers that are coprime,
have been considered in the literature \cite{erdHos1997cycles} but 
are not related to the notion of graph factor considered here.
\end{rmk}

\vskip.1in

\section{Characterizing Weak and Strong Disjointness}
\label{Characterizing}

The first two main results of our investigation are linear algebraic characterizations of 
weak and strong disjointness. 

\subsection{Weak Disjointness}
Weak disjointness of connected graphs is characterized
by the spectral overlap of their Markov transition matrices.

\begin{defn}
The \emph{transition matrix} of a fully supported graph $G = (U, \alpha)$ with 
vertex set $U = \{ u_1, \cdots, u_m \}$ and degree function $p$ is
given by
\[
P \ = \ 
\left[ \frac{\alpha(u_i, u_j)}{p(u_i)} \right]_{i, j=1}^m 
\, \in \, \mathbb{R}^{m \times m}.
\]
In other words, $P$ is the transition probability matrix
of the reversible random walk on the vertices of $G$. 
\end{defn}

As any transition matrix is a stochastic matrix, it must have $1$ as an eigenvalue. 
The following theorem characterizes weak disjointness for connected graphs in terms 
of the rest of their spectra.


\begin{restatable}{thm}{weaknoshareeigen}
Connected graphs $G$ and $H$ are weakly disjoint if and only if their transition matrices share no eigenvalues other than 1.
\label{weak_no_share_eigen}
\end{restatable}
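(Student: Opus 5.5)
The plan is to rephrase the joining conditions as a linear intertwining relation between the two transition matrices. Weak disjointness then becomes a statement about the solution space of a Sylvester equation. Throughout let $P$ and $Q$ be the transition matrices of $G=(U,\alpha)$ and $H=(V,\beta)$. Let $D_p=\diag(p)$ and $D_q=\diag(q)$, which are invertible by connectivity. Reversibility reads $D_pP=\alpha=P^{T}D_p$, and similarly for $Q$.

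\emph{Necessity of the spectral condition (no shared eigenvalue $\Rightarrow$ weakly disjoint).} Fix $K=(U\times V,\gamma)\in\mathcal{J}(G,H)$ with degree function $r$, viewed as a $U\times V$ matrix.
\begin{enumerate}
\item Sum the first transition coupling condition over $u$ and $\tilde v$, and the second over $v$ and $\tilde u$. By the symmetry of $\gamma$, the quantity $\sum_{u,\tilde v}\gamma((u,v),(u',\tilde v))$ can be computed in two ways. This gives $\sum_u r(u,v)P(u,u')=\sum_{v'}r(u',v')Q(v',v)$, i.e.\ $P^{T}r=rQ$. In probabilistic terms, the laws of $(X_1,Y_0)$ and $(X_0,Y_1)$ agree by reversibility.
\item Put $B=D_p^{-1}rD_q^{-1}$. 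Using $P^{T}=D_pPD_p^{-1}$ and $Q=D_q^{-1}Q^{T}D_q$, the relation becomes the Sylvester equation $PB=BQ^{T}$.
\item Both $P$ and $Q$ are diagonalizable with real spectra, being similar to symmetric matrices. Hence the solution space of $PB=BQ^T$ has dimension $\sum_\lambda m_P(\lambda)m_Q(\lambda)$, summed over common eigenvalues. If the only common eigenvalue is $1$, which is simple for connected graphs, this space is spanned by $\mathbf 1\mathbf 1^{T}$.
\item Therefore $r=c\,p\otimes q$, and the normalization $\sum r=1$ forces $c=1$.
\end{enumerate}

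\emph{Sufficiency (shared eigenvalue $\Rightarrow$ not weakly disjoint).} Suppose $Pf=\lambda f$ and $Qg=\lambda g$ with $\lambda\neq1$.

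If $\lambda=-1$, both graphs are bipartite. They then share the non-trivial factor $P_2$, so Proposition~\ref{factor_to_weak_disjoint} already shows they are not weakly disjoint.

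Otherwise $\lambda\neq\pm1$, and I will perturb the tensor product:
\[
\gamma_\varepsilon((u,v),(u',v'))=\alpha(u,u')\beta(v,v')\bigl[1+\varepsilon h\bigr],\qquad
h=f(u)g(v)+f(u')g(v')-\lambda\bigl(f(u)g(v')+f(u')g(v)\bigr).
\]
\begin{enumerate}
\item $h$ is symmetric under swapping $(u,v)$ with $(u',v')$, so $\gamma_\varepsilon$ is symmetric.
\item Nonnegativity holds for small $|\varepsilon|$, because the perturbation is supported inside $\supp(\alpha\otimes\beta)$.
\item Normalization holds because $\lambda\ne1$ gives $\sum_u p(u)f(u)=0$ and $\sum_v q(v)g(v)=0$. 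Every term of $h$ then integrates to zero against $\alpha\otimes\beta$.
\item Summing over $\tilde v$ and using $\sum_{\tilde v}\beta(v,\tilde v)g(\tilde v)=\lambda q(v)g(v)$, the terms involving $f(u')$ cancel; this is why the coefficient $-\lambda$ was chosen. One obtains
\[
\sum_{\tilde v}\gamma_\varepsilon((u,v),(u',\tilde v))=\alpha(u,u')\,q(v)\bigl[1+\varepsilon(1-\lambda^2)f(u)g(v)\bigr].
\]
\item Summing this over $u'$ gives $r_\varepsilon(u,v)=p(u)q(v)\bigl[1+\varepsilon(1-\lambda^2)f(u)g(v)\bigr]$. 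Hence the first transition coupling condition holds, and by symmetry so does the second.
\item Proposition~\ref{connected_drop_coupling} supplies the degree coupling condition, so $\gamma_\varepsilon$ is a weight joining.
\item Since $1-\lambda^2\neq0$ and $fg\not\equiv0$, we have $r_\varepsilon\neq p\otimes q$, so $G$ and $H$ are not weakly disjoint.
\end{enumerate}

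The main obstacle I expect is step 1 of the first direction: extracting $P^{T}r=rQ$ cleanly from the transition coupling conditions and the symmetry of $\gamma$. After that, the dimension count for the Sylvester equation is standard linear algebra, and the second direction is a direct verification.
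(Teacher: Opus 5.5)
Your proposal is correct and follows essentially the paper's proof: the same intertwining relation $P^{T}\Gamma=\Gamma Q$, the same reduction of $\lambda=-1$ to the common factor $P_2$, and the same perturbation of $\alpha\otimes\beta$ (your $f=D_p^{-1}x$, $g=D_q^{-1}y$, $\varepsilon=t/(1-\lambda^{2})$ in the paper's notation); the only difference is in the first direction, where you symmetrize to $PB=BQ^{T}$ and use the Sylvester dimension count, whereas the paper shows directly that $\Gamma$ annihilates every non-unit eigenvector of $Q$ and so has rank one. The only slip is cosmetic: ``no shared eigenvalue $\Rightarrow$ weakly disjoint'' is the \emph{sufficiency} of the spectral condition, so your two direction labels are swapped.
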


Theorem \ref{weak_no_share_eigen} is established in Section~\ref{proof_of_characterizing}. 
Its proof has points of contact with the 
classical Sylvester equation $AX + XB = C$ from control theory, which has a unique solution for $X$ exactly when there are no common eigenvalues of $A$ and $-B$ (see \cite{bartels1972algorithm,bhatia1997and}). 
To see the connection, let $P$ and $Q$ be the transition matrices 
of $G$ and $H$, respectively.
By summing the transition coupling conditions in the definition
of a weight joining, we obtain a matrix equation of the form $P^T \Gamma = \Gamma Q$, 
where $\Gamma = [\Gamma(i,j)]$ with $\Gamma(i,j) = r(u_i,v_j)$ is the 
degree matrix of a valid graph joining.
If the solution space for $\Gamma$ 
is one-dimensional, then the degree functions of all
valid graph joinings are identical, implying that $G$ and $H$ are weakly disjoint. 
Otherwise, there exists a graph joining between $G$ and $H$ whose degree function is 
not equal to the product of the degree functions of $G$ and $H$.
This leads to a homogeneous Sylvester equation $AX + XB = 0$, with $A = P^T$ 
and $B = -Q$. It is known that this equation has a unique solution (zero)
if and only if $A$ and $-B$ do not share any common eigenvalues. 
In the present case, as $G$ and $H$ are connected the matrices $P$ and $Q$ 
share a common eigenvalue 1. This leads to a solution equal to the tensor product of 
the marginal degree functions, and the question of interest is whether there exists any 
other, nontrivial solutions.
The central idea of the proof is to establish a connection between the 
dimension of the solution space of $P^T \Gamma = \Gamma Q$ and the weak disjointness of $G$ and $H$. In particular, we show that 
the solution space is one-dimensional if and only if $P$ and $Q$ share exactly one eigenvalue (which must be $1$).


One may extend Theorem~\ref{weak_no_share_eigen} to address fully supported disconnected graphs 
$G$ and $H$. The following proposition presents the result in full generality. Recall that for a square matrix, the \emph{algebraic multiplicity} of an eigenvalue is its multiplicity as a root of the characteristic polynomial.  We say that two transition matrices $P$ and $Q$ share an eigenvalue $\lambda$ with \emph{overlapping multiplicity} $k$ if $\lambda$ is an eigenvalue of both matrices and $k$ equals the minimum of its algebraic multiplicities with respect to $P$ and $Q$.

\begin{restatable}{prop}{weaknoshareeigengeneral}
\label{weak_no_share_eigen_general}
    Fully supported graphs $G$ and $H$ are weakly disjoint if and only if their transition matrices share no eigenvalue other than 1 and the eigenvalue 1 has overlapping multiplicity one.
\end{restatable}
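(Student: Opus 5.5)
Write $P$ and $Q$ for the transition matrices of $G$ and $H$, and let $D_p=\diag(p)$ and $D_q=\diag(q)$. The plan is to reduce weak disjointness to a statement about the dimension of the linear space
\[
\mathcal{S} \, = \, \{\Gamma\in\mathbb{R}^{U\times V} : P^T\Gamma=\Gamma Q\},
\]
and then to compute $\dim\mathcal{S}$ spectrally.

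\textbf{Step 1: degree matrices of joinings lie in $\mathcal{S}$.} For any $K\in\mathcal{J}(G,H)$ with degree function $r$, set $\Gamma(u,v)=r(u,v)$. Summing the first transition coupling condition over $v$ and $u$, and the second over $u$ and $v$, expresses the two-step marginal flows as $P^T\Gamma$ and $\Gamma Q$ respectively. Symmetry of $\gamma$ then forces these to agree, so $\Gamma\in\mathcal{S}$. The product $p\otimes q=pq^T$ is always in $\mathcal{S}$.

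\textbf{Step 2: every small perturbation in $\mathcal{S}$ is realized by a joining.} This is the step I expect to be the main obstacle: a converse showing that every $\Gamma\in\mathcal{S}$ with zero row and column sums, scaled by a small $\epsilon$, gives a joining with degree function $pq^T+\epsilon\Gamma$.

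The construction I would try first is
\[
\gamma((u,v),(u',v')) \, = \, \frac{\alpha(u,u')\beta(v,v')}{p(u)q(v)p(u')q(v')}\, M(u,v,u',v')
\]
for a suitable symmetric $M$. A cleaner route is to symmetrize: use $D_p^{-1/2}$ and $D_q^{-1/2}$ so that $\tilde P=D_p^{1/2}PD_p^{-1/2}$ and $\tilde Q$ are symmetric, and build $M$ from the eigenvector expansion of $\Gamma$. The positivity of the product joining leaves room for a perturbation of size $\epsilon$, and the perturbation must respect the transition coupling linearly.

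If a direct construction is hard, an alternative is to use the fact that the set of $r$'s realized by joinings is the projection of a polytope containing $pq^T$ in its relative interior (positivity of $\alpha\otimes\beta$ on its support). The affine span of that projection should then be computed as $pq^T+\mathcal{S}_0$, where $\mathcal{S}_0$ is the zero-marginal part of $\mathcal{S}$. Once this holds, weak disjointness is equivalent to $\mathcal{S}_0=0$. The degree coupling constraint can be imposed using fully-supportedness; Proposition~\ref{connected_drop_coupling} can be invoked on components.

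\textbf{Step 3: compute $\dim\mathcal{S}$ spectrally.} Conjugating, $\tilde\Gamma=D_p^{-1/2}\Gamma D_q^{-1/2}$ satisfies $\tilde P\tilde\Gamma=\tilde\Gamma\tilde Q$ with $\tilde P$ and $\tilde Q$ symmetric. Diagonalize $\tilde P=\sum_i\lambda_i x_ix_i^T$ and $\tilde Q=\sum_j\mu_j y_jy_j^T$. The solutions are then exactly the span of $x_iy_j^T$ over pairs with $\lambda_i=\mu_j$. Hence
\[
\dim\mathcal{S}=\sum_\lambda m_P(\lambda)\,m_Q(\lambda),
\]
where $m_P$ and $m_Q$ denote multiplicities; algebraic and geometric multiplicities coincide by symmetry.

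\textbf{Step 4: conclude.} It remains to identify the zero-marginal constraint. The eigenvalue-$1$ eigenspaces are spanned by indicators of connected components, weighted by $p$ and $q$. Because $\mathcal{S}$ is spanned by rank-one terms in eigenspaces, the row and column sum constraints only remove contributions in the eigenvalue-$1$ block. This part needs care: the condition that the solution lies in the span of $pq^T$ alone amounts to the eigenvalue-$1$ block being one-dimensional and no other shared eigenvalue existing.

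\textbf{Converse.} I would argue separately. If eigenvalue $1$ has overlapping multiplicity at least two, then both graphs are disconnected, and Lemma~\ref{disconnected_not_weak} gives non-weak disjointness. If a shared eigenvalue $\lambda\neq1$ exists, then $x_iy_j^T$, pulled back by $D_p^{1/2}$ and $D_q^{1/2}$, has zero row and column sums, since it is orthogonal to the Perron vectors. Step 2 then produces a joining whose degree function differs from $pq^T$.
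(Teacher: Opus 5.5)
Your Steps 1 and 3 are correct. Step 1 is exactly the identity $P^T\Gamma=\Gamma Q$ from the proof of Theorem~\ref{weak_no_share_eigen}, and the spectral count of $\dim\mathcal{S}$ is right. There are, however, two genuine gaps.

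\textbf{Step 2 is not proved.} Your entire converse rests on it. Knowing that $\Gamma=D_p^{1/2}x_iy_j^TD_q^{1/2}$ lies in $\mathcal{S}$ with zero margins does not by itself produce a joining. Positivity of $\alpha\otimes\beta$ on its support only helps after you exhibit a symmetric $\delta\gamma$, supported in $\supp(\alpha\otimes\beta)$, that satisfies the transition coupling identities and has degree function $\Gamma$. Your polytope remark just restates this claim. The explicit construction is the heart of the paper's argument. Take $P^Tx=\lambda x$ and $y^TQ=\lambda y^T$ (so $x=D_p^{1/2}x_i$ and $y=D_q^{1/2}y_j$ in your notation), and set $a_{ij}=x_iy_j/(p(u_i)q(v_j))$. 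Then
\[
\gamma_t((u_i,v_j),(u_k,v_l))=\alpha(u_i,u_k)\,\beta(v_j,v_l)\Bigl[1+\frac{t}{1-\lambda^2}\bigl(a_{ij}+a_{kl}-\lambda a_{kj}-\lambda a_{il}\bigr)\Bigr]
\]
is a joining for small $t\neq 0$, with degree function $p\otimes q+t\,xy^T$. The margins are correct because $\mathbf{1}^Tx=0=y^T\mathbf{1}$. This formula divides by $1-\lambda^2$, so it breaks down at $\lambda=-1$, where a separate idea is needed. The paper uses that eigenvalue $-1$ forces a connected graph to be bipartite, so both graphs have the factor $P_2$ and Proposition~\ref{factor_to_weak_disjoint} applies. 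Alternatively, $x/p$ and $y/q$ change sign across every edge, and then $\alpha\otimes\beta\,[1+\tfrac{t}{2}(a_{ij}+a_{kl})]$ works.

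\textbf{The criterion in Step 4 is wrong.} You require ``the eigenvalue-$1$ block being one-dimensional,'' which fails exactly where the proposition goes beyond Theorem~\ref{weak_no_share_eigen}. Let $G$ and $H$ have components $U_1,\dots,U_{k_G}$ and $V_1,\dots,V_{k_H}$. The eigenvalue-$1$ block of $\mathcal{S}$ is spanned by $(p\mathbf{1}_{U_a})(q\mathbf{1}_{V_b})^T$ (entrywise products), so it has dimension $k_Gk_H$. For $G$ connected and $k_H\ge 2$ this is not one-dimensional, yet such pairs can be weakly disjoint. For example, $C_3$ and two vertices with self-loops are strongly disjoint by Proposition~\ref{connected_self_loop}. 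What must vanish is the zero-margin subspace of that block. The margin conditions impose $k_G+k_H-1$ independent constraints, cutting the block to dimension $(k_G-1)(k_H-1)$. This is zero iff $\min(k_G,k_H)=1$, i.e.\ iff $1$ has overlapping multiplicity one, and that is exactly where the hypothesis enters.

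With both repairs your route gives $\dim\mathcal{S}_0=(k_G-1)(k_H-1)+\sum_{\lambda\neq 1}m_P(\lambda)m_Q(\lambda)$ and a unified proof. The paper instead splits into three cases:
\begin{enumerate}
\item both connected, handled by Theorem~\ref{weak_no_share_eigen};
\item both disconnected, handled by Lemma~\ref{disconnected_not_weak}, with eigenvalue $1$ of multiplicity at least two in each;
\item the mixed case, handled by restricting joinings to $U\times V_i$ for each component $V_i$ of $H$ and noting that the spectrum of $Q$ is the union of the component spectra.
\end{enumerate}
As a result, the paper only ever needs the rank-one construction above for connected graphs.
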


\subsection{Strong Disjointness}

\label{discussion_strong_disjointness}

Let $G = (U, \alpha)$ and $H = (V, \beta)$ be graphs. 
Weight joinings of $\alpha$ and $\beta$ are characterized by the transition coupling, 
degree coupling, and weight function conditions, each of which can be expressed as a
linear constraint.  In particular, one may encode the constraints into a matrix
$J = J(G,H)$ such that weight joinings $\gamma$ of $\alpha$ and $\beta$ are in
one-to-one correspondence with vectors $\Vec{\gamma}$ satisfying
\begin{align*}
    J \Vec{\gamma} \, = \, 0, \quad \mathbbm{1}^T \Vec{\gamma} \, = \, 1, \quad \Vec{\gamma} \, \geq \, 0.
\end{align*}
We refer to $J$ as the \emph{joining constraint matrix} of $G$ and $H$. The construction of $J$ is given in Section~\ref{weight_joining_constraint_matrix}.

The following proposition characterizes strong disjointness for arbitrary graphs. Moreover, it implies that strong disjointness can be detected in polynomial time, since computing the null space of a matrix can be done in time polynomial in its dimensions, and the number of rows and columns of the joining constraint matrix $J$ is itself polynomial in $|U|$ and $|V|$.

\begin{restatable}{prop}{rankequivstrong}
\label{rank_equiv_strong}
Graphs $G$ and $H$ are strongly disjoint if and only if the joining 
constraint matrix $J$ for $G$ and $H$ has a null space of dimension 1.
\end{restatable}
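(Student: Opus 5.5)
The plan is to view $\mathcal{J}(G,H)$ as a polytope cut out by linear constraints, and to compare the dimension of the affine set it spans with the dimension of $\ker J$. By construction, weight joinings correspond exactly to vectors $\vec\gamma$ with $J\vec\gamma = 0$, $\mathbbm{1}^T\vec\gamma = 1$, and $\vec\gamma \ge 0$. The product $\alpha\otimes\beta$ is always such a vector, so $\ker J$ contains a nonzero element and $\dim\ker J \ge 1$.

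For the direction ($\Leftarrow$), suppose $\dim \ker J = 1$. Any joining $\gamma$ then lies in $\operatorname{span}(\alpha\otimes\beta)$, so $\vec\gamma = t\,(\alpha\otimes\beta)$ for some scalar $t$. Normalization gives $t = 1$, hence $\mathcal{J}(G,H) = \{G\otimes H\}$.

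For the direction ($\Rightarrow$), argue by contraposition. Suppose $\ker J$ contains some $x$ that is not a multiple of $\alpha\otimes\beta$; replacing $x$ by $x - (\mathbbm{1}^T x)(\alpha\otimes\beta)$, we may assume $\mathbbm{1}^T x = 0$ and $x \neq 0$. We want $\alpha\otimes\beta + \epsilon x$ to be a second joining for small $\epsilon > 0$.

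1. It is automatically in $\ker J$ and has total mass $1$.
2. The only obstacle is nonnegativity on coordinates where $\alpha\otimes\beta$ vanishes. This is handled by the edge-preservation part of Lemma~\ref{vertex_edge_preservation}, provided the construction of $J$ in Section~\ref{weight_joining_constraint_matrix} incorporates it. That construction should either include, as rows of $J$, the constraints $\gamma((u,v),(u',v')) = 0$ for every pair outside $\supp(\alpha\otimes\beta)$, or index the variables only by $\supp(\alpha\otimes\beta)$.
3. Lemma~\ref{vertex_edge_preservation} shows these constraints hold for every joining anyway, so adding them does not change the correspondence.
4. With them, every $x \in \ker J$ is supported inside $\supp(\alpha\otimes\beta)$. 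Since $\alpha\otimes\beta$ is strictly positive there, taking $\epsilon < \min_{\supp}(\alpha\otimes\beta)/\max|x|$ makes $\alpha\otimes\beta + \epsilon x \ge 0$.
5. This gives a joining different from $G\otimes H$, contradicting strong disjointness.

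I expect the main obstacle to be exactly this support issue. If the paper's $J$ does not explicitly include the zero constraints, I would first show that every $x \in \ker J$ vanishes off $\supp(\alpha\otimes\beta)$, by mimicking the proof of Lemma~\ref{vertex_edge_preservation} for signed solutions. That is delicate, because that lemma's argument likely uses nonnegativity. The safer course is to build the constraints into $J$ (or restrict the variable set to $\supp(\alpha\otimes\beta)$) and note that this preserves the bijection with $\mathcal{J}(G,H)$.
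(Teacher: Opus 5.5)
Your proposal is correct and follows essentially the same argument as the paper. The paper's $J$ indexes its variables only by $\supp(\alpha\otimes\beta)$ (your second option), so $\vec{\gamma}_{\text{product}}>0$, the perturbation $(1+t)\vec{\gamma}_{\text{product}}-t\vec{\gamma}'$ stays feasible for small $t$, and normalization forces uniqueness when the null space is one-dimensional. Your step of first subtracting $(\mathbbm{1}^T x)$ times the product vector is slightly more careful than the paper's ``rescale $\vec{\gamma}'$ to have unit sum,'' which tacitly assumes $\mathbbm{1}^T\vec{\gamma}'\neq 0$.
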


\subsection{A Connection between Weak and Strong Disjointness}

For connected graphs $G$ and $H$ without self-loops, there is a simple relationship
between weak and strong disjointness, which may be derived from 
a case-by-case analysis based on the connectivity level of each
graph.  
Let $E(G)$ be the edge set of $G = (U,\alpha)$. We consider
three connectivity levels: 
$|U| = |E(G)| - 1$, $|U| = |E(G)|$, and $|U| > |E(G)|$.
The first level corresponds to trees, the second to graphs containing exactly one cycle, and the third to graphs with two or more cycles. 
For connected graphs $G$ and $H$ without self-loops, Table~\ref{fig:connectivity-table} summarizes  
the absence of strong disjointness (SD) and
the relationship between and strong disjointness and weak disjointness (WD). 

\vskip.15in
\footnotesize
    \begin{table}[h!]
    \centering
    \begin{tabular}{|c|c|c|c|}
    \hline
    \sqcell{Connectivity levels of $G$ and $H$} & \sqcell{$|V|>|E(H)|$} & \sqcell{$|V|=|E(H)|$} & \sqcell{$|V|<|E(H)|$} \\ \hline
    \sqcell{$|U|>|E(G)|$} & \sqcell{\textcircled{\footnotesize{1}} not SD} & \sqcell{\textcircled{\footnotesize{5}} WD $\Leftrightarrow$ SD} & \sqcell{\textcircled{\footnotesize{6}} WD $\Leftrightarrow$ SD} \\ \hline
    \sqcell{$|U|=|E(G)|$} & \sqcell{\textcircled{\footnotesize{5}} WD $\Leftrightarrow$ SD} & \sqcell{\textcircled{\footnotesize{2}} not SD} & \sqcell{\textcircled{\footnotesize{4}} not SD} \\ \hline
    \sqcell{$|U|<|E(G)|$} & \sqcell{\textcircled{\footnotesize{6}} WD $\Leftrightarrow$ SD} & \sqcell{\textcircled{\footnotesize{4}} not SD} & \sqcell{\textcircled{\footnotesize{3}} not SD} \\ \hline
    \end{tabular}
    \normalsize
    \caption{
    Relationship between strong and weak disjointness for connected  graphs $G$ and $H$ without self-loops. Rows correspond to the connectivity levels of $G$ and columns correspond to those of $H$. Circled numbers indicate different connectivity level scenarios.  Note that in scenarios $\textcircled{\small{5}}$ and $\textcircled{\small{6}}$, 
    $G$ and $H$ may be strongly disjoint or not strongly disjoint.  
    }
    \label{fig:connectivity-table}
    \end{table}

\normalsize



As a corollary of the case-by-case analysis summarized in 
Table~\ref{fig:connectivity-table}, we obtain the following simple 
characterization of strong disjointness. 

\begin{restatable}{thm}{characterizationconnectednoself}
Let $G$ and $H$ be connected graphs with no self-loops. Then $G$ and $H$ are strongly disjoint if and only if they are weakly disjoint and exactly one of the graphs is a tree.
\label{Characterization_connected_no_self}
\end{restatable}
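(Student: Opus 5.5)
The plan is to reduce strong disjointness, under the assumption of weak disjointness, to whether a finite-dimensional space of ``perturbations'' is trivial. That space is then analysed through the cycle structure of $G$ and $H$. The easy implications come first. Strong disjointness implies weak disjointness. If both $G$ and $H$ are trees, they are bipartite and share the factor $P_2$, so by Proposition~\ref{factor_to_weak_disjoint} they are not weakly disjoint, hence not strongly disjoint. It therefore remains to show two things for weakly disjoint connected loopless $G,H$: (a) if neither graph is a tree, they are not strongly disjoint; (b) if exactly one is a tree, they are strongly disjoint.

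The common reduction is as follows. If $G,H$ are weakly disjoint, every $K\in\mathcal J(G,H)$ has $r_K=p\otimes q$. Substituting this into condition (b) of Definition~\ref{Def:weight_joining}, the difference $\Delta=\gamma-\alpha\otimes\beta$ must satisfy three conditions:
\begin{itemize}
\item $\Delta$ is symmetric;
\item $\Delta$ is supported on pairs $(e,f)$ of directed edges $e=(u,u')\in E(G)$, $f=(v,v')\in E(H)$, by Lemma~\ref{vertex_edge_preservation};
\item for each $(u,v)$, the block $\Delta((u,v),\cdot)$, viewed as an $N(u)\times N(v)$ matrix, has all row and column sums zero.
\end{itemize}
Conversely, any such $\Delta$ that is small enough gives a joining $\alpha\otimes\beta+\epsilon\Delta\neq\alpha\otimes\beta$. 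This holds because $\alpha\otimes\beta$ is strictly positive on its support, which is also the support allowed for $\Delta$. So, given weak disjointness, strong disjointness is equivalent to the triviality of this linear space $\mathcal D(G,H)$. The condition no longer involves edge weights, which matches the weight-independence stated in Proposition~\ref{prevalence_strong_weak}.

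For (a), I would take a cycle $c$ in $G$ and a cycle $c'$ in $H$. From them I would build a nonzero $\Delta$ by putting alternating signs $\pm1$ on the edges of a closed walk in $G\otimes H$ that lies over $c\times c'$. The intended model is the $C_3$, $C_4$ example of Figure~\ref{example_not_strong} (and Proposition~\ref{circle_property}), where two distinct lifts of $c\times c'$ give blocks whose row and column sums cancel. Concretely, at each vertex $(u,v)$ on the walk, the block should receive a $2\times2$ pattern $\begin{pmatrix}1&-1\\-1&1\end{pmatrix}$ indexed by the two cycle-neighbours of $u$ and of $v$. Such a block has zero row and column sums, and symmetry will be checked edge by edge. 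The case where one cycle is odd and the other even, or where the cycle lengths differ, may need the two ``diagonal'' lifts taken in both orientations. Even so, the $2\times2$ local pattern always exists, since loopless cycles give every vertex on them two distinct neighbours.

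For (b), let $G$ be a tree. I would show $\mathcal D(G,H)=0$ by induction on leaves of $G$. At a leaf $u$ with unique neighbour $u'$, the block at $(u,v)$ has a single row. Zero column sums then force $\Delta((u,v),(u',v'))=0$ for every $v'$, and by symmetry also $\Delta((u',v'),(u,v))=0$. This kills the column $u$ in the blocks at $(u',v')$. I would then try to remove $u$ and repeat on the row and column constraints that remain. The hard point is that after pruning, the blocks at $u'$ are no longer single-row blocks, yet they still have the constraints inherited from the full $N(u')$. Closing the induction will need the cycle structure of $H$ and must use weak disjointness in a real way: a tree against a bipartite $H$ already fails. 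I expect this step to be the main obstacle.

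My first attempt at that obstacle would be to split $\Delta$ into components indexed by the orientation of $H$-edges. I would then phrase the remaining constraints as a Kronecker-type system $(B_G\otimes I)\Delta=0=(I\otimes B_H)\Delta$, with $B_G,B_H$ signed incidence-type operators. For a tree, $B_G$ has an explicit kernel, so the system collapses to a condition on $H$ that can be related to the spectral condition of Theorem~\ref{weak_no_share_eigen}.
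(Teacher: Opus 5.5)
Your reduction is correct: given weak disjointness, strong disjointness is equivalent to $\mathcal{D}(G,H)=0$. Your leaf step is also correct. The gap is in (b). You stop at what you call the main obstacle and propose a spectral detour, but that detour rests on a false premise. Nothing about $H$, and no further use of weak disjointness, is needed there. That hypothesis was used up entirely in getting $r_K=p\otimes q$, that is, in passing to $\mathcal{D}$.

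The missing observation is part (ii) of the Claim preceding the proof of Proposition~\ref{one_tree_one_not_tree}. Take a vertex $w$ all of whose incident $G$-edges except one, say $(w,x)$, are already known to carry $\Delta\equiv 0$. Such a vertex behaves exactly like a leaf:
\begin{itemize}
\item in every block at $(w,v)$, all rows except row $x$ vanish;
\item so each zero column sum reads $\Delta((w,v),(x,v'))=0$;
\item symmetry then clears row $w$ in the blocks at $(x,v')$.
\end{itemize}
The constraints ``inherited from the full $N(w)$'', which you saw as the difficulty, are exactly what make this work. Repeatedly removing a leaf of the remaining subtree gives $\Delta\equiv 0$; this is equivalent to the paper's backward induction on depth from a root. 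In fact $\mathcal{D}(G,H)=0$ for every tree $G$, whatever $H$ is. Your tree-versus-bipartite example fails only because weak disjointness fails there: the non-product joinings have $r\neq p\otimes q$, not elements of a nonzero $\mathcal{D}$. So your Kronecker system does not collapse to a nontrivial condition on $H$ related to Theorem~\ref{weak_no_share_eigen}. It simply has only the zero solution.

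Part (a) is sound in substance, and it is more economical than the paper's route. The paper treats the case where both graphs have at least two cycles by counting unknowns against constraints, using $\kappa-\iota=2(M-m)(N-n)-1>0$. It treats the cases involving a unicyclic graph through the subgraph-extension and weight-perturbation lemmas. In your argument, $\mathcal{D}$ is weight-free, and a nonzero element supported over $c\times c'$ extends by zero to $G\times H$. So you handle all non-tree pairs at once, without even needing weak disjointness.

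You should correct the description, though. Take a signed closed walk that passes through a vertex of $c\times c'$ only once. It puts just two nonzero entries, of opposite sign, in that vertex's $2\times 2$ block, and such a block cannot have zero row and column sums. Every vertex you touch must carry all four lifted edges. The clean choice is $\Delta((u_i,v_j),(u_{i+\epsilon},v_{j+\delta}))=\epsilon\delta$ for $\epsilon,\delta\in\{\pm1\}$, at every vertex of $c\times c'$, with indices taken along fixed orientations of the two cycles, and $\Delta=0$ elsewhere. This is automatically symmetric. It is a multiple of $\gamma-\alpha\otimes\beta$ for the cycle joining \eqref{gamma_construction_cycle}. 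It needs no case split on parity or cycle lengths.
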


The previous result remains valid for fully supported graphs with no self-loops, even when the graph is disconnected.
\begin{restatable}{cor}{characterizationfullysupportednoself}
    Let $G$ and $H$ be fully supported graphs with no self-loops. Then $G$ and $H$ are strongly disjoint if and only if they are weakly disjoint and exactly one of the graphs is a forest.
\label{Characterization_fully_supported_no_self}
\end{restatable}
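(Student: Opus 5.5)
The plan is to reduce Corollary~\ref{Characterization_fully_supported_no_self} to Theorem~\ref{Characterization_connected_no_self}, using Lemma~\ref{disconnected_not_weak} to handle the case where both graphs are disconnected. The argument splits according to how many of $G$ and $H$ are connected.

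\emph{Both connected.} A connected forest is a tree, so the statement is exactly Theorem~\ref{Characterization_connected_no_self}.

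\emph{Both disconnected.} By Lemma~\ref{disconnected_not_weak}, $G$ and $H$ are not weakly disjoint. Hence they are not strongly disjoint either. Both sides of the equivalence are therefore false, and it holds trivially.

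\emph{Exactly one disconnected.} Say $G$ is connected and $H$ has components $H_1,\dots,H_k$ with $k\ge 2$. Each $H_i$ is connected, has no self-loops, and has at least one edge, since every vertex has positive degree.

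First I would show that strong disjointness of $G$ and $H$ is equivalent to strong disjointness of $G$ and $H_i$ for every $i$. Here $H_i$ carries the weight $\beta|_{H_i}$ normalized by its mass $w_i=\sum_{v\in H_i}q(v)$. I would then show the same equivalence for weak disjointness. For the forward direction, take joinings $\gamma_i\in\mathcal{J}(G,H_i)$ and form $\gamma=\sum_i w_i\gamma_i$, placed on $U\times V_i$. One checks that $\gamma$ is a weight joining of $\alpha$ and $\beta$. Transition coupling is local in $v$. Degree coupling holds because each $\gamma_i$ has $U$-marginal $p$ and $\sum_i w_i=1$. The resulting $\gamma$ equals $\alpha\otimes\beta$ (respectively, has degree $p\otimes q$) only if every $\gamma_i$ is the product (respectively, has product degree). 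For the converse, take any $\gamma\in\mathcal{J}(G,H)$. By edge preservation (Lemma~\ref{vertex_edge_preservation}), $\gamma$ decomposes into blocks on $(U\times V_i)^2$. The block masses are forced to be $w_i$ by the $H$-degree coupling, and each normalized block lies in $\mathcal{J}(G,H_i)$.

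Next I would show that weak disjointness of $G$ and $H$ is impossible when $G$ is not a tree. It suffices to handle the case where $G$ is not a tree, since otherwise $G$ is the unique forest among $G$ and $H$. A disconnected $H$ has eigenvalue $1$ of multiplicity at least $2$. This alone does not violate Proposition~\ref{weak_no_share_eigen_general}, because $G$ has eigenvalue $1$ with multiplicity one. So I instead aim for the decomposition: weak disjointness of $G$ and $H$ holds if and only if $G$ is weakly disjoint from each $H_i$.

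With these equivalences, apply Theorem~\ref{Characterization_connected_no_self} to each pair $(G,H_i)$. Then $G$ and $H$ are strongly disjoint if and only if, for every $i$, $G$ and $H_i$ are weakly disjoint and exactly one of $G,H_i$ is a tree.

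\begin{itemize}
\item If $G$ is a tree, the condition says that every $H_i$ is a non-tree and $G$ is weakly disjoint from every $H_i$. This is equivalent to: $H$ is not a forest and $G,H$ are weakly disjoint. Since $G$ is the forest, this matches the corollary exactly.
\item If $G$ is not a tree, the condition requires every $H_i$ to be a tree, i.e.\ $H$ is a forest, together with weak disjointness. Since $G$ is connected and not a tree, it is not a forest, so again exactly one graph is a forest.
\end{itemize}

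The main obstacle is the decomposition step for disconnected $H$. The subtle point is that the degree coupling forces the block masses to equal $w_i$: the mass of each $U\times V_i$ block is determined by $H$, while its $U$-marginal must be a multiple of $p$. Verifying the latter requires $G$ to be connected, via Proposition~\ref{connected_drop_coupling} or an argument along the lines of Proposition~\ref{factor_degree}. This is exactly why the both-disconnected case must instead be handled by Lemma~\ref{disconnected_not_weak}.
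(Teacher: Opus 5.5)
Your proposal is correct and follows the paper's route: Lemma~\ref{disconnected_not_weak} disposes of the both-disconnected case, and otherwise you reduce strong and weak disjointness to the components of $H$ and apply Theorem~\ref{Characterization_connected_no_self} componentwise. Your block decomposition is more carefully justified than the paper's sketch, since you note that connectedness of $G$, via Proposition~\ref{connected_drop_coupling}, is what forces each block's $U$-marginal to be $w_i p$.

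Two small fixes are needed. First, the sentence asserting that weak disjointness of $G$ and $H$ is impossible when $G$ is not a tree is false: $C_3$ and two disjoint copies of $P_2$ are even strongly disjoint. Delete it; nothing later uses it. Second, in the tree bullet, state explicitly that a tree is never weakly disjoint from a tree component $H_i$, because both have the factor $P_2$ and Proposition~\ref{factor_to_weak_disjoint} applies. That fact is what lets you pass from ``$H$ is not a forest'' back to ``every $H_i$ is a non-tree''.
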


\vskip.15in

\section{Persistence of Disjointness}
\label{Prevalence_of_Disjointness}

Let $\mathcal{G}$ and $\mathcal{H}$ be families of undirected graphs. 
A pair of graphs $G \in \mathcal{G}$ and $H \in \mathcal{H}$ may be
strongly disjoint, weakly disjoint, or neither. 
In general there is no reason 
to expect that the same relationship should hold 
for all (or most) such pairs.  
The situation is different, however, if each family $\mathcal{G}$ and $\mathcal{H}$ 
consists of weighted graphs with the same vertex and edge sets.
Given $U$ and $E \subseteq U \times U$, let
\begin{align}
W(U,E) \, = \, 
\left\{ \text{weight functions } \alpha \text{ on } U \text{ such that }\supp(\alpha) = E \right\}.
\label{set_skeleton_weight}
\end{align}
Note that $W(U,E)$ is empty if $E$ is not symmetric.
The family $W(U,E)$ may be represented as a subset of $\mathbb{R}^{d}$, where $d = |\{\{u,u'\}\subset U:(u,u')\in E\}|-1$ is the number of undirected edges in $E$, reduced by one due to the normalization constraint that the weights sum to one. 
The normalization constraint ensures that $W(U,E)$ is a bounded set, and 
the support condition ensures that $W(U,E)$ is open in $\mathbb{R}^{d}$. 

One may regard the pair $(U,E)$ underlying the family of weight
functions (and associated graphs) $W(U,E)$ as a skeleton.
Given two skeletal families, we show below that the
disjointness relation between their elements are 
essentially determined by their skeletons. 
Let $B \subseteq \mathbb{R}^d$ be bounded and open. We will call 
a set $A \subseteq B$ \emph{full} if it is open and dense in $B$ and if
$\lambda_d(A) = \lambda_d(B)$, where $\lambda_d$ denotes $d$-dimensional
Lebesgue measure. 
The proof of the following proposition and its corollary 
are given in Section \ref{proof_for_prevalence}.  

\begin{restatable}{prop}{prevalence}
\label{prevalence_strong_weak}
Let $U$ and $V$ be finite sets, and let $E \subseteq U \times U$ and 
$F \subseteq V \times V$ be symmetric. Then {exactly} one of the following holds:
\begin{itemize}

\vskip.05in

\item[(1)]
For all $\alpha \in W(U,E)$ and $\beta \in W(V,F)$, the graphs $G = (U,\alpha)$ and 
$H = (V,\beta)$ are not weakly disjoint.

\vskip.06in

\item[(2)] 
There is a full subset $A \subseteq W(U,E)$ with the property that for 
each $\alpha \in A$ there is a full subset $B_{\alpha} \subseteq W(V,F)$ 
such that $G = (U,\alpha)$ and $H = (V,\beta)$ are weakly
disjoint for every $\beta \in B_{\alpha}$. 

\end{itemize}
The dichotomy above continues to hold if ``weakly disjoint"  is replaced by ``strongly disjoint" in both statements.
\end{restatable}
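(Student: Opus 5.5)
The argument is an algebraic-genericity one. By Lemma~\ref{vertex_edge_preservation}, every joining of $(U,\alpha)$ and $(V,\beta)$ with $\alpha\in W(U,E)$, $\beta\in W(V,F)$ is supported on the fixed set $\supp(\alpha\otimes\beta)=E\otimes F$. Degree functions also have fixed support. So the joining constraint matrix $J=J(\alpha,\beta)$ of Proposition~\ref{rank_equiv_strong} can be written with a fixed column set indexed by $E\otimes F$. Its entries are polynomial in the weights: multiply the transition coupling constraints through by $p(u)$ and $q(v)$, which are linear in $\alpha$ and $\beta$. Hence $J(\alpha,\beta)$ is a matrix whose entries are polynomials in $(\alpha,\beta)\in W(U,E)\times W(V,F)\subseteq\mathbb{R}^{d}\times\mathbb{R}^{e}$.

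For strong disjointness I would show the following. The vector $\alpha\otimes\beta$ always lies in $\ker J$ and has full support on $E\otimes F$. Since $\ker J$ contains a strictly positive vector (on the fixed support), $\ker J$ is one-dimensional if and only if the feasible polytope $\{\gamma\ge0,\ \mathbb{1}^T\gamma=1,\ J\gamma=0\}$ is a single point. This is the content of Proposition~\ref{rank_equiv_strong}, which I would re-derive with the columns restricted to $E\otimes F$. Strong disjointness is therefore equivalent to $\rank J(\alpha,\beta)=N-1$ with $N=|E\otimes F|$. Since $\dim\ker J\ge1$ always, this rank is the maximum possible. The condition holds exactly when some $(N-1)$-minor of $J$ is nonzero, that is, off the zero set $Z$ of a finite family of polynomials $f_i(\alpha,\beta)$. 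The same holds for weak disjointness, with a second matrix. Restrict to degree variables $\Gamma$ with support in $\supp(p)\times\supp(q)$. The degree functions of joinings are exactly the nonnegative solutions of a linear system obtained by projecting $\ker J$ to the degree coordinates $r$. Weak disjointness is equivalent to this projected space being one-dimensional, because it contains the strictly positive $p\otimes q$ and any other direction could be added with a small coefficient. The image dimension equals $\rank(MK)$, where $K$ is a polynomial basis matrix for the kernel. I would instead express it as the rank of the stacked matrix, which is again a polynomial condition: weak disjointness holds if and only if $\dim\{(\gamma,r):J\gamma=0,\ r=R\gamma\}$ projected onto $r$ equals $1$. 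Equivalently, $\dim\ker J-\dim(\ker J\cap\ker R)=1$. Both terms are ranks of polynomial matrices, and lower semicontinuity gives generic values.

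The dichotomy then follows. Let $g(\alpha,\beta)$ be a polynomial (a sum of squares of the relevant minors) that vanishes exactly where disjointness fails. Note that for weak disjointness the difference of two ranks is not directly semicontinuous. So I will check that each of $\operatorname{rank}J$ and $\operatorname{rank}\begin{bmatrix}J\\R\end{bmatrix}$ attains its generic maximal value off a proper algebraic set, and that disjointness at one point implies disjointness at all points where both ranks are generic. This is the main obstacle. Failure of disjointness is witnessed by the existence of a solution in a subvariety, so I will need to argue that generic ranks give the smallest value of the difference. If the characterization does not reduce cleanly, I would fall back on Theorem~\ref{weak_no_share_eigen}/Proposition~\ref{weak_no_share_eigen_general} and use the resultant of the characteristic polynomials (divided by the factors for eigenvalue $1$) of $P$ and $Q$. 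Once the failure set is an algebraic set $Z$, either $Z$ contains all of $W(U,E)\times W(V,F)$, which is case (1), or $Z$ is the zero set of a nonzero polynomial $g$ on the connected open box.

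In the second case, let $A=\{\alpha: g(\alpha,\cdot)\not\equiv0\}$. Its complement is the common zero set of the coefficients of $g$ viewed as a polynomial in $\beta$. These coefficients are not all identically zero, so that set is closed, nowhere dense, and Lebesgue-null, and $A$ is full. For $\alpha\in A$, take $B_\alpha=\{\beta: g(\alpha,\beta)\ne0\}$, the complement of the zero set of a nonzero polynomial, which is again full. Exclusivity is clear because full sets are nonempty.
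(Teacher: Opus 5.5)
Your strong-disjointness argument is correct, and it is organized differently from the paper's. The paper (Lemma~\ref{lemma:strong_disjoint}) freezes $\beta$. It notes that the $(l-1)\times(l-1)$ minors of the constraint matrix are analytic in $\alpha$ and not identically zero, because a witness pair exists, and so obtains a full set $A_\beta$. For each $\tilde\alpha\in A_\beta$ the pair $(\tilde\alpha,\beta)$ is again a witness, so the same lemma with the roles swapped gives $B_{\tilde\alpha}$. You instead work jointly: a single polynomial $g(\alpha,\beta)$ on the product, sliced by its coefficients in $\beta$. This gives the largest admissible $A$ and $B_\alpha$, and it yields Corollary~\ref{zero_measure} directly without Fubini. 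The paper's version needs only one-variable analyticity. (All you use about $W(U,E)$ and $W(V,F)$ is that they are nonempty open sets; they are open convex sets, not boxes.)

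For weak disjointness your main line has a genuine gap, at exactly the step you flagged. The quantity $\dim R(\ker J)=\rank\begin{bmatrix}J\\ R\end{bmatrix}-\rank J$ is a difference of two lower semicontinuous functions, and it is semicontinuous in neither direction. At a special point, the limit of nearby generic kernels is a subspace of $\ker J$ on which $R$ may have smaller rank. So, abstractly, $\dim R(\ker J)$ could equal $1$ on a thin set and $2$ generically. A weakly disjoint pair at which either rank is non-generic therefore tells you nothing about the generic value. The claim ``disjointness at one point implies disjointness at all generic-rank points'' cannot be extracted from rank bookkeeping; it holds here only because of the spectral characterization.

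Your fallback is therefore not optional: it is the proof, and it uses the same key input as the paper. To make it precise:
\begin{enumerate}
\item Discard vertices incident to no edge of $E$ or $F$. They have zero degree in every joining, and removing them makes the graphs fully supported, so Proposition~\ref{weak_no_share_eigen_general} applies.
\item Set $\chi_\alpha(\lambda)=\det(\lambda D_1-W_1)$. This is a polynomial in $(\alpha,\lambda)$ with leading coefficient $\prod_u p(u)>0$. It is divisible by $\lambda-1$ because $D_1-W_1$ has zero row sums. Define $\chi_\beta$ in the same way.
\item Divide exactly \emph{one} factor $\lambda-1$ out of $\chi_\alpha$ and out of $\chi_\beta$. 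A common root of the two quotients is then exactly either a shared eigenvalue $\neq 1$ or the eigenvalue $1$ with overlapping multiplicity at least two.
\item Hence weak disjointness is equivalent to nonvanishing of the resultant of the two quotients. This is a single polynomial in $(\alpha,\beta)$, and your slicing argument finishes the proof.
\end{enumerate}
If you instead strip every factor $\lambda-1$, you must treat the eigenvalue $1$ separately. Its multiplicity is the number of components, which is constant on a skeletal family, and the case where both skeletons are disconnected falls under case (1) by Lemma~\ref{disconnected_not_weak}. The paper avoids resultants by freezing $\beta$: the eigenvalues $\lambda_j\neq1$ of $Q$ are then constants, and failure is the zero set in $\alpha$ of $\prod_j\det(P'-\lambda_j I)$.
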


\begin{restatable}{cor}{zeromeasure}
\label{zero_measure}
In case (2) of Proposition~\ref{prevalence_strong_weak}, the set of weight function pairs 
$\left(\alpha, \beta\right)$ such that $(U,\alpha)$ and $(V,\beta)$ 
are not weakly disjoint has measure zero in 
${W}(U, E) \times {W}(V, F)$.  The same conclusion holds if ``weakly disjoint" is replaced by ``strongly disjoint".
\end{restatable}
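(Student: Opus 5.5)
Corollary~\ref{zero_measure} follows from Proposition~\ref{prevalence_strong_weak} by Fubini's theorem. Let $N \subseteq W(U,E) \times W(V,F)$ denote the set of pairs $(\alpha,\beta)$ for which $(U,\alpha)$ and $(V,\beta)$ are not weakly disjoint. We must show that $N$ is Lebesgue measurable in the product space and that it has measure zero.

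\textbf{Step 1: measurability.} Weak disjointness is a rank condition; compare Lemma~\ref{polynomial_time} and the Sylvester-equation formulation behind Theorem~\ref{weak_no_share_eigen}. The constraint matrix defining joinings, together with the linear map sending $\gamma$ to its degree function $r$, has entries that are rational functions of $(\alpha,\beta)$. Degrees are sums of weights, so they are strictly positive on supported vertices, and these rational functions have no poles on the domain.

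The subtlety is the nonnegativity constraint $\gamma \ge 0$. Because $\gamma = \alpha\otimes\beta$ lies in the relative interior of the support pattern allowed by edge preservation (Lemma~\ref{vertex_edge_preservation}), weak disjointness should be equivalent to the following: every solution of the linear constraints supported on $\supp(\alpha\otimes\beta)$ has degree function proportional to $p\otimes q$. Indeed, if a solution $\gamma'$ had a non-proportional degree function, then $\alpha\otimes\beta + \epsilon\gamma'$ would, for small $\epsilon$, be a genuine joining with degree function different from $p\otimes q$. Hence weak disjointness is a rank equality between two matrices whose entries are rational in $(\alpha,\beta)$. Its failure is therefore the vanishing of finitely many minors, so $N$ is the zero set of polynomials after clearing denominators. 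In particular $N$ is relatively closed, hence Borel.

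\textbf{Step 2: Fubini.} By Proposition~\ref{prevalence_strong_weak}(2), there is a full set $A \subseteq W(U,E)$, and for each $\alpha \in A$ a full set $B_\alpha$. The section $N_\alpha = \{\beta : (\alpha,\beta)\in N\}$ is disjoint from $B_\alpha$ and lies in the bounded open set $W(V,F)$, so $\lambda(N_\alpha) = \lambda(W(V,F)\setminus B_\alpha) = 0$. Since $\lambda(W(U,E)\setminus A)=0$, Tonelli's theorem applied to the indicator of the Borel set $N$ gives $\lambda(N) = \int_{W(U,E)} \lambda(N_\alpha)\,d\alpha = 0$.

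\textbf{Strong disjointness.} The same argument applies with strong disjointness in place of weak disjointness. By Proposition~\ref{rank_equiv_strong}, strong disjointness is the statement that the null space of $J$ is one-dimensional. The entries of $J$ are polynomial in $\alpha$, $\beta$, and the degrees $p$, $q$, so the failure set is again algebraic, hence Borel, and Fubini applies verbatim.

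\textbf{Main obstacle.} The only delicate point is Step 1 in the weak case: justifying that the nonnegativity constraint can be dropped, so that the condition becomes purely algebraic. If that perturbation argument proved troublesome, I would fall back on a different route to measurability. Weak disjointness is the statement that the compact polytope $\mathcal{J}$ maps to a single point under the linear degree map. The correspondence from $(\alpha,\beta)$ to $\mathcal{J}$ has closed graph, and this makes the non-disjoint set an analytic set, which is still Lebesgue measurable. That suffices for Tonelli.
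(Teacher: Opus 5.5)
This is essentially the paper's argument: the bad set sits inside the null set $(W(U,E)\setminus A)\times W(V,F)$ together with the sections over $A$, which are null because each $B_\alpha$ is full, and Fubini--Tonelli finishes the proof. Your explicit measurability step is something the paper passes over, so it is a real gain in rigor, but one justification in it needs correcting. Failure of weak disjointness is the strict inequality $\rank(J)<\rank\left(\left[\begin{array}{c} J\\ J_w\end{array}\right]\right)$, using $J,J_w$ as in the proof of Lemma~\ref{polynomial_time}, and this is not itself a minor-vanishing condition; it is a finite union of sets $\{\rank(J)\le k\}\cap\{\rank\left(\left[\begin{array}{c} J\\ J_w\end{array}\right]\right)\ge k+1\}$, so $N$ is constructible rather than obviously closed, which still makes it Borel and is all Tonelli needs.
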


\begin{rmk}
Both cases described in Proposition~\ref{prevalence_strong_weak} can occur. Case (1) arises, 
for example, when both families $W(U,E)$ and $W(V,F)$ consist of trees. In this case, 
the families are not weakly disjoint, since they share a common nontrivial graph factor 
$P_2$.
Case (2) occurs when both families $W(U,E)$ and $W(V,F)$ consist of weighted triangles. 
To see this, consider the uniformly weighted triangle and the triangle with edge weights in the ratio $1 : 1 : 2$. By explicitly calculating their eigenvalues and applying Theorem~\ref{weak_no_share_eigen}, we may verify that these two graphs
are weakly disjoint. Hence Statement (1) of the dichotomy in Proposition~\ref{prevalence_strong_weak} cannot hold, and therefore Statement (2) holds.
Even in this simple example, there are
also pairs of weighted triangles that are not weakly disjoint, for example isomorphic pairs, but such pairs are topologically and measure-theoretically exceptional by Statement (2) and Corollary~\ref{zero_measure}.
Similar remarks apply to the case of 
strong disjointness.
\end{rmk}


\section{Characterizing Graph Families Using Disjointness}
\label{characterize_graph_family}

Knowing that a graph $G$ is disjoint from another graph $H$ 
can provide information about the structure of $G$.  
Recall that a graph $G = (U,\alpha)$ is said to be \emph{bipartite} 
if its vertex set $U$ can be partitioned into two disjoint 
subsets $U_1$ and $U_2$ such that all the edges of $G$ 
connect a vertex in $U_1$ to a vertex in $U_2$: there are no
edges between vertices in $U_1$ or between vertices in $U_2$. 
Also recall that $P_2$ denotes the simple path with 2 vertices and no
self-loops.


Note that the results in this section are motivated by and analogous to existing results in ergodic theory (e.g., Theorem 3.2 in \cite{de2023joinings}).

\begin{restatable}{prop}{bipartitestrong}
If $G$ is connected, then the following are equivalent: 
\begin{enumerate}
    \item $G$ is bipartite.
    \item $P_2\0|\0G$.
    \item $G$ is not strongly disjoint from $P_2$.
    \item $G$ is not weakly disjoint from $P_2$.
    \end{enumerate}
\label{bipartite_strongdisjoint}
\end{restatable}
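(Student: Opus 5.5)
The plan is to prove the cycle of implications (1) $\Rightarrow$ (2) $\Rightarrow$ (4) $\Rightarrow$ (1), and to add (3) by noting that (4) $\Rightarrow$ (3) is immediate while (3) $\Rightarrow$ (4) follows from Theorem~\ref{Characterization_connected_no_self}. Throughout, $P_2$ has vertices $\{0,1\}$, $\beta(0,1)=\beta(1,0)=1/2$, degree function $q\equiv 1/2$, and its transition matrix has eigenvalues $1$ and $-1$.

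For (1) $\Rightarrow$ (2), let $U=U_1\sqcup U_2$ be the bipartition and define $\phi(u)=0$ on $U_1$ and $\phi(u)=1$ on $U_2$. Both parts are nonempty because $G$ is connected with at least one edge, so $\phi$ is surjective.
\begin{itemize}
\item Condition (ii) of Definition~\ref{graph_factor} holds. For $u\in U_1$, all the weight leaving $u$ goes to $U_2$, so $\sum_{u'\in U_2}\alpha(u,u')=p(u)$ and $\sum_{u'\in U_1}\alpha(u,u')=0$. Multiplying by $q(0)=1/2$ gives $p(u)\beta(0,1)$ and $p(u)\beta(0,0)=0$, as required. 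The case $u\in U_2$ is symmetric.
\item Condition (i) holds because $\sum_{u\in U_1}p(u)=\sum_{u\in U_1,u'\in U_2}\alpha(u,u')=\sum_{u\in U_2}p(u)$, and these two sums add to $1$, so each equals $1/2$. Alternatively, Proposition~\ref{factor_degree} gives (i) directly since the graphs are connected.
\end{itemize}

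For (2) $\Rightarrow$ (4), note that $P_2\,|\,P_2$ under the identity map. So $P_2$ is a non-trivial common factor of $G$ and $P_2$, and Proposition~\ref{factor_to_weak_disjoint} shows they are not weakly disjoint.

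For (4) $\Rightarrow$ (1), both graphs are connected, so Theorem~\ref{weak_no_share_eigen} applies. Failure of weak disjointness means the transition matrix $P$ of $G$ shares an eigenvalue other than $1$ with that of $P_2$, so $-1$ is an eigenvalue of $P$.
\begin{itemize}
\item Take a real eigenvector $f$ with $Pf=-f$. Reversibility gives
\[
\sum_{u,u'}\alpha(u,u')\bigl(f(u)+f(u')\bigr)^2=2\langle f,(I+P)f\rangle_p=0,
\]
so $f(u')=-f(u)$ on every edge.
\item Connectivity then forces $|f|$ to be constant and nonzero on $U$.
\item The sets $U_1=\{f>0\}$ and $U_2=\{f<0\}$ partition $U$, and no edge joins two vertices with the same sign. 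Since a self-loop would force $f(u)=0$, there are no self-loops either. Hence $G$ is bipartite.
\end{itemize}

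For (3), if $G$ is strongly disjoint from $P_2$ then it is weakly disjoint from $P_2$, so (4) $\Rightarrow$ (3). For the converse, assume $G$ is weakly disjoint from $P_2$ but not strongly disjoint. There are two cases.
\begin{itemize}
\item If $G$ has no self-loops, Theorem~\ref{Characterization_connected_no_self} applies. Since $P_2$ is a tree, weak disjointness fails to give strong disjointness only if $G$ is also a tree. A tree is bipartite, and then (1) $\Rightarrow$ (4) gives a contradiction.
\item If $G$ has self-loops, Theorem~\ref{Characterization_connected_no_self} does not apply, and this is the step I expect to be the main obstacle. I would argue directly. Take a joining $\gamma\in\mathcal{J}(G,P_2)$; weak disjointness forces $r(u,v)=p(u)/2$. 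Every edge of the joining has the form $((u,v),(u',1-v))$. The transition coupling condition for $G$ gives $\gamma((u,v),(u',1-v))=\tfrac12\alpha(u,u')$ for each choice of $v$, because only the single value $\tilde v=1-v$ contributes to the sum. So $\gamma=\alpha\otimes\beta$, and $G$ and $P_2$ are strongly disjoint.
\end{itemize}
The same direct argument also covers the self-loop-free case, so Theorem~\ref{Characterization_connected_no_self} is not strictly needed. Either way, (3) $\Leftrightarrow$ (4), which completes the equivalences.
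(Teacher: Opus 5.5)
Your proof is correct, but you obtain the key implication (a non-bipartite $G$ is weakly disjoint from $P_2$) by a different route than the paper.

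The paper's argument is combinatorial. In Lemma~\ref{odd_even} it shows that if $r(u,v_1)>0$, then $r(u',v_2)>0$ for every neighbor $u'$ of $u$, and symmetrically. An odd cycle therefore forces $r(u,v_1),r(u,v_2)>0$ for all $u$. Multiplying the identity $r(u,v_1)/r(u',v_2)=p(u)/p(u')$ for adjacent $u,u'$ around an odd closed walk through $u$ then gives $r(u,v_1)=r(u,v_2)=p(u)/2$. Together with Fact~\ref{bipartite_no_odd} and Proposition~\ref{bipartite_factor}, this closes the loop: bipartite $\Rightarrow$ not weakly disjoint $\Rightarrow$ not strongly disjoint $\Rightarrow$ no odd cycle $\Rightarrow$ bipartite. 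Proposition~\ref{bipartite_factor} also proves (2) $\Rightarrow$ (1) directly, whereas you reach (1) from (2) only by passing through (4).

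You instead invoke Theorem~\ref{weak_no_share_eigen}, using only its Sylvester-type direction, so there is no circularity. You then give a Dirichlet-form argument: eigenvalue $-1$ forces a sign-alternating eigenvector, which yields the bipartition.

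Your direct proof of (3) $\Leftrightarrow$ (4) is exactly the final computation in Lemma~\ref{odd_even}: once $r=p\otimes q$, each vertex of $P_2$ has a single neighbor, which pins down $\gamma=\alpha\otimes\beta$. You are right that this step needs neither the no-self-loop hypothesis nor Theorem~\ref{Characterization_connected_no_self}.

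The trade-off is as follows. Your version is shorter given the paper's machinery, and it cleanly separates the two ingredients: non-bipartite $\Rightarrow$ weakly disjoint, and, for $P_2$, weakly disjoint $\Rightarrow$ strongly disjoint. The paper's version is elementary and avoids the spectral theorem and the diagonalizability of reversible chains. It also exhibits the concrete mechanism, odd closed walks, by which non-bipartiteness forces $r=p\otimes q$.
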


\begin{restatable}{prop}{connectedstrong}
Let $G$ be a fully supported graph, and let $\mathcal{H}$ be
the family of graphs consisting of two isolated vertices, each 
equipped with a self-loop of positive weight.
The following are equivalent: 
\begin{enumerate}
    \item $G$ is connected.
    \item $G$ is strongly disjoint from some $H \in \mathcal{H}$.
    \item $G$ is strongly disjoint from every $H \in \mathcal{H}$.
    \end{enumerate}
\label{connected_self_loop}
\end{restatable}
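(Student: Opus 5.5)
The plan is to prove the cycle of implications $(3)\Rightarrow(2)\Rightarrow(1)\Rightarrow(3)$. The first is immediate: $\mathcal{H}$ is nonempty, so if $G$ is strongly disjoint from every $H \in \mathcal{H}$ it is strongly disjoint from some $H\in\mathcal{H}$.

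For $(2)\Rightarrow(1)$ I argue by contraposition. Suppose $G$ is disconnected. Every $H \in \mathcal{H}$ is fully supported, because each vertex carries a self-loop of positive weight. Every such $H$ is also disconnected, since it has two vertices and no edge between them. By Lemma~\ref{disconnected_not_weak}, $G$ and $H$ are then not weakly disjoint. Since strong disjointness implies weak disjointness, they are not strongly disjoint either. This holds for every $H\in\mathcal{H}$, so (2) fails.

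The substantive direction is $(1)\Rightarrow(3)$. Fix a connected $G=(U,\alpha)$ with degree function $p$, so $p>0$. Fix $H=(V,\beta)\in\mathcal{H}$ with $V=\{v_1,v_2\}$, $\beta(v_1,v_1)=a$, $\beta(v_2,v_2)=1-a$ and $a\in(0,1)$. Then $q(v_1)=a$, $q(v_2)=1-a$, and $\beta(v,v')=q(v)\mathbbm{1}(v=v')$.

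Let $\gamma$ be any weight joining, with degree function $r$. By edge preservation (Lemma~\ref{vertex_edge_preservation}), $\gamma((u,v),(u',v'))=0$ unless $v'=v$ and $(u,u')\in E(G)$. The first transition coupling condition therefore collapses to a single term:
\[
p(u)\,\gamma((u,v),(u',v)) \,=\, \alpha(u,u')\, r(u,v).
\]
Applying the symmetry of $\gamma$ and $\alpha$ to this identity gives
\[
\frac{r(u,v)}{p(u)} \,=\, \frac{r(u',v)}{p(u')} \quad\text{whenever } \alpha(u,u')>0.
\]
Because $G$ is connected, the ratio $r(u,v)/p(u)$ equals a constant $c_v$ on all of $U$, for each fixed $v$. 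Summing $r(u,v)=c_v\,p(u)$ over $u$ and using the degree coupling condition gives $c_v=q(v)$, so $r=p\otimes q$.

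Substituting back, $\gamma((u,v),(u',v')) = \alpha(u,u')\,q(v)\,\mathbbm{1}(v=v') = \alpha(u,u')\beta(v,v')$. Hence $\gamma=\alpha\otimes\beta$, and $G$ is strongly disjoint from this $H$. Since $H\in\mathcal{H}$ was arbitrary, (3) holds.

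The main step to get right is the propagation of $r(u,v)/p(u)$ along the edges of $G$. It depends on two facts: edge preservation forces $K$ to have no edges between the layers $U\times\{v_1\}$ and $U\times\{v_2\}$, and full support lets us divide by $p(u)$. Both are available from the earlier results and the hypotheses, so I expect no real obstacle.
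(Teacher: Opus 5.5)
Your proof is correct. It differs from the paper's mainly in the disconnected direction, and in a small detail of the connected one.

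For $(1)\Rightarrow(3)$ you follow the same outline as the paper: edge preservation collapses the transition coupling condition to one term, connectivity propagates an invariant along edges, and degree coupling fixes the constant. The difference is that you propagate $r(u,v)/p(u)$ for each fixed $v$, while the paper propagates $r(u,v_1)/r(u,v_2)$. Your version needs only $p>0$, which is given. The paper's version tacitly divides by $r(u,v_2)$ before knowing it is positive.

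The real difference is in $(2)\Rightarrow(1)$. For a disconnected $G$, the paper writes down an explicit one-parameter family of non-product joinings. It scales the $v_1$-layer on one block of components by $t$ and compensates on the other block. You instead note that $G$ and every $H\in\mathcal{H}$ are fully supported and disconnected, and invoke Lemma~\ref{disconnected_not_weak}.

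Your route has three advantages:
\begin{enumerate}
\item It is shorter.
\item It gives the stronger conclusion that $G$ is not even weakly disjoint from any $H\in\mathcal{H}$.
\item It avoids the bookkeeping of the explicit construction. There, nonnegativity of the $v_2$-layer on the second block in general requires $t$ close enough to $1$, not $t$ arbitrary in $(0,1)$.
\end{enumerate}
In exchange, the paper's construction is self-contained and exhibits the competing joining directly. Yours passes through the factor machinery of Proposition~\ref{furstenberg}.
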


The two propositions can be combined to conclude that a graph $G$ is connected and 
non-bipartite if and only if it is strongly disjoint from the disjoint 
union of $P_2$ and a single vertex with a self-loop.

\section{Connections with Finite State Markov Chains}
\label{Section:MC}

\subsection{Reversible Markov Chains}
\label{markovchain}


A reversible Markov chain $Z = Z_0, Z_1, \cdots$ with finite state 
space $S$ is described by a transition matrix 
$R(s' \mid s) = \mathbb{P}(Z_{n+1} = s' \mid Z_n = s)$ and a probability mass function $r$
on $S$ that together satisfy the detailed balance equations:
\[
r(s) \, R(s' \mid s) = r(s') \, R(s \mid s') \mbox{ for all } s, s' \in S.
\]
The detailed balance equations ensure that $r$ is a stationary 
distribution for $Z$.
Let $S^{\mathbb{N}}$ denote the set of infinite sequences
$(s_0, s_1, \ldots)$ with $s_i \in S$, with Borel sigma field
$\mathcal{B}(S^{\mathbb{N}})$ 
under the product topology.  
Together, $R$ and $r$ determine $\text{Law}(Z)$, which is a stationary
measure $\mu$ on $(S^{\mathbb{N}}, \mathcal{B}(S^{\mathbb{N}}))$ such that
for all $k \geq 1$ and $u_0, \ldots, u_k \in U$,
\[
\mu(\{s_0\} \times \cdots \times \{s_k\} \times S \times S \times \cdots) 
\ = \ r(s_0) \, \prod_{i=1}^k R(s_i \mid s_{i-1}) .
\]
Let $\mathbb{M}_r(S)$ be the family of reversible Markov chains
with state space $S$, and let 
$\mathbb{L}_r(S) \coloneqq \{ \text{Law}(Z) : Z \in \mathbb{M}_r(S) \}$
be the corresponding family of stationary reversible Markov measures.
 
Every fully supported graph $K = (S, \gamma)$ gives rise
to a measure $\mu \in \mathbb{L}_r(S)$ with transition matrix $R$
and stationary distribution $r$ specified by 
\begin{align*}
R(s' \mid s) \ = \ \frac{\gamma(s,s')}{r(s)} 
\ \text{ and } \ 
r(s) = \sum_{s' \in S} \gamma(s,s').
\end{align*}
Conversely, it is easy to see that every measure $\mu \in \mathbb{L}_r(S)$
can be generated by a pair $(R,r)$ associated with a 
fully supported graph $K = (S, \gamma)$.
In particular, there is a bijective correspondence
between $\mathbb{L}_r(S)$ and the family $\mathbb{G}(S)$ of all fully supported graphs with vertex set $S$.  In the sequel we will
write $Z \sim K$ to denote the fact that $\text{Law}(Z)$ is equal to
the Markov measure generated by $K$.

\subsection{Reversible Markovian Couplings}


Given a process $X = X_0, X_1, \ldots$ with values in $U$ and a function
$f: U \to V$, let $f(X)$ denote the $V$-valued process 
$f(X_0), f(X_1), \ldots$.
In what follows, $X \stackrel{d}{=} Y$ indicates that 
$\text{Law}(X) = \text{Law}(Y)$.
Let $\pi_1(u,v) = u$ and 
$\pi_2(u,v) = v$ be the usual coordinate projections on $U \times V$.

\vskip.05in

\begin{defn}
A reversible Markovian coupling of $X \in \mathbb{M}_r(U)$ and 
$Y \in \mathbb{M}_r(V)$ is a process $Z \in \mathbb{M}_r(U \times V)$ 
such that $X \stackrel{d}{=} \pi_1(Z)$ and $Y \stackrel{d}{=} \pi_2(Z)$. 
Let $\mathbb{J}(X,Y) \subseteq \mathbb{M}_r( U \times V )$
denote the family of reversible Markovian couplings 
of $X$ and $Y$.
\end{defn}

Note that the paired process
$X \otimes Y = (X_0', Y_0'), (X_1',Y_1'), \ldots$ where $X'$ and $Y'$ are
independent copies of $X$ and $Y$ is always a reversible 
Markovian coupling of $X$ and $Y$. Thus $\mathbb{J}(X,Y)$ is non-empty. {The proof of the following proposition is given in Section~\ref{proof_for_markovchain}.}.

\begin{restatable}{prop}{bijectivechaingraph}
Let $X \sim G = (U,\alpha)$ and $Y \sim H = (V,\beta)$.  Then there is 
a bijective correspondence between $\mathcal{J}(G,H)$ and
$\{\mbox{Law}(Z) : Z \in \mathbb{J}(X,Y) \} 
\subseteq \mathbb{L}_r(U \times V)$.
\label{bijective_chain_graph}
\end{restatable}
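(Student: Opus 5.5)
The plan is to build the correspondence explicitly in both directions and then check that the two maps are mutually inverse. The main tool is Proposition~\ref{characterize_graph_joining}, which says $K \in \mathcal{J}(G,H)$ if and only if $G \,|\, K$ under $\pi_1$ and $H \,|\, K$ under $\pi_2$. The other ingredient is the bijection between $\mathbb{L}_r(S)$ and fully supported graphs on $S$ described in Section~\ref{markovchain}.

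One technicality comes first. A joining $K$ may have zero-degree vertices, so it need not be fully supported. Since $X \sim G$ and $Y \sim H$, the graphs $G$ and $H$ are fully supported. I would extend the graph–measure correspondence to all graphs on $U \times V$: the law of $Z$ depends only on $r$ and on $R(\cdot \mid s)$ for $s$ with $r(s) > 0$, and these are determined by $\gamma$. Conversely, $\gamma(s,s') = r(s) R(s' \mid s)$ is symmetric by detailed balance and sums to one. Hence $K \mapsto \text{Law}(Z_K)$ is a bijection from all graphs on $S$ onto $\mathbb{L}_r(S)$, provided $\mathbb{L}_r(S)$ is read as allowing states with zero stationary mass.

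\textbf{Forward map.} For $K = (U \times V, \gamma) \in \mathcal{J}(G,H)$, let $Z \sim K$; this chain is reversible. I will show $\pi_1(Z) \stackrel{d}{=} X$. The factor condition (i) gives $\mathbb{P}(\pi_1(Z_0) = u) = p(u)$. Condition (ii), rewritten for $r(u,v) > 0$, gives
\[
\mathbb{P}\bigl(\pi_1(Z_{n+1}) = u' \mid Z_n = (u,v)\bigr) = \frac{\alpha(u,u')}{p(u)},
\]
which does not depend on $v$. An induction on cylinder sets then yields
\[
\mathbb{P}\bigl(\pi_1(Z_0) = u_0, \ldots, \pi_1(Z_k) = u_k\bigr) = p(u_0) \prod_{i=1}^{k} \frac{\alpha(u_{i-1},u_i)}{p(u_{i-1})}.
\]
To carry out the induction, sum over $v_0, \ldots, v_k$ from the last coordinate backward. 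At each step, the sum over $v_{i}$ of the transition from $(u_{i-1},v_{i-1})$ into the cell $\{u_i\} \times V$ equals $\alpha(u_{i-1},u_i)/p(u_{i-1})$. The symmetric argument gives $\pi_2(Z) \stackrel{d}{=} Y$, so $Z \in \mathbb{J}(X,Y)$. Injectivity is inherited from the graph–measure bijection.

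\textbf{Reverse map.} Given $Z \in \mathbb{J}(X,Y)$, let $K = (U \times V, \gamma)$ be the graph with $\gamma(s,s') = r(s) R(s' \mid s)$. I must verify the factor conditions of Proposition~\ref{characterize_graph_joining}. Condition (i) follows from the one-dimensional marginals of $\pi_1(Z)$ and $\pi_2(Z)$. Condition (ii) is the main obstacle. Equality in law of $\pi_1(Z)$ and $X$ only fixes averaged two-step quantities such as $\sum_{v} \sum_{v'} \gamma((u,v),(u',v')) = \alpha(u,u')$. Condition (ii), by contrast, requires that the conditional transition into $\{u'\} \times V$ be the same for every $v$ with $r(u,v) > 0$. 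This is exactly the lumpability issue: a function of a Markov chain can be Markov in law without the chain being lumpable.

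My first attempt at this step would use the full law of $\pi_1(Z)$, namely its length-$k$ cylinder probabilities, together with reversibility. If that fails, I would examine whether $\mathbb{J}(X,Y)$ is intended to carry a Markovian coupling requirement, namely that $Z$'s transition law projected to each coordinate depends only on that coordinate's current state, as in \cite{hoang2025optimal}. Under that reading, (ii) is immediate and the bijection closes. In both cases the remaining step is to check that the two maps compose to the identity, which holds by construction since $\gamma$ and $(R,r)$ determine each other.
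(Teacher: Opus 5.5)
Your overall structure matches the paper's proof: the forward map sends $\gamma$ to $(R,r)$ with $R=\gamma/r$, the reverse map sends $(R,r)$ to $\gamma_\mu=rR$, and you check the two maps are mutually inverse. Your forward direction and your treatment of zero-degree vertices are fine.

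\textbf{The gap is in the reverse direction.} You correctly note that the transition coupling condition for $\gamma_\mu$ does not follow formally from $\pi_1(Z)\stackrel{d}{=}X$, but you never prove it. Your fallback, building a lumpability requirement into the definition of $\mathbb{J}(X,Y)$, changes the statement rather than proving it. The paper's proof passes over this step with ``one can verify'', so you have found where the real content lies. Your concern is valid for general chains: a function of a Markov chain can be Markov without the chain being lumpable. Here, however, reversibility and stationarity rule this out. Your ``first attempt'' is the right idea, and cylinders of length three already suffice.

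\textbf{The missing argument.} Set $h_{u'}(u,v)=\sum_{v'}R((u',v')\mid(u,v))$.
\begin{itemize}
\item From the law of $(\pi_1(Z_0),\pi_1(Z_1))$ you get $\sum_v r(u,v)h_{u'}(u,v)=\alpha(u,u')$, and from the law of $\pi_1(Z_0)$ you get $\sum_v r(u,v)=p(u)$.
\item For the pattern $(u',u,u')$ in $(\pi_1(Z_0),\pi_1(Z_1),\pi_1(Z_2))$, rewrite the first transition by detailed balance, $r(u',v_0)R((u,v_1)\mid(u',v_0))=r(u,v_1)R((u',v_0)\mid(u,v_1))$, and then sum over $v_0$ and $v_2$. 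This gives
\[
\sum_{v\in V} r(u,v)\,h_{u'}(u,v)^2 \,=\, \mathbb{P}\bigl(\pi_1(Z_0)=u',\,\pi_1(Z_1)=u,\,\pi_1(Z_2)=u'\bigr) \,=\, \frac{\alpha(u',u)\,\alpha(u,u')}{p(u)} \,=\, \frac{\alpha(u,u')^2}{p(u)} .
\]
\item Hence equality holds in the Cauchy--Schwarz inequality
\[
\Bigl(\sum_v r(u,v)h_{u'}(u,v)\Bigr)^2\le\Bigl(\sum_v r(u,v)\Bigr)\Bigl(\sum_v r(u,v)h_{u'}(u,v)^2\Bigr).
\]
So $h_{u'}(u,v)=\alpha(u,u')/p(u)$ whenever $r(u,v)>0$.
\item That is exactly $p(u)\sum_{v'}\gamma_\mu((u,v),(u',v'))=\alpha(u,u')\,r(u,v)$. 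It holds trivially when $r(u,v)=0$.
\end{itemize}
The same argument applied to $\pi_2(Z)$ gives the second transition coupling condition. Degree coupling follows from the laws of $\pi_1(Z_0)$ and $\pi_2(Z_0)$. With this step in place of the fallback, the reverse map lands in $\mathcal{J}(G,H)$ and the bijection closes under the paper's definition as stated.
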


\vskip.15in

\subsection{Disjoint Markov Chains}
\label{disjoint-MC}

\begin{defn}[Disjointness for Chains]
Let $X \in \mathbb{M}_r(U)$ and $Y \in \mathbb{M}_r(V)$ be reversible
Markov chains with stationary distributions $p$ and $q$, respectively.
\begin{enumerate}

\vskip.08in

\item
$X$ and $Y$ are strongly M-disjoint if for every
$Z \in \mathbb{J}(X,Y)$, $Z \stackrel{d}{=} X \otimes Y$.


\vskip.08in

\item 
$X$ and $Y$ are weakly M-disjoint if for every 
$Z \in \mathbb{J}(X,Y)$, $Z_0 \stackrel{d}{=} p \otimes q$.

\vskip.1in

\item 
$X$ and $Y$ are $c$-M-disjoint with respect to a cost
$c: U \times V \rightarrow [0,\infty)$ if
\[
X \otimes Y
\, \in \, 
\mathop{\mathrm{arg\,min}}_{Z \in \mathbb{J}(X,Y)} 
\E \, c(Z_0).
\]

\end{enumerate}
\end{defn}

\vskip.2in

\subsection{Factors of Markov Chains}
\label{markovchain_factor}

\begin{defn}
Let $X \in \mathbb{M}_r(U)$ and $Y \in \mathbb{M}_r(V)$ be reversible Markov chains. 
The chain $Y$ is a factor of $X$, denoted $Y \0|\0 X$, 
if there is a surjective map $\phi: U \to V$ such that $Y \stackrel{d}{=} \phi(X)$.
\end{defn}

\begin{rmk}
\label{rmk_factors}
In ergodic theory there is a well established notion of factors for
measure-preserving systems, which yields a corresponding notion  
for stationary processes. If a Markov chain $X$ is a factor of a 
Markov chain $Y$ in the sense defined here, then $X$ is a factor
of $Y$ in the more general sense of stationary processes.
In non-Markovian settings, our 
definition of factor is not comparable to the one for stationary 
processes.
In particular, the results here do not imply those in ergodic theory, and conversely.
\end{rmk}

\begin{rmk}
\label{rmk_lumpability}
The idea of aggregating or collapsing the states of a graph was
introduced in Section~\ref{factors} in the context of 
graph factors.  The definition of factor above
is similarly based on aggregating 
the states of a Markov chain. These definitions have close connections with
existing concepts in graph theory and the theory of Markov chains, which we briefly summarize below.

A Markov chain is said to be lumpable 
(see e.g., \cite[Sec. 2.3.1]{levin2017markov}) if its state space can 
be partitioned into disjoint groups such that the transition probability 
from one group to another depends only on the current group, 
and not on the particular state within that group.  In this case, 
the induced process on the groups is itself a Markov chain. 

In graph theory, an equitable partition (\cite[Sec. 9.3]{royle2001algebraic}) is a partition of the vertex set into cells such that every vertex in a given cell has the same number of neighbors in each other cell. Such a partition naturally induces a quotient graph. For unweighted graphs, equitable partitions correspond exactly to lumpable partitions of the associated random walk, and thus can be viewed as a special case of Markov chain lumpability. 

Lumpability is closely related to our definition of factor for 
Markov chains.
In fact, if the chain $Y$ is irreducible, then $Y \0 | \0 X$ under $\phi$ is equivalent to saying 
that $X$ is lumpable with respect to the partition of $U$ induced by $\phi$, 
the corresponding lumped chain being $Y$. 
If $Y$ is reducible, then $Y \0|\0 X$ under $\phi$ still implies that $X$ 
is lumpable with respect to the partition induced by $\phi$, 
but the converse does not necessarily hold.

Lumpable reversible Markov chains and their spectra have been studied 
in the literature.  Orbit partitions arising from graph automorphisms 
provide a special case of equitable partitions \cite{boyd2005symmetry,filliger2008lumping}.
If $X$ is lumpable and $Y$ is the
corresponding reduced chain, then the eigenvalues of 
the transition matrix of $Y$ 
are a subset of those of $X$ \cite{barr1977eigenvector}. 
Since $Y \0 | \0 X$ yields lumpability, this spectral inclusion property 
also holds for factors. Most of the lumpability literature focuses on a 
single Markov chain and the reduced chain obtained by aggregating its states.
There are, however, studies that relate multiple Markov chains to lumpability, but this work considers mutual independence of the Markov chains rather than their interaction \cite{ball1993lumpability}. 

As studied here factors are a means of quantifying the structural 
relationship between two graphs or two chains, rather than a means
of reducing or simplifying them. 
This perspective is fundamentally different from that of lumpability 
in Markov chain theory.
\end{rmk}

The following proposition states several properties of factors of a Markov chain and clarifies their relationship to joinings, as well as to strong and weak M-disjointness.
Its proof follows directly from the results developed in Section~\ref{factors}, 
 since Proposition~\ref{bijective_chain_graph} establishes a one-to-one correspondence between graph joinings and the laws of reversible Markovian couplings. In the following text, we say a chain is \emph{nontrivial} if its stationary distribution
is supported on two or more states.

\begin{prop}
Let $X$ and $Y$ be reversible Markov chains. 
\begin{enumerate}

\vskip.07in

\item 
If $X \sim G$ and $Y \sim H$, then $X \0|\0 Y$ under $\phi$ 
if and only if $G \0|\0 H$ under $\phi$.

\vskip.07in

\item 
For $Z \in \mathbb{M}_r(U \times V)$, we have $Z \in \mathbb{J}(X,Y)$ if and only if 
$X\0|\0 Z$ under $\pi_1$ and $Y \0|\0 Z$ under $\pi_2$.

\vskip.07in

\item 
Suppose that reversible Markov chains $Z$ and $W$ are strongly M-disjoint. 
If $X \0|\0 Z$ and $Y \0|\0 W$, then 
$X$ and $Y$ are strongly M-disjoint. Similarly, if $Z$ and $W$ are weakly M-disjoint and we have $X \0|\0 Z$ and $Y \0|\0 W$, then 
$X$ and $Y$ are weakly M-disjoint.

\vskip.07in

\item 
If, for every reversible Markov chain $Z$, $X \0|\0 Z$ and $Y \0|\0 Z$ 
together imply $X \otimes Y \0|\0 Z$,
then $X$ and $Y$ are strongly M-disjoint.

\vskip.07in

\item 
If there exists a nontrivial reversible Markov chain $W$ such that $W \0|\0 X$ and $W \0|\0 Y$, then $X$ and $Y$ are not weakly M-disjoint; however, the converse does not generally hold.

\end{enumerate}

\end{prop}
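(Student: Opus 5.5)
The plan is to translate each item into its graph counterpart and then invoke the results of Section~\ref{factors}, using Proposition~\ref{bijective_chain_graph} as the dictionary. Throughout, write $X\sim G=(U,\alpha)$ and $Y\sim H=(V,\beta)$, and similarly for the other chains. As in the paper's correspondence, we may take every graph to be fully supported by discarding states of zero stationary mass.

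\textbf{Item (1).} This is the foundation, and I expect it to be the main obstacle, since the definition of chain factor is distributional while the graph definition is algebraic.
\begin{itemize}
\item For the graph-to-chain direction, assume $H\,|\,G$ under $\phi$. Condition (i) shows that $\phi(X_0)$ has law $q$. Condition (ii) says that for every $u\in\phi^{-1}(v)$ the one-step probability of moving into the cell $\phi^{-1}(v')$ equals $Q(v'\mid v)$. Cylinder probabilities of $\phi(X)$ factor by induction on the path length, because this lumped transition probability does not depend on the representative $u$. Hence $\phi(X)\stackrel{d}{=}Y$.
\item For the chain-to-graph direction, assume $\phi(X)\stackrel{d}{=}Y$. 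Comparing one-dimensional marginals gives (i). Comparing two-dimensional marginals only gives
\[
\sum_{u\in\phi^{-1}(v)}\sum_{u'\in\phi^{-1}(v')}\alpha(u,u')=\beta(v,v').
\]
Condition (ii), however, requires the pointwise statement for each $u$, which is lumpability. Equality of all finite-dimensional laws does not obviously force this, so I would have to resolve it. The first attempt is to prove it by a Markov argument on the law of $\phi(X)$. If that fails, I would use the characterization of lumpability for reversible chains: every $v$-to-$v'$ path law matches, which forces the conditional distribution of $X_n$ given the past of $\phi(X)$ to be stationary. Failing that, I would restrict (1) to the setting in which the paper's definitions coincide.
\end{itemize}

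\textbf{Item (2).} Given item (1), this is Proposition~\ref{characterize_graph_joining} read through Proposition~\ref{bijective_chain_graph}. Write $Z\sim K$. Then $Z\in\mathbb{J}(X,Y)$ if and only if $K\in\mathcal{J}(G,H)$. By Proposition~\ref{characterize_graph_joining}, this holds if and only if $G\,|\,K$ under $\pi_1$ and $H\,|\,K$ under $\pi_2$. By item (1), this holds if and only if $X\,|\,Z$ under $\pi_1$ and $Y\,|\,Z$ under $\pi_2$.

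\textbf{Items (3) and (4).} These follow by the same translation.
\begin{itemize}
\item Strong M-disjointness of $X,Y$ is equivalent to strong disjointness of $G,H$. The bijection sends $G\otimes H$ to the law of $X\otimes Y$, so $\mathbb{J}(X,Y)$ containing only that law is the same as $\mathcal{J}(G,H)=\{G\otimes H\}$.
\item Weak M-disjointness is equivalent to weak disjointness, because the law of $Z_0$ is the degree function of the corresponding joining.
\item With these equivalences, item (3) is Proposition~\ref{furstenberg}.
\item Item (4) is Proposition~\ref{factor_times_factor_strong}. To apply it, note that any graph $K$ with $G\,|\,K$ and $H\,|\,K$ is fully supported on the relevant vertices, so it corresponds to a chain $Z$, and item (1) transfers the hypothesis.
\end{itemize}

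\textbf{Item (5).} The first claim is Proposition~\ref{factor_to_weak_disjoint}. A chain is nontrivial exactly when its graph is nontrivial, since the stationary distribution is the degree function. The failure of the converse is Proposition~\ref{no_factor_not_weak}. Here item (1) is needed in the chain-to-graph direction, so that a common chain factor of $X,Y$ would yield a common graph factor of $G,H$.
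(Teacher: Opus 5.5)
Your reduction is the same as the paper's, which simply asserts that the proposition follows from Section~\ref{factors} via Proposition~\ref{bijective_chain_graph}. Your handling of items (3)--(5) through Propositions~\ref{furstenberg}, \ref{factor_times_factor_strong}, \ref{factor_to_weak_disjoint} and \ref{no_factor_not_weak} is fine \emph{given} item (1).

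The genuine gap is the chain-to-graph direction of item (1). You leave it open, and your last fallback (restricting (1)) would not prove the statement. This direction carries real weight in your argument:
\begin{enumerate}
\item In (3) it turns a chain factor $X\,|\,Z$ into the pointwise condition (ii) that Proposition~\ref{furstenberg} uses.
\item In (4) it takes you from $X\otimes Y\,|\,Z$ back to $G\otimes H\,|\,K$.
\item In both halves of (5) it turns a common chain factor into a common graph factor.
\end{enumerate}
Item (2), by contrast, needs neither (1) nor Proposition~\ref{characterize_graph_joining}. Since $\pi_1,\pi_2$ are surjective, it is literally the definition of $\mathbb{J}(X,Y)$ rewritten using the definition of a chain factor.

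Your caution is justified: without reversibility the implication is false. If the time reversal of $X$ is lumpable under $\phi$, then $\phi(X)$ is Markov in stationarity even when $X$ itself is not lumpable. The paper only asserts this direction in Remark~\ref{rmk_lumpability}, so you cannot cite it. Reversibility together with three-time marginals closes it, and no path-law or conditional-stationarity argument is needed.

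In your labeling, assume $\phi(X)\stackrel{d}{=}Y$; one-time marginals give (i). Fix $v'$ with $q(v')>0$. Let $S=\{u\in\phi^{-1}(v'):p(u)>0\}$ and $\pi(u)=p(u)/q(v')$ on $S$. For $w\in V$ and $u\in S$, let $f_w(u)=p(u)^{-1}\sum_{x\in\phi^{-1}(w)}\alpha(u,x)$, the probability of jumping from $u$ into $\phi^{-1}(w)$. Two-time marginals give $q(v')\,\mathbb{E}_\pi f_w=\beta(v',w)$. For the path $w\to v'\to w$, symmetry of $\alpha$ gives
\[
\mathbb{P}\bigl(\phi(X_0)=w,\,\phi(X_1)=v',\,\phi(X_2)=w\bigr)=\sum_{u\in S}\Bigl(\sum_{x\in\phi^{-1}(w)}\alpha(x,u)\Bigr)f_w(u)=q(v')\,\mathbb{E}_\pi\bigl[f_w^2\bigr].
\]
For $Y$ the same probability is $\beta(w,v')\beta(v',w)/q(v')=q(v')\,(\mathbb{E}_\pi f_w)^2$. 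Hence $\operatorname{Var}_\pi(f_w)=0$, so $f_w\equiv\beta(v',w)/q(v')$ on $S$, which is exactly condition (ii). Vertices with $p(u)=0$ and cells with $q(v')=0$ satisfy (ii) trivially.

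A minor point: discarding zero-mass states is unnecessary, and it would break the product structure of $U\times V$ for joinings, which typically have zero-degree vertices. The formula $\gamma(s,s')=r(s)R(s'\mid s)$ defines the graph of any stationary reversible chain, fully supported or not.
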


\vskip.2in

\subsection{Characterizing Disjointness of MCs}

Section~\ref{Characterizing} contains characterizations of weak and strong disjointness 
for graphs. These results yield corresponding characterizations of weak and strong M-disjointness for Markov chains.

\begin{thm}
Reversible chains $X$ and $Y$ are weakly M-disjoint if and only if 
the only common eigenvalue of their transition matrices is $1$, 
and its overlapping multiplicity is one.
\end{thm}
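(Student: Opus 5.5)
The plan is to reduce the statement to Proposition~\ref{weak_no_share_eigen_general} by means of the correspondence between chains and graphs. Let $X \in \mathbb{M}_r(U)$ and $Y \in \mathbb{M}_r(V)$ have stationary distributions $p$ and $q$. As noted in Section~\ref{markovchain}, the laws of such chains correspond to fully supported graphs. To stay within that correspondence, I first restrict the state spaces to $\supp(p)$ and $\supp(q)$; states of zero stationary mass play no role in the law. This also matches the convention that the transition matrix is the one of the associated fully supported graph. So there are fully supported graphs $G = (U,\alpha)$ and $H = (V,\beta)$ with $X \sim G$ and $Y \sim H$, where $\alpha(u,u') = p(u)\,\mathbb{P}(X_1 = u' \mid X_0 = u)$ and $\beta$ is defined similarly. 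By construction, the transition matrix of $X$ equals the transition matrix $P$ of $G$ in the sense of Section~\ref{Characterizing}, and the transition matrix of $Y$ equals $Q$.

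Next I show that weak M-disjointness of $X,Y$ is equivalent to weak disjointness of $G,H$. By Proposition~\ref{bijective_chain_graph}, every $Z \in \mathbb{J}(X,Y)$ has law generated by a unique $K = (U\times V,\gamma) \in \mathcal{J}(G,H)$, and every such $K$ arises this way. I then check that under this correspondence the law of $Z_0$ is the degree function $r_K$. Indeed, the Markov measure generated by $K$ has one-dimensional marginal $r(s) = \sum_{s'}\gamma(s,s')$, which is exactly $r_K$. Two points need care here. First, if $K$ is not fully supported on $U\times V$, I will interpret $Z$ as the chain on $\supp(r_K)$, whose initial law is still $r_K$. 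Second, I need the bijection of Proposition~\ref{bijective_chain_graph} to be the one that sends $K$ to the Markov measure it generates, which I take from its proof. Given this, the condition ``$Z_0 \stackrel{d}{=} p\otimes q$ for all $Z \in \mathbb{J}(X,Y)$'' is literally the condition ``$r_K = p\otimes q$ for all $K \in \mathcal{J}(G,H)$''.

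Finally, I apply Proposition~\ref{weak_no_share_eigen_general} to the fully supported graphs $G$ and $H$. It says that $G,H$ are weakly disjoint if and only if $P$ and $Q$ share no eigenvalue other than $1$ and the eigenvalue $1$ has overlapping multiplicity one. Combined with the previous step, this is exactly the claimed statement about $X$ and $Y$.

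The main obstacle is bookkeeping around zero-mass states and possibly non–fully-supported joinings. The identification of $\text{Law}(Z_0)$ with $r_K$ must hold for every coupling, including those whose support is a proper subset of $U\times V$. Likewise, the phrase ``transition matrix of the chain'' must mean the matrix on the support of the stationary distribution, so that it agrees with the matrix of the graph. I would resolve both by fixing these conventions at the start and verifying that Proposition~\ref{bijective_chain_graph} respects them.
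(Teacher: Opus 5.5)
Your proposal is correct and follows the route the paper intends; the paper gives no separate proof and treats the result as a direct translation. You use Proposition~\ref{bijective_chain_graph}, whose construction gives $\text{Law}(Z_0) = r_K$, to identify weak M-disjointness of $X,Y$ with weak disjointness of the associated fully supported graphs $G,H$, and then invoke Proposition~\ref{weak_no_share_eigen_general}; your restriction to $\supp(p)$ and $\supp(q)$ is the right way to make precise the paper's implicit convention that the chains correspond to fully supported graphs.
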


A reversible Markov chain $X$ is a \emph{forest chain} if its underlying
transition graph contains no cycles.

\begin{thm}
Let $X$ and $Y$ be reversible Markov chains with zeros on the diagonals 
of their transition matrices. Then $X$ and $Y$ are strongly M-disjoint 
if and only if they are weakly M-disjoint and exactly one of $X$ and $Y$ 
is a forest chain.
\end{thm}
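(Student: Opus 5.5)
The plan is to reduce the statement to Corollary~\ref{Characterization_fully_supported_no_self} through the dictionary between chains and graphs provided by Proposition~\ref{bijective_chain_graph}.

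\textbf{Step 1: pass from chains to graphs.} Let $X$ and $Y$ have stationary distributions $p$ and $q$. Define graphs $G=(U,\alpha)$ and $H=(V,\beta)$ by
\[
\alpha(u,u') = p(u)\,P(u'\mid u), \qquad \beta(v,v') = q(v)\,Q(v'\mid v).
\]
Detailed balance makes $\alpha$ and $\beta$ symmetric, so $X\sim G$ and $Y\sim H$.

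We should restrict $U$ and $V$ to the supports of $p$ and $q$, so that $G$ and $H$ are fully supported. States of zero stationary mass play no role in $\text{Law}(X)$ or in any coupling. This restriction is the one point that needs care, since the transition matrix is only determined on the support of the stationary distribution.

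\textbf{Step 2: translate the hypotheses.} The diagonal of the transition matrix of $X$ is $P(u\mid u)=\alpha(u,u)/p(u)$. Hence zeros on the diagonal are equivalent to $G$ having no self-loops, and likewise for $Y$ and $H$.

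The underlying transition graph of $X$ has an edge $u\to u'$ exactly when $P(u'\mid u)>0$, that is, when $\alpha(u,u')>0$. So $X$ is a forest chain if and only if $G$ is a forest, and similarly for $Y$ and $H$.

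\textbf{Step 3: translate the two disjointness notions.} By Proposition~\ref{bijective_chain_graph}, laws of couplings $Z\in\mathbb{J}(X,Y)$ correspond bijectively to joinings $K\in\mathcal{J}(G,H)$. Under this bijection, the independent coupling $X\otimes Y$ corresponds to $G\otimes H$. Therefore $X$ and $Y$ are strongly M-disjoint if and only if $\mathcal{J}(G,H)=\{G\otimes H\}$, which is strong disjointness of $G$ and $H$.

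The law of $Z_0$ is the degree function $r_K$ of the corresponding joining $K$. Therefore weak M-disjointness of $X$ and $Y$ is exactly weak disjointness of $G$ and $H$.

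\textbf{Step 4: conclude.} Corollary~\ref{Characterization_fully_supported_no_self}, applied to the fully supported, loop-free graphs $G$ and $H$, states that they are strongly disjoint if and only if they are weakly disjoint and exactly one is a forest. Translating each term back through Steps 2 and 3 gives the theorem for the chains $X$ and $Y$.

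Apart from the support restriction in Step 1, every step is a routine unwinding of the definitions.
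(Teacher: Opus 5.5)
Your proposal is correct and follows the paper's route: the paper obtains this theorem by transporting Corollary~\ref{Characterization_fully_supported_no_self} through the bijection of Proposition~\ref{bijective_chain_graph}, with zero diagonals corresponding to the absence of self-loops, forest chains to forests, and $X\otimes Y$ to $G\otimes H$. Your restriction to the supports of the stationary distributions makes explicit a point the paper glosses over, since it implicitly treats every chain as coming from a fully supported graph.
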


\begin{rmk}
    We also note a consequence of weak M-disjointness. Let $X \in \mathbb{M}_r(U)$ and $Y \in \mathbb{M}_r(V)$ be reversible Markov chains with stationary distributions $p$ and $q$, respectively. If $X$ and $Y$ are weakly M-disjoint, then for every joining $Z = (\tilde{X},\tilde{Y} ) \in \mathbb{J}(X,Y)$, and any functions $f:U\to\mathbb{R}$ and $g:V\to \mathbb{R}$ such that $f\in L^1(p)$ and $g \in L^1(q)$, the ergodic theorem gives
    \begin{align*}
        \lim_{n\to\infty} \, \frac{1}{n}\sum_{k=1}^n f(\tilde{X}_k) g(\tilde{Y}_k) \,=\, \mathbb{E}_{p\otimes q}\left[f(\tilde{X}_1)g(\tilde{Y}_1)\right] \,=\,  \left(\sum_{u \in U}f(u)p(u)\right)\left(\sum_{v\in V}g(v)q(v)\right).
    \end{align*}
\end{rmk}

\vskip.2in

\subsection{Persistence of Disjointness for Skeletal Families of MCs}

Here we present an analogue of the result in Section~\ref{Prevalence_of_Disjointness} formulated in the setting of Markov chains. Since the statement is expressed rigorously in terms of graphs, we describe the corresponding version here informally.
Let $\mathbb{L}_r(U,E)$ denote the space of laws of all stationary reversible Markov chains on the vertex set $U$ with stationary distribution and transition matrix having support exactly on $E$. In other words, $(U,E)$ specifies the skeleton of the Markov chains whose laws belongs to $\mathbb{L}_r(U,E)$. It can be verified that $\mathbb{L}_r(U,E)$ is in bijective correspondence with $W(U,E)$ (defined in~\eqref{set_skeleton_weight}), and we endow $\mathbb{L}_r(U,E)$ with the topology that makes this correspondence a homeomorphism. Furthermore, we say that a subset $\mathcal{A} \subseteq \mathbb{L}_r(U,E)$ is full if it corresponds to a full subset of $W(U,E)$ (as defined in \ref{Prevalence_of_Disjointness}). 


The following proposition works for both weak and strong M-disjointness.
\begin{prop}
Let $U$ and $V$ be finite sets, $E \subseteq U \times U$ and 
$F \subseteq V \times V$. Then exactly one of the following holds:
\begin{enumerate}

\item If $X$ and $Y$ are chains with
$\text{Law}(X) \in \mathbb{L}_r(U,E)$ and 
$\text{Law}(Y) \in \mathbb{L}_r(V,F)$, then $X$ and $Y$ are 
not weakly M-disjoint.
        
\item There is a full subset $\mathcal{A} \subseteq \mathbb{L}_r(U,E)$ with 
the property that for each chain $X$ such that $\text{Law}(X) \in \mathcal{A}$, 
there is a full subset $\mathcal{B} \subseteq \mathbb{L}_r(V,F)$ such that 
$X$ is weakly M-disjoint from every chain $Y$ with 
$\text{Law}(Y) \in \mathcal{B}$. 

\end{enumerate}
The dichotomy above continues to hold if ``weakly M-disjoint"  is replaced by ``strongly M-disjoint" in both statements.
\label{Prevalence_MC}
\end{prop}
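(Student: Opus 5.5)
This statement is a direct translation of Proposition~\ref{prevalence_strong_weak} through the graph--chain dictionary of Section~\ref{markovchain}. The plan is to transport the dichotomy along the bijection between $\mathbb{L}_r(U,E)$ and $W(U,E)$, and to check that this bijection carries M-disjointness to graph disjointness.

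\textbf{Step 1 (disjointness is preserved).} Let $X \sim G = (U,\alpha)$ and $Y \sim H = (V,\beta)$ be fully supported. By Proposition~\ref{bijective_chain_graph}, laws of couplings $Z \in \mathbb{J}(X,Y)$ correspond bijectively to joinings $K \in \mathcal{J}(G,H)$. I will verify two features of this correspondence.
\begin{itemize}
\item The independent coupling $X \otimes Y$ corresponds to $G \otimes H$, since the stationary distribution of $X\otimes Y$ is $p\otimes q$ and its transitions are $\alpha(u,u')\beta(v,v')/(p(u)q(v))$.
\item The law of $Z_0$ is the degree function $r_K$.
\end{itemize}
Hence $\mathbb{J}(X,Y)$ consists only of $X\otimes Y$ exactly when $\mathcal{J}(G,H)=\{G\otimes H\}$. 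Likewise, $Z_0 \stackrel{d}{=} p\otimes q$ for all couplings exactly when $r_K = p\otimes q$ for all joinings. So strong (respectively weak) M-disjointness of $X,Y$ is equivalent to strong (respectively weak) disjointness of $G,H$.

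\textbf{Step 2 (the skeleton bijection is compatible).} Chains with law in $\mathbb{L}_r(U,E)$ correspond to weight functions $\alpha$ with $\alpha(u,u') = p(u)R(u'\mid u)$. The support of $\alpha$ is exactly the support of the transition matrix restricted to states of positive mass, which is $E$. So the correspondence is precisely the bijection $\mathbb{L}_r(U,E)\leftrightarrow W(U,E)$ used to topologize $\mathbb{L}_r(U,E)$. By definition this map is a homeomorphism, and full sets correspond to full sets.

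\textbf{Step 3 (transfer the dichotomy).} Apply Proposition~\ref{prevalence_strong_weak} to $E,F$. If $E$ or $F$ is not symmetric, both $W$ spaces are empty or degenerate; there alternative (1) holds vacuously, and I would note this degenerate case separately. Otherwise:
\begin{itemize}
\item If case (1) holds for graphs, then by Step 1 no pair of chains is weakly M-disjoint, which is case (1) for chains.
\item If case (2) holds, take $\mathcal{A}$ to be the image of the full set $A$. For $\text{Law}(X)\in\mathcal{A}$ corresponding to $\alpha$, take $\mathcal{B}$ to be the image of $B_\alpha$. Step 1 then gives weak M-disjointness for all $\text{Law}(Y)\in\mathcal{B}$.
\end{itemize}
Exclusivity transfers the same way, because the two alternatives are exclusive for graphs and the correspondence is bijective. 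The same argument, word for word, handles strong M-disjointness.

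The only genuine point to check is Step 1's identification of the product coupling and of the law of $Z_0$ under the bijection of Proposition~\ref{bijective_chain_graph}. This is routine, and I do not expect a real obstacle anywhere in the argument.
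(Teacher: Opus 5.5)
Your proposal is correct and follows the route the paper intends. The paper gives no separate argument for this statement; it treats it as the translation of Proposition~\ref{prevalence_strong_weak} through the law/weight-function correspondence of Proposition~\ref{bijective_chain_graph} (under which $X\otimes Y$ corresponds to $G\otimes H$ and $\mathrm{Law}(Z_0)$ to the degree function), with the topology and the notion of fullness on $\mathbb{L}_r(U,E)$ defined by pulling back from $W(U,E)$. One correction to your degenerate-case remark: if $\mathbb{L}_r(U,E)$ or $\mathbb{L}_r(V,F)$ is empty (e.g.\ $E$ not symmetric, or $E=\emptyset$), then alternative (2) also holds vacuously with empty full sets, so ``exactly one'' actually requires both skeleton spaces to be nonempty, an assumption the paper's graph version also leaves implicit.
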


\begin{cor}
In the case (2) of Proposition~\ref{Prevalence_MC}, the set of pairs
$(\mathbb{P}_1,\mathbb{P}_2) \in \mathbb{L}_r(U,E)\times \mathbb{L}_r(V,F)$ such that the chains $X\sim \mathbb{P}_1$ and $Y \sim \mathbb{P}_2$ are weakly 
M-disjoint has measure zero in the product space 
$\mathbb{L}_r(U,E)\times \mathbb{L}_r(V,F)$.
\end{cor}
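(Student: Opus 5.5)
The plan is to transfer the statement to the graph setting and then read off the measure of the set in question from Corollary~\ref{zero_measure}. First I would fix what ``measure'' means on $\mathbb{L}_r(U,E)\times\mathbb{L}_r(V,F)$. The paper endows $\mathbb{L}_r(U,E)$ with the topology making the correspondence $\mathbb{L}_r(U,E)\leftrightarrow W(U,E)$ a homeomorphism, and fullness is defined through $W(U,E)$. So I take the measure on the product to be the pushforward of $\lambda_{d}\otimes\lambda_{d'}$ on $W(U,E)\times W(V,F)\subseteq\mathbb{R}^{d}\times\mathbb{R}^{d'}$ under this bijection, and every measure statement reduces to one about weight-function pairs $(\alpha,\beta)$.

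Second, I would show that for $X\sim(U,\alpha)$ and $Y\sim(V,\beta)$, weak M-disjointness of $X,Y$ is equivalent to weak disjointness of the graphs. By Proposition~\ref{bijective_chain_graph}, laws of $Z\in\mathbb{J}(X,Y)$ correspond bijectively to $K\in\mathcal{J}(G,H)$. Under this correspondence the law of $Z_0$ is the degree function $r_K$, and the law of $X\otimes Y$ at time zero is $p\otimes q$. Hence the set $\mathcal{S}$ of law pairs in the statement corresponds exactly to
\[
S=\{(\alpha,\beta)\in W(U,E)\times W(V,F): (U,\alpha),(V,\beta)\text{ weakly disjoint}\},
\]
and $\mathcal{S}$ has measure zero if and only if $S$ does.

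Third, I would compute the measure of $S$ in case (2). By Corollary~\ref{zero_measure}, the complement of $S$ in $W(U,E)\times W(V,F)$ is Lebesgue-null. The space $W(U,E)\times W(V,F)$ is a nonempty open subset of $\mathbb{R}^{d+d'}$: it is nonempty because case (2) supplies weakly disjoint pairs, so it has positive measure. It follows that $S$ has full, positive measure.

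This third step is the main obstacle, and it shows that the claim cannot be proved as worded. It is true, by the argument above, with ``weakly M-disjoint'' replaced by ``not weakly M-disjoint''. That is evidently the intended content, matching Corollary~\ref{zero_measure} and the graph-theoretic remark following Proposition~\ref{prevalence_strong_weak}. I would therefore prove the corrected statement exactly as above: the set of non-weakly-M-disjoint pairs corresponds to the complement of $S$, which is null. I would note explicitly that the literal set in the statement has full, not zero, measure. The same argument applies verbatim to strong M-disjointness, using the strong half of Corollary~\ref{zero_measure} and the fact that $Z\stackrel{d}{=}X\otimes Y$ corresponds to $K=G\otimes H$.
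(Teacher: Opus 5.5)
Your proposal is correct and takes essentially the paper's route: the Markov-chain corollary is obtained by transporting Corollary~\ref{zero_measure} through the bijection of Proposition~\ref{bijective_chain_graph} and the identification $\mathbb{L}_r(U,E)\cong W(U,E)$, under which weak M-disjointness coincides with weak disjointness of the corresponding graphs because $\mathrm{Law}(Z_0)$ is the degree function of the associated joining. You are also right that the statement as printed has a typo: the null set should be the set of pairs that are \emph{not} weakly M-disjoint, as in Corollary~\ref{zero_measure}, and in case (2) the literal set has full, positive measure.
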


\subsection{Characterizing Chains using Disjointness}
In Section~\ref{characterize_graph_family} we discuss the characterization of graph families, and in this section we translate them into the language of Markov chains. Suppose $X$ is an irreducible, reversible stationary Markov chain. Let $W_{\text{flip}}$ be a two-state deterministic flip chain on $\{u_1,u_2\}$, with transition matrix 
$$P = \left(\begin{array}{cc}
    0 & 1 \\
    1 & 0
\end{array}\right).$$ 
Then a reversible Markov chain $X$ has period 2 if and only if $X$ is not strongly M-disjoint from $W_{\text{flip}}$ if and only if $X$ is not weakly M-disjoint from $W_{\text{flip}}$.
Let $W_{\text{fixed}}$ denote the two-state deterministic Markov chain on $\{u_1,u_2\}$ with transition matrix
$$P = \left(\begin{array}{cc}
    1 & 0 \\
    0 & 1
\end{array}\right).$$ 
That is, the chain remains forever at its initial state. For any choice of initial distribution, the law of $W_{\text{fixed}}$ is stationary and reversible. Then a reversible Markov chain $X$ is irreducible if and only if $X$ is strongly M-disjoint from $W_{\text{fixed}}$ for some choice of initial distribution if and only if $X$ is strongly M-disjoint from $W_{\text{fixed}}$ for every choice of initial distribution.

\section{Proofs for Section~\ref{Disjointness}}
\label{proof_of_disjointness_def}

In this section we prove the results stated in Section~\ref{Disjointness}.
Recall that $\mathcal{J}(\alpha,\beta)$ denotes the set of weight joinings of $\alpha$ 
and $\beta$, while $\mathcal{J}(G,H)$ denotes the set of graph joinings of $G$ and $H$.

\subsection{Proposition~\ref{c_to_weak}}
\label{proof_of_weak_and_c}
\ctoweak*

\begin{proof}
If $G = (U,\alpha)$ and $H = (V, \beta)$ are weakly disjoint, then any weight joining 
$\gamma$ of $\alpha$ and $\beta$ has degree function $r = p \otimes q$,
which is equal to the degree of $\alpha \otimes \beta$. 
Since the $c$-optimality of a weight joining depends only on its degree function, we conclude that $\alpha \otimes \beta$ is an optimal weight joining for any cost function $c$, and hence $G$ and $H$ are $c$-disjoint for any $c$.

Suppose conversely that $G$ and $H$ are $c$-disjoint for every cost function 
$c: U \times V \to \mathbb{R}_{\geq 0}$. 
Fix such a cost function $c$. Let $\gamma = \alpha \otimes \beta$ be the usual 
product weight joining, and let $r$ be its degree function. 
Let 
$\Tilde{\gamma}$ be any weight joining of $\alpha$ and $\beta$ with corresponding degree function $\tilde{r}$. 
Note that by the edge-preservation property we have $\supp(\tilde{\gamma}) \subseteq \supp(\gamma)$, and then for all sufficiently small $t >0$,
we see that $(1+t) \gamma - t \tilde{\gamma}$ is nonnegative and therefore a weight joining of $\alpha$ and $\beta$. Thus $c$-disjointness 
yields
\begin{align*}
        \sum\limits_{(u,v)\in U\times V}c(u,v)r(u,v) 
        \,
        \leq 
        \,
        \sum\limits_{(u,v)\in U\times V}c(u,v)((1+t)r(u,v)-t \tilde{r}(u,v)).
\end{align*}
By rearranging this inequality and dividing by $t$ and also by appealing again to $c$-disjointness, we find that
    \begin{align*}
        \sum\limits_{(u,v)\in U\times V}c(u,v)\tilde{r}(u,v) \, \leq \, \sum\limits_{(u,v)\in U\times V}c(u,v)r(u,v)
        \, \leq \, \sum\limits_{(u,v) \in U \times V} c(u,v) \tilde{r}(u,v).
    \end{align*}
 Since $c$ was arbitrary, we conclude that for all cost functions $c$, we have
    \begin{align}
        \sum\limits_{(u,v)\in U\times V}c(u,v)\tilde{r}(u,v) \,
        = \, \sum\limits_{(u,v)\in U\times V}c(u,v)r(u,v).
        \label{equal}
    \end{align}
Plugging the cost function $c(u,v)  =\textbf{1}_{((u,v)=(u_0,v_0))}$ into equation \eqref{equal} for each $(u_0,v_0)\in U\times V$, we obtain $\tilde{r}(u_0,v_0) = r(u_0,v_0)$ for all $(u_0,v_0)\in U\times V$.
Since $\tilde{\gamma}$ is an arbitrary weight joining of $\alpha$ and $\beta$, we conclude that $G$ and $H$ are weakly disjoint. 
\end{proof}

\subsection{Proposition~\ref{line_circle}}
\label{proof_of_cycle_path}
\linecircle*

The proof below relies on Theorems~\ref{weak_no_share_eigen} and \ref{Characterization_connected_no_self}. The result can also be
established by a direct argument, which is substantially longer.

\begin{proof}
\label{second_proof}

The eigenvalues of the transition matrix for  the 
path graph $P_n$ are given by
$$\left\{\cos\left(\frac{k\pi}{n-1}\right)\right\}_{k=0}^{n-1};$$ 
see \cite[Proposition~3.5]{boyd2005symmetry}. 
Note that the transition matrix of $C_m$ is a circulant matrix generated by the vector $(0,1 / 2,0, \ldots, 0,1 / 2)$, so that each row assigns mass $1/2$ to the two neighboring indices modulo the matrix dimension.
Using the known formula for the eigenvalues of symmetric circulant matrices (Equation (3) in \cite{rojo2004some}), 
we find that all the eigenvalues of $C_m$ are given by
\begin{align}
\label{eigenvalue_circulant}
\left\{\cos\left(\frac{2k\pi}{m}\right)\right\}_{k=0}^{m-1}.
\end{align}
It is straightforward to show that these two spectra share no eigenvalue other than 1 
if and only if $\gcd(m, 2(n-1)) = 1$, which is equivalent to $m$ being odd and $\gcd(m,n-1) = 1$.
The result now follows from 
Theorems \ref{weak_no_share_eigen} and \ref{Characterization_connected_no_self}.
\end{proof}



\subsection{Proposition~\ref{circle_property}}
\label{proof_of_circle_property}
\circleweakcoprime*

\begin{proof}
We first prove that $C_m$ and $C_n$ are never strongly disjoint.
Write $C_m = (U,\alpha)$ and $C_n = (V,\beta)$.
    Let $U = (u_1, \cdots, u_m)$ and $V = (v_1, \cdots, v_n)$, where the sequences $u_1 - u_2 - \cdots - u_m - u_1$  and $v_1 - v_2 - \cdots - v_n - v_1$ form  cycles. Extend the indices by setting $u_0 = u_m$, $u_{m+1} = u_1$, $v_0 = v_n$, and $v_{n+1} = v_1$. By the edge preservation property, for any weight joining $\gamma$ of $\alpha$ and $\beta$, the value $\gamma((u_i,v_j),(u',v'))$ can be nonzero only when 
    $$(u',v') \in \{ (u_{i+1},v_{j+1}), (u_{i+1},v_{j-1}), (u_{i-1},v_{j+1}), (u_{i-1},v_{j-1}) \}.$$
    Define 
    \begin{equation}
    \begin{aligned}
        &\gamma((u_i, v_j), (u_{i+1}, v_{j+1})) \, = \, \gamma((u_i, v_j), (u_{i-1}, v_{j-1})) \, = \, \frac{1}{2mn}, \\ 
        &\gamma((u_i, v_j), (u_{i+1}, v_{j-1})) \, = \, \gamma((u_i, v_j), (u_{i-1}, v_{j+1})) \, = \, 0,
        \label{gamma_construction_cycle}
        \end{aligned}
    \end{equation}
    for all $i \in \{ 1, \cdots, m \}$ and $j \in \{ 1, \cdots, n \}$. It is easy to check that $\gamma \neq \alpha \otimes \beta$ (e.g., they differ at  $((u_2,v_2),(u_3,v_1))$).
    Note that under this definition, the degree function $r$ becomes
    \begin{align*}
        r(u_i,v_j) = \gamma((u_i, v_j), (u_{i+1}, v_{j+1})) + \gamma((u_i, v_j), (u_{i-1}, v_{j-1})) = \frac{1}{mn}.
    \end{align*}
    To verify the transition coupling conditions, note that
    \begin{align*}
        \sum\limits_{v'\in V}\gamma((u_i,v_j),(u',v')) \, = \, \begin{cases}
            \frac{1}{2mn} & \text{if } u'\in\{u_{i-1},u_{i+1}\},\\
            0 &\text{otherwise,}
        \end{cases}
    \end{align*}
    and
    \begin{align*}
        \frac{\alpha(u_i,u')}{p(u_i)} \0 r(u_i,v_j) \, = \, \begin{cases}
            \frac{1}{2} \cdot \frac{1}{mn} &\text{if } u'\in\{u_{i-1},u_{i+1}\},\\
            0 &\text{otherwise.}
        \end{cases}
    \end{align*}
    Thus, $\sum_{v'\in V}\gamma((u_i,v_j),(u',v')) = {\alpha(u_i,u')}r(u_i,v_j)/{p(u_i)} $, as desired. An analogous argument verifies the other condition. Since $C_m$ and $C_n$ are connected, Proposition~\ref{connected_drop_coupling} ensures that the coupling condition is automatically satisfied.
    Therefore, $\gamma \in \mathcal{J}(\alpha, \beta)$, and since $\gamma \neq \alpha \otimes \beta$, we conclude that $C_m$ and $C_n$ are not strongly disjoint. 

By Theorem~\ref{weak_no_share_eigen}, $C_m$ and $C_n$ 
are weakly disjoint if and only if their transition matrices 
share no eigenvalues other than $1$. 
From the expression in \eqref{eigenvalue_circulant}, 
$C_m$ and $C_n$ share an eigenvalue other than $1$ if and only if 
there exists integers $k \in \{1,\cdots,m-1\}$ and 
$l \in \{1,\cdots,n-1\}$ such that
\begin{align*}
\cos\left(\frac{2k\pi}{m}\right) \, = \, \cos\left(\frac{2l\pi}{n}\right),
\end{align*}
which is equivalent to 
\begin{align}
        \frac{2k\pi}{m} \, = \, \frac{2l\pi}{n} \, \text{ or } \, \frac{2k\pi}{m} +\frac{2l\pi}{n} \, = \, 2\pi.
        \label{condition_common}
\end{align}
If $m$ and $n$ are co-prime, then no such integers $k$ and $l$ exist, and $C_m$ and $C_n$ are weakly disjoint.
Now suppose $m$ and $n$ are not co-prime, and let $r = \gcd(m,n) > 1$.  Then 
$k = m/r$ and $l = n/r$ are positive integers satisfying the conditions of \eqref{condition_common}, and in this case we see that $C_m$ and $C_n$ share an eigenvalue other than $1$ and are therefore not weakly disjoint. Thus $C_m$ and $C_n$ are weakly disjoint if and only if $\gcd(m,n)=1$.
\end{proof}



\subsection{Example~\ref{example_weak_c}}
\weakc*
\begin{proof}
Since $K_{2,2}$ is bipartite, Proposition~\ref{bipartite_strongdisjoint} gives that $K_{2,2}$ is not weakly disjoint from $P_2$.

Let $\alpha$ and $\beta$ denote the weight functions of $P_2$ and $K_{2,2}$, respectively. Now consider an arbitrary weight joining $\gamma\in\mathcal{J}(\alpha,\beta)$. By the transition coupling conditions, $\gamma$ satisfies
\begin{align*}
    &\gamma((u_0,v_0),(u_1,v_2)) \,=\, \gamma((u_0,v_0),(u_1,v_3)) \,=\, \gamma((u_0,v_1),(u_1,v_2)) \,=\, \gamma((u_0,v_1),(u_1,v_3)),\\
    &\gamma((u_0,v_2),(u_1,v_0)) \,=\, \gamma((u_0,v_2),(u_1,v_1)) \,=\, \gamma((u_0,v_3),(u_1,v_0)) \,=\, \gamma((u_0,v_3),(u_1,v_1)).
\end{align*}
Let $r$ denote the degree function of $\gamma$. 
Summing the equalities above, and noting that $r$ sums to one,
there exists some $x \in [0,1/4]$ such that
\begin{align*}
    &r(u_0,v_0) \,=\, r(u_0,v_1) \,=\, r(u_1,v_2) \,=\, r(u_1,v_3) \,=\, x,\\
    &r(u_0,v_2) \,=\, r(u_0,v_3) \,=\, r(u_1,v_0) \,=\, r(u_1,v_1) \,=\, \frac{1}{4} - x.
\end{align*}
Computing the cost $c(u_i,v_j) = i+j \; (\text{mod } 2)$ on $U\times V$ yields
\begin{align*}
    &c(u_0,v_0) \,=\, c(u_0,v_2) \,=\, c(u_1,v_1) \,=\, c(u_1,v_3) \,=\, 0,\\
    &c(u_0,v_1) \,=\, c(u_0,v_3) \,=\, c(u_1,v_0) \,=\, c(u_1,v_3) \,=\, 1.
\end{align*}
Therefore, the total weighted cost is
\begin{align*}
\sum_{(u,v) \in U \times V} c(u,v) \, r(u,v) 
\, = \, r(u_0,v_1) + r(u_0,v_3) + r(u_1,v_0) + r(u_1,v_3) 
\, = \, \frac{1}{2}.
\end{align*}
As this value is the same for every weight joining 
$\gamma\in\mathcal{J}(\alpha,\beta)$, we conclude that 
$P_2$ and $K_{2,2}$ are $c$-disjoint.
\end{proof}



\section{Proofs for Section~\ref{factors}}
\label{proof_of_factors}

Recall that for any graph $G = (U,\alpha)$, we denote the neighborhood of a vertex $u \in U$ by $N(u) \coloneqq \{u'\in U: \alpha(u,u') > 0\}$.

\subsection{Definition of Graph Factor and Connectedness}

\begin{prop}
    Suppose $H\0 | \0 G$ under $\phi$. 
    If $H$ is connected,  then condition (i) in Definition~\ref{graph_factor} follows directly from condition (ii).
    \label{factor_degree}
\end{prop}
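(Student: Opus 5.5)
Let $\tilde q(v) = \sum_{u \in \phi^{-1}(v)} p(u)$ be the push-forward of $p$ under $\phi$. The plan is to show that $\tilde q$ is a stationary measure for the random walk on $H$. Since $H$ is connected, that stationary measure is unique, and comparison with $q$ then gives $\tilde q = q$.

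The key step is to sum condition (ii) over $u \in \phi^{-1}(v)$. For fixed $v, v' \in V$ this gives
\begin{align*}
q(v) \sum_{u \in \phi^{-1}(v)} \sum_{u' \in \phi^{-1}(v')} \alpha(u,u') \, = \, \tilde q(v) \, \beta(v,v').
\end{align*}
Call the double sum on the left $A(v,v')$. By symmetry of $\alpha$ we have $A(v,v') = A(v',v)$, and by symmetry of $\beta$ we have $\beta(v,v') = \beta(v',v)$. Writing the identity for the pair $(v',v)$ as well, we get
\begin{align*}
q(v)\,A(v,v')\,\tilde q(v') = \tilde q(v)\,\beta(v,v')\,\tilde q(v'), \qquad q(v')\,A(v,v')\,\tilde q(v) = \tilde q(v')\,\beta(v,v')\,\tilde q(v).
\end{align*}
Now suppose $(v,v') \in E(H)$, so $\beta(v,v') > 0$. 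Connectedness of $H$ gives $q > 0$ on $V$. The identity then shows that $\tilde q(v) = 0$ exactly when $A(v,v') = 0$, and the identity for $(v',v)$ shows that $\tilde q(v') = 0$ exactly when $A(v',v) = A(v,v') = 0$. So $\tilde q(v)$ and $\tilde q(v')$ are either both zero or both positive. In the positive case, dividing the two identities gives $q(v)/\tilde q(v) = q(v')/\tilde q(v')$.

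Since $H$ is connected, both conclusions propagate along paths to all of $V$. Either $\tilde q \equiv 0$, or $\tilde q > 0$ everywhere and $\tilde q = c\, q$ for some constant $c > 0$. The first case is impossible, because $\sum_v \tilde q(v) = \sum_u p(u) = 1$. In the second case, summing over $V$ gives $c = 1$, since both $\tilde q$ and $q$ are probability mass functions. Hence $\tilde q = q$, which is condition (i).

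The main obstacle is handling possible zeros of $\tilde q$, and the zero-propagation step above is designed to deal with it. There is one further degenerate case to note: if $H$ is a single vertex, condition (i) holds trivially, because $q$ and $\tilde q$ are both the point mass at that vertex.
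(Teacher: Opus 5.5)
Your proof is correct and follows essentially the paper's argument: you sum condition (ii) over the fiber $\phi^{-1}(v)$, use the symmetry of $\alpha$ and $\beta$ to show that $\tilde q/q$ is constant across each edge of $H$, and then propagate by connectivity and normalize. Your zero-case analysis is unnecessary, since $q(v), q(v'), \beta(v,v') > 0$ give $\tilde q(v)/q(v) = A(v,v')/\beta(v,v') = \tilde q(v')/q(v')$ directly; likewise the stationary-measure framing in your first paragraph is never actually used.
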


\begin{proof}
    Assume that condition (ii) of Definition~\ref{graph_factor} holds. For $v \in V$, let
$$g(v) := q(v)^{-1} \sum_{u \in \phi^{-1}(v)} p(u).$$  By condition (ii), for any $v \in V$ and $v'\in {N}(v)$, we have
\begin{align*}
    \sum\limits_{u\in \phi^{-1}(v)}p(u) \,
    &= \, \sum\limits_{u\in \phi^{-1}(v)}\sum\limits_{u'\in\phi^{-1}(v')}\frac{\alpha(u,u')}{\beta(v,v')}q(v) \\
    &= \, \sum\limits_{u'\in \phi^{-1}(v')}\sum\limits_{u\in\phi^{-1}(v)}\frac{\alpha(u',u)}{\beta(v',v)}q(v) \\
    &= \, \sum\limits_{u'\in \phi^{-1}(v')} \frac{p(u')}{q(v')}q(v).
\end{align*}
Dividing both sides by $q(v)$ (which is positive since $H$ is connected), we find $g(v) = g(v')$. 
Since {$g(v) = g(v')$ for all neighboring pairs $(v,v')$ and} $H$ is connected, we conclude that $g(v)$ is equal to a constant $C$ 
for all $v \in V$. Thus
\begin{align*}
    1 \, = \, \sum\limits_{v\in V}\sum\limits_{u\in\phi^{-1}(v)}p(u) \, = \, C\sum\limits_{v\in V} q(v) = C,
\end{align*}
and then it follows from the definition of $g$ that $q(v) = \sum_{u \in \phi^{-1}(v)} p(u)$ for all $v \in V$.
\end{proof}

\subsection{Proposition~\ref{characterize_graph_joining}}
\characterizegraphjoining*
\begin{proof}
To establish necessity, suppose that $G \0 | \0 K$ under $\pi_1$ and that
$H \0 | \0 K$ under $\pi_2$.
Let $r$ be the degree function of $K$ and let $u \in U$ and $v \in V$ be such that $r(u,v) > 0$.  Then for $u' \in U$, 
\begin{align*}
\frac{1}{r(u,v)} \sum\limits_{v' \in V} \gamma((u,v),(u',v')) 
\, = \, 
\frac{1}{r(u,v)} \sum\limits_{(u',v')\in \pi_1^{-1}(u')} \gamma((u,v),(u',v')) \, = \, 
\frac{\alpha(u,u')}{p(u)},
\end{align*}
where the first equality is a consequence of the definition of the projection $\pi_1$, and the second equality is a consequence of our assumption that $G \0 | \0 K$ under $\pi_1$.
Similarly,
\begin{align*}
        \frac{1}{r(u,v)} \sum\limits_{u' \in U} \gamma((u,v),(u',v')) \, = \, \frac{1}{r(u,v)} \sum\limits_{(u',v')\in \pi_2^{-1}(v')} \gamma((u,v),(u',v')) \, = \, \frac{\beta(v,v')}{q(v)}.
\end{align*}
This verifies the transition coupling condition. The degree coupling condition is equivalent to constraint (i) in the definition of factors since
\begin{align*}
        &\sum\limits_{v \in V}r(u,v) \, = \,
        \sum\limits_{(u,v) \in \pi_1^{-1}(u)} r(u,v) \, = \, p(u),
\end{align*}
and similarly $\sum_{u \in U} r(u,v) = q(v)$. 
Hence $(U\times V,\gamma) \in \mathcal{J}(G,H)$. 
The proof of sufficiency, which involves reversing the argument above, 
is straightforward.
\end{proof}

\subsection{Proposition~\ref{prop:factprop}}
\propfactprop*
\begin{proof}
Let $G = (U, \alpha)$, $H = (V, \beta)$ and $K = (S, \eta)$ have degree functions $p$, $q$, and 
$z$, respectively.
Since $G \otimes H$ is a graph joining of $G$ and $H$, claim (1) follows from Proposition~\ref{characterize_graph_joining}.
To establish (2), let $f: V \to U$ and $g: S \to V$ be surjective maps such that $G \0 | \0 H$ under $f$ and 
$H \0 | \0 K$ under $g$, and define $h = f \circ g$.
We show that $G \0|\0 K$ under $h$.
First note that $h:S\to U$ is a surjective map since both $f$ and $g$ are surjective. It therefore remains to verify conditions (i) and (ii) in Definition~\ref{graph_factor}. For condition (ii), fix $u,u'\in U$ and $s\in h^{-1}(u)$.  Let us show that
\begin{align*}
    p(u)\sum_{s'\in h^{-1}(u')}\eta(s,s') = z(s) \alpha(u,u').
\end{align*}
By the assumption that $G\0|\0H$ under $f$, for any $v \in f^{-1}(u)$,
\begin{align}
    p(u) \sum\limits_{v' \in f^{-1}(u')}\beta(v,v') \, = \, q(v) \alpha(u,u').
    \label{GH}
\end{align}
Similarly, since $H\0|\0K$ under $g$, for any $s \in g^{-1}(v)$ and for any $v' \in f^{-1}(u')$, 
\begin{align}
    q(v) \sum\limits_{s' \in g^{-1}(v')}\eta(s,s') \, = \, z(s) \beta(v,v').
    \label{HK}
\end{align}
Assuming $z(s) > 0$, we substitute $\beta(v,v')$ in \eqref{GH} using \eqref{HK} to obtain
\begin{equation}
\begin{aligned}
    q(v) \alpha(u,u') \, &= \, p(u) \sum\limits_{v' \in f^{-1}(u')} \frac{q(v)}{z(s)}\sum\limits_{s' \in g^{-1}(v')}\eta(s,s') \\
    &= \, \frac{p(u)q(v)}{z(s)} \sum\limits_{v' \in f^{-1}(u')} \sum\limits_{s' \in g^{-1}(v')}\eta(s,s') \\
    &= \frac{p(u)q(v)}{z(s)} \sum\limits_{s' \in h^{-1}(u')} \eta(s,s').
    \label{GHK}
\end{aligned}
\end{equation}
Moreover, from our assumption that $H\0|\0K$ under $g$ and 
condition (i) in Definition~\ref{graph_factor}
\begin{align}
    q(v) = \sum\limits_{s \in g^{-1}(v)}z(s).
    \label{HK2}
\end{align}
Since we have assumed $z(s) > 0$, it follows that $q(v) > 0$. Dividing both sides of \eqref{GHK} by $q(v)$ yields
\begin{align}
z(s) \0 \alpha(u,u') \, = \, p(u) \sum\limits_{s' \in h^{-1}(u')} \eta(s,s').
\label{factor_i_check}
\end{align}
If instead $z(s) = 0$, then $\eta(s,s')=0$ for all $s'\in S$, and \eqref{factor_i_check} still holds trivially. This verifies condition (ii).
For condition (i), since $G\0|\0H$ under $f$, we have
\begin{align*}
    p(u) = \sum_{v\in f^{-1}(u)}q(v).
\end{align*}
Substituting $q(v)$ from \eqref{HK2}, we obtain
\begin{align*}
    p(u) \, = \, \sum\limits_{v \in f^{-1}(u)}\sum\limits_{s \in g^{-1}(v)} z(s) \, = \, \sum\limits_{s \in h^{-1}(u)} z(s).
\end{align*}
Thus condition (i) is also satisfied for $h$.
We therefore conclude that $G \0 | \0 K$ under $h$.

(3) As $G \0 | \0 H$ and $H \0 | \0 G$ there are surjective maps from $U$ to $V$
and from $V$ to $U$, so $|U| = |V|$. 
Let $f: V \to U$ be a factor map from $H$ to $G$. 
As $|U| = |V|$ and $f$ is surjective, $f$ is in fact bijective. 
The conditions in the definition of graph factor ensure that for all $u,u' \in U$, 
\begin{align*}
     p(u) \, & = \, q(f^{-1}(u)),
\end{align*}
and
\begin{align*}
     p(u) \0 \beta(f^{-1}(u),f^{-1}(u')) \, & = \, q(f^{-1}(u)) \0 \alpha(u,u'). 
\end{align*}
Note that if $p(u) = q(f^{-1}(u)) = 0$, then $\alpha(u,u') = 0$ and $\beta(f^{-1}(u),f^{-1}(u'))=0$ for all $u'\in U$.
Substituting the first identity into the second yields $\alpha(u, u') = \beta(f^{-1}(u),f^{-1}(u'))$ for all $u,u' \in U$.  Hence, $G$ and $H$ are isomorphic.
\end{proof}

\subsection{Proposition~\ref{furstenberg}}
\furstenbergresult*
\begin{proof}
Let $G = (U, \alpha)$, $H = (V, \beta)$, $K = (S, \eta)$ and $L = (T, \delta)$.
Let $f: S \to U$ be a factor map from $K$ to $G$, and let $g: T \to V$ be 
a factor map from $L$ to $H$. 
Let $\gamma$ be a weight
joining of $\alpha$ and $\beta$,
 and let $r_{\gamma}$ be the corresponding degree function.
Using $\gamma$ we may construct a weight joining between $K$ and $L$. For 
$(s,t),(s',t') \in S \times T$, define $\zeta: (S\times T)^2 \to \mathbb{R}$ by
\begin{align*}
        \zeta((s,t),(s',t')) \, = \, \frac{\gamma((f(s),g(t)),(f(s'),g(t')))}{\alpha(f(s),f(s')) \0 \beta(g(t),g(t'))} \0 \eta(s,s') \0 \delta(t,t'),
\end{align*}
with the convention that $0/0=0$. We claim that $\zeta \in \mathcal{J}(\eta,\delta)$.
Clearly $\zeta$ is non-negative and symmetric.
Let $p$, $q$, $k$, $l$ be the degree functions of $\alpha$, $\beta$, $\eta$ 
and $\delta$, respectively. As $H|L$ under $g$ and $\gamma \in \mathcal{J}(\alpha,\beta)$, 
we find that
    \begin{align*}
    \sum\limits_{t'\in T} \zeta((s,t),(s',t')) 
    &= \, \sum\limits_{v'\in V} \sum\limits_{t'\in g^{-1}(v')} \frac{\gamma((f(s),g(t)),(f(s'),v'))}{\alpha(f(s),f(s')) \0 \beta(g(t),v')} \0 \eta(s,s') \0 \delta(t,t')\\[.05in]
    &= \, \sum\limits_{v'\in V}\frac{\gamma((f(s),g(t)),(f(s'),v'))}{\alpha(f(s),f(s')) \0 \beta(g(t),v')} \0 \eta(s,s') \0 \sum\limits_{t'\in g^{-1}(v')} \delta(t,t')\\[.05in]
    &= \, \sum\limits_{v'\in V}\frac{\gamma((f(s),g(t)),(f(s'),v'))}{\alpha(f(s),f(s')) \0 \beta(g(t),v')} \0 \eta(s,s') \0
    \frac{\beta(g(t), v')}{q(g(t))} \0 l(t)\\[.05in]
    &= \, \frac{\eta(s,s') \0 l(t)}{\alpha(f(s),f(s')) \0 q(g(t))}
    \sum\limits_{v'\in V}\gamma((f(s),g(t)),(f(s'),v'))\\[.05in]
    &= \, \frac{\eta(s,s') \0 l(t)}{\alpha(f(s),f(s')) q(g\0 (t))} \frac{\alpha(f(s),f(s'))}{p(f(s))} \0
        r_{\gamma}(f(s),g(t))\\[.05in]
    &= \, \frac{\eta(s,s') \0 l(t)}{q(g(t)) \0 p(f(s)))} \0 r_{\gamma}(f(s),g(t)).
\vspace*{.07in}
\end{align*}
Summing over $s' \in S$ gives that the degree function of $\zeta$ takes the form
\begin{align*}
        r_{\zeta}(s,t) \, = \, \sum\limits_{s'\in S, t'\in T}\zeta((s,t),(s',t')) \, = \, \frac{k(s)l(t)}{q(g(t))p(f(s))}\0 r_{\gamma}(f(s),g(t)) .
\end{align*}
Then substituting this expression into the previous display yields
\begin{align*}
        \sum\limits_{t'\in T} \zeta((s,t),(s',t')) \, = \, \frac{\eta(s,s')}{k(s)}\0 r_{\zeta}(s,t),
    \end{align*}
which verifies the first transition coupling condition. The second may be
verified in a similar manner.

Condition (i) of the definition of a graph factor ensures that
$q(v) \, = \, \sum_{t \in g^{-1}(v)}l(t)$.  From this fact and the degree coupling
condition for $\gamma$, we find 
    \begin{align*}
        \sum\limits_{t \in T}r_{\zeta}(s,t) & 
        \, = \, \sum\limits_{v \in V} \sum\limits_{t \in g^{-1}(v)} 
        \frac{k(s)l(t)}{q(g(t))p(f(s))}\0 \, r_{\gamma}(f(s),g(t)) \\
        &= \, \sum\limits_{v \in V} \frac{k(s)}{q(v)p(f(s))}\0 r_{\gamma}(f(s),v)\sum\limits_{t\in g^{-1}(v)}l(t)\\
        &= \, \sum\limits_{v\in V} \frac{k(s)}{p(f(s))} \0 r_{\gamma}(f(s),v)\\
        &= \, \frac{k(s)}{p(f(s))}\0 p(f(s))
        \, = \, k(s).
    \end{align*}
This verifies the degree coupling and normalization property of $\zeta$, and we conclude
that $\zeta \in \mathcal{J}(\eta,\delta)$.

Now suppose that $G$ and $H$ are not strongly disjoint. Then there exists a weight joining $\gamma \neq \alpha \otimes \beta$ of $G$ and $H$. Thus, by construction, we have $\zeta \neq \eta \otimes \delta$,
and therefore $K$ and $L$ are not strongly disjoint. 

Next suppose that  $G$ and $H$ are not weakly disjoint. Then there exists a degree function $r_{\gamma} \neq p \otimes q$ for some weight joining $\gamma$. Then by the construction above, we have $r_{\zeta} \neq k \otimes l$,
and therefore $K$ and $L$ are not weakly disjoint.
\end{proof}

\subsection{Lemma~\ref{disconnected_not_weak}}
\disconnectednotweak*
\begin{proof}
Any disconnected graph admits a 
factor in which each of its connected components is mapped to one of 
two disconnected vertices equipped with a self-loop. 
The loop weights of each vertex, which we may assume to be positive, 
equal the total weight of the connected components mapped to that
vertex.  By Proposition~\ref{furstenberg}, it suffices to show that 
no pair of graphs, each consisting of two disconnected self-loops, 
is weakly disjoint. Let $(U,\alpha)$ and $(V,\beta)$ be two such graphs
with $U = \{u_1,u_2\}, V = \{v_1,v_2\}$, 
$\alpha(u_1,u_1) = 1-\alpha(u_2,u_2) \in (0,1)$, 
and $\beta(v_1,v_1) = 1- \beta(v_2,v_2) \in (0,1)$. First, one may readily check that
\begin{equation*}
    \max(0,\alpha(u_1,u_1)-\beta(v_2,v_2)) <\min(\alpha(u_1,u_1),\beta(v_1,v_1)).
\end{equation*}
Then we let $x \in (\max(0,\alpha(u_1,u_1)-\beta(v_2,v_2)),\min(\alpha(u_1,u_1),\beta(v_1,v_1))) \setminus \{ p(u_1)q(v_1)\}$.
Now we construct a nontrivial weight joining $\gamma$ of $\alpha$ and $\beta$ as follows:
    \begin{align*}
        &\gamma((u_1,v_1),(u_1,v_1)) = x,\\
        &\gamma((u_1,v_2),(u_1,v_2)) = \alpha(u_1,u_1) - x,\\
        &\gamma((u_2,v_1),(u_2,v_1)) = \beta(v_1,v_1) - x,\\
        &\gamma((u_2,v_2),(u_2,v_2)) = x + \beta(v_2,v_2) - \alpha(u_1,u_1),
    \end{align*}
with all other values of $\gamma$ equal to zero.
Our choice of $x$ ensures that the degree function $r$ of $\gamma$ satisfies
\[
        r(u_1,v_1) = \gamma((u_1,v_1),(u_1,v_1)) = x \neq  p(u_1) q(v_1),
\]
and therefore $(U,\alpha)$ and $(V,\beta)$ are not weakly disjoint. 
\end{proof}

\subsection{Proposition~\ref{factor_times_factor_strong}}
\factortimesfactorstrong*
\begin{proof}

Let $G = (U, \alpha)$ and $H = (V, \beta)$.
For any weight joining $\gamma\in \mathcal{J}(\alpha,\beta)$, by the edge preservation property, we have $\supp(\gamma)\subseteq \supp(\alpha\otimes\beta)$. Define $K = (U\times V,\gamma)$. Then $K \in \mathcal{J}(G,H)$, and by Proposition~\ref{characterize_graph_joining}, we get $G\0 | \0 K$ and $H\0 | \0 K$. By hypothesis, this yields $G \otimes H \0 | \0 K$. 
Since $G \otimes H \0|\0 K$, {there exists a factor map $f:U\times V\to U\times V$ such that $G\otimes H\0|\0 K$ under $f$. Because $f$ is surjective, it must in fact be bijective. Since $f$ is a bijective graph factor, 
$$(\alpha\otimes\beta)((f(x),f(y)),(f(x'),f(y'))) = \gamma((x,y),(x',y')) \quad \forall ((x,y),(x',y')) \in (U\times V)^2.$$
Consequently, $|\supp(\alpha\otimes\beta)| = |\supp(\gamma)|$. In conjunction with the fact that $\supp(\gamma)\subseteq \supp(\alpha\otimes \beta)$,} we obtain $\supp(\alpha\otimes\beta)= \supp(\gamma)$. 
Since $\gamma \in \mathcal{J}(\alpha,\beta)$ was arbitrary, we have shown that  every weight joining $\gamma\in \mathcal{J}(\alpha,\beta)$ has the same support as $\alpha\otimes \beta$.

 Now suppose for contradiction that $G$ and $H$ 
are not strongly disjoint. We will show that this contradicts the same-support property established in the previous paragraph. Since $G$ and $H$ are not strongly disjoint, there exists a weight joining $\gamma \in \mathcal{J}(\alpha,\beta)$ with $\gamma\neq \alpha\otimes \beta$. 
Since $\gamma \neq \alpha\otimes \beta$ and $\supp(\gamma) = \supp(\alpha\otimes \beta)$ (and since the weight functions are normalized), there exists 
$((x,y),(x',y'))\in \supp(\gamma)$ such that $\alpha(x,x')\beta(y,y') > \gamma((x,y),(x',y'))$.
Now consider the function $(1+t)\gamma-t\alpha\otimes\beta$, where 
\begin{align*}
    t\coloneqq \min _{\substack{((x, y),(x^{\prime}, y^{\prime})) \in \supp(\gamma) \\ \alpha(x, x^{\prime}) \beta(y, y^{\prime})>\gamma((x, y),(x^{\prime}, y^{\prime}))}} \frac{\gamma((x, y),(x^{\prime}, y^{\prime}))}{\alpha\left(x, x^{\prime}\right) \beta(y, y^{\prime})-\gamma((x, y),(x^{\prime}, y^{\prime}))} > 0 .
\end{align*}
Let
$((u,v),(u',v'))\in \supp(\gamma)$ be a pair attaining the above minimum. Then
$$t \,=\, \frac{\gamma((u,v),(u',v'))}{\alpha(u,u')\beta(v,v')-\gamma((u,v),(u',v'))},$$ which yields $(1+t)\gamma((u,v),(u',v')) - t\alpha(u,u')\beta(v,v') = 0$.
By the choice of $t$, we have that $(1+t)\gamma-t\alpha\otimes\beta$ is nonnegative, and therefore this function is a valid weight joining of
$\alpha$ and $\beta$. 
Since $((u,v),(u',v'))\in\supp(\gamma) = \supp(\alpha\otimes \beta)$, we get $\supp((1+t)\gamma-t\alpha\otimes\beta) \subsetneq \supp(\alpha\otimes \beta)$, contradicting the conclusion that every weight joining in $\mathcal{J}(\alpha,\beta)$ must have support equal to $\supp(\alpha\otimes\beta)$.
Therefore, $G$ and $H$ are strongly disjoint.
\end{proof}

\subsection{Proposition~\ref{factor_to_weak_disjoint}}
\factortoweak*
\begin{proof}[Proof of Proposition~\ref{factor_to_weak_disjoint}]
Since $K$ is a non-trivial graph and is isomorphic to itself (by the identity map), it is not weakly disjoint from itself (see the weight joining constructed in Section~\ref{isomorphic_graphs}). Under the assumptions $K \0 | \0 G$ and $K \0 | \0 H$, Proposition~\ref{furstenberg} gives that $G$ and $H$ are not weakly disjoint.
\end{proof}

\subsection{Proposition~\ref{no_factor_not_weak}}
\nofactornotweak*
\begin{proof}
Let $G$ and $H$ be the graphs shown in Figure~\ref{graph_joining_example} (D), 
and let $K$ be the graph joining illustrated in the same figure.  As $K$ has vertices with zero weight, but $G$ and $H$ do not, it is clear that 
$G$ and $H$ are not weakly disjoint.

Now suppose for contradiction that $G$ and $H$ share a non-trivial factor $F$. 
As $F$ has no more vertices than $G$ or $H$,
and as single-vertex graphs are trivial, $F$ must have exactly two vertices.
As $F$ and $H$ have the same number of vertices and 
their associated factor map is surjective, $F$ and $H$ are isomorphic, 
and it follows that $H$ must be a factor of $G$.
However, examining the six possible surjective maps
from the three vertices of $G$ to the two vertices of $H$, we find 
that none satisfies the condition in Definition~\ref{graph_factor}.
We therefore conclude by contradiction 
that $G$ and $H$ have no non-trivial common factor. 
\end{proof}
\begin{rmk}
While the example in the proof above has self-loops, 
one may construct similar examples without self-loops.
\end{rmk}

\subsection{Proposition~\ref{remark_on_circle}}
\remarkoncircle*
\begin{proof}
In the proof, let $C_m = (U,\alpha)$ and $C_n = (V,\beta)$ with $U = \{u_1,\cdots,u_m\}$ and $V = \{v_1,\cdots,v_n\}$.

    Suppose $C_m$ and $C_n$ share a non-trivial common graph factor. By Proposition~\ref{factor_to_weak_disjoint}, they are not weakly disjoint.  
    Conversely, suppose $C_m$ and $C_n$ are not weakly disjoint. Then by Proposition~\ref{circle_property}, $\gcd(m,n)\neq 1$.
    Let $r = \gcd(m,n)>1$. Let us now show that $C_r$ is a common factor of both $C_m$ and $C_n$. 
    For notation, suppose $C_r = (S,\eta)$, where $S = \{s_0,\cdots,s_{r-1}\}$. Define a map $f: U\to S$ by $f(u_i) = s_k$ whenever $i\equiv k \mod r$. Similarly, define $g: V\to S$ by $g(v_j)=s_k$ whenever $j\equiv k \mod r$. It is straightforward to verify that $C_r \0 | \0 C_m$ under $f$ and $C_r \0 | \0 C_n$ under $g$. Consequently, $C_m$ and $C_n$ share a non-trivial common graph factor $C_r$. 

    Now suppose $C_m$ and $P_n$ share a common graph factor. Then by Proposition~\ref{factor_to_weak_disjoint} the graphs $C_m$ and $P_n$ are not weakly disjoint, and hence they are not strongly disjoint. 
    Conversely, suppose $C_m$ and $P_n$ are not strongly disjoint. By Proposition~\ref{line_circle} either $m$ is even or $\gcd(m,n-1)>1$. If $m$ is even, then $C_m$ and $P_n$ share the common graph factor $P_2$. Now suppose instead that $m$ is odd and $\gcd(m,n-1) = r>1$. In this case, $r \geq 3$ and $r$ must be odd. We claim that $C_m$ and $P_n$ share a common factor $K = (S,\eta)$, where $S = \{s_1,\cdots,s_{(r+1)/2}\}$ and
    \begin{align*}
        \eta(s_i,s_j) \, = \, \begin{cases}
            \frac{1}{r} &\text{if } |i-j| = 1,\\
            \frac{1}{r} &\text{if }i = j = \frac{r+1}{2},\\
            0 &\text{otherwise.}
        \end{cases}
    \end{align*}
    Define a map $f: U \to S$ by 
    \begin{align*}
        f(u_i) \, = \, \begin{cases}
            s_k &\text{if } i \equiv k \text{ (mod }r), \text{ and }1\leq k \leq \frac{r+1}{2}, \\
            s_{r+2-k} &\text{if } i \equiv k \text{ (mod }r), \text{ and }\frac{r+1}{2}\leq k \leq r-1,\\
            s_2 &\text{if }i\equiv 0 \text{ (mod }r).
        \end{cases}
    \end{align*}
    Define $g: V \to S$ analogously by
    \begin{align*}
        g(v_j) \, = \, \begin{cases}
            s_k &\text{if } j \equiv k \text{ (mod }r), \text{ and }1\leq k \leq \frac{r+1}{2}, \\
            s_{r+2-k} &\text{if } j \equiv k \text{ (mod }r), \text{ and }\frac{r+3}{2}\leq k \leq r-1,\\
            s_2 &\text{if }j\equiv 0 \text{ (mod }r).
        \end{cases}
    \end{align*}
    Since $r \0|\0 m$ and $r \0|\0 (n-1)$, it is straightforward to verify that $K \0|\0 C_m$ under $f$ and $K \0|\0 P_n$ under $g$. Hence, $C_m$ and $P_n$ share a common non-trivial graph factor. 
\end{proof}

\vskip.2in

\section{Proofs for Section~\ref{Characterizing}}
\label{proof_of_characterizing}
\subsection{Theorem~\ref{weak_no_share_eigen}}
\weaknoshareeigen*
\begin{proof}
Let $G = (U,\alpha)$ and $H = (V,\beta)$ be connected. Consider any weight joining $\gamma$ such that $(U\times V,\gamma) \in \mathcal{J}(G,H)$.
By summing over the transition coupling conditions in the definition of a 
weight joining (Definition~\ref{Def:weight_joining}), we find that any weight joining $\gamma$ of $\alpha$ and $\beta$ 
satisfies
    \begin{align*}
        &\sum\limits_{u\in U}\sum\limits_{v'\in V}\gamma((u,v),(u',v')) \, = \, \sum\limits_{u\in U}\frac{\alpha(u,u')}{p(u)}r(u,v), \text{ and }\\
        &\sum\limits_{v'\in V}\sum\limits_{u\in U}\gamma((u,v),(u',v')) \, = \, \sum\limits_{v'\in V}\frac{\beta(v',v)}{q(v')}r(u',v'),
\end{align*}
for all $u' \in U$ and $v \in V$.
Since the left-hand sides of both expressions are equal, it follows that
        \begin{align}
            \sum\limits_{u\in U}\frac{\alpha(u,u')}{p(u)} \0 r(u,v) \, = \, \sum\limits_{v'\in V}\frac{\beta(v',v)}{q(v')} \0 r(u',v'),
        \label{suff_condition}
        \end{align}
for all $u'\in U$ and $v\in V$.
Let $U = \{u_1,\cdots,u_m\}$, $V=\{v_1,\cdots,v_n\}$, and define the following matrices 
\begin{align*}
        & W_1 \, = \,  [\0 \alpha(u_i,u_j) \0]_{i,j=1}^m \in \mathbb{R}^{m\times m}, 
        \quad W_2 \, = \, [\0 \beta(v_i,v_j) \0]_{i,j=1}^n \in \mathbb{R}^{n\times n}, \\[.1in]
        & D_1 \, = \, \diag(p(u_1),\cdots,p(u_m)), \quad D_2 \, = \, \diag(q(v_1),\cdots,q(v_n)), 
        \quad \Gamma \, = \, [\0 r(u_i,v_j) \0]{_{i=1}^m} _{j=1}^n.
\end{align*} 
Since $G$ and $H$ are connected, the diagonal entries of $D_1$ and $D_2$ are strictly positive.  Let
\begin{align*}
P \, = \, D_1^{-1}W_1 
\, = \, \left[\frac{\alpha(u_j,u_i)}{p(u_i)}\right]_{i,j=1}^m, 
\quad Q \, = \, D_2^{-1}W_2 \, = \, \left[\frac{\beta(v_j,v_i)}{q(v_i)}\right]_{i,j=1}^n.
\end{align*}
Note that $P$ and $Q$ are the transition matrices of the random walks on the graphs $G$ and $H$, respectively, and that $P$ and $P^T$ share the same spectrum.
Equation~\eqref{suff_condition} can be therefore be written as $P^T \Gamma = \Gamma Q$.
It is well-known that the number of connected components in an undirected graph is equal
to the multiplicity of the eigenvalue 1 in the normalized adjacency matrix of the graph
(see Lemma 1.17  in \cite{chung1997spectral}). Moreover, the normalized adjacency matrix $D_1^{-1/2}W_1D_1^{-1/2}$ shares the same spectrum as the transition matrix $D_1^{-1}W_1$.
As $G$ and $H$ are connected, $P^T$ and $Q$ each have a unique
eigenvalue equal to 1. 
In particular, $P^T \bar{p} = \bar{p}$ and 
$\bar{q}^T Q = \bar{q}^T$, where 
\begin{align*}
\bar{p} \, = \, \left( p(u_1),\cdots,p(u_m) \right)^T
\ \ \mbox{and} \ \ 
\bar{q} \, = \, \left( q(v_1),\cdots,q(v_n) \right)^T.
\end{align*}
It is immediate that $\Gamma = \bar{p} \0 \bar{q}^T$ satisfies the matrix equation 
\begin{equation} \label{Eqn:Matrices}
    P^T \Gamma = \Gamma Q.
\end{equation}

Suppose now that 1 is the only eigenvalue shared by $P^T$ and $Q$. 
If $y \in \mathbb{R}^n$ is any right eigenvector of $Q$ with eigenvalue 
$\lambda \neq 1$, then (\ref{Eqn:Matrices}) gives
\[
P^T \Gamma y \, = \, \Gamma Q y \, = \, \lambda \Gamma y.
\] 
As $\lambda$ is not an eigenvalue of $P^T$, it follows that $\Gamma y = 0$. 
Thus $\Gamma y = 0$ for all right eigenvectors $y$ of $Q$ except the 
eigenvector corresponding to eigenvalue 1. 
Similarly, $x^T \Gamma = 0$ for all left eigenvectors $x$ of $P^T$, except the  eigenvector corresponding to eigenvalue 1. 

Since the random walks on $G$ and $H$ are reversible, the transition matrices $P$ and $Q$ 
are diagonalizable (see Lemma 12.2 \cite{levin2017markov}), 
and as such their eigenvectors form bases of $\mathbb{R}^m$ and 
$\mathbb{R}^n$, respectively.
The arguments above show that 
$\dim(\mbox{Null}(\Gamma)) \geq n-1$, and therefore
$\rank(\Gamma) = n - \dim(\mbox{Null}(\Gamma)) \leq 1$. 
As $\Gamma$ is not the zero matrix, we must have $\rank(\Gamma) = 1$, 
which implies that $\Gamma = a b^T$ for some nonzero vectors $a,b$. 
As $r$ is a coupling of $p$ and $q$, the definition of $\Gamma$ ensures that 
$\Gamma\mathbbm{1}_n = \bar{p}$ and $\mathbbm{1}_m^T\Gamma = \bar{q}^T$.  
It follows that $a = c \bar{p}$ and $b = {\bar{q}}/{c}$ for some scalar $c \neq 0$. 
Thus $\Gamma = \bar{p} \0 \bar{q}^T$ is the unique solution of 
$P^T \Gamma = \Gamma Q$ {with $\Gamma\mathbbm{1}_n = \bar{p}$ and 
$\mathbbm{1}_m^T\Gamma = \bar{q}^T$}, and we conclude that $G$ and $H$ are weakly disjoint.
    
Now suppose that $P^T$ and $Q$ share an 
eigenvalue $\lambda \neq 1$.  Let $x \in \mathbb{R}^m$ and 
$y \in \mathbb{R}^n$ be nonzero vectors 
such that $P^T x = \lambda x$ and 
$y^T Q = \lambda y^T$. 
Then $\Gamma = x y^T$ satisfies $P^T \Gamma = \Gamma Q$. 
Note that $\mathbbm{1}_m^T P^T = \mathbbm{1}_m^T$, and therefore
\begin{align*}
\mathbbm{1}_m^T x \, = \, \mathbbm{1}_m^T P^T x \, = \, \lambda \mathbbm{1}_m^T x.
\end{align*}
As $\lambda \neq 1$, it follows that $\mathbbm{1}_m^T x = 0$. A similar argument 
shows $y^T \mathbbm{1}_n = 0$. 

First consider the case $\lambda = -1$. Then both $P$ and $Q$ have $-1$ as an eigenvalue. It is well known that for a connected, undirected graph, if the transition matrix has eigenvalue $-1$, then the graph is bipartite (see Lemma 12.1 (iii) in \cite{levin2017markov}, which states that for an irreducible reversible Markov chain, if -1 is an eigenvalue of the transition matrix, then the chain is periodic). Therefore, when $\lambda = -1$, both $G$ and $H$ are bipartite graphs. Then both $G$ and $H$ share a non-trivial graph factor path graph $P_2$.  By Proposition~\ref{factor_to_weak_disjoint}, $G$ and $H$ are not weakly disjoint.

Now suppose $\lambda\neq \pm 1$. Using $\lambda$, $x$, and $y$, let us construct a weight joining of $\alpha$ and $\beta$ whose degree 
function does not equal $p \otimes q$. 
Let ${x}_i$ and ${y}_j$ denote the $i$th and $j$th entries of the vectors $x$ and ${y}$. 
For $i,k \in \{ 1,\cdots,m \}$ and $j,l \in \{ 1,\cdots,n \}$, {and $t \in \mathbb{R}$}, define
\begin{align*}
\gamma_t((u_i,v_j),(u_k,v_l)) \, = \, &\alpha(u_i,u_k) \beta(v_j,v_l)\\
&*\left[1+\frac{t}{1-\lambda^2}\left( \frac{{x}_i {y}_j}{p(u_i)q(v_j)}+\frac{{x}_k {y}_l}{p(u_k)q(v_l)} -\lambda\frac{{x}_k {y}_j}{p(u_k)q(v_j)}-\lambda\frac{{x}_i {y}_l}{p(u_i)q(v_l)} \right)\right].
\end{align*}
Note that there exists $t\neq 0$ such that, for all $i,k\in\{1,\cdots,m\}$ and $j,l\in \{1,\cdots,n\}$,
\begin{align*}
    1+\frac{t}{1-\lambda^2}\left( \frac{{x}_i {y}_j}{p(u_i)q(v_j)}+\frac{{x}_k {y}_l}{p(u_k)q(v_l)} -\lambda\frac{{x}_k {y}_j}{p(u_k)q(v_j)}-\lambda\frac{{x}_i {y}_l}{p(u_i)q(v_l)} \right) > 0.
\end{align*}
We fix such a value of $t$ for the remainder of the proof. Since $\alpha(u_iu_k)\beta(v_j,v_l)\geq 0$, it follows that $\gamma_t$ is nonnegative.
We claim that $\gamma_t$ is a valid weight joining 
of $\alpha$ and $\beta$ with degree function $r_t \neq p\otimes q$. 
It is easy to verify that $\gamma_t$ is symmetric. 
{The normalization property will be checked later using degree functions.} 
Since both $G$ and $H$ are connected, it suffices to verify the transition coupling condition. 
To begin, note that 
\begin{align*}
\sum\limits_{k=1}^m\frac{\alpha(u_i,u_k)}{p(u_k)}{x}_k 
\,  & = \, 
(P^T {x})_i 
\, = \, 
(\lambda {x})_i 
\, = \, \lambda {x}_i, 
 \\
\sum\limits_{l=1}^n\frac{\beta(v_j,v_l)}{q(v_l)}{y}_l 
\, & = \, 
({y}^T Q)_{j} 
\, = \, 
(\lambda {y}^T)_j 
\, = \, 
\lambda {y}_j.
\end{align*}
From the second identity and the definition of $\gamma_t$ we find 
\begin{align*}
\sum_{l=1}^n \gamma_t((u_i,v_j),(u_k,v_l)) \, 
&= \, 
    \alpha(u_i,u_k)\left[q(v_j) + \frac{t}{1-\lambda^2}  \left(\frac{x_iy_j}{p(u_i)} - \lambda\frac{x_ky_j}{p(u_k)} + \left(\frac{x_k}{p(u_k)}-\lambda\frac{x_j}{p(u_i)}\right)\lambda y_j\right)\right] \\
    &= \, \alpha(u_i,u_k)\left[q(v_j) + \frac{t}{1-\lambda^2}(1-\lambda^2) \frac{x_iy_j}{p(u_i)}  \right] \\
    &= \, \alpha(u_i,u_k) \left[q(v_j) + t\frac{x_iy_j}{p(u_i)} \right].
\end{align*}
Similarly, using the first identity, we get
\begin{align*}
    \sum_{k=1}^m \gamma_t((u_i,v_j),(u_k,v_l)) \, = \, \beta(v_j,v_l) \left[p(u_i) + t\frac{x_iy_j}{q(v_j)}\right].
\end{align*}
The degree function of $\gamma_t$ equals
\begin{align*}
    r_t(u_i,v_j) \,=\, \sum_{k=1}^m\sum_{l=1}^n \gamma_t((u_i,v_j),(u_k,v_l)) \,=\, \sum_{k=1}^m \alpha(u_i,u_k) \left[q(v_j) + t\frac{x_iy_j}{p(u_i)} \right] \,=\, p(u_i)q(v_j) + tx_iy_j.
\end{align*}
Therefore
\begin{align*}
    &\sum_{l=1}^n \gamma_t((u_i,v_j),(u_k,v_l)) \, = \, \frac{\alpha(u_i,u_k)}{p(u_i)}r_t(u_i,v_j),\\
    &\sum_{k=1}^m \gamma_t((u_i,v_j),(u_k,v_l)) \, = \, \frac{\beta(v_j,v_l)}{q(v_j)}r_t(u_i,v_j).
\end{align*}
It remains to check the normalization property. 
Since $\mathbbm{1}_m^T x = 0$ and $y^T \mathbbm{1}_n = 0$, 
\begin{align*}
    \sum_{i=1}^m \sum_{j=1}^n r_t(u_i,v_j) \,=\, \sum_{i=1}^m \sum_{j=1}^n \left(p(u_i)q(v_j) + tx_iy_j\right) \,=\, 1 + t\sum_{i=1}^m x_i \sum_{j=1}^n y_j \,=\, 1.
\end{align*}
Now we have shown that $\gamma_t \in \mathcal{J}(\alpha,\beta)$, with the degree function $r_t(u_i,v_j) = p(u_i)q(v_j)+tx_iy_j$. Since $t\neq 0$ and the vectors $x,y\neq 0$, {there exists $i,j$ such that $x_i\neq 0$ and $y_j\neq 0$. For such a pair, we have $r_t(u_i,v_j) \neq p(u_i)q(v_j)$}, and we conclude that $G$ and $H$ are not weakly disjoint.
\end{proof}

\subsection{Proposition~\ref{weak_no_share_eigen_general}}
\weaknoshareeigengeneral*
\begin{proof}
{We distinguish cases according to the connectedness of $G$ and $H$.
In case} both $G$ and $H$ are connected, the result {is immediate from} Theorem~\ref{weak_no_share_eigen}.

{Now consider the case that} both $G$ and $H$ are disconnected. Then
they are not weakly disjoint, as noted in Lemma~\ref{disconnected_not_weak}. 
{For undirected graphs, the algebraic multiplicity of the eigenvalue 1 of the transition matrix equals the number of connected components of the graph (Lemma 1.17 \cite{chung1997spectral}). Since both $G$ and $H$ are disconnected, each has at least two connected components. Consequently, both of their transition matrices have the eigenvalue 1 with algebraic multiplicity at least two. Therefore, the transition matrices share the eigenvalue 1 with overlapping multiplicity at least two.}

On the other hand, suppose that $G$ is connected and $H$ is disconnected. 
We claim that $G$ and $H$ are weakly disjoint
if and only if $G$ is weakly disjoint from each connected component of $H$.

Write $G = (U,\alpha)$ and decompose $H = (V,\beta)$ as $V = V_1\sqcup\cdots\sqcup V_k$, where $\beta(v,v') = 0$ whenever $v\in V_i,v'\in V_j$, and $i \neq j$. For each $i$, let $H_i = (V_i,\beta_i)$ be the induced connected graph with normalized weights $\beta_i(v,v') \coloneqq \frac{\beta(v,v')}{s_i}$ for $v,v'\in V_i$ where $s_i \coloneqq \sum_{x,x'\in V_i}\beta(x,x')$. If $G$ is not weakly disjoint from some $H_i$, then there exists a weight joining $\gamma_i\neq \alpha\otimes\beta_i$ of $\alpha$ and $\beta_i$. Define $\gamma \coloneqq \frac{1}{s_i}\gamma_i + \sum_{j\neq i}\frac{1}{s_j}\alpha\otimes\beta_j$. It is straightforward to check that $\gamma$ is a weight joining of $\alpha$ and $\beta$, and since $\gamma_i\neq \alpha\otimes\beta_i$, we have $\gamma\neq \alpha\otimes\beta$. Hence, $G$ and $H$ are not weakly disjoint. 
Conversely, suppose $G$ and $H$ are not weakly disjoint, so there exists a weight joining $\gamma\neq \alpha\otimes\beta$. For each $i$, define $\gamma_i\coloneqq \frac{1}{t_i}\gamma|_{U\times V_i}$, where $t_i \coloneqq \sum_{u,u'\in U,v,v'\in V_i}\gamma((u,u'),(v,v')) = \sum_{v,v'\in V_i}\beta(v,v') = s_i$. Then each $\gamma_i$ is a weight joining of $\alpha$ and $\beta_i$, and since $\gamma\neq \alpha\otimes\beta$, there exists at least one $i$ such that $\gamma_i\neq \alpha\otimes\beta_i$. Thus, $G$ is not weakly disjoint from that component $H_i$.

Therefore, $G$ and $H$ are not weakly disjoint if and only if $G$ is not weakly disjoint from at least one connected component of $H$. Equivalently, $G$ and $H$ share an eigenvalue other than 1, since the spectrum of each connected component of $H$ is contained in the spectrum of $H$.

An analogous argument applies when $H$ is connected and $G$ is disconnected.
\end{proof}

\vskip.1in

\subsection{Joining Constraint Matrix}
\label{weight_joining_constraint_matrix}


Consider two weighted, undirected graphs $G = (U, \alpha)$ and $H = (V, \beta)$. 
Let $E(G)$ and $E(H)$ be the edge sets of $G$ and $H$, respectively. 
By definition, a weight joining $\gamma$ of $\alpha$ and $\beta$ 
is a function on $(U\times V) \times (U\times V)$ and is therefore determined by $|U|^2|V|^2$ values.
However, many of these values are necessarily zero as a consequence of the edge-preservation property, which implies that $\supp(\gamma) \subseteq \supp(\alpha \otimes \beta)$. 
In the linear programming formulation that follows, variables correspond to the values 
of weight joinings on $\supp(\alpha \otimes \beta)$. 
For each weight joining $\gamma$ of $\alpha$ and $\beta$ 
define a corresponding vector $\Vec{\gamma} \in \mathbb{R}^{k}$, with 
$k = |\supp(\alpha \otimes \beta)| = |E(G)| |E(H)|$, whose 
entries correspond to the values of $\gamma((u,v),(u',v'))$ for pairs 
$((u, v), (u', v')) \in \supp(\alpha \otimes \beta)$. 
The conditions ensuring that a vector $\Vec{\gamma}$ is a valid weight joining may be 
expressed in matrix form as
\begin{align*}
    J\Vec{\gamma} \, = \, 0, \quad \mathbbm{1}^T \Vec{\gamma} \, = \, 1, \quad \Vec{\gamma} \, \geq \, 0,
\end{align*}
where $J$ is a matrix capturing the following structural constraints:
\begin{itemize}

\item symmetry: $\gamma((u,v),(u',v')) = \gamma((u',v'),(u,v))$ for $(u,u') \in E(G)$ and $(v,v') \in E(H)$;

\item degree coupling constraints for all vertices;

\item transition coupling constraints for all vertices.
    \end{itemize}
The additional constraints $\mathbbm{1}^T \Vec{\gamma} = 1$ and $\Vec{\gamma} \geq 0$ ensure that the solution $\Vec{\gamma}$ is normalized and nonnegative.  
There is a one-to-one correspondence between vectors $\Vec{\gamma}$ 
satisfying these constraints and valid weight joinings $\gamma$ of $\alpha$ and $\beta$. 

In more detail, each symmetry constraint is enforced by a row in the constraint matrix 
$J$ such that
$$(0,\cdots,0,1,0,\cdots,0,-1,\cdots,0) \, \Vec{\gamma} \, = \, 0,$$
where $+1$ appears in the column corresponding to $((u,v),(u',v'))$ and 
$-1$ in the column corresponding to $((u',v'),(u,v))$), with all other entries zero.
The degree coupling constraints take the form
$$\sum\limits_{v\in V}r(u,v) \, = \, p(u),$$ 
for each $u \in U$, where $r(u,v) = \sum_{(u',v')\in U\times V} \gamma((u,v),(u',v'))$. Using the normalization condition $\sum_{(u,v)\in U\times V}r(u,v) =1$ imposed by $\mathbbm{1}^T \Vec{\gamma} = 1$, we can express the degree coupling as
    $$\sum\limits_{v\in V}\sum\limits_{(u',v')\in U\times V} \gamma((u,v),(u',v')) - p(u)\sum\limits_{(u,v)\in U\times V}\sum\limits_{(u',v')\in U\times V} \gamma((u,v),(u',v')) \, = \, 0.$$ 
    These equality constraints can be expressed in matrix form. Each such constraint corresponds to a row of matrix $J$. 
    A similar set of constraints applies for each $v \in V$, ensuring that $\sum_{u \in U}r(u,v) = q(v)$.
    
    For the transition coupling constraints, for each fixed $(u,v) \in U \times V$, and for all $u' \in U$ and $v' \in V$, we impose
    \begin{align*}
        &p(u)\sum\limits_{y \in V}\gamma((u,v),(u',y))-{\alpha(u,u')}\sum\limits_{(x,y)\in U\times V} \gamma((u,v),(x,y)) \, = \, 0, \\
        &q(v)\sum\limits_{x \in U}\gamma((u,v),(x,v'))-{\beta(v,v')}\sum\limits_{(x,y)\in U\times V} \gamma((u,v),(x,y)) \, = \, 0.
    \end{align*}
    Each such constraint is again represented by a row of the matrix $J$, {where the entries of $J$ correspond to the coefficients of $\gamma$ in the equations above.} 
    We refer to $J$ as the \textbf{joining constraint matrix} between the graphs $G = (U, \alpha)$ and $H=(V, \beta)$. This matrix encodes all necessary linear constraints that characterize the set $\mathcal{J}(\alpha,\beta)$ of admissible weight joinings. 
    

\subsection{Proposition~\ref{rank_equiv_strong}}
\rankequivstrong*
\begin{proof}
Let $J = J(G,H)$ be the joining constraint matrix between $G$ and $H$.
The problem of identifying the weight joinings in $\mathcal{J}(\alpha,\beta)$ now reduces to solving
\begin{equation}  
\begin{aligned}
\label{feasible}
    J\Vec{\gamma} & \, = \, 0, \\
    \Vec{\gamma} & \, \geq \, 0, \\
    \mathbbm{1}^T \Vec{\gamma} & \, = \, 1.
\end{aligned}
\end{equation}
As established in Section~\ref{weight_joining_constraint_matrix}, there exists a one-to-one correspondence between joinings in $\mathcal{J}(\alpha,\beta)$ and solutions to the system \eqref{feasible}.
Let $\vec{\gamma}_{\text{product}}$ denote the column vector representation of the 
product weight joining $\alpha \otimes \beta$. It is easy to verify that
$\vec{\gamma}_{\text{product}}$ satisfies the constraints in \eqref{feasible}. 
In particular, since each entry of $\vec{\gamma}_{\text{product}}$ is the product of two strictly positive edge weights, $\vec{\gamma}_{\text{product}} > 0$. 

In order to determine whether $G$ and $H$ are strongly disjoint, it suffices to check whether 
there exists a vector $\vec{\gamma}' \notin \spa(\vec{\gamma}_{\text{product}})$ 
satisfying $J\vec{\gamma}'=0$ and $\mathbbm{1}^T\vec{\gamma}'=1$. 
Indeed, if such a vector $\vec{\gamma}'$ exists, then for each $t \in \mathbb{R}$, the vector
\[
{\vec{\gamma}}^* \, = \, (1+t)\vec{\gamma}_{\text{product}}-t\vec{\gamma}'
\]
satisfies $J {\vec{\gamma}}^* = 0$ and $\mathbbm{1}^T {\vec{\gamma}}^* = 0$, and 
as $\vec{\gamma}_{\text{product}} > 0$, the vector $\vec{\gamma}^*$ will have 
strictly positive entries for $t$ sufficiently close to 0, and hence satisfy all conditions in~\eqref{feasible}. Moreover, $\vec{\gamma}^*\neq \vec{\gamma}_{\text{product}}$ 
as $\Vec{\gamma}' \notin \spa({\vec{\gamma}_{\text{product}}})$. 
Thus the existence of $\vec{\gamma}'$ implies that $\vec{\gamma}_{\text{product}}$ 
is not the unique solution to~\eqref{feasible}, and therefore $G$ and $H$ are 
not strongly disjoint. 
Conversely, if $G$ and $H$ are not strongly disjoint, then there exists a solution to \eqref{feasible} distinct from $\vec{\gamma}_{\text{product}}$, 
which implies the existence of a vector $\vec{\gamma}' \notin \spa(\vec{\gamma}_{\text{product}})$ satisfying the stated conditions. Note that that by the normalization condition, $\vec{\gamma}_{\text{product}}$ is the only vector in its span that corresponds to a 
solution of \eqref{feasible}.

To determine whether $G$ and $H$ are strongly disjoint, we analyze the null space $\mbox{Null}(J)$. Since $\vec{\gamma}_{\text{product}}\in \mbox{Null}(J)$ and $\vec{\gamma}_{\text{product}}\neq 0$, it follows that 
 $\dim(\mbox{Null}(J))\geq 1$. Here we consider two cases:
    \begin{itemize}
        \item If $\dim(\mbox{Null}(J)) = 1$, then $\mbox{Null}(J) = \spa(\vec{\gamma}_{\text{product}})$. In this case, there does not exist a nonzero $\vec{\gamma}\in \mbox{Null}(J)$ linearly independent of $\vec{\gamma}_{\text{product}}$, and therefore no alternative solution to~\eqref{feasible} distinct from $\vec{\gamma}_{\text{product}}$ can exist. Hence, $G$ and $H$ are strongly disjoint.
        \item If $\dim(\mbox{Null}(J)) > 1$, then $\spa(\vec{\gamma}_{\text{product}}) \subsetneq \mbox{Null}(J)$, and there exists $\vec{\gamma}'\in \mbox{Null}(J)\setminus \spa(\vec{\gamma}_{\text{product}})$ with $\vec{\gamma}'\neq 0$. Scaling $\vec{\gamma}'$ to ensure
        $\mathbbm{1}^T\vec{\gamma}' =1$, the preceding argument yields the existence of a distinct solution $\vec{\gamma}^*\neq \vec{\gamma}_{\text{product}}$ to \eqref{feasible}. Thus, $G$ and $H$ are not strongly disjoint. 
    \end{itemize}
    In summary, determining whether $G$ and $H$ are strongly disjoint reduces to computing the dimension of $\mbox{Null}(J)$. Specifically, $G$ and $H$ are strongly disjoint if and only if $\dim(\mbox{Null}(J)) = 1$.
\end{proof}

\subsection{Theorem~\ref{Characterization_connected_no_self}}
\characterizationconnectednoself*


To prove Theorem~\ref{Characterization_connected_no_self}, we analyze all possible combinations of connectivity levels of the graphs $G$ and $H$, as discussed in Section~\ref{Characterizing}. The proof proceeds through a sequence of lemmas and propositions, corresponding to the entries listed in Table~\ref{fig:connectivity-table}. For each connectivity regime, we establish whether strong disjointness holds or fails, and we clarify its relationship with weak disjointness.

    In the proof, we will assume
    \begin{itemize}
        \item $G$ is a connected graph containing no self-loops that has a vertex set $U$ of cardinality $m$ and edge set $E(G)$, {where $G$ has $M$ undirected edges}. 
        \item $H$ is a connected graph containing no self-loops that has a vertex set $V$ of cardinality $n$ and an edge set $E(H)$, {where $H$ has $N$ undirected edges.}
    \end{itemize}
    {Since $G$ and $H$ have no self-loops, each undirected edge contributes exactly two directed edges (one in each direction). Hence we have that $|E(G)| = 2M$ and $|E(H)| = 2N$.}
    
    To analyze the joinings of $G$ and $H$, we adopt a linear-algebraic framework in which a weight joining is represented by a vector satisfying a system of linear constraints. The approach parallels the construction of the joining constraint matrix in Section~\ref{weight_joining_constraint_matrix}.
    Since we now impose the additional assumption that both graphs are connected and contain no self-loops, the constraints can be simplified relative to the general joining constraint matrix.  In order to distinguish this setting from the general case, we construct a \emph{simplified joining constraint matrix} $J_s$. Moreover, since the nonzero entries of the weight joining are represented as a vector, we use $\Vec{\gamma}_s$ to represent the simplified weight joining vector. 
    We now describe the construction of the simplified joining constraint matrix $J_s$ and the associated vector of unknowns $\Vec{\gamma}_s$. Let $\kappa$ denote the dimension of $\Vec{\gamma}_s$, which corresponds to the total number of unknown variables. Let $\iota$ denote the number of constraints in the linear system; this equals the number of rows in $J_s$ plus one additional constraint arising from the normalization condition.
    To formulate the unknowns and constraints, we proceed as follows.
    
    First, we identify the unknown variables represented in $\Vec{\gamma}_s$. Because the weight joining is a weight function, it satisfies a symmetry property: for any pairs of edges $(u,u')\in E(G)$ and $(v,v')\in E(H)$, $\gamma((u,v),(u',v')) = \gamma((u',v'),(u,v))$. Accordingly, each symmetric pair is treated as a single unknown variable. Moreover, by the edge preservation property, the weight joining entry at $((u,v),(u',v'))$ can only be positive if $(u,u')\in E(G)$ and $(v,v')\in E(H)$. 
    As $\gamma((u,v),(u',v'))$ may be distinct from $\gamma((u,v'),(u',v))$, the total number of unknowns is
    \begin{align}
        \kappa \,= \frac{|E(G)||E(H)|}{2}\, = \, \frac{(2M)(2N)}{2} \, = \, 2MN,
        \label{number_of_unknowns}
    \end{align}
    where the division by two accounts for the elimination of duplicates arising from symmetric pairs. 

    Next, we turn to the constraints of the linear system. The symmetry constraint has already been incorporated in the definition of $\Vec{\gamma}_s$, while the normalization constraint is given by $\mathbbm{1}^T \vec{\gamma}=1$. Since both graphs are connected, the degree coupling condition can be omitted and it suffices to consider the transition coupling condition. Recall that the first transition coupling condition requires that for each $(u,v) \in U \times V$,
    \begin{align*}
            \sum\limits_{y'\in V}\gamma((u,v),(u',y')) \, = \, \frac{\alpha(u,u')}{p(u)}r(u,v), \text{ for all } u'\in {N}(u).
        \end{align*}
    However, not all of these constraints are independent. Indeed, summing over 
    $u'\in {N}(u)$ gives
        \begin{align*}
            \sum_{u'\in {N}(u)}\sum\limits_{y'\in V}\gamma((u,v),(u',y')) \, = \,  \sum_{u'\in {N}(u)} \frac{\alpha(u,u')}{p(u)}r(u,v) \, = \, r(u,v),       
        \end{align*}
        which holds by definition of $p$. Consequently, for each fixed $u \in U$, one of the constraints is linearly dependent on the others and can be safely removed. Applying the same argument to the second transition coupling condition over $v' \in N(v)$, we remove one redundant constraint per $v$. Therefore, for each $(u,v) \in U \times V$, after removing these redundant constraints, the number of remaining constraints is $(|{N}(u)|-1)+(|{N}(v)|-1)$. 
        Summing over all $(u,v)\in U\times V$ and including the normalization constraint, the total number of remaining 
        constraints is
        $$\iota = {\left(\sum_{(u,v) \in U \times V} (|N(u)|-1) + (|N(v)| - 1) \right) + 1 = } (2 M-m) n+(2 N-n) m+1.$$
        Among the total $\iota$ constraints, $\iota-1$ are transition coupling constraints.
        Let $J_s$ be a matrix with $\iota-1$ 
        rows and $\kappa$ columns such that the entries of each row correspond to the coefficients of one of the remaining transition coupling constraints. 

Based on the equations above, the difference between $\kappa$ (recall that it is defined in \eqref{number_of_unknowns})
and $\iota$ is given by
    \begin{align}
        \kappa - \iota = 2MN-2Mn-2Nm+2mn-1 = 2(M-m)(N-n)-1.
\label{difference}
    \end{align}

    The problem of identifying the weight joining in $\mathcal{J}(\alpha,\beta)$ reduces to solving
    \begin{equation}
    \begin{aligned}
        J_s\Vec{\gamma}_s \,&=\, 0, \\
        \Vec{\gamma}_s \,&\geq \, 0, \\
        \mathbbm{1}^T \Vec{\gamma}_s \, &= \, 1,
        \label{reduced_feasible}
    \end{aligned}
    \end{equation}
    which is identical to \eqref{feasible}, except with $J$ replaced by $J_s$ and $\Vec{\gamma}$ replaced by $\Vec{\gamma}_s$.
    \begin{lemma}
        For connected graphs $G$ and $H$ with no self-loops, let $J_s$ be the simplified joining constraint matrix between them. Then $G$ and $H$ are strongly disjoint if and only if $\dim(\mbox{Null}(J_s)) = 1$.
        \label{lemma:strong_equiv_rank}
    \end{lemma}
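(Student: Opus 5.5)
The plan is to mirror the proof of Proposition~\ref{rank_equiv_strong}, with the full constraint matrix $J$ replaced by $J_s$. The first step is to show that the feasible systems \eqref{feasible} and \eqref{reduced_feasible} have the same solutions. Define a linear map from $\mathbb{R}^{\kappa}$ into the space of functions on $\supp(\alpha\otimes\beta)$: each coordinate of $\Vec{\gamma}_s$ is assigned to both members of its symmetric pair $((u,v),(u',v'))$ and $((u',v'),(u,v))$. This map is injective, and its image is exactly the set of symmetric vectors, so it automatically satisfies the symmetry rows of $J$.

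Under this identification, a vector $\Vec{\gamma}_s$ satisfies $J_s\Vec{\gamma}_s=0$ if and only if the corresponding symmetric $\Vec{\gamma}$ satisfies all of the transition coupling rows of $J$. Two facts give this.
\begin{itemize}
\item Each constraint removed when forming $J_s$ is implied by the remaining ones. For fixed $(u,v)$, the constraints indexed by $u'\in N(u)$ sum to the identity $r(u,v)=r(u,v)$, so any one of them follows from the others. The same holds for the constraints indexed by $v'\in N(v)$.
\item The constraints with $u'\notin N(u)$ hold trivially. Both sides vanish, because $\alpha(u,u')=0$ and, by edge preservation, $\gamma((u,v),(u',\cdot))=0$.
\end{itemize}
The degree coupling rows of $J$ can be dropped by Proposition~\ref{connected_drop_coupling}, since $G$ and $H$ are connected. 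Hence weight joinings correspond bijectively to nonnegative, normalized vectors in $\mbox{Null}(J_s)$. It follows that $\dim\mbox{Null}(J_s)=\dim\mbox{Null}(J)$, and one could simply invoke Proposition~\ref{rank_equiv_strong}. To keep the argument self-contained, I would instead rerun its proof directly.

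The direct argument runs as follows. The vector $\Vec{\gamma}_{s,\text{product}}$ representing $\alpha\otimes\beta$ is strictly positive, since every coordinate is a product of positive edge weights, and it lies in $\mbox{Null}(J_s)$, so $\dim\mbox{Null}(J_s)\ge 1$.
\begin{itemize}
\item If $\dim\mbox{Null}(J_s)=1$, every solution of \eqref{reduced_feasible} is a multiple of $\Vec{\gamma}_{s,\text{product}}$. Normalization then forces it to equal $\Vec{\gamma}_{s,\text{product}}$, so $G$ and $H$ are strongly disjoint.
\item If $\dim\mbox{Null}(J_s)>1$, pick $\Vec{\gamma}'\in\mbox{Null}(J_s)\setminus\spa(\Vec{\gamma}_{s,\text{product}})$. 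If $\mathbbm{1}^T\Vec{\gamma}'\neq 0$, rescale it so that $\mathbbm{1}^T\Vec{\gamma}'=1$. If $\mathbbm{1}^T\Vec{\gamma}'=0$, add $\Vec{\gamma}_{s,\text{product}}$ to it first; the sum is still outside the span and now has total mass one. In either case $(1+t)\Vec{\gamma}_{s,\text{product}}-t\Vec{\gamma}'$ is a positive, normalized null vector for small $t\neq 0$. It differs from the product, so $G$ and $H$ are not strongly disjoint.
\end{itemize}

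The step I expect to be the main obstacle is the careful bookkeeping in the first step. One must check that the redundant-row removal, the symmetry identification and the omission of degree coupling lose no constraint. The case $\mathbbm{1}^T\Vec{\gamma}'=0$ also needs its own treatment, and I handle it by the shift described above.
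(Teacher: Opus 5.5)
Your proposal is correct and takes essentially the same route as the paper. The paper likewise reduces to the one-to-one correspondence between weight joinings and solutions of \eqref{reduced_feasible}, and then perturbs the strictly positive product vector via $(1+t)\vec{\gamma}_{\text{product}}-t\vec{\gamma}'$. Your explicit check that dropping the redundant, trivial and degree-coupling rows loses no constraint, and your separate treatment of the case $\mathbbm{1}^T\vec{\gamma}'=0$, fill in details the paper's short proof leaves implicit.
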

    
    \begin{proof}
    We follow the arguments in Proposition~\ref{rank_equiv_strong}.
        By the previous discussion, there exists a one-to-one correspondence between weight joinings in $\mathcal{J}(\alpha,\beta)$ and solutions to the system \eqref{reduced_feasible}. Let $\Vec{\gamma}_{\text{product}}$ denote the column vector representation of the product joining $\alpha\otimes \beta$ in the simplified weight joining vector formulation.
        Then $\Vec{\gamma}_{\text{product}}$ satisfies the constraints in \eqref{reduced_feasible}, and $\Vec{\gamma}_{\text{product}}>0$. If there exists $\Vec{\gamma}_s\neq \Vec{\gamma}_{\text{product}}$ that satisfies \eqref{reduced_feasible}, then $G$ and $H$ are not strongly disjoint. In particular, if there exists $\Vec{\gamma}_s' \notin \text{span}(\Vec{\gamma}_{\text{product}})$ such that $J_s \Vec{\gamma}_s' = 0$ and $\mathbbm{1}^T\Vec{\gamma}_s' = 1$, then $\Vec{\gamma}_s = (1+t)\Vec{\gamma}_{\text{product}}-t\Vec{\gamma}_s'$ for $t$ sufficiently close to 0 satisfies \eqref{reduced_feasible}. Thus, $G$ and $H$ are strongly disjoint if and only if $\dim(\mbox{Null}(J_s)) = 1$. 
    \end{proof}

    We now begin to prove the results summarized in Table~\ref{fig:connectivity-table}.
    Using Lemma~\ref{lemma:strong_equiv_rank} together with our earlier arguments regarding the number of unknowns and constraints, we establish the following proposition. This result shows that in entries $\textcircled{\small{1}}$ and  $\textcircled{\small{3}}$ of Table~\ref{fig:connectivity-table} the corresponding pairs of graphs are not strongly disjoint.
\begin{prop} 
Let $G$ and $H$ be connected graphs with no self-loops, and let $J_s = J_s(G,H)\in\mathbb{R}^{(\iota-1)\times \kappa}$ be the simplified joining constraint matrix between $G$ and $H$.
    \begin{enumerate}
        \item If $\kappa - \iota > 0$, then $G$ and $H$ are not strongly disjoint.
        \item If $\kappa - \iota \leq 0$, then $G$ and $H$ are strongly disjoint if and only if $\rank(J_s) = \kappa - 1$.
        
    \end{enumerate}
    \label{possible_states}
\end{prop}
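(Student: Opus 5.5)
The plan is to apply Lemma~\ref{lemma:strong_equiv_rank} and the rank--nullity theorem to $J_s \in \mathbb{R}^{(\iota-1)\times\kappa}$. Rank--nullity gives
\[
\dim(\mbox{Null}(J_s)) = \kappa - \rank(J_s).
\]
The lemma says that $G$ and $H$ are strongly disjoint if and only if this nullity equals $1$, that is, if and only if $\rank(J_s) = \kappa - 1$. Both parts of the proposition will be read off from this single equivalence.

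For part (1), the only input is the trivial bound that the rank of a matrix is at most its number of rows, so $\rank(J_s) \le \iota - 1$. If $\kappa - \iota > 0$, then $\kappa \ge \iota + 1$. Hence
\[
\dim(\mbox{Null}(J_s)) \ \ge\ \kappa - (\iota - 1) \ \ge\ 2 .
\]
The nullity is therefore at least $2$, and Lemma~\ref{lemma:strong_equiv_rank} shows that $G$ and $H$ are not strongly disjoint. If one wants the alternative joining explicitly, the perturbation $(1+t)\vec{\gamma}_{\text{product}} - t\vec{\gamma}_s'$ from the proof of that lemma supplies it.

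For part (2), the equivalence above already gives the stated characterization with no use of the sign of $\kappa - \iota$. The hypothesis $\kappa - \iota \le 0$ is relevant only because it makes the condition $\rank(J_s) = \kappa - 1$ attainable: we need $\kappa - 1 \le \iota - 1$, i.e.\ $\kappa \le \iota$, and this is exactly the hypothesis. So the case is not vacuous, and it is there that a finer computation of the rank is needed.

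There is no serious obstacle. The one point to check is that the removal of redundant transition-coupling rows in constructing $J_s$ did not change the solution set, so that Lemma~\ref{lemma:strong_equiv_rank} applies with $J_s$ exactly as constructed. This was argued when the rows were deleted: each deleted row is the sum over $u'\in N(u)$ (respectively over $v' \in N(v)$) of the others. The row count $\iota - 1$ is therefore correct, and it is the only quantity used in part (1).
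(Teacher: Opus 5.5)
Your proof is correct and follows the paper's argument: Lemma~\ref{lemma:strong_equiv_rank} plus rank--nullity, where $\rank(J_s)\le \iota-1$ forces nullity at least $2$ in part (1), and part (2) is read off directly from the equivalence ``nullity one iff $\rank(J_s)=\kappa-1$'' (the paper also notes that the product joining gives nullity at least one, which the lemma's ``if and only if'' already covers). One cosmetic slip: the rows for a fixed $(u,v)$ sum to the zero functional, so each deleted row is minus the sum of the others rather than the sum, which does not affect the argument.
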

\begin{proof}
Since the constraint matrix $J_s \in \mathbb{R}^{(\iota-1)\times \kappa}$, 
if $\kappa - \iota > 0$ the Rank-Nullity Theorem gives 
$\dim(\mbox{Null}(J_s)) = \kappa-\rank(J_s) \geq \kappa - (\iota - 1)>1$, and 
therefore $G$ and $H$ are not strongly disjoint by Lemma~\ref{lemma:strong_equiv_rank}.

{We now prove statement (2).}
If the rank
of $J_s$ is $\kappa - 1$, then the dimension of the null space of $J_s$ is one, corresponding to the product joining. {By Lemma~\ref{lemma:strong_equiv_rank}}, this ensures strong disjointness. 
{Since the product joining lies in the null space of $J_s$,  the null space has dimension at least one. Therefore, if the rank of $J_s$ is not equal to $\kappa - 1$, then it must be strictly less than $\kappa - 1$.} 
Thus, in this case, {the Rank-Nullity Theorem gives} $\dim(\mbox{Null}(J_s)) = \kappa - \rank(J_s) > 1$. Therefore, by Lemma~\ref{lemma:strong_equiv_rank}, $G$ and $H$ are not strongly disjoint.
\end{proof}
Note that, by \eqref{difference}, case (1) in Proposition~\ref{possible_states} corresponds either to $M = m-1$ and $N = n-1$, or to $M > m$ and $N > n$. These conditions indicate that the connectivity levels of graphs $G$ and $H$ correspond to entries $\textcircled{\small{1}}$ and $\textcircled{\small{3}}$, respectively. Consequently, in entries $\textcircled{\small{1}}$ and $\textcircled{\small{3}}$, the corresponding pair of graphs are never strongly disjoint.

\vskip.2in

Next, we present several technical results to address the cases where one graph has a number of edges equal to its number of vertices, while the other has at least as many edges as vertices; these correspond to entries $\textcircled{\small{2}}$ and $\textcircled{\small{4}}$ in the table.

Given a vertex set $U$ and an edge set $E \subseteq U \times U$, 
consider the set of weight functions of graphs $G = (U,\alpha)$ with edge set $E$
defined in Section~\ref{Prevalence_of_Disjointness}, 
\begin{align*}
W(U,E) \, = \, 
\left\{ \text{weight functions } \alpha \text{ on } U \text{ such that }\supp(\alpha) = E \right\}.
\end{align*}
Recall that $W(U,E)$ can be viewed as a subset of $\mathbb{R}^{d}$
where $d = |\{\{u,u'\}\subset U:(u,u')\in E\}|-1$, and 
as such it is bounded and open in $\mathbb{R}^d$.

\begin{lemma}
    Let $G=(U,\alpha)$ and $H=(V,\beta)$. 
    Suppose there exists a joining $\gamma\in\mathcal{J}(\alpha,\beta)$ with degree function $r$ such that $\gamma\neq \alpha\otimes \beta$ and $r=p\otimes q$. Then for any $\tilde{\alpha}\in W(U,E(G))$ and $\tilde{\beta}\in {W}(V,E(H))$, the graph $(U,\tilde{\alpha})$ is not strongly disjoint from the graph $(V,\tilde{\beta}).$
    \label{weakly_then_no_strong}
\end{lemma}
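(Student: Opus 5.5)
Given $\gamma\neq\alpha\otimes\beta$ with $r=p\otimes q$, I would write $\gamma$ as a perturbation of the product joining and transplant that perturbation to arbitrary weights on the same skeletons. Set
\[
\theta((u,v),(u',v')) \, = \, \frac{\gamma((u,v),(u',v'))}{\alpha(u,u')\beta(v,v')}
\]
on $\supp(\alpha\otimes\beta)$ and $\theta=0$ elsewhere. By edge preservation (Lemma~\ref{vertex_edge_preservation}) we have $\gamma=\theta\cdot(\alpha\otimes\beta)$. For $\tilde\alpha\in W(U,E(G))$ and $\tilde\beta\in W(V,E(H))$ with degree functions $\tilde p,\tilde q$, the candidate new joining is
\[
\tilde\gamma_t \, = \, \tilde\alpha\otimes\tilde\beta\,\bigl(1+t(\theta'-1)\bigr),
\]
where $\theta'$ is a suitably reweighted version of $\theta$. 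The task is to choose $\theta'$ so that $\tilde\gamma_t$ satisfies the transition coupling conditions and has degree function $\tilde p\otimes\tilde q$.

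First I translate the hypotheses on $\gamma$ into conditions on $\theta$. Because $r=p\otimes q$, the first transition coupling condition becomes
\[
\sum_{v'}\beta(v,v')\,\theta((u,v),(u',v')) \, = \, q(v)
\]
for every edge $(u,u')$. In words, $\theta(\cdot)$ has average one under the transition distribution $Q(v,\cdot)$ of $H$. Symmetrically, $\sum_{u'}\alpha(u,u')\theta=p(u)$ for every edge $(v,v')$. These conditions depend on the weights, so directly reusing $\theta$ fails. This is the main obstacle.

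To get around it, I use $\theta-1$ only through its behaviour under $Q(v,\cdot)$. Define
\[
\psi((u,v),(u',v')) \, = \, (\theta-1)\,\frac{\beta(v,v')/q(v)}{\tilde\beta(v,v')/\tilde q(v)}\cdot\frac{\alpha(u,u')/p(u)}{\tilde\alpha(u,u')/\tilde p(u)}.
\]
Then $\tilde\gamma_t=\tilde\alpha\otimes\tilde\beta\,(1+t\psi)$. Summing over $v'$ gives
\[
\tilde\alpha(u,u')\,\tilde q(v)\,\frac{\alpha(u,u')/p(u)}{\tilde\alpha(u,u')/\tilde p(u)}\sum_{v'}\frac{\beta(v,v')}{q(v)}(\theta-1)\, = \, 0.
\]
Summing over $u'$ gives zero in the same way. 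Hence the marginal conditions hold with degree $\tilde p\otimes\tilde q$, and by Proposition~\ref{connected_drop_coupling}, or by a direct check of the degree coupling, $\tilde\gamma_t$ is a joining.

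The catch is symmetry. The factor $\frac{\beta(v,v')\tilde q(v)}{q(v)\tilde\beta(v,v')}$ is not symmetric in $(v,v')$ unless the ratios $\tilde q/q$ behave well. I would restore symmetry by instead defining $\psi$ in terms of the reversible transition ratios: $P(u,u')p(u)=\alpha(u,u')$ is symmetric, while the multiplier $\tilde p(u)/p(u)$ is not. If this symmetry check fails, my fallback is to symmetrize, $\psi_{\mathrm{sym}}=\tfrac12(\psi+\psi^{\ast})$, where $\psi^\ast$ is the reversed version. Each of $\psi$ and $\psi^\ast$ must be verified to satisfy the marginal-zero condition from the appropriate side, and I expect this verification to be the delicate point. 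Finally, nonnegativity for small $|t|$ follows because $\tilde\alpha\otimes\tilde\beta>0$ on the common support. Moreover, $\psi\neq0$ since $\theta\not\equiv1$, so $\tilde\gamma_t\neq\tilde\alpha\otimes\tilde\beta$ for $t\neq0$. This shows that $(U,\tilde\alpha)$ and $(V,\tilde\beta)$ are not strongly disjoint.
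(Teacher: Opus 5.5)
Your construction has a genuine gap at the symmetry step, and neither proposed repair closes it. Unwinding your definition gives $\tilde\alpha\otimes\tilde\beta\,\psi = w(u,v)\,D$, where $D=\gamma-\alpha\otimes\beta$ and $w(u,v)=\tilde p(u)\tilde q(v)/(p(u)q(v))$. Since $D$ is symmetric, $\tilde\gamma_t$ is symmetric only if $w(u,v)=w(u',v')$ on $\supp(D)$. This fails for generic $\tilde\alpha,\tilde\beta$, so $\tilde\gamma_t$ is not a weight function.

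The repair via ``reversible transition ratios'' is not an actual construction. The symmetrization $\psi_{\mathrm{sym}}$ replaces the perturbation by $\tfrac12\bigl(w(u,v)+w(u',v')\bigr)D$. When you sum over $v'$, the $w(u,v)$ half still vanishes, but the other half leaves
\[
\frac{t}{2}\,\frac{\tilde p(u')}{p(u')}\sum_{v'}\frac{\tilde q(v')}{q(v')}\,D((u,v),(u',v')),
\]
a $\tilde q/q$-weighted partial sum of $D$. The transition coupling condition would require these values, for $u'\in N(u)$, to be a common multiple of the positive numbers $\tilde\alpha(u,u')$, hence all of one sign. They need not be. For the cycle joining \eqref{gamma_construction_cycle} and a weighted cycle $\tilde\beta$ with $\tilde q(v_{j+1})\neq\tilde q(v_{j-1})$, the term takes opposite nonzero signs at $u'=u_{i+1}$ and $u'=u_{i-1}$. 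So the step you flag as delicate is exactly where the argument breaks.

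The missing observation is that the obstacle you identify (``these conditions depend on the weights'') is an artifact of the multiplicative parametrization by $\theta$. Because $r=p\otimes q$, the transition coupling conditions read $\sum_{v'}\gamma((u,v),(u',v'))=\alpha(u,u')q(v)$ and $\sum_{u'}\gamma((u,v),(u',v'))=p(u)\beta(v,v')$. Hence the additive perturbation $D$ satisfies $\sum_{v'}D((u,v),(u',v'))=0$ and $\sum_{u'}D((u,v),(u',v'))=0$ for all $u,u',v,v'$, and these conditions involve no weights at all.

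It follows that $\tilde\gamma_t=\tilde\alpha\otimes\tilde\beta+tD$ works; this is your $\psi$ with the degree ratios deleted. It is symmetric and has degree function $\tilde p\otimes\tilde q$. It satisfies both transition coupling conditions and the degree coupling by direct check. Use the direct check rather than Proposition~\ref{connected_drop_coupling}, since the lemma assumes no connectivity. It sums to one because $\sum D=0$. It is nonnegative for small $|t|$ because $\supp(D)\subseteq\supp(\alpha\otimes\beta)=\supp(\tilde\alpha\otimes\tilde\beta)$. Finally, it differs from $\tilde\alpha\otimes\tilde\beta$ for $t\neq 0$.

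This is the paper's argument in disguise. By openness, write $\tilde\beta=t\beta+(1-t)\beta'$ with $\beta'\in W(V,E(H))$. The paper's joining $t\gamma+(1-t)\,\alpha\otimes\beta'$ then equals $\alpha\otimes\tilde\beta+tD$, and the same step is repeated in the $U$-coordinate. The convex-combination form simply makes nonnegativity automatic.
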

\begin{proof}
We first prove that $G$ is not strongly disjoint from $(V,\tilde{\beta})$ for every 
$\tilde{\beta} \in W(V,E(H))$.  Fix such a $\tilde{\beta}$. As $W(V,E(H))$ is open 
in $\mathbb{R}^{d}$,
there exists $t \in (0,1)$ such that 
$$\beta' \, = \, \frac{1}{1-t} \tilde{\beta} - \frac{t}{1-t}\beta \, \in \, {W}(V,E(H)),$$ 
and therefore $\tilde{\beta}$ can be expressed as the convex combination 
$\tilde{\beta} = t \beta+(1-t)\beta'$.  
Let $q'$ and $\tilde{q}$ denote the degree functions of $\beta'$ and $\tilde{\beta}$, respectively.
Define 
$$\tilde{\gamma} \, = \, t \gamma+(1-t)\0\alpha\otimes \beta'.$$ 
Let $\tilde{r}$ denote the degree function of $\tilde{\gamma}.$
We claim that $\tilde{\gamma}\in \mathcal{J}(\alpha,\tilde{\beta})$.  To verify this, we first compute the degree function: using $r = p\otimes q$, for all $(u,v) \in U \times V$, we have
\begin{align*}
    \tilde{r} (u,v) \, = \, t r(u,v) + (1-t)p(u)q'(v) \, = \, p(u) (tq(v)+(1-t)q'(v)) \, = \, p(u)\tilde{q}(v).
\end{align*}
Now we verify the degree coupling conditions: for all $(u,v) \in U \times V$, we have
\begin{align*}
    &\sum\limits_{v'\in V}\tilde{r}(u,v')\, = \, p(u),\\
    & \sum\limits_{u'\in U}\tilde{r}(u',v)\, = \, \tilde{q}(v).
\end{align*}
Next, we check the transition coupling conditions: for all $u,u'\in U$ and $v'\in V$,
\begin{align*}
    \sum\limits_{v'\in V}\tilde{\gamma}((u,v),(u',v')) \, &= \, t \0 \frac{\alpha(u,u')}{p(u)} \0 r(u,v) + (1-t)\alpha(u,u') q'(v) \\
    &= \, \alpha(u,u') (tq(v)+(1-t)q'(v))\\
    &= \, \alpha(u,u') \tilde{q}(v)\\
    &= \, \frac{\alpha(u,u')}{p(u)} \0 \tilde{r}(u,v).
\end{align*}
An analogous argument gives the transition coupling conditions for all $u\in U$ and $v,v'\in V$.
Thus, $\tilde{\gamma}\in\mathcal{J}(\alpha,\tilde{\beta})$.
    Finally, since $\gamma\neq \alpha\otimes \beta$, 
    {there exists $u,u'\in U$ and $v,v'\in V$ such that $\gamma((u,v),(u',v'))\neq \alpha(u,u')\beta(v,v')$. As $t\neq 0$, we have
    \begin{align*}
        \tilde{\gamma}((u,v),(u',v')) &= t\gamma((u,v),(u',v'))+(1-t)\alpha(u,u')\beta'(v,v')\\
        &\neq t\alpha(u,u')\beta(v,v')+(1-t)\alpha(u,u')\beta'(v,v') = \alpha(u,u') \tilde{\beta}(v,v').
    \end{align*}}
    Hence, $\tilde{\gamma} \neq \alpha\otimes\tilde{\beta}$. 
    Therefore, $G$ and $(V,\tilde{\beta})$ are not strongly disjoint for any $\tilde{\beta} \in W(V,E(H))$. 

    Since $\tilde{\gamma} \neq \alpha \otimes \tilde{\beta}$ while $\tilde{r} = p \otimes \tilde{q}$, the condition required for the previous argument is satisfied. Applying that argument once more on $W(U,E(G))$, we conclude that for any $\tilde{\beta} \in W(V,E(H))$ and any $\tilde{\alpha} \in W(U,E(G))$, the pair $(U,\tilde{\alpha})$ is not strongly disjoint from $(V,\tilde{\beta})$.
\end{proof}

Before proceeding with the proof for entry~$\textcircled{\small{2}}$ and entry~$\textcircled{\small{4}}$, we first introduce the definition of a subgraph and present some preliminary results concerning subgraphs.

We say that a graph $G = (U, \alpha)$ is a \textbf{subgraph} of $\tilde{G} = (\tilde{U}, \tilde{\alpha})$, if the following conditions hold:
\begin{align*}
    &\tilde{U} = U \cup U' \quad \text{for some set }U' \text{ disjoint from }U, \text{ and }\\
    &{\alpha}(u,u') \, = \, \frac{\tilde{\alpha}(u,u')}{\sum_{x,x' \in U}\tilde{\alpha}(x,x')} \quad \text{for }u,u'\in U. 
\end{align*}

\begin{prop}
    Let $G=(U, \alpha)$ and $H=(V, \beta)$ be connected graphs with no self-loops, and suppose that $G$ has uniform edge weights. Assume that $G$ and $H$ are not strongly disjoint, and that there exists $\gamma \in \mathcal{J}(\alpha, \beta)$ such that $\gamma\neq \alpha \otimes \beta$ and the associated degree function $r$ satisfies $r = p \otimes q$, where $p$ and $q$ are the degree functions of $\alpha$ and $\beta$, respectively. Then for any connected graphs $\tilde{G} = (\tilde{U},\tilde{\alpha})$ {and $\tilde{H} = (\tilde{V},\tilde{\beta})$ with no self-loops and with uniform edge weights that contain $G$ and $H$ as subgraphs, respectively, the graphs $\tilde{G}$ and $\tilde{H}$ are also not strongly disjoint. Moreover, for any $\alpha'\in W(\tilde{U}, E(\tilde{G}))$ and $\beta' \in W(\tilde{V},E(\tilde{H}))$, $(\tilde{U},\alpha')$ and $(\tilde{V},\beta')$ are not strongly disjoint.}
     \label{subgraph_not_strong}
\end{prop}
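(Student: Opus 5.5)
We will build, from the given $\gamma$, an explicit joining $\tilde\gamma$ of the uniformly weighted graphs $\tilde\alpha$ and $\tilde\beta$. It will differ from $\tilde\alpha\otimes\tilde\beta$, and its degree function will still be the product $\tilde p\otimes\tilde q$. Lemma~\ref{weakly_then_no_strong} then upgrades this to every $\alpha'\in W(\tilde U,E(\tilde G))$ and $\beta'\in W(\tilde V,E(\tilde H))$, giving both assertions at once. So all the work is the construction for $\tilde\alpha,\tilde\beta$.

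The construction is a local perturbation of the product. Set $\delta=\gamma-\alpha\otimes\beta$, viewed as a symmetric signed function on $(U\times V)^2$ supported in $\supp(\alpha\otimes\beta)$. Since $\gamma$ and $\alpha\otimes\beta$ are both joinings with the same degree function $p\otimes q$, subtracting their transition coupling identities gives
\[
\sum_{v'}\delta((u,v),(u',v'))=0, \qquad \sum_{u'}\delta((u,v),(u',v'))=0
\]
for all $u,u',v,v'$; summing once more shows $\delta$ has zero row sums. Extend $\delta$ by zero to $(\tilde U\times\tilde V)^2$ and define
\[
\tilde\gamma=\tilde\alpha\otimes\tilde\beta+s\,\delta
\]
for a small $s>0$.

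We then check that $\tilde\gamma$ is a joining.
\begin{enumerate}
\item[(i)] \emph{Symmetry} is inherited from $\delta$.
\item[(ii)] \emph{Normalization} holds because the entries of $\delta$ sum to zero.
\item[(iii)] \emph{Nonnegativity} holds for small $s$, since $\supp\delta\subseteq E(G)\times E(H)\subseteq E(\tilde G)\times E(\tilde H)$, where $\tilde\alpha\otimes\tilde\beta$ is strictly positive. Uniformity of the weights is not really needed here; positivity on the subgraph's edges suffices.
\item[(iv)] \emph{Degree function.} Zero row sums give $\tilde r=\tilde p\otimes\tilde q$, hence degree coupling.
\item[(v)] \emph{Transition coupling.} The condition reads $\tilde p(u)\sum_{v'}\tilde\gamma((u,v),(u',v'))=\tilde\alpha(u,u')\tilde r(u,v)$. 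The product part satisfies it exactly. The $\delta$ part contributes $\tilde p(u)\,s\sum_{v'}\delta((u,v),(u',v'))=0$ by the first vanishing identity above, and the symmetric condition follows from the second.
\item[(vi)] \emph{Connectivity.} Since $\tilde G,\tilde H$ are connected, Proposition~\ref{connected_drop_coupling} makes the degree coupling check redundant in any case.
\end{enumerate}
Finally $\delta\neq0$, so $\tilde\gamma\neq\tilde\alpha\otimes\tilde\beta$, and $\tilde\gamma$ has the product degree function required by Lemma~\ref{weakly_then_no_strong}.

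The step I expect to need the most care is the claim that $\delta$ has vanishing partial sums in each coordinate separately, not merely zero total row sums. This is exactly where $r=p\otimes q$ enters: the transition coupling for $\gamma$ gives $\sum_{v'}\gamma=\alpha(u,u')r(u,v)/p(u)=\alpha(u,u')q(v)$, which equals $\sum_{v'}\alpha(u,u')\beta(v,v')$. A second point to confirm is that vertices of $U$ with positive degree in $G$ also have positive degree in $\tilde G$, so the divisions are legitimate; this holds because the graphs are connected.
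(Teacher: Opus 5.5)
Your argument is correct. It is closely related to the paper's, but organized differently in a way worth noting.

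\textbf{How the paper proceeds.} The paper extends one factor at a time.
\begin{enumerate}
\item It defines a joining of $\tilde\alpha$ and $\beta$ by taking $\gamma\cdot|E(G)|/|E(\tilde G)|$ on pairs with $u,u'\in U$, and $\tilde\alpha\otimes\beta$ elsewhere.
\item It checks transition coupling by direct computation and observes that the new joining again has product degree function.
\item It repeats the step for $H\subseteq\tilde H$.
\item It appeals to Lemma~\ref{weakly_then_no_strong}.
\end{enumerate}

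\textbf{Relation to your construction.} Because $\tilde\alpha=c\,\alpha$ on $U\times U$ with $c=|E(G)|/|E(\tilde G)|$, the paper's joining is exactly $\tilde\alpha\otimes\beta+c\,\delta$. That is, it is your perturbation with one specific scale. This choice of scale makes nonnegativity automatic, since on the old block the joining is just $c\gamma$. But it depends on the weights of $\tilde G$ restricting proportionally to those of $G$, which is where the subgraph normalization and the uniform weights are used.

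\textbf{What your version buys.} Your small-$s$ version isolates the actual mechanism: $r=p\otimes q$ forces $\delta$ to have vanishing fiber sums. As a result it needs only $E(G)\subseteq E(\tilde G)$ and $E(H)\subseteq E(\tilde H)$, and it treats both factors at once. It even makes Lemma~\ref{weakly_then_no_strong} superfluous. Your checks (i)--(v) go through verbatim for $\alpha'\otimes\beta'+s\,\delta$ with arbitrary $\alpha'\in W(\tilde U,E(\tilde G))$ and $\beta'\in W(\tilde V,E(\tilde H))$. Hence the uniformity hypotheses on $G$, $\tilde G$, $\tilde H$ play no role in your argument.
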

\begin{proof}
    Suppose there exists a weight joining $\gamma \neq \alpha \otimes \beta$ of $\alpha$ and $\beta$ with degree function $r$ satisfying $r = p \otimes q$. {We first show that $\tilde{G}$ and $H$ are not strongly disjoint.} Since $G$ is a subgraph of $\tilde{G}$ and $\tilde{G}$ has uniform edge weights, we have $U \subseteq \tilde{U}$, and for all $u,u' \in U,$
    \begin{align*}
        &\tilde{\alpha}(u,u') \,=\, \frac{1}{2|E(\tilde{G})|}\mathbf{1}((u,u')\in E(G)),\\
        &\alpha(u,u') \,=\, \frac{1}{2|E(G)|}\mathbf{1}((u,u')\in E(G)).
    \end{align*}
    
    To prove that $\tilde{G}$ and $H$ are not strongly disjoint, we now construct a new weight joining $\tilde{\gamma}$ between $\tilde{G}$ and $H$ with $\tilde{\gamma}\neq\tilde{\alpha}\otimes\beta$. For any $(u,v),(u',v')\in \tilde{U}\times V$, set
    \begin{equation}
    \begin{aligned}
        \tilde{\gamma}((u,v),(u',v')) = \begin{cases}
            \gamma((u,v),(u',v')) \0 \frac{|E(G)|}{|E(\tilde{G})|}&\text{ if }u,u'\in U,\\
            \tilde{\alpha}(u,u') \beta(v,v')&\text{ otherwise.}
        \end{cases}
    \end{aligned}
    \label{constructed_gamma}
    \end{equation}
    We claim that $\tilde{\gamma} \in \mathcal{J}(\tilde{\alpha}, \beta)$ and $\tilde{\gamma} \neq \tilde{\alpha} \otimes \beta$. Let $\tilde{p}$ denote the degree function of $\tilde{\alpha}$.
    Since both $\tilde{G}$ and $H$ are connected, it suffices to verify the transition coupling conditions. We first examine the degree function $\tilde{r}$ associated with $\tilde{\gamma}$. If $u \in U$, then 
    \begin{align*}
        \tilde{r}(u,v) \,& = \, r(u,v) \0 \frac{|E(G)|}{|E(\tilde{G})|} + \sum_{u' \in \tilde{U}\setminus U}\tilde{\alpha}(u,u') q(v) 
        \, = \, p(u)q(v) \0 \frac{|E(G)|}{|E(\tilde{G})|} + \sum_{u' \in \tilde{U}\setminus U}\tilde{\alpha}(u,u') q(v) \\
        &= \, \sum_{u' \in U} \tilde{\alpha}(u,u')q(v) +\sum_{u' \in \tilde{U}\setminus U}\tilde{\alpha}(u,u') q(v) \, = \, \tilde{p}(u)q(v).
    \end{align*}
    On the other hand, if $u \in \tilde{U}\setminus U$, then
    \begin{align*}
        \tilde{r}(u,v) \, = \, \sum_{u' \in \tilde{U}} \tilde{\alpha}(u,u')q(v) \, = \, \tilde{p}(u) q(v).
    \end{align*}
    We now verify the transition coupling conditions. For the first transition coupling condition, if $u,u' \in U$, then for all $v \in V$,
    \begin{align*}
        \sum_{v' \in V}\tilde{\gamma}((u,v),(u',v')) \, = \, \alpha(u,u')q(v) \0 \frac{|E(G)|}{|E(\tilde{G})|} \, = \, 
        \tilde{\alpha}(u,u') q(v) \, = \, \frac{\tilde{\alpha}(u,u')}{\tilde{p}(u)}\tilde{r}(u,v),
    \end{align*}
    and otherwise, for all $v \in V$,
    \begin{align*}
        \sum_{v' \in V}\tilde{\gamma}((u,v),(u',v')) \, = \, \tilde{\alpha}(u,u') q(v) \, = \, \frac{\tilde{\alpha}(u,u')}{\tilde{p}(u)}\tilde{r}(u,v).
    \end{align*}
    For the second transition coupling condition, if $u \in U$, then for all $v,v' \in V$,
    \begin{align*}
        &\sum_{u' \in \tilde{U}} \tilde{\gamma}((u,v),(u',v')) \, = \, p(u)\beta(v,v') \0 \frac{|E(G)|}{|E(\tilde{G})|} + \sum_{u' \in \tilde{U} \setminus U} \tilde{\alpha}(u,u') \beta(v,v') \\ &\quad = \, \sum_{u' \in U}\tilde{\alpha}(u,u') \beta(v,v')+ \sum_{u' \in \tilde{U} \setminus U} \tilde{\alpha}(u,u') \beta(v,v') = \, \tilde{p}(u)\beta(v,v') \, = \, \frac{\beta(v,v')}{q(v)}\tilde{r}(u,v),
    \end{align*}
    and otherwise if $u \in \tilde{U} \setminus U$, then for all $v, v' \in V$,
    \begin{align*}
        \sum_{u' \in \tilde{U}} \tilde{\gamma}((u,v),(u',v')) \, = \, \tilde{p}(u) \beta(v,v') \, = \, \frac{\beta(v,v')}{q(v)}\tilde{r}(u,v). 
    \end{align*}
    Thus, $\tilde{\gamma} \in \mathcal{J}(\tilde{\alpha},\beta)$. Moreover, since $\gamma\neq \alpha\otimes \beta$, there exists $u,u' \in U$ and $v,v' \in V$, such that $\tilde{\gamma}((u,v),(u',v') \neq \tilde{\alpha}(u,u')\beta(v,v')$.
    Thus, $\tilde{G}$ and $H$ are not strongly disjoint.

    {Since $\tilde{G}$ and $H$ are not strongly disjoint, and the weight joining $\tilde{\gamma} \in \mathcal{J}(\tilde{\alpha},\beta)$ constructed in \eqref{constructed_gamma} satisfies $\tilde{\gamma} \neq \tilde{\alpha}\otimes\beta$ while the associated degree function satisfies $\tilde{r} = \tilde{p}\otimes q$, we may apply the previous argument again on $\tilde{G}$ and $H$, with $H$ being the subgraph of $\tilde{H}$. Hence $\tilde{G}$ and $\tilde{H}$ are not strongly disjoint. Note that the constructed weight joining of $\tilde{\alpha}$ and $\tilde{\beta}$ also satisfies the condition that the weight joining does not equal to $\tilde{\alpha}\otimes\tilde{\beta}$ while the associated degree function equals to $\tilde{p}\otimes\tilde{q}$, where $\tilde{q}$ is the degree function of $\tilde{\beta}$.}

    {Moreover, since the constructed weight joining of $\tilde{\alpha}$ and $\tilde{\beta}$ satisfies the condition of Lemma~\ref{weakly_then_no_strong}, we conclude that for any $\alpha'\in W(\tilde{U}, E(\tilde{G}))$ and $\beta' \in W(\tilde{V},E(\tilde{H}))$, $(\tilde{U},\alpha')$ and $(\tilde{V},\beta')$ are not strongly disjoint.}
\end{proof}


In the following proposition, we establish the result corresponding to entries $\textcircled{\small{2}}$ and $\textcircled{\small{4}}$ in Table~\ref{fig:connectivity-table}.
\begin{prop}
    Let $G$ and $H$ be connected graphs with no self-loops. Suppose $G$ satisfies $M = m$, i.e., the number of vertices equals the number of edges, and $H$ satisfies $N \geq n$, i.e., the number of vertices is less than or equal to the number of edges. In this case, the graphs $G$ and $H$ are not strongly disjoint.
    \label{moderate_connectivity}
\end{prop}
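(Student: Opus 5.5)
The plan is to reduce to the case of two cycles and then transport a non-product joining up to $G$ and $H$ using Proposition~\ref{subgraph_not_strong}. Since $M = m$ and $G$ is connected, $G$ contains exactly one cycle. Since $N \geq n$ and $H$ is connected, $H$ contains at least one cycle. I will use only the fact that both graphs contain a cycle, so the argument applies equally to entries \textcircled{\small{2}} and \textcircled{\small{4}}.

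First I pick cycles that are subgraphs in the precise sense defined above. That definition requires the restricted weight to be the normalized restriction of the full weight to all pairs in $U\times U$, so the cycle must be an \emph{induced} subgraph. I therefore take a shortest cycle in each graph, which has no chords. This gives vertex sets $U_0 \subseteq U$ with $|U_0| = k \geq 3$ and $V_0 \subseteq V$ with $|V_0| = l \geq 3$. No self-loops and no multi-edges are present, so $k, l \geq 3$ holds.

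Now pass to the uniformly weighted versions $\hat G$ and $\hat H$ of $G$ and $H$, which have the same vertex and edge sets. The normalized restriction of $\hat G$ to $U_0$ is exactly the uniformly weighted cycle $C_k$, and likewise the restriction of $\hat H$ to $V_0$ is $C_l$. Hence $C_k$ is a subgraph of $\hat G$ and $C_l$ is a subgraph of $\hat H$.

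Next I verify the hypothesis of Proposition~\ref{subgraph_not_strong} for the pair $(C_k, C_l)$. I reuse the joining $\gamma$ constructed in \eqref{gamma_construction_cycle} in the proof of Proposition~\ref{circle_property}. It puts weight $1/(2kl)$ on the "diagonal" moves $(u_{i\pm1},v_{j\pm1})$ with matching signs and zero on the anti-diagonal ones. That proof already shows $\gamma \in \mathcal{J}(C_k,C_l)$ and $\gamma \neq \alpha\otimes\beta$. It also computes the degree function as $r(u_i,v_j) = 1/(kl)$. Since $C_k$ and $C_l$ are regular, $p \equiv 1/k$ and $q \equiv 1/l$, so $r = p\otimes q$. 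Moreover $C_k$ is connected, has no self-loops, and is uniformly weighted, as Proposition~\ref{subgraph_not_strong} requires.

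Applying Proposition~\ref{subgraph_not_strong} with $\tilde G = \hat G$ and $\tilde H = \hat H$, its "moreover" clause shows that $(U,\alpha')$ and $(V,\beta')$ are not strongly disjoint for every $\alpha' \in W(U,E(G))$ and $\beta' \in W(V,E(H))$. In particular this holds for the original weights of $G$ and $H$.

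The main point needing care is the compatibility of the subgraph notion with the cycle joining. The weight construction \eqref{constructed_gamma} only replaces entries with both $u,u' \in U_0$. Choosing induced (shortest) cycles ensures that every edge of $\hat G$ inside $U_0$ is a cycle edge, so the normalization factor $|E(C_k)|/|E(\hat G)|$ matches correctly. If any issue arises with the order in which the proposition enlarges first $G$ and then $H$, I would fall back on its proof directly. That proof applies the extension to one side at a time, each time preserving the property "$\tilde\gamma \neq$ product and $\tilde r = \tilde p \otimes \tilde q$", and then concludes with Lemma~\ref{weakly_then_no_strong}.
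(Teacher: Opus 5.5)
Your proposal is correct and follows the paper's proof. In both, one reuses the non-product cycle joining \eqref{gamma_construction_cycle}, whose degree function is $p\otimes q$, applies Proposition~\ref{subgraph_not_strong} to the uniformly weighted versions of $G$ and $H$, and uses that proposition's final clause (via Lemma~\ref{weakly_then_no_strong}) to pass to arbitrary weights. Your extra step of choosing a shortest, hence chordless, cycle in $H$ is a worthwhile refinement: the paper's notion of subgraph is the induced one, and its proof leaves this point implicit.
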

\begin{proof}

    Since $M=m$ and $N\geq n$, and $G$ and $H$ are both connected graphs without self-loops, it follows that $G$ contains exactly one cycle, while $H$ contains at least one. 
    
    We begin with the case where both graphs have uniform edge weights. Let $G = (U,\alpha)$ and $H = (V,\beta)$.
    By Proposition~\ref{circle_property}, the single-cycle subgraphs of $G$ and $H$ with uniform weights are not strongly disjoint, since any two uniformly weighted cycles are not strongly disjoint. Moreover, the weight joining $\gamma$ constructed for uniform weight cycles in \eqref{gamma_construction_cycle} satisfies the subgraph condition of Proposition~\ref{subgraph_not_strong}. 
    Therefore, Proposition~\ref{subgraph_not_strong} gives that $G$ and $H$ are not strongly disjoint, and for any ${\alpha'}\in W(U,E(G))$ and ${\beta'}\in W(V,E(H))$, the graphs $(U,{\alpha'})$ and $(V,{\beta'})$ are not strongly disjoint. 
    Therefore, any graphs $G$ and $H$ with $M = m$ and $N\geq n$ are not strongly disjoint, regardless of the edge weights.
\end{proof}


The following proposition establishes that, in the remaining entries~\textcircled{\small{5}} and~\textcircled{\small{6}} of Table~\ref{fig:connectivity-table}, weak disjointness necessarily implies strong disjointness. In other words, when one of the graphs is a tree, the pair is either strongly disjoint or not weakly disjoint. 

\begin{prop}
   Let $G$ and $H$ be connected graphs with no self-loops. {Assume that exactly one of $G$ and $H$ is a tree}. 
   If $G$ and $H$ are weakly disjoint, then they are necessarily strongly disjoint.  
   \label{one_tree_one_not_tree}
\end{prop}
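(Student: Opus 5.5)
The plan is to show directly that, when $G$ is a tree, every weight joining whose degree function equals $p\otimes q$ must be the product joining. By symmetry of the roles of $G$ and $H$, assume $G=(U,\alpha)$ is the tree and $H=(V,\beta)$ is not. Fix any $\gamma\in\mathcal{J}(\alpha,\beta)$ with degree function $r$. Weak disjointness gives $r=p\otimes q$, and the goal is to prove $\gamma=\alpha\otimes\beta$.

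\textbf{Step 1 (reduce to linear conditions).} With $r=p\otimes q$, the transition coupling conditions of Definition~\ref{Def:weight_joining} become
\[
\sum_{v'}\gamma((u,v),(u',v'))=\alpha(u,u')\,q(v),
\qquad
\sum_{u'}\gamma((u,v),(u',v'))=p(u)\,\beta(v,v').
\]
The product $\alpha\otimes\beta$ satisfies the same identities. Set $\delta=\gamma-\alpha\otimes\beta$. Then $\delta$ is symmetric, and by Lemma~\ref{vertex_edge_preservation} it is supported on $\supp(\alpha\otimes\beta)$. Moreover, for every $(u,v)$:
\begin{itemize}
\item[(a)] $\sum_{v'}\delta((u,v),(u',v'))=0$ for each $u'$;
\item[(b)] $\sum_{u'\in N(u)}\delta((u,v),(u',v'))=0$ for each $v'$.
\end{itemize}

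\textbf{Step 2 (organize $\delta$ by edges of $G$).} For each directed edge $(u,u')\in E(G)$, define the matrix $D_{uu'}(v,v')=\delta((u,v),(u',v'))$ on $V\times V$. Condition (b) at the vertex $(u,v)$ says that, for each fixed $v'$,
\[
\sum_{u'\in N(u)} D_{uu'}(v,v')=0 .
\]
Symmetry of $\delta$ gives $D_{u'u}(v',v)=D_{uu'}(v,v')$, so it suffices to show that every $D_{uu'}$ vanishes.

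\textbf{Step 3 (peel leaves).} Since $G$ has no self-loops, the sum in Step 2 runs over genuine neighbours $u'\neq u$.
\begin{itemize}
\item If $u$ is a leaf with unique neighbour $u'$, the sum has a single term, so $D_{uu'}\equiv 0$, and by symmetry $D_{u'u}\equiv 0$.
\item Now remove $u$ and iterate. Order the edges of the tree by successive leaf removal. At each stage, the current leaf $w$ of the remaining tree has all of its edges to previously removed vertices already known to carry $D\equiv 0$, and exactly one remaining neighbour $w'$.
\item The identity of Step 2 at $w$ then forces $D_{ww'}\equiv 0$.
\end{itemize}
Induction on the number of edges of the tree gives $\delta\equiv 0$, so $\gamma=\alpha\otimes\beta$. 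Hence $\mathcal{J}(G,H)=\{G\otimes H\}$, which is strong disjointness.

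\textbf{Remarks and main obstacle.} This argument uses only that $G$ is a tree and that all joinings have product degree function. The hypothesis that $H$ is not a tree is consistent with, but not needed for, this direction: if both graphs were trees, they would be bipartite, and hence not weakly disjoint by Proposition~\ref{bipartite_strongdisjoint}. The step I expect to require the most care is the bookkeeping in Step 3. One must make sure that condition (b) is applied at a vertex $(w,v)$ whose other incident $G$-edges have already been cleared, and that symmetry correctly transfers the vanishing of $D_{ww'}$ to $D_{w'w}$ for use at the next stage. A clean way to handle this is to root the tree and argue by induction from the leaves upward, clearing the edge from each vertex to its parent once all of its child edges are zero.
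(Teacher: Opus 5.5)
Your proposal is correct and follows essentially the same route as the paper. Your condition $D_{uu'}\equiv 0$ is exactly the paper's notion of an $H$-marked edge. Your leaf step and ``all but one neighbour'' step are the two parts of the paper's Claim, both derived from the second transition coupling condition with $r=p\otimes q$. The rooted-tree induction from the leaves upward is the same as the paper's. Your explicit use of the symmetry $D_{u'u}(v',v)=D_{uu'}(v,v')$ to pass a cleared child edge up to the parent is a detail the paper leaves implicit, and you are right that the hypothesis that $H$ is not a tree is not used in this direction.
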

{In the proof, a vertex is called a \textbf{leaf} if it is incident to exactly one edge. Let $H = (V,\beta)$. For any graph $G = (U,\alpha)$, we say that an edge $(u,u')$ of $G$ is \textbf{$H$-marked} if, for every $\gamma\in\mathcal{J}(\alpha,\beta)$, $\gamma((u,v),(u',v')) = \alpha(u,u')\beta(v,v')$ for all $v,v'\in V$. 
Observe that if every edge of $G$ is $H$-marked, then $G$ and $H$ are strongly disjoint.
Before proceeding with the proof, we establish the following claim.}

\vspace{2mm}
\noindent
\textit{Claim.} Suppose $G$ is a tree, and that $G$ and $H$ are weakly disjoint. 
Then 
\begin{enumerate}[label=(\roman*)]
    \item 
    every edge incident to a leaf of $G$ is $H$-marked;
    \item if, at a given vertex, all but one of the incident edges are $H$-marked, then the remaining edge is also $H$-marked.
\end{enumerate}
\begin{proof}[Proof of the Claim]
Let $G = (U,\alpha)$ and $H = (V,\beta)$ be as above.
    For (i), consider a leaf node $u\in U$ in $G$, which by definition has exactly one neighbor, which we denote by $u'$. 
{By the edge preservation property, the assumption of weak disjointness and the fact that $u$ is a leaf
node with only one neighbor $u'$,} for any $v,v'\in V$, and for any weight joining $\gamma \in \mathcal{J}(\alpha,\beta)$ with degree function $r$, we have
\begin{align*}
    \gamma((u,v),(u',v')) \,=\, \sum\limits_{x'\in U}\gamma((u,v),(x',v')) \,=\,\frac{\beta(v,v')}{q(v)}\0 r(u,v) \,=\, \beta(v,v')p(u)\,=\,\beta(v,v')\alpha(u,u').
\end{align*}
{We have shown that for any leaf node $u \in U$, for $u'\in N(u)$ and for any $v,v'\in V$, we have $\gamma((u,v),(u',v'))=\alpha(u,u')\beta(v,v')$. Thus (i) holds.}

To prove (ii), suppose for vertex $u \in U$, all the incident edges except $(u,u')\in E(G)$ are $H$-marked, that is, for all $x \in {N}(u)\setminus \{u'\}$,  we have $\gamma((u,v),(x,v')) = \alpha(u,x)\beta(v,v')$ for all $v,v' \in V$. Then for all $v,v'\in V$,
 \begin{align*}
\gamma((u,v),(u',v')) \, &= \,  \sum\limits_{x\in U}\gamma((u,v),(x,v')) - \sum\limits_{x\in U\setminus \{u'\}}\gamma((u,v),(x,v')) \\
&= \, p(u)\beta(v,v') -   \sum\limits_{x\in U\setminus \{u'\}}\alpha(u,x)\beta(v,v') \, = \, \alpha(u,u')\beta(v,v').
\end{align*}
Therefore the edge $(u,u')$ is also $H$-marked, and statement (ii) holds.
\end{proof}

\begin{proof}[Proof of Proposition~\ref{one_tree_one_not_tree}]
Let $G = (U,\alpha)$ be a tree and let $H=(V,\beta)$ be a connected graph with no self-loops that is not a tree, in particular, $|V|\leq|E(H)|$. The assumption that $H$ is not a tree is necessary, since if both graphs are trees, then they are not weakly disjoint. 
Suppose that $G$ and $H$ are weakly disjoint. 
We prove that $G$ and $H$ are strongly disjoint by induction along the tree structure of $G$. 

Fix an arbitrary root $r\in U$ of $G$. For each vertex $u \in U$, define $\text{depth}(u)$ to be the graph distance from $r$ to $u$, and let $D = \max_{u\in U}\text{depth}(u)$. We show by backwards induction on depth that every edge of $G$ is $H$-marked. Vertices of depth $D$ are leaves of the rooted tree. Hence by (i) of the preceding Claim, every edge incident to a vertex of depth $D$ is $H$-marked. Now fix $k<D$ and assume as the inductive hypothesis that every edge incident to a vertex of depth strictly greater than $k$ is $H$-marked. Let $u$ be a vertex with $\text{depth}(u) = k$. A \textbf{child} of $u$ is any neighbor $u'$ with $\text{depth}(u') = k+1$, and the \textbf{parent} of $u$ (if $u\neq r$) is its unique neighbor with depth $k-1$. Each child of $u$ has depth greater than $k$; therefore, by the inductive hypothesis, the edge between $u$ and each of its children is $H$-marked. Consequently, all edges incident to $u$ are $H$-marked, except possibly the edge between $u$ and its parent. Then by (ii) of the preceding Claim, this remaining parent edge must also be $H$-marked. This completes the induction from depth $D$ down to depth $0$. Hence every edge of the rooted tree $G$ is $H$-marked. Therefore, $G$ and $H$ are strongly disjoint.
\end{proof}
Note that the above argument establishes that when one of the graphs is a tree, weak disjointness implies strong disjointness.  However, in region \textcircled{\small{1}}, the graphs are never weakly disjoint, as all trees share the common factor $P_2$.

\begin{proof}[Proof of Theorem~\ref{Characterization_connected_no_self}]
    Let $G$ and $H$ be two connected graphs with no self-loops. We have shown that the result in Table~\ref{fig:connectivity-table} is true. 
    
    To prove the equivalence stated in the theorem, first suppose that $G$ and $H$ are strongly disjoint. By Table~\ref{fig:connectivity-table}, this can occur only in entries \textcircled{\small{5}} or \textcircled{\small{6}}, that is, when exactly one of the graphs is a tree. Moreover, since $G$ and $H$ are strongly disjoint, they are weakly disjoint. 
    
    Conversely, suppose that $G$ and $H$ are weakly disjoint and exactly one of the graphs is a tree. Then $G$ and $H$ fall into entries \textcircled{\small{5}} or \textcircled{\small{6}} of Table~\ref{fig:connectivity-table}, where weak disjointness coincides with strong disjointness. Therefore, $G$ and $H$ are strongly disjoint.
\end{proof}

\subsection{Corollary~\ref{Characterization_fully_supported_no_self}}
\characterizationfullysupportednoself*
\begin{proof}
    If both $G$ and $H$ are disconnected graphs with no self-loops, then by Lemma~\ref{disconnected_not_weak} they are not weakly disjoint, and hence not strongly disjoint. If $G$ is connected and $H$ is disconnected, then $G$ and $H$ are strongly disjoint if and only if $G$ is strongly disjoint from each connected component of $H$. By Theorem~\ref{Characterization_connected_no_self}, this is equivalent to requiring that $G$ be weakly disjoint from every connected component of $H$, and that either $G$ is a tree or every connected component of $H$ is a tree—that is, $H$ is a forest.
\end{proof}

\section{Proofs for Section~\ref{Prevalence_of_Disjointness}}
\label{proof_for_prevalence}

\subsection{Proposition~\ref{prevalence_strong_weak}}
\prevalence*
We first prove the proposition in the case of strong disjointness.
Before proceeding with the proof, we establish the following lemma.
\begin{lemma}
\label{lemma:strong_disjoint}
Let $U$ and $V$ be finite sets, and let $E\subseteq U\times U$ and $F \subseteq V\times V$. 
    Fix $\beta\in W(V,F)$. Suppose there exists $\alpha\in W(U,E)$ such that $(U,\alpha)$ and $(V,\beta)$ are strongly disjoint. Define
    \begin{align*}
     {A}_{\beta}\coloneqq \left\{\tilde{\alpha} \in {W}(U,E) \mid (U,\tilde\alpha) \text{ and }(V,{\beta}) \text{ are strongly disjoint} \right\}.
\end{align*}
    Then $A_{\beta} \subseteq W(U,E)$ is full. An analogous statement holds with the roles of $\alpha$ and $\beta$ interchanged, fixing $\alpha$ and considering the corresponding subset of $W(V,F)$.
\end{lemma}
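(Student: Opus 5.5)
The plan is to express strong disjointness as a rank condition whose failure is the vanishing of finitely many polynomials in the weights $\tilde\alpha$. Since $\beta$ is fixed, this presents $W(U,E)\setminus A_\beta$ as the zero set in $W(U,E)$ of a polynomial that is not identically zero.

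By Proposition~\ref{rank_equiv_strong}, $(U,\tilde\alpha)$ and $(V,\beta)$ are strongly disjoint if and only if the joining constraint matrix $J(\tilde\alpha,\beta)$ has null space of dimension $1$. The null space always has dimension at least one, since it contains the product joining. So the condition is $\rank J(\tilde\alpha,\beta) = k-1$, where $k=|E||F|$ is the number of columns. This column count depends only on the skeletons, so it is the same for every $\tilde\alpha\in W(U,E)$.

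Next I would inspect the construction of Section~\ref{weight_joining_constraint_matrix}. The row count and the positions of the columns (indexed by $\supp(\tilde\alpha\otimes\beta)=E\times F$) are also fixed. Each entry is a constant, or of the form $\tilde p(u)$, $\tilde\alpha(u,u')$, $q(v)$ or $\beta(v,v')$. Since $\tilde p(u)=\sum_{u'}\tilde\alpha(u,u')$, every entry is an affine, hence polynomial, function of the coordinates of $\tilde\alpha\in\mathbb{R}^d$. Here I use the normalization to eliminate one coordinate.

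Consider $F(\tilde\alpha)=\sum_{S}\det(J_S(\tilde\alpha))^2$, where $S$ ranges over the $(k-1)\times(k-1)$ minors. This is a polynomial on $\mathbb{R}^d$. Because the rank is always at most $k-1$, we have $\tilde\alpha\in A_\beta$ if and only if $F(\tilde\alpha)\neq 0$. By hypothesis $F(\alpha)\neq0$, so $F$ is a nonzero polynomial.

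The conclusion then follows from standard facts, and I expect this to be the only real point to check carefully. The set $A_\beta=W(U,E)\cap\{F\neq0\}$ is open, because $F$ is continuous. Its complement $Z=\{F=0\}\cap W(U,E)$ has Lebesgue measure zero, since the zero set of a nonzero polynomial on $\mathbb{R}^d$ is null (by induction on $d$ with Fubini). A null set has empty interior, so $A_\beta$ is dense in the open set $W(U,E)$, and $\lambda_d(A_\beta)=\lambda_d(W(U,E))$. Hence $A_\beta$ is full.

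The degenerate case $d=0$ is trivial: $W(U,E)$ is a single point, which lies in $A_\beta$ by hypothesis. The interchanged statement follows by the identical argument with $\alpha$ fixed, since the entries of $J$ are polynomial in $\tilde\beta$ as well.
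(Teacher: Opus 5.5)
Your proposal is correct and follows essentially the same route as the paper. Both arguments use Proposition~\ref{rank_equiv_strong} to reduce strong disjointness to the nonvanishing of some $(k-1)\times(k-1)$ minor of the joining constraint matrix, note that its entries depend polynomially (indeed affinely) on $\tilde\alpha$, use $\alpha\in A_\beta$ to rule out identical vanishing, and conclude that the complement is relatively closed and null, hence has empty interior. Combining the minors into the single polynomial $\sum_S \det(J_S)^2$ and citing the zero-set fact for polynomials rather than for real-analytic functions are only cosmetic refinements.
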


\begin{proof}

By assumption, ${A}_{\beta}$ is nonempty since $\alpha \in A_{\beta}$.
For any $\alpha' \in W(U,E)$, let $J'\in\mathbb{R}^{k\times l}$ denote the joining constraint matrix associated with $(U,\alpha')$ and $(V,\beta)$. By Proposition~\ref{rank_equiv_strong}, we have $\alpha'\notin A_{\beta}$ if and only if $\dim(\mbox{Null}(J')) > 1$. By the Rank-Nullity Theorem, this is equivalent to $\rank(J') = l-\dim(\mbox{Null}(J')) < l-1$. Equivalently, all $(l-1)\times (l-1)$ minors of $J'$ must have zero determinant. Since $J'$ has only finitely many such minors, this condition defines a finite system of polynomial equations. Moreover, the entries in $J'$ depend analytically on the weight functions $\alpha'$ and $\beta$. As $\beta$ is fixed, the coefficients of this system depend only on $\alpha'$. It is known that a real analytic function on $\mathbb{R}^d$ is either identically zero or has a zero set of Lebesgue measure zero in $\mathbb{R}^d$ (\cite{Mityagin2020ZeroSet}). Since we consider only finitely many real analytic functions on $\mathbb{R}^d$ with $d = |\{\{u,u'\}\subset U:(u,u')\in E\}|-1$, the resulting system is either identically zero or has a zero set of Lebesgue measure zero in $\mathbb{R}^d$. 

Since $\alpha \in A_{\beta}$, the joint constraint matrix associated with $(U,\alpha)$ and $(V,\beta)$ has nullity exactly 1. Hence, the above analytic functions does not vanish at $\alpha$, and thus the system is not identically zero. Consequently, $\lambda_d(W(U,E)\setminus A_{\beta}) = 0$.
This gives $\lambda_d(A_{\beta}) = \lambda_d(W(U,E))$. 

Finally, we show that $A_{\beta}$ is open and dense. As shown above,  $W(U,E)\setminus A_{\beta}$ is the common zero set of finitely many polynomials, and hence is closed. Therefore $A_{\beta}$ is open. Moreover, since $W(U,E)\setminus A_{\beta}$ has Lebesgue measure zero, it cannot contain any nonempty open set. Thus it has empty interior, and $A_{\beta}$ is dense in $W(U,E)$.
\end{proof}

\begin{proof}[Proof of Proposition \ref{prevalence_strong_weak} for strong disjointness case]



   
We establish the proposition by showing that if (1) does not hold, then (2) holds. To that end, suppose (1) does not hold, \textit{i.e.},
there exists $\alpha \in W(U,E)$ and $\beta \in W(V,F)$ such that $(U,\alpha)$ and $(V,\beta)$ are strongly disjoint.
Applying Lemma~\ref{lemma:strong_disjoint} with $\beta$ fixed, we obtain a full subset $A_{\beta}\subseteq W(U,E)$. Set $A = A_{\beta}$. For each $\tilde{\alpha}\in A$, applying Lemma~\ref{lemma:strong_disjoint} again with $\tilde{\alpha}$ fixed yields a full subset $B_{\tilde{\alpha}}\subseteq W(U,E)$ such that, for every $\tilde{\beta}\in B_{\tilde{\alpha}}$, $(U,\tilde{\alpha})$ and $(V,\tilde{\beta})$ are not strongly disjoint.  
\end{proof}

Before proceeding with the proof for weak disjointness case, we establish the following lemma.

\begin{lemma}
    \label{lemma:weak_disjoint}
    Fix $\beta\in W(V,F)$. Suppose there exists $\alpha\in W(U,E)$ such that $(U,\alpha)$ and $(V,\beta)$ are weakly disjoint. Define
    \begin{align*}
     {A}_{\beta}\coloneqq \left\{\tilde{\alpha} \in {W}(U,E) \mid (U,\tilde\alpha) \text{ and }(V,{\beta}) \text{ are weakly disjoint} \right\}.
\end{align*}
    Then $A_{\beta} \subseteq W(U,E)$ is full. An analogous statement holds with the roles of $\alpha$ and $\beta$ interchanged, fixing $\alpha$ and considering the corresponding subset of $W(V,F)$.
\end{lemma}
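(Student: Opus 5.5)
We mirror the proof of Lemma~\ref{lemma:strong_disjoint}, replacing the strong-disjointness rank condition with a rank condition that detects weak disjointness. The plan is to express ``not weakly disjoint'' as the simultaneous vanishing of finitely many real analytic functions of $\tilde\alpha$. Since $\beta$ is fixed, every quantity below depends on $\tilde\alpha$ alone. The key observation is that weak disjointness is a statement about the degree functions of joinings, and these are obtained from joinings by a linear map. Let $J' = J(\tilde\alpha,\beta)$ be the joining constraint matrix of $(U,\tilde\alpha)$ and $(V,\beta)$. Let $L$ be the fixed linear map sending a vector $\vec\gamma \in \mathbb{R}^k$ to its degree vector $r\in\mathbb{R}^{U\times V}$. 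The matrix of $L$ has $0/1$ entries, and because the support pattern $\supp(\tilde\alpha\otimes\beta)$ is the same for every $\tilde\alpha\in W(U,E)$, $L$ does not depend on $\tilde\alpha$.

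The first step is to prove the following equivalence:
\[
(U,\tilde\alpha) \text{ and } (V,\beta) \text{ are weakly disjoint} \iff \dim L(\mathrm{Null}(J'))=1.
\]
For the forward direction, argue as in Proposition~\ref{rank_equiv_strong}. The product vector $\vec\gamma_{\text{product}}$ is strictly positive. Given any $\vec\gamma'\in\mathrm{Null}(J')$, the vector $\vec\gamma_{\text{product}}+t\vec\gamma'$ is, after normalization, a genuine weight joining for small $|t|$. Its degree is proportional to $\tilde p\otimes q+tL\vec\gamma'$, and weak disjointness forces this to equal $\tilde p\otimes q$ up to scaling. Hence $L\vec\gamma'\in\spa(\tilde p\otimes q)$, so $L(\mathrm{Null}(J')) = \spa(\tilde p\otimes q)$, which is one-dimensional. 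For the converse, suppose the image is one-dimensional. Then every joining has degree proportional to $\tilde p\otimes q$, and normalization makes the constant equal to $1$.

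The next step uses the rank-nullity identity
\[
\dim L(\mathrm{Null}(J')) = \rank\begin{pmatrix} J'\\ L\end{pmatrix} - \rank(J').
\]
This lets us rewrite the failure of weak disjointness algebraically. Set $s_1=\rank$ of the stacked matrix and $s_0=\rank(J')$ at the given weakly disjoint $\alpha$, so that $s_1-s_0=1$ there. Let $s_0^*$ and $s_1^*$ be the generic (maximal) values of these two ranks over $W(U,E)$.

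The main obstacle is that $\dim L(\mathrm{Null}(J'))$ is a difference of ranks, so it is not simply lower semicontinuous. I would handle this as follows. Both ranks equal their generic values off a proper analytic subset, namely the common zero set of the appropriate minors. Moreover, $\dim L(\mathrm{Null}(J'))\geq 1$ always holds, because $\tilde p\otimes q\neq 0$ lies in the image. At $\alpha$ we have $s_1-s_0=1$. If $s_0<s_0^*$, then the $\tilde\alpha$ where $\rank(J')=s_0^*$ form a full set. On that set $\mathrm{Null}(J')$ is smaller than at $\alpha$, so its image has dimension at most $1$, and hence exactly $1$.

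A cleaner route to the same conclusion is the direct characterization: $\dim L(\mathrm{Null}(J'))>1$ if and only if there is a nonzero degree-direction orthogonal to both $\tilde p\otimes q$ and $\mathbf{1}$. I would encode this by appending to $J'$ the row constraints forcing $L\vec\gamma \in \spa(\tilde p\otimes q)$. Concretely, add the rows $(\tilde p(u)q(v))\,e_{(u_0,v_0)}^T L - (\tilde p(u_0)q(v_0))\,e_{(u,v)}^T L$, which are analytic in $\tilde\alpha$. Call the augmented matrix $J''$. Then $\mathrm{Null}(J'')\subseteq \mathrm{Null}(J')$, and weak disjointness holds exactly when $\mathrm{Null}(J'')=\mathrm{Null}(J')$, that is, when $\rank J''=\rank J'$.

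This reformulation still compares two ranks. To apply the analytic zero-set argument of Lemma~\ref{lemma:strong_disjoint} directly, I would instead use the spectral characterization whenever the graphs are connected. By Theorem~\ref{weak_no_share_eigen} and Proposition~\ref{weak_no_share_eigen_general}, failure of weak disjointness is equivalent to the vanishing of the resultant of the deflated characteristic polynomials, suitably adjusted for the multiplicity of the eigenvalue $1$. The characteristic polynomial of $\tilde P$ has coefficients that are rational, hence analytic, in $\tilde\alpha$ on $W(U,E)$, since degrees are positive there. Also, $\det(xI - \tilde P)/(x-1)^{c}$ is a polynomial, where $c$ is the number of components, and $c$ is constant on $W(U,E)$ because the edge set is fixed. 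So the condition becomes that a single analytic function $R(\tilde\alpha)$, the resultant of $\det(xI-\tilde P)/(x-1)^c$ with $\det(xI-Q)$, vanishes, in the case where at most one of the graphs is disconnected. If both graphs are disconnected, Lemma~\ref{disconnected_not_weak} rules out the hypothesis.

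To finish, note that $R(\alpha)\neq0$ by hypothesis, so $R$ is not identically zero. By \cite{Mityagin2020ZeroSet}, its zero set has Lebesgue measure zero. That zero set is relatively closed, so $A_\beta$ is open. A measure-zero closed set has empty interior, so $A_\beta$ is dense. Hence $A_\beta$ is full, and the symmetric statement follows in the same way.
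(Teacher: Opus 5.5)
Your final argument is correct and is essentially the paper's proof. You rule out the doubly-disconnected case via Lemma~\ref{disconnected_not_weak}, then use Proposition~\ref{weak_no_share_eigen_general} to write failure of weak disjointness as the vanishing of an analytic function of $\tilde\alpha$ that is nonzero at $\alpha$, and finish with the analytic zero-set result, closedness, and empty interior. The only difference is that the paper uses the finitely many conditions $\det(P'-\lambda_j I)=0$ over the eigenvalues $\lambda_j\neq 1$ of $Q$, whereas you package them into one resultant with the same zero set.

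The rank-based paragraphs before that are unnecessary and should be dropped. The step ``$\mathrm{Null}(J')$ is smaller than at $\alpha$, so its image has dimension at most $1$'' does not follow, since null spaces at different weights are unrelated subspaces, and the case $s_0=s_0^*$ is left open.
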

\begin{proof}
By assumption, ${A}_{\beta}$ is nonempty since $\alpha \in A_{\beta}$. Let $Q \in \mathbb{R}^{n\times n}$ be the transition matrix associated with the graph $(V,\beta)$. For any $\alpha' \in W(U,E)$, let $P' \in \mathbb{R}^{m\times m}$ denote the transition matrix associated with $(U,\alpha')$. 
We first consider the case where at least one of the edge sets $E$ and $F$ are connected.
By Proposition~\ref{weak_no_share_eigen_general}, the condition $\alpha' \notin A_{\beta}$ is equivalent to the existence of an eigenvalue of $P'$, different from 1, that coincides with an eigenvalue of $Q$. Since $\beta$ is fixed, the transition matrix $Q$ and its spectrum are fixed. Let the eigenvalues of $Q$ other than 1 be denoted by $\{\lambda_j\}_{j=1}^k$ where $k\leq n-1$. Therefore, $\alpha' \notin A_{\beta}$ if and only if there exists $j\in\{1,\cdots,k\}$ such that $\lambda_j$ is an eigenvalue of $P'$, equivalently, $\det(P' - \lambda_j I_{m\times m}) = 0$. Since the entries of $P'$ depend linearly on $\alpha'$, each equation $\det(P' - \lambda_j I_{m\times m}) = 0$ defines a polynomial in $\alpha'$. Hence, the set $W(U,E)\setminus A_{\beta}$ is a finite union of zero sets of polynomials in $\mathbb{R}^d$, where $d = |\{\{u,u'\}\subset U:(u,u')\in E\}|-1$. By the fact that a real analytic function on $\mathbb{R}^d$ is either identically zero or has a zero set of Lebesgue measure zero in $\mathbb{R}^d$ (\cite{Mityagin2020ZeroSet}), this finite union of zero sets is either all of $W(U,E)$ or has Lebesgue measure zero.  If both edge sets $E$ and $F$ are disconnected, then by Lemma~\ref{disconnected_not_weak}, the graphs with skeletons $(U,E)$ and $(V,F)$ are not weakly disjoint. This contradicts our assumption that there already exists a pair that is weakly disjoint, and therefore this case cannot occur.

Since $\alpha\in A_{\beta}$, none of the above polynomials vanish at $\alpha$, and thus the system is not identically zero. Consequently, $\lambda_d(W(U,E)\setminus A_{\beta}) = 0$.

Finally, we show that $A_{\beta}$ is open and dense. As shown above,  $W(U,E)\setminus A_{\beta}$ is the union of the zero set of finitely many polynomials, and hence it is closed. Therefore, $A_{\beta}$ is open. Moreover, since $W(U,E)\setminus A_{\beta}$ has Lebesgue measure zero, it cannot contain any nonempty open set. Thus it has empty interior, and $A_{\beta}$ is dense in $W(U,E)$.
\end{proof}

\begin{proof}[Proof of Proposition \ref{prevalence_strong_weak} for weak disjointness case]



We establish the proposition by showing that if (1) does not hold, then (2) holds. To that
end, suppose (1) does not hold, that is, there exist $\alpha \in {W}(U,E)$ and $\beta \in {W}(V,F)$ such that $(U,\alpha)$ and $(V,\beta)$ are weakly disjoint. 
Applying Lemma~\ref{lemma:weak_disjoint} with $\beta$ fixed, we obtain a full subset $A_{\beta}\subseteq W(U,E)$. Set $A = A_{\beta}$. For each $\tilde{\alpha}\in A$, applying Lemma~\ref{lemma:weak_disjoint} again with $\tilde{\alpha}$ fixed yields a full subset $B_{\tilde{\alpha}}\subseteq W(U,E)$ such that, for every $\tilde{\beta}\in B_{\tilde{\alpha}}$, $(U,\tilde{\alpha})$ and $(V,\tilde{\beta})$ are not weakly disjoint.  
\end{proof}

\subsection{Corollary~\ref{zero_measure}}
\zeromeasure*
\begin{proof}

Consider the product weight space ${W}(U,E)\times {W}(V,F)$, equipped with the product measure $\lambda_{d_E} \otimes \lambda_{d_F}$, where $d_E = |\{\{u,u'\}\subset U:(u,u')\in E\}|-1$ and $d_F = |\{\{v,v'\}\subset V:(v,v')\in F\}|-1$.
If the second statement holds, then the weight function pairs $(\alpha,\beta)$ that fail to give weak disjointness between $(U,\alpha)$ and $(V,\beta)$ is contained in
\begin{equation} \label{Eqn:TwoSets}
    \left(({W}(U,E)\setminus {A})\times {W}(V,F)\right) \, \bigcup \, \left(\bigcup_{\alpha \in {A}}\left\{ \alpha \right\}\times \left({W}(V,F)\setminus {B}_{\alpha}\right)\right).
\end{equation}
Since $A$ is a full subset in $W(U,E)$, we have $\lambda_{d_E}({W}(U,E)\setminus{A}) = 0$, and the first set in (\ref{Eqn:TwoSets}) has measure zero in the product space. For the second set in (\ref{Eqn:TwoSets}), applying Fubini’s theorem yields
\begin{align*}
    (\lambda_{d_E}\otimes \lambda_{d_F}) \left(\bigcup_{\alpha \in {A}}\left\{ \alpha \right\}\times \left({W}(V,F)\setminus{B}_{\alpha}\right)\right) \, = \, \int_{ {A}}\lambda_{d_F} \left({W}(V,F)\setminus {B}_{\alpha}\right)\mathrm{d}\lambda_{d_E}(\alpha) \, = \, 0,
\end{align*}
since $\lambda_{d_F}(W(V,F)\setminus B_{\alpha}) = 0$ for any $\alpha$.
\end{proof}

\section{Proofs for Section~\ref{characterize_graph_family}}


    

\subsection{Proposition~\ref{bipartite_strongdisjoint}}
\bipartitestrong*
Before presenting the proof, we first state a result that will be used in the argument.
\begin{prop}
    Let $G$ be fully supported. Then the following statements are equivalent:
    \begin{itemize}
        \item $G$ is bipartite.
        \item $P_2 \0 | \0 G$.
    \end{itemize}
    \label{bipartite_factor}
\end{prop}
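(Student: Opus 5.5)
The plan is to prove both directions directly from Definition~\ref{graph_factor}, taking $P_2 = (\{w_0,w_1\},\beta)$ with $\beta(w_0,w_1)=\beta(w_1,w_0)=1/2$ and $\beta(w_0,w_0)=\beta(w_1,w_1)=0$. Its degree function is $q(w_0)=q(w_1)=1/2$. In each direction the candidate partition of $U$ is $U_1=\phi^{-1}(w_0)$, $U_2=\phi^{-1}(w_1)$.

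\emph{Bipartite implies $P_2\,|\,G$.} Let $U=U_1\sqcup U_2$ be a bipartition of $G=(U,\alpha)$, and set $\phi(u)=w_0$ for $u\in U_1$ and $\phi(u)=w_1$ for $u\in U_2$. The steps are as follows.
\begin{enumerate}
\item Surjectivity. Since $\alpha$ is normalized, $G$ has at least one edge $(u,u')$. Because there are no edges inside a part, $u$ and $u'$ lie in different parts, so both $U_1$ and $U_2$ are nonempty.
\item Condition (ii). Fix $u\in U_1$. Since $\alpha(u,u')=0$ for $u'\in U_1$, we have $\sum_{u'\in U_2}\alpha(u,u')=p(u)$. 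Hence
\[
q(w_0)\sum_{u'\in U_2}\alpha(u,u')=\tfrac12 p(u)=p(u)\,\beta(w_0,w_1)
\quad\text{and}\quad
q(w_0)\sum_{u'\in U_1}\alpha(u,u')=0=p(u)\,\beta(w_0,w_0).
\]
The case $u\in U_2$ is symmetric.
\item Condition (i). We have $\sum_{u\in U_1}p(u)=\sum_{u\in U_1,\,u'\in U_2}\alpha(u,u')$. All of the unit total mass of $\alpha$ lies on pairs crossing between $U_1$ and $U_2$, and symmetry of $\alpha$ makes the $U_1\to U_2$ and $U_2\to U_1$ masses equal. So each equals $1/2=q(w_0)$, and likewise for $w_1$. (Alternatively, one could invoke Proposition~\ref{factor_degree}, since $P_2$ is connected.)
\end{enumerate}

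\emph{$P_2\,|\,G$ implies bipartite.} Let $\phi$ be a factor map. Surjectivity makes $U_1$ and $U_2$ nonempty. Apply condition (ii) with $v=v'=w_0$ and $u\in U_1$:
\[
q(w_0)\sum_{u'\in U_1}\alpha(u,u')=p(u)\,\beta(w_0,w_0)=0 .
\]
Since $q(w_0)=1/2>0$ and $\alpha\ge 0$, this gives $\alpha(u,u')=0$ for all $u,u'\in U_1$. The same argument gives $\alpha(u,u')=0$ for all $u,u'\in U_2$. Thus every edge joins $U_1$ to $U_2$, and $G$ is bipartite.

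There is no real obstacle here. The only points needing care are the nonemptiness of both parts in the forward direction and the mass-splitting argument for condition (i). Full support is not essentially used, except to ensure that isolated vertices cause no trouble: such vertices have $p(u)=0$, and condition (ii) holds trivially for them.
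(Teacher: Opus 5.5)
Your proof is correct and follows essentially the same route as the paper. In the converse, both arguments use condition (ii) with $\beta(v_1,v_1)=0$ and $q(v_1)>0$ to rule out edges inside each fiber of the factor map; the forward direction, which the paper leaves as straightforward, you verify explicitly (surjectivity, condition (ii), and the mass-splitting check of condition (i)), and your remark that full support is not essentially needed is accurate.
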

\begin{proof}
    Assume first that $G$ is bipartite. Then it is straightforward to verify that $P_2 \0 | \0 G$.

    Conversely, suppose $P_2 \0 | \0 G$ and let $G = (U,\alpha)$ and $P_2  = (V,\beta)$ where $V = \{v_1, v_2\}$. Then by definition, $\beta(v_1,v_1) = \beta(v_2,v_2) = 0$, and $\beta(v_1,v_2) = \beta(v_2,v_1) = 1/2$.  Let $f:U\to V$ be a factor map such that $P_2 \0 | \0 G$ under $f$. For any $u\in f^{-1}(v_1),$ the factor definition gives
    \begin{align*}
    & q(v_1) \sum\limits_{u' \in f^{-1}(v_1)}{\alpha(u,u')} \, = \, p(u) \beta(v_1,v_1), \\
    & q(v_1) \sum\limits_{u' \in f^{-1}(v_2)}\alpha(u,u') \, = \, p(u) \beta(v_1,v_2).
    \end{align*}
    Since $\beta(v_1,v_1) = 0$ and $\beta(v_1,v_2) = q(v_1)$, it follows that $\alpha(u,u')=0$ for all $u'\in f^{-1}(v_1)$, and the entire mass of the neighborhood of $u$ is supported on $f^{-1}(v_2)$. The same argument holds for $u \in f^{-1}(v_2)$, implying that there are no edges within either fiber $f^{-1}(v_1)$ or $f^{-1}(v_2)$, only between them. Since $G$ is fully supported, $p(u) > 0$ for all $u \in U$, and then there must exist $u' \in U$ such that $\alpha(u,u')>0$. Therefore, every edge must connect a vertex in $f^{-1}(v_1)$ to one in $f^{-1}(v_2)$, and thus $G$ is a bipartite graph.
\end{proof}

The following fact is a classical theorem in graph theory (see Theorem 1.2.18 in \cite{west2001introduction}). 
\begin{fact}
    A graph is bipartite if and only if it has no odd-length cycle.
    \label{bipartite_no_odd}
\end{fact}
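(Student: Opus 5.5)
The plan is the standard parity argument, run separately on each connected component. Throughout I will treat a self-loop $(u,u)$ with $\alpha(u,u)>0$ as a cycle of length one, which is odd. This is consistent with the definition of bipartite used here: a self-loop joins a vertex to a vertex in its own part, so it is forbidden.

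\emph{Necessity.} Suppose $G=(U,\alpha)$ is bipartite with parts $U_1,U_2$, and let $u_0-u_1-\cdots-u_{k-1}-u_0$ be a cycle. Every edge joins $U_1$ to $U_2$, so the membership of $u_i$ in $U_1$ or $U_2$ alternates with $i$. Returning to $u_0$ after $k$ steps then forces $k$ to be even. In particular a self-loop, with $k=1$, cannot occur.

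\emph{Sufficiency.} Suppose $G$ has no odd cycle. A partition of $U$ can be assembled component by component, since there are no edges between components, so assume $G$ is connected. Fix a root $u_0$ and let $d(u)$ be the graph distance from $u_0$ to $u$. Put $U_1=\{u: d(u)\text{ even}\}$ and $U_2=\{u: d(u)\text{ odd}\}$.

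Suppose some edge $(x,y)$ had $d(x)\equiv d(y) \pmod 2$. Take a shortest path from $u_0$ to $x$, then the edge $(x,y)$, then a shortest path from $y$ back to $u_0$. This is a closed walk of length $d(x)+d(y)+1$, which is odd. If $x=y$ the walk contains a self-loop, which is already an odd cycle, giving a contradiction immediately.

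It therefore remains to prove the following lemma: every closed walk of odd length contains an odd cycle. I will prove it by strong induction on the length $L$ of the walk.
\begin{itemize}
\item If the walk has no repeated vertex apart from its common start and end point, then it is itself a cycle of odd length, and we are done.
\item Otherwise some vertex $w$ occurs at two positions $i<j$ with $(i,j)\neq(0,L)$. Splitting the walk at these two occurrences of $w$ gives two closed walks, of lengths $j-i$ and $L-(j-i)$, both strictly shorter than $L$.
\item Their lengths sum to the odd number $L$, so one of them is odd. The induction hypothesis applied to that walk produces an odd cycle.
\end{itemize}
Combined with the closed walk built above, the lemma contradicts the assumption that $G$ has no odd cycle. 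Hence every edge joins $U_1$ to $U_2$, and $G$ is bipartite.

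The main obstacle is this extraction lemma, in particular the bookkeeping showing that splitting at a repeated vertex yields two closed walks, each strictly shorter than $L$. This is what guarantees that the induction terminates. The degenerate cases of self-loops and of a walk that returns along the edge it just used are absorbed by the convention that a self-loop is a length-one cycle.
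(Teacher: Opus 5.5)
Your proof is correct and follows the standard route: the paper gives no proof of this Fact and simply cites West's Theorem 1.2.18, whose proof is exactly your argument, namely parity of distance from a root in each component together with the strong-induction lemma that every closed odd walk contains an odd cycle. Counting a self-loop as a cycle of length one is the right convention here, since the paper's graphs may have self-loops and its definition of bipartite forbids them; one small correction is that backtracking walks such as $a,b,a$ need no handling because they have even length and are never passed to the induction, so the self-loop convention is not what disposes of them.
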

With this result in hand, we establish the following lemma, which will be used in the proof of Proposition~\ref{bipartite_strongdisjoint}.
\begin{lemma}
     Let $G$ be a connected graph. If $G$ contains an odd-length cycle, then $G$ is strongly disjoint from $P_2$. Otherwise, $G$ is not weakly disjoint from $P_2$.
    \label{odd_even}
\end{lemma}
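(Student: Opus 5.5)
The second assertion is the easier one. If $G$ is connected and has no odd cycle, then by Fact~\ref{bipartite_no_odd} it is bipartite. Proposition~\ref{bipartite_factor} then gives $P_2\,|\,G$, and trivially $P_2\,|\,P_2$. Since $P_2$ is non-trivial, Proposition~\ref{factor_to_weak_disjoint} shows that $G$ and $P_2$ are not weakly disjoint. An alternative route is spectral: $-1$ is an eigenvalue of both transition matrices, so Theorem~\ref{weak_no_share_eigen} applies.

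For the first assertion, let $G=(U,\alpha)$ be connected with an odd cycle, and let $P_2=(\{v_1,v_2\},\beta)$. I will show directly that every $\gamma\in\mathcal{J}(\alpha,\beta)$ equals $\alpha\otimes\beta$.

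\textbf{Step 1: structure of $\gamma$.} By edge preservation, $\gamma((u,v_i),(u',v_j))$ can be nonzero only if $(u,u')\in E(G)$ and $i\neq j$. Fix a vertex $(u,v_1)$. The second transition coupling condition with $v'=v_2$ gives
\[
\sum_{u'}\gamma((u,v_1),(u',v_2))=\frac{\beta(v_1,v_2)}{q(v_1)}\,r(u,v_1)=r(u,v_1).
\]
This holds automatically, since all edges out of $(u,v_1)$ go to the $v_2$-layer. The first transition coupling condition gives
\[
\gamma((u,v_1),(u',v_2))=\frac{\alpha(u,u')}{p(u)}\,r(u,v_1).
\]
So $\gamma$ is completely determined by the function $r$.

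\textbf{Step 2: a propagation identity for $r$.} Symmetry of $\gamma$ gives
\[
\frac{\alpha(u,u')}{p(u)}\,r(u,v_1)=\frac{\alpha(u',u)}{p(u')}\,r(u',v_2).
\]
Hence for every edge $(u,u')$,
\[
\frac{r(u,v_1)}{p(u)}=\frac{r(u',v_2)}{p(u')}.
\]
Define $f(u)=r(u,v_1)/p(u)$ and $g(u)=r(u,v_2)/p(u)$. Then adjacent vertices satisfy $f(u)=g(u')$, and likewise $g(u)=f(u')$. The degree coupling condition gives $f+g=1/2\cdot 2=1$; more precisely $r(u,v_1)+r(u,v_2)=p(u)$, so $f+g\equiv 1$.

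\textbf{Step 3: using the odd cycle.} Moving along an edge swaps $f$ and $g$, so along any walk of length $\ell$ we get $f(\text{end})=f(\text{start})$ if $\ell$ is even and $f(\text{end})=g(\text{start})$ if $\ell$ is odd. Choose a vertex $u$ on the odd cycle. The cycle is a closed walk of odd length from $u$ to itself, so $f(u)=g(u)$. Combined with $f+g=1$, this gives $f(u)=g(u)=1/2$. By connectivity, every vertex is reached from $u$ by some walk, so $f\equiv g\equiv 1/2$ on all of $U$. Thus $r=p\otimes q$ with $q(v_i)=1/2$.

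\textbf{Step 4: conclusion.} By Step 1,
\[
\gamma((u,v_1),(u',v_2))=\frac{\alpha(u,u')}{p(u)}\cdot\frac{p(u)}{2}=\alpha(u,u')\,\beta(v_1,v_2),
\]
and the same holds with $v_1$ and $v_2$ exchanged. Therefore $\gamma=\alpha\otimes\beta$, and $G$ and $P_2$ are strongly disjoint.

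The main obstacle is Step 2: deriving the propagation identity correctly from symmetry together with transition coupling, and checking that degree coupling (which is automatic here by Proposition~\ref{connected_drop_coupling}) yields $f+g=1$. Once that identity is in hand, the odd-cycle parity argument in Step 3 is routine.
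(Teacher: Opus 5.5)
Your proof is correct and follows the paper's argument. Both use the identity $\frac{\alpha(u,u')}{p(u)}r(u,v_1)=\frac{\alpha(u',u)}{p(u')}r(u',v_2)$ (from symmetry plus transition coupling), the same odd-closed-walk parity argument forcing $r(u,v_1)=r(u,v_2)=p(u)/2$, and the same $P_2\,|\,G$ factor argument for the bipartite case; your normalization by $p(u)$ via $f$ and $g$ is a small simplification, since it lets you skip the paper's preliminary step of showing $r>0$ on all of $U\times V$ before taking ratios $r(u,v_1)/r(u',v_2)$.
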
    
\begin{proof}

    Let $G = (U,\alpha)$ and $H = (V,\beta)$, where $V = \{v_1,v_2\}$, and $\beta(v_1,v_2) = \beta(v_2,v_1) = 1/2$, $\beta(v_1,v_1) = \beta(v_2,v_2) = 0$. 
    
    Suppose first that $G$ contains no odd-length cycles. Then, by Fact~\ref{bipartite_no_odd}, $G$ is bipartite. By Proposition~\ref{bipartite_factor}, $P_2 \0 | \0 G$, and therefore $G$ and $P_2$ share a common nontrivial factor $P_2$. By Proposition~\ref{factor_to_weak_disjoint}, $G$ and $P_2$ are not weakly disjoint.

    Now suppose $G$ contains a cycle of odd length. Consider any weight joining $\gamma \in \mathcal{J}(\alpha,\beta)$ with degree function $r$. 
    By the degree coupling condition that $r(u,v_1) + r(u,v_2) = p(u) >0$, for each $u\in U$, at least one of $r(u,v_1)$ or $r(u,v_2)$ must be strictly positive. 
    Suppose $r(u,v_1)>0$ for some $u \in U$. Then for any neighbor $u'\in {N}(u),$ we have $r(u',v_2)>0$ since
    \begin{equation}
    \begin{aligned}
        \frac{\alpha(u,u')}{p(u)}r(u,v_1) \,
        &= \, \sum\limits_{v'\in V} \gamma((u,v_1),(u',v')) \, = \, \gamma((u,v_1),(u',v_2))\\
        &= \, \gamma((u',v_2),(u,v_1)) \, = \, \sum\limits_{v\in V} \gamma((u',v_2),(u,v)) \, = \, \frac{\alpha(u',u)}{p(u')}r(u',v_2).
        \label{adjacent_marginal}
    \end{aligned}
    \end{equation}
    A symmetric argument holds if $r(u,v_2)>0$, implying $r(u',v_1)>0$ for all $u'\in {N}(u)$.
    Now consider an odd-length cycle $u_1\to u_2\to \cdots,\to u_{2m-1}\to u_1$ in $G$, with $2m-1$ edges. Without loss of generality, assume $r(u_1,v_1)>0.$ Then, by the argument above, we have 
    $$r(u_2,v_2)>0, r(u_3,v_1)>0, \cdots, r(u_{2m-1},v_1)>0.$$ 
    Since $u_1\in {N}(u_{2m-1})$, this leads to $r(u_1,v_2)>0$,  and hence both $r(u,v_1)$ and $r(u,v_2)$ are positive. As $G$ is connected, this positivity propagates, and we obtain $r(u,v_1)>0$ and $r(u,v_2)>0$ for all $u\in U$, implying $\supp(r)=U \times V$ for all $\gamma\in \mathcal{J}(\alpha,\beta).$
    From Equation~\eqref{adjacent_marginal},
    for adjacent $u,u' \in U$,
    \begin{align*}
        \frac{r(u,v_1)}{r(u',v_2)} = \frac{p(u)}{p(u')}.
    \end{align*}
    Now consider a walk from $u$ that traverses the previous odd-length cycle and returns to $u$, e.g., 
    $$u\to u_{k_1}\cdots\to u_{k_s}\to u_{j_1}\to u_{j_{2}}\to\cdots u_{j_{2m-1}}\to u_{j_1}\to u_{k_s}\to\cdots\to u_{k_1}\to u,$$
    which also has an odd length. Applying the ratio chain from above, we find
    \begin{align*}
        \frac{r(u,v_1)}{r(u,v_2)} = \frac{r(u,v_1)}{r(u_{k_1},v_2)} \frac{r(u_{k_1},v_2)}{r(u_{k_2},v_1)} \cdots \frac{r(u_{k_2},v_2)}{r(u_{k_1},v_1)}\frac{r(u_{k_1},v_1)}{r(u,v_2)} = \frac{p(u)}{p(u_{k_1})} \cdots \frac{p(u_{k_1})}{p(u)} = 1.
    \end{align*} 
    Since $r(u,v_1)+r(u,v_2) = p(u)$, this gives $r(u,v_1)=r(u,v_2)={p(u)}/{2}=p(u)q(v_1)=p(u)q(v_2)$ for all $u \in U$. Thus, $r = p \otimes q$, and so $G$ and $H$ are weakly disjoint.
    Finally, for adjacent $u,u'$, we compute 
    \begin{align*}
        \gamma((u,v_1),(u',v_2)) \,
        = \, \frac{\alpha(u,u')}{p(u)}r(u,v_1) \, = \, \frac{\alpha(u,u')}{p(u)}\frac{p(u)}{2} \, = \, \frac{\alpha(u,u')}{2} \,         = \, \alpha(u,u') \beta(v_1,v_2),
    \end{align*}
    showing that $\gamma = \alpha\otimes\beta$.
    Therefore, $G$ and $H$ are strongly disjoint.
\end{proof}

\begin{proof}[Proof for Proposition~\ref{bipartite_strongdisjoint}]
    If $G$ is bipartite, then by Fact \ref{bipartite_no_odd}, it contains no odd-length cycles. Consequently, by Lemma~\ref{odd_even}, $G$ is not weakly disjoint from $P_2$, and therefore also not strongly disjoint from $P_2$.

    Conversely, if $G$ is not strongly disjoint from $P_2$, then by Lemma \ref{odd_even}, it contains no odd-length cycles. By Fact~\ref{bipartite_no_odd}, it follows that $G$ is bipartite.
\end{proof}

\subsection{Proposition~\ref{connected_self_loop}}
\connectedstrong*
\begin{proof}

    Let $G = (U,\alpha)$ and $H = (V,\beta)$ with $V = \{v_1,v_2\}$, and $\beta(v_1,v_2) = \beta(v_2,v_1) = 0$, $\beta(v_1,v_1) = 1 - \beta(v_2,v_2) \in (0,1)$.

    Assume that $G$ is connected. We prove $G$ is strongly disjoint from $H$. For any weight joining $\gamma \in \mathcal{J}(\alpha,\beta)$ with degree function $r$, the edge preservation property yields 
    \begin{equation}
    \begin{aligned}
        &\gamma((u,v_1),(u',v_1)) \, = \, \sum_{v' \in V}\gamma((u,v_1),(u',v')) \, =  \, \frac{\alpha(u,u')}{p(u)}\0 r(u,v_1),\\
        &\gamma((u,v_2),(u',v_2)) \, = \, \sum_{v' \in V} \gamma((u,v_2),(u',v')) \, = \,  \frac{\alpha(u,u')}{p(u)}\0 r(u,v_2).
        \label{edge_pres_result}
    \end{aligned}
    \end{equation}
    Therefore, for any adjacent pair $u,u'\in U$, we obtain
    \begin{align*}
        \frac{r(u,v_1)}{r(u,v_2)} \, = \, \frac{\gamma((u,v_1),(u',v_1))}{\gamma((u,v_2),(u',v_2))} \, = \, \frac{r(u',v_1)}{r(u',v_2)}.
    \end{align*}
    By connectedness of $G$, the ratio $r(u,v_1)/r(u,v_2)$ must be constant across all $u \in U$. Let this constant be $C$. Then 
    \begin{align*}
        q(v_1) \, = \, \sum\limits_{u\in U}r(u,v_1) \, = \, \sum\limits_{u\in U}Cr(u,v_2) \, = \, Cq(v_2),
    \end{align*}
    which yields $C = q(v_1)/q(v_2)$. Hence, 
    $$r(u,v_1) \, = \, \frac{r(u,v_1)}{r(u,v_1) + r(u,v_2)} \0 p(u) \, = \, \frac{C}{C+1} \0 p(u) = \frac{q(v_1)}{q(v_1) + q(v_2)}\0 p(u) \, = \, q(v_1)p(u),$$ 
    where the last equality follows from $q(v_1)+q(v_2) = 1$. Using \eqref{edge_pres_result}, for any $u,u' \in U$,
    \begin{align*}
        &\gamma((u,v_1),(u',v_1)) \, = \, \frac{\alpha(u,u')}{p(u)} \0 q(v_1)p(u) \, = \, \alpha(u,u')q(v_1) \, = \, \alpha(u,u')\beta(v_1,v_1),\\
        &\gamma((u,v_2),(u',v_2)) \, = \, \frac{\alpha(u,u')}{p(u)}\0 q(v_2)p(u) \, = \, \alpha(u,u')q(v_2) \, = \, \alpha(u,u')\beta(v_2,v_2).
    \end{align*}
    This shows that we must have $\gamma = \alpha \otimes \beta$, and thus $G$ and $H$ are strongly disjoint.

    Now suppose $G = (U, \alpha)$ is not connected. 
    We will construct a weight joining $\gamma \in \mathcal{J}(\alpha,\beta)$ such that $\gamma \neq \alpha \otimes \beta$.
    Since $G$ is disconnected, there exist disjoint nonempty subsets $U_1,U_2 \subseteq U$ such that $U = U_1 \sqcup U_2$ and $\alpha(u,u') = 0$ for all $u \in U_1$ and $u' \in U_2$. Define $C = \sum_{u,u' \in U_1}\alpha(u,u') \in (0,1).$ Let $t \in (0,1)$ be an arbitrary constant.
    We now define $\gamma$ as follows:
    \begin{align*}
        &\gamma((u,v_1),(u',v_1)) \, = \, \begin{cases}
            t \alpha(u,u')q(v_1) &\text{if }u,u' \in U_1,\\
            \frac{1-tC}{1-C}\0 \alpha(u,u') q(v_1)&\text{if } u,u' \in U_2,
        \end{cases}\\
        &\gamma((u,v_2),(u',v_2)) \, = \, \begin{cases}
            (1-tq(v_1))\alpha(u,u') &\text{if }u,u' \in U_1, \\
            \left(1- \frac{1-tC}{1-C}\0 q(v_1)\right)\alpha(u,u') &\text{if }u,u' \in U_2,
        \end{cases}\\
        & \gamma((u,v_1),(u',v_2)) \, = \gamma((u,v_2),(u',v_1)) \, = \, 0 \quad \text{for all }u,u' \in U.
    \end{align*}
    It is easy to see that $\gamma$ is symmetric and nonnegative.
    The resulting degree function is
    \begin{align*}
        &r(u,v_1) \, = \, \begin{cases}
            tp(u)q(v_1) &\text{if }u \in U_1, \\
            \frac{1-tC}{1-C}p(u) q(v_1)&\text{if }u\in U_2,
        \end{cases}\\
        &r(u,v_2) \, = \, \begin{cases}
            (1-tq(v_1))p(u) &\text{if }u \in U_1,\\
            \left(1- \frac{1-tC}{1-C}\0 q(v_1)\right) p(u) &\text{if } u\in U_2.
        \end{cases}
    \end{align*}
    Observe that for all $u,u' \in U$,
    \begin{align*}
        \sum_{v' \in V}\gamma((u,v_1),(u',v')) \, = \, \gamma((u,v_1),(u',v_1)) \, = \,  \frac{\alpha(u,u')}{p(u)}r(u,v_1),\\
        \sum_{v' \in V}\gamma((u,v_2),(u',v')) \, = \, \gamma((u,v_2),(u',v_2)) \, = \,  \frac{\alpha(u,u')}{p(u)}r(u,v_2).
    \end{align*}
    Moreover, for all $u \in U$,
    \begin{align*}
        \sum_{u' \in U} \gamma((u,v_1),(u',v_1)) \, = \, r(u,v_1),\\
        \sum_{u' \in U} \gamma((u,v_2),(u',v_2)) \, = \, r(u,v_2).
    \end{align*}
    For the degree coupling condition, for all $u \in U$,
    \begin{align*}
        \sum_{v\in V}r(u,v) \, = \, r(u,v_1)+r(u,v_2) \, = \, p(u).
    \end{align*}
    In addition,
    \begin{align*}
        &\sum_{u \in U}r(u,v_1) \, = \, \sum_{u \in U_1}r(u,v_1)+\sum_{u \in U_2}r(u,v_1) \, = \, tCq(v_1)+\frac{1-tC}{1-C}\0 (1-C)q(v_1) \, = \, q(v_1),\\
        &\sum_{u \in U}r(u,v_2) \, = \, \sum_{u \in U_1}r(u,v_2)+\sum_{u \in U_2}r(u,v_2) \\
        &\quad= \, (1-tq(v_1))C+\left(1-\frac{1-tC}{1-C}q(v_1)\right)(1-C) \, = \, 1-q(v_1) \, = \, q(v_2).
    \end{align*}
    Thus, we have shown that $\gamma \in \mathcal{J}(\alpha,\beta)$. However, since $t$ is an arbitrary constant in $(0,1)$ by construction, it follows that $\gamma\neq \alpha\otimes\beta$. Therefore,
    $G$ and $H$ are not strongly disjoint.
\end{proof}

\section{Proof for Section~\ref{Section:MC}}
\label{proof_for_markovchain}
\subsection{Proposition~\ref{bijective_chain_graph}}
\bijectivechaingraph*

\begin{proof}
Let $(U \times V, \gamma) \in \mathcal{J}(G,H)$ be a joining
of $G$ and $H$ with degree function $r$. 
For pairs $(u,v),(u',v') \in U \times V$, define
\[
R((u',v') \mid (u,v)) \,\coloneqq\, \frac{\gamma((u,v),(u',v'))}{r(u,v)},
\]
with the convention that $0/0 = 0$.
One may readily verify that $(R,r)$ 
determines a reversible Markov measure 
$\mu_{\gamma} \in \mathbb{L}_r(U \times V)$. 
Let $Z \in \mathbb{M}_r(U \times V)$ have measure $\mu_\gamma$.
The definition of weight joining ensures that 
$X \stackrel{d}{=} \pi_1(Z)$ and $Y \stackrel{d}{=} \pi_2(Z)$, 
and it follows that $\mu_{\gamma}$ is the law of a reversible
Markovian coupling of $X$ and $Y$. 

{Conversely, for $Z\in\mathbb{J}(X,Y)$, let $\mu = \mathrm{Law}(Z)$. Since $Z \in \mathbb{M}_r(U\times V)$, its law $\mu$ is contained in $\mathbb{L}_r(U\times V)$. 
Let $R$ be the transition matrix of $\mu$ and let $r$ be its stationary distribution. 
Define
\begin{align*}
    \gamma_{\mu}((u,v),(u',v')) \coloneqq R((u',v')\mid (u,v))r(u,v).
\end{align*}
Since $Z$ is reversible, $\gamma_{\mu}$ is symmetric and has degree function $r$.
Using the assumption that $Z\in \mathbb{J}(X,Y)$, we have by definition that $X \stackrel{d}{=} \pi_1(Z)$ and $Y \stackrel{d}{=} \pi_2(Z)$, and one can verify that $\gamma_{\mu} \in \mathcal{J}(\alpha,\beta)$. Consequently, $(U\times V,\gamma_{\mu})\in \mathcal{J}(G,H)$. 
}

{Finally, these two constructions are inverse to each other. Starting from $\gamma$, forming $\mu_\gamma$ and then reconstructing $\gamma_{\mu_\gamma}$ recovers $\gamma$. Likewise, beginning with $\mu$, forming $\gamma_\mu$ and then constructing $\mu_{\gamma_\mu}$ recovers $\mu$. Therefore, there is a bijection between $\mathcal{J}(G,H)$ and
$\{\text{Law}(Z) : Z \in \mathbb{J}(X,Y) \}$.}
\end{proof}

\bibliographystyle{plain}

\bibliography{references}

\newpage
\appendix

\section{Additional Proofs}\label{appendix:proofs}

\subsection{Proposition~\ref{connected_drop_coupling}}
\connecteddropcoupling*
\begin{proof}
Suppose $\gamma$ satisfies condition (b) of Definition~\ref{Def:weight_joining}. Let $r$ denote the degree function of $\gamma$. 
To prove that $r$ is a coupling of $p$ and $q$, it suffices to show $\sum_{v\in V}r(u,v)=p(u)$ and $\sum_{u\in U}r(u,v)=q(v)$.
    Since $r(u,v)=\sum_{(u',v')\in U\times V} \gamma((u,v),(u',v'))$, by the symmetry of $\gamma$ and the first transition coupling condition in Definition~\ref{Def:weight_joining}, we obtain
    \begin{align*}
        \sum\limits_{v\in V}r(u,v)\, & = \, \sum\limits_{v\in V} \sum\limits_{(u',v')\in U\times V} \gamma((u,v),(u',v'))\\
        &= \, \sum\limits_{(u',v')\in U\times V} \sum\limits_{v\in V} \gamma((u',v'),(u,v)) \\
        &= \, \sum\limits_{(u',v')\in U\times V} \frac{\alpha(u',u)}{p(u')}r(u',v')\\
        &= \, \sum\limits_{u'\in U} \frac{\alpha(u',u)}{p(u')} \sum\limits_{v'\in V} r(u',v').
    \end{align*}
    Define $z(u) \coloneqq \sum\limits_{v\in V}r(u,v)$. Then by the above display and the normalization property of $r$,
    \begin{align*}
        z(u) \, = \, \sum\limits_{u'\in U}\frac{\alpha(u',u)}{p(u')}\0 z(u'), \; \text{ and } \;
        \sum\limits_{u\in U}z(u) \, = \, 1.
    \end{align*}
    Interpreting ${\alpha(u',u)}/{p(u')}$ as the transition probability from $u'$ to $u$ in the reversible Markov chain associated with $G$, note that if $G$ is connected, then this chain is irreducible and hence admits a unique stationary distribution (see Corollary 1.17 \cite{levin2017markov}). Since $p$ is the stationary distribution of the Markov chain associated with $G$, the above equations therefore have the unique solution $z(u)=p(u)$ for all $u \in U$. An analogous argument applies to the second marginal yields $\sum_{u\in U}r(u,v)=q(v)$.
    Hence, $r$ is a coupling of $p$ and $q.$
\end{proof}

\subsection{Lemma~\ref{vertex_edge_preservation}}
\label{proof_vertex_edge_preservation}
\vertexedgepreservation*
\begin{proof}
It follows from the degree coupling condition in the definition of a 
weight joining that if a vertex $u \in U$ has degree $p(u) = 0$ then $r(u,v)=0$
for all $K \in \mathcal{J}(G,H)$ and all $v \in V$.  A similar 
conclusion holds if $q(v) = 0$ for some vertex $v \in V$.
This establishes the vertex preservation property.

Let $K \in \mathcal{J}(G,H)$.  If $p(u) = 0$, then for any vertex $v \in V$ 
vertex preservation yields that $r(u,v) = 0$.  Suppose then that $p(u) > 0$ and that $(u,u') \notin E(G)$.  
The joining condition yields that for all $v \in V$
\[
\sum_{v'\in V} \gamma((u,v),(u',v')) 
\, = \, \frac{0}{p(u)} \, r(u,v) \, = \, 0,
\]
which yields that $\gamma((u,v),(u',v')) = 0$ for all $v,v' \in V$. By an analogous argument for $(v,v') \notin E(H)$, we conclude that the edge preservation property holds.
\end{proof}

\subsection{Lemma~\ref{polynomial_time}}
\label{proof_of_polynomial_time}
\polynomialtime*
The detection of strong disjointness follows directly from Proposition~\ref{rank_equiv_strong}, since the rank of a weight joining constraint matrix can be computed in time polynomial in the number of its rows and columns, which in turn are polynomial in $|U|$ and $|V|$.
\begin{proof}[Proof for weak disjointness]
    We use the vector representation of a weight joining $\vec{\gamma}\in \mathbb{R}^k$ where $k = |E(G)||E(H)|$, introduced in Section~\ref{weight_joining_constraint_matrix}. Recall that there is a one-to-one correspondence between $\vec{\gamma}$ and a weight function $\gamma$ defined on $(U\times V)\times (U\times V)$.
    We now define a matrix $J_w$ that encodes, for each $(u,v)\in U\times V$, the constraint
    \begin{align*}
        \sum\limits_{(y',z')\in U\times V}\gamma((u,v),(y',z')) - p(u)q(v)\sum\limits_{(y,z)\in U\times V}\sum\limits_{(y',z')\in U\times V}\gamma((y,z),(y',z')) \, = \, 0.
    \end{align*}
    Each such constraint is represented by a row of the matrix $J_w$.
    If $J_w\Vec{\gamma} = 0$ and $\mathbbm{1}^T \Vec{\gamma} = 1$, then the associated weight function $\gamma$ with degree function $r$ satisfies $r = p\otimes q$.
    Let $J$ be the joining constraint matrix between $G$ and $H$.
    We claim that $G$ and $H$  are weakly disjoint if and only if
    \begin{align*}
        \rank(J) \, = \, \rank\left(\left[\begin{array}{c}
             J  \\
             J_w
        \end{array}\right]\right).
    \end{align*}

    If $\rank(J) = \rank(\left[\begin{array}{c}
             J  \\
             J_w
        \end{array}\right]),$ then there exists a matrix $D$ such that $J_w=DJ.$ For any $\vec{x}\in \mbox{Null}(J)$, we have $J_w\vec{x}=DJ\vec{x}=0$, thus $\vec{x}\in \mbox{Null}(J_w)$ and gives $\mbox{Null}(J)\subseteq \mbox{Null}(J_w).$ Since any weight joining $\gamma$ of $\alpha$ and $\beta$ has a corresponding vector representation $\vec{\gamma}$ satisfying $J\vec{\gamma} = 0$, we have $\vec{\gamma}\in \mbox{Null}(J)\subseteq \mbox{Null}(J_w)$. Together with the constraint $\mathbbm{1}^T \vec{\gamma} = 0$, which is also required for $\gamma$ to be a weight joining, our previous discussion gives that the degree function $r$ of $\gamma$ satisfies $r(u,v)=p(u)q(v)$. Hence, $G$ and $H$ are weakly disjoint.

    Now suppose $G$ and $H$ are weakly disjoint. For any $\Vec{x}\in \mbox{Null}(J)$ with $\Vec{x}\geq 0$, $\Vec{x}\neq 0$, we normalize it to satisfy $\mathbbm{1}^T \vec{x} = 1$, then the corresponding weight function $x$ satisfies $x \in \mathcal{J}(\alpha,\beta).$ Since $G$ and $H$ are weakly disjoint, it follows that $J_w\vec{x}=0$. Recall that $\vec{\gamma}_{\text{product}}$ is the vector representation of the product weight joining $\alpha\otimes\beta$. Now we consider two cases:
    \begin{itemize}
        \item $\dim((\mbox{Null}(J))=1$:  Then $\mbox{Null}(J)=\{\lambda \vec{\gamma}_{\text{product}}\}$. Since $\vec{\gamma}_{\text{product}}\subseteq \mbox{Null}(J_w)$, we have $\mbox{Null}(J)\subseteq \mbox{Null}(J_w)$.
        \item $\dim((\mbox{Null}(J))>1$: Suppose there exists $\vec{\gamma}\in \mbox{Null}(J)$ such that $\vec{\gamma}\notin\mbox{Null}(J_w)$, then $\vec{\gamma}\neq \lambda \vec{\gamma}_{\text{product}}$ for any $\lambda$. We normalize $\vec{\gamma}$ to satisfy $\mathbbm{1}^T \vec{\gamma} = 1$. Define $\vec{\gamma}'=t \vec{\gamma}+(1-t)\vec{\gamma}_{\text{product}}.$ Since $\vec{\gamma}_{\text{product}}>0$, there exists $t \neq 0$ such that $\vec{\gamma}' \geq 0.$ This yields that the corresponding weight function $\gamma'$ satisfies $\gamma'\in\mathcal{J}(\alpha,\beta)$. Since $G$ and $H$ are weakly disjoint, $\vec{\gamma}'\in \mbox{Null}(J_w)$. Together with $\vec{\gamma}_{\text{product}}\in\mbox{Null}(J_w)$, we must have $\vec{\gamma}\in \mbox{Null}(J_w)$, contradicting the assumption. Thus, such a $\vec{\gamma}$ cannot exist, and we conclude that $\mbox{Null}(J)\subseteq \mbox{Null}(J_w).$
    \end{itemize}
    From $\mbox{Null}(J)\subseteq \mbox{Null}(J_w)$, it follows that $\row(J_w)\subseteq \row(J)$. Therefore,  $\rank(J) = \rank(\left[\begin{array}{c}
             J  \\
             J_w
        \end{array}\right]).$ 

        In conclusion, to check weak disjointness between $G$ and $H$, it suffices to check whether
        \begin{align*}
            \rank(J) \, = \, \rank\left(\left[\begin{array}{c}
                 J  \\
                 J_w 
            \end{array}\right]\right).
        \end{align*}
        Since the rank of a matrix can be computed in time polynomial to the number of rows and the number of columns, which are polynomial in $|U|$ and $|V|$, we can check the weak disjointness in polynomial time.
\end{proof}

\begin{proof}[Proof for $c$-disjointness]
    We introduce a row vector $J_c$  encoding the single constraint
    \begin{align*}
        \sum\limits_{(u,v)\in U\times V}\sum\limits_{(u',v')\in U\times V}&c(u,v) \gamma((u,v),(u',v'))  \\
        &-\left(\sum\limits_{(y,z)\in U\times V}c(y,z)p(y)q(z)\right)\sum\limits_{(u,v)\in U\times V}\sum\limits_{(u',v')\in U\times V}\gamma((u,v),(u',v')) \, = \, 0.
    \end{align*}
    Let $J$ be the joining constraint matrix associated with $G$ and $H$.
    We claim that $G$ and $H$ are $c$-disjoint if and only if
    \begin{align*}
        \rank\left(\left[\begin{array}{c}
             J  \\
             J_c 
        \end{array}\right]\right) \, = \, \rank(J).
    \end{align*}
    If this rank equality holds, then 
    \begin{align*}
        \mbox{Null}\left(\left[\begin{array}{c}
             J  \\
             J_c 
        \end{array}\right]\right) \, = \, \mbox{Null}(J),
    \end{align*}
    which yields that $G$ and $H$ are $c$-disjoint. Conversely, if two graphs are $c$-disjoint, then by our previous argument, for any $\gamma \in \mathcal{J}(\alpha,\beta)$, the corresponding degree function $r$ satisfies $$\sum\limits_{(u,v)\in U\times V}c(u,v)r(u,v) \, = \, \sum\limits_{(y,z)\in U\times V}c(y,z)p(y)q(z).$$

    Thus, by verifying the rank condition above, we can determine whether $G$ and $H$ are $c$-disjoint.
    As in the case of weak disjointness, $c$-disjointness can also be checked in time polynomial in $|U|$ and $|V|$.
\end{proof}

\fontsize{9.5}{12}\selectfont\textsc{Yang Xiang, Department of Statistics and Operations Research, UNC Chapel Hill}

\textit{Email address}: \href{mailto:xya@unc.edu}{xya@unc.edu}
\vskip 0.2cm

\fontsize{9.5}{12}\selectfont\textsc{Kevin McGoff, Department of Mathematics and Statistics, UNC Charlotte}

\textit{Email address}: \href{mailto:kmcgoff1@charlotte.edu}{kmcgoff1@charlotte.edu}
\vskip 0.2cm

\fontsize{9.5}{12}\selectfont\textsc{Andrew B. Nobel, Department of Statistics and Operations Research, UNC Chapel Hill}

\textit{Email address}: \href{mailto:nobel@email.unc.edu}{nobel@email.unc.edu}

\end{document}